\newtheorem{theorem}{Theorem}[section]
\newtheorem{corollary}{Corollary}[section]
\newtheorem{lemma}{Lemma}[section]
\newtheorem{proposition}{Proposition}[section]
\newtheorem{example}{Example}[section]
\theoremstyle{definition}
\newtheorem{definition}{Definition}[section]
\newtheorem*{acknowledgement}{Acknowledgement}
\theoremstyle{remark}
\newtheorem{remark}{Remark}[section]
\numberwithin{equation}{section}
\newcommand{\ov}{\overline}
\newcommand{\e}{\varepsilon}
\newcommand{\N}{\mathbb N}
\renewcommand{\vec}[1]{\mathbf{#1}}
\newcommand{\R}{\mathbb{R}}
\newcommand{\er}{\eqref}
\DeclareMathOperator{\Div}{div} \DeclareMathOperator{\dist}{dist}
\DeclareMathOperator{\Supp}{Supp}
\DeclareMathOperator{\cp}{cap}
\newcommand{\Haus}{\mathcal H}
\newcommand{\Leb}{\mathcal L}
\renewcommand{\vec}[1]{\boldsymbol{#1}}
\date{}
\begin{document}
\title{Jumps in Besov spaces and fine properties of Besov and fractional Sobolev functions}
\author{Paz Hashash and Arkady Poliakovsky}
\date{\today}
\maketitle
\begin{abstract}
In this paper we analyse functions in Besov spaces $B^{1/q}_{q,\infty}(\mathbb{R}^N,\mathbb{R}^d),q\in (1,\infty)$, and functions in fractional Sobolev spaces $W^{r,q}(\mathbb{R}^N,\mathbb{R}^d),r\in (0,1),q\in [1,\infty)$. We prove for Besov functions $u\in B^{1/q}_{q,\infty}(\mathbb{R}^N,\mathbb{R}^d)$ the summability of the difference between one-sided approximate limits in power $q$, $|u^+-u^-|^q$, along the jump set $\mathcal{J}_u$ of $u$ with respect to Hausdorff measure
$\mathcal{H}^{N-1}$, and establish the best bound from above on the integral $\int_{\mathcal{J}_u}|u^+-u^-|^qd\mathcal{H}^{N-1}$ in terms of Besov constants. We show for functions $u\in B^{1/q}_{q,\infty}(\mathbb{R}^N,\mathbb{R}^d),q\in (1,\infty)$  that
\begin{equation}
\liminf\limits_{\varepsilon\to 0^+}\frac{1}{\varepsilon^N}\int_{B_{\varepsilon}(x)}
|u(z)-u_{B_{\varepsilon}(x)}|^qdz=0
\end{equation}
for every $x$ outside of a $\mathcal{H}^{N-1}$-sigma finite set. For fractional Sobolev functions $u\in W^{r,q}(\mathbb{R}^N,\mathbb{R}^d)$ we prove that
\begin{equation}
\lim_{\varepsilon\to
0^+}\frac{1}{\varepsilon^N}\int_{B_{\varepsilon}(x)}\frac{1}{\varepsilon^N}\int_{B_{\varepsilon}(x)}
|u\big(z\big)-u(y)|^qdzdy=0
\end{equation}
for $\mathcal{H}^{N-rq}$ a.e. $x$, where $q\in[1,\infty)$, $r\in(0,1)$ and $rq\leq N$. We prove for $u\in W^{1,q}(\mathbb{R}^N),1<q\leq N$, that
\begin{equation}
\lim\limits_{\varepsilon\to 0^+}\frac{1}{\varepsilon^N}\int_{B_{\varepsilon}(x)}
|u(z)-u_{B_{\varepsilon}(x)}|^qdz=0
\end{equation}
for $\mathcal{H}^{N-q}$ a.e. $x\in \R^N$.

In addition, we prove Lusin-type approximation for fractional Sobolev functions $u\in W^{r,q}(\mathbb{R}^N,\mathbb{R}^d)$ by H{\"o}lder continuous functions in $C^{0,r}(\mathbb{R}^N,\mathbb{R}^d)$.
\tableofcontents
\end{abstract}
\maketitle

\section{\textbf{Introduction}}
Throughout the article the letters $N,d$ denote natural numbers.

The so-called ``$BBM$ formula", which was represented by Bourgain, Brezis and Mironescu in \cite{BBM}, gives a characterization of Sobolev functions $W^{1,q}(\Omega)$ for
$1<q<\infty$ and of functions of bounded variation $BV(\Omega)$ using
double integrals and mollifiers, where $\Omega\subset\mathbb{R}^N$ is
open and bounded set with Lipschitz boundary. The characterization for
$BV(\Omega)$ functions in its full strength is due to D{\'a}vila
\cite{Davila}. For a general introduction to the theory of Sobolev
functions see \cite{EG,Maz'ya} and for the theory of functions of
bounded variation see \cite{AFP,EG}. The $BBM$ formula says, in
particular, that for an open and bounded set
$\Omega\subset\mathbb{R}^N$ with Lipschitz boundary, $1<q<\infty$ and
$u\in W^{1,q}(\Omega)$ we have
\begin{equation}
\lim_{\epsilon\to 0^+}\int_{\Omega}\left(\int_{\Omega\cap B_{\epsilon}(x)}\frac{1}{\epsilon^N}\frac{|u(x)-u(y)|^q}{|x-y|^q}dy\right)dx=C_{q,N}\|\nabla u\|^q_{L^q(\Omega)};
\end{equation}
and for $u\in BV(\Omega)$ we have
\begin{equation}
\label{eq:ineqality6}
\lim_{\epsilon\to 0^+}\int_{\Omega}\left(\int_{\Omega\cap B_{\epsilon}(x)}\frac{1}{\epsilon^N}\frac{|u(x)-u(y)|}{|x-y|}dy\right)dx=C_{1,N}\|D u\|(\Omega),
\end{equation}
where $C_{q,N},C_{1,N}$ are dimensional constants.

In \cite{P}, Poliakovsky investigated the limit
\begin{equation}
\label{eq:inequality7}
\lim_{\epsilon\to 0^+}\int_{\Omega}\left(\int_{\Omega\cap B_{\epsilon}(x)}\frac{1}{\epsilon^N}\frac{|u(x)-u(y)|^q}{|x-y|}dy\right)dx,
\end{equation}
for $1<q<\infty$,
$\Omega\subset\mathbb{R}^N$ an open set with bounded Lipschitz boundary and
$u\in BV(\Omega,\mathbb{R}^d)\cap L^\infty(\Omega,\mathbb{R}^d)$. The space $BV^q(\Omega,\mathbb{R}^d)$ was also considered in \cite{P}:
we say that $u\in BV^q(\Omega,\mathbb{R}^d)$ if and only if $u\in L^q(\Omega,\mathbb{R}^d)$ and
\begin{equation}
\limsup_{\epsilon\to 0^+}\int_{\Omega}\left(\int_{\Omega\cap B_{\epsilon}(x)}\frac{1}{\epsilon^N}\frac{|u(x)-u(y)|^q}{|x-y|}dy\right)dx<\infty.
\end{equation}
The limit
$\eqref{eq:inequality7}$ is obtained by replacing $\frac{|u(x)-u(y)|}{|x-y|}$ in $\eqref{eq:ineqality6}$ by $\frac{|u(x)-u(y)|^q}{|x-y|}$. In \cite{P} it was proved that the limit
$\eqref{eq:inequality7}$ is given in terms of the jump part of the distributional derivative of $u$ only, without the absolutely continuous and Cantor parts. Before describing it as a theorem recall the definition of one-sided approximate limits:

\begin{definition}(Approximate jump points, Definition 3.67 in \cite{AFP})
\label{def:approximate jump point} Let
$\Omega\subset\mathbb{R}^N$ be an open set, $u\in
L^1_{loc}(\Omega,\mathbb{R}^d)$ and $x\in \Omega$. We say that $x$
is an approximate jump point of $u$ if and only if there exist
$a,b\in \mathbb{R}^d$ and $\nu\in S^{N-1}$ such that $a\neq b$ and
\begin{equation}
\label{eq:one-sided approximate limitint} \lim_{\rho\to
0^+}\fint_{B^+_\rho(x,\nu)}|u(y)-a|dy=0,\quad \lim_{\rho\to
0^+}\fint_{B^-_\rho(x,\nu)}|u(y)-b|dy=0,
\end{equation}
where
\begin{equation}
B^+_\rho(x,\nu):=\left\{y\in B_\rho(x):(y-x)\cdot
\nu>0\right\},\quad B^-_\rho(x,\nu):=\left\{y\in
B_\rho(x):(y-x)\cdot \nu<0\right\}.
\end{equation}
The triple $(a,b,\nu)$, uniquely determined by $\eqref{eq:one-sided
approximate limitint}$ up to a permutation of $(a,b)$ and the change
of sign of $\nu$, is denoted by $(u^+(x),u^-(x),\nu_u(x))$. The set
of approximate jump points is denoted by $\mathcal{J}_u$.
\end{definition}

\begin{theorem}(Theorem 1.1 in \cite{P})
\\
\label{thm:Besov constant in terms of jump}
Let $\Omega\subset\R^N$ be an open set with bounded Lipschitz boundary and let $u\in BV(\Omega,\R^d)\cap L^\infty(\Omega,\R^d).$ Then for every $1<q<\infty$ we have $u\in BV^q(\Omega,\R^d)$ and
\begin{equation}
\label{eq:equality8}
C_N\int_{\mathcal{J}_u}|u^+(x)-u^-(x)|^q~d\Haus^{N-1}(x)=\lim_{\epsilon\to 0^+}\int_{\Omega}\left(\int_{\Omega\cap B_{\epsilon}(x)}\frac{1}{\epsilon^N}\frac{|u(x)-u(y)|^q}{|x-y|}dy\right)dx,
\end{equation}
where
\begin{equation}
\label{eq:definition of dimensional constant C_N}
C_N:=\frac{1}{N}\int_{S^{N-1}}|z_1|~d\Haus^{N-1}(z),\quad z:=(z_1,...,z_N).
\end{equation}
\end{theorem}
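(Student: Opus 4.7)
The strategy is a blow-up/localization argument in the spirit of Fonseca--M\"uller. For $\e>0$ define the Radon measures on $\O$ by
\begin{equation*}
  d\mu_\e(x):=\Biggl(\frac{1}{\e^N}\int_{\O\cap B_\e(x)}\frac{|u(x)-u(y)|^q}{|x-y|}\,dy\Biggr)dx,
\end{equation*}
so that $\mu_\e(\O)$ is precisely the right-hand side of \er{eq:equality8}. The plan is to show that $\mu_\e$ converges weak-$*$ on $\O$ to $C_N\,|u^+-u^-|^q\,\HN\!\llcorner\!\mathcal J_u$ and that the total masses converge to the same limit. Equi-boundedness $\sup_\e\mu_\e(\O)<\infty$ (which simultaneously yields $u\in BV^q$) follows from the interpolation $|u(x)-u(y)|^q\le(2\|u\|_\infty)^{q-1}|u(x)-u(y)|$ together with D\'avila's $q=1$ BBM estimate \er{eq:ineqality6}, giving a bound of order $|Du|(\O)$ and hence weak-$*$ precompactness of $\{\mu_\e\}$.

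For any weak-$*$ cluster point $\mu$, I would show using the decomposition $Du=\gr u\,\Leb^N+D^cu+D^ju$ that $\mu$ is supported on $\mathcal J_u$ and absolutely continuous with respect to $\HN\!\llcorner\!\mathcal J_u$. The $W^{1,q}$ contribution is immediate from the chain-rule bound $|u(x)-u(y)|^q\le|x-y|^q\int_0^1|\gr u(x+t(y-x))|^q\,dt$, which, after Fubini, controls the absolutely-continuous piece of $\mu_\e(\O)$ by $C\e^{q-1}\|\gr u\|_{L^q}^q\to 0$ as $\e\to 0^+$, the hypothesis $q>1$ being crucial. The Cantor piece is handled by approximating $u$ by $SBV$ functions $u_n$ with polyhedral jump sets and $|D^cu_n|\to 0$, controlling the error in the seminorm $\mu_\e(\cdot)^{1/q}$ via a Poincar\'e--$BV$ estimate, and exploiting that $|D^cu|(B_\e(x))/\e^{N-1}\to 0$ for $|D^cu|$-a.e.~$x$ (a consequence of $|D^cu|\perp\HN$).

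At an $\HN$-density-$1$ point $x_0\in\mathcal J_u$ the Radon--Nikodym density of $\mu$ is computed by blow-up. By \rdef{def:approximate jump point} the rescalings $u_r(z):=u(x_0+rz)$ converge in $L^1_{\mathrm{loc}}(\R^N,\R^d)$ to the step function $u_0(z):=u^+(x_0)\mathbf 1_{\{z\cdot\nu>0\}}+u^-(x_0)\mathbf 1_{\{z\cdot\nu<0\}}$ with $\nu:=\nu_u(x_0)$. After the substitution $x=x_0+rz$, $y=x_0+rw$, $\delta:=\e/r$, the scaling identities $|x-y|=r|z-w|$ and $dx\,dy=r^{2N}dz\,dw$ yield
\[
\mu_\e(B_r(x_0))=r^{N-1}\int_{B_1(0)}\frac{1}{\delta^N}\int_{B_\delta(z)}\frac{|u_r(z)-u_r(w)|^q}{|z-w|}\,dw\,dz.
\]
Along a suitable diagonal $r,\delta\to 0^+$ the inner double integral converges to the same quantity for $u_0$ (by dominated convergence, using the $L^\infty$ bound on $u$), which can be computed explicitly. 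Writing $F(t):=\int_{B_1(0)\cap\{w\cdot\nu<-t\}}|w|^{-1}\,dw$, the integrand for $u_0$ equals $|u^+-u^-|^q\delta^{-1}F(|z\cdot\nu|/\delta)\mathbf 1_{\{|z\cdot\nu|<\delta\}}$, and integrating over $z\in B_1(0)$ gives, in the limit $\delta\to 0^+$, the value $|u^+-u^-|^q\,\omega_{N-1}\cdot 2\int_0^1F(s)\,ds$. Fubini gives $2\int_0^1 F(s)\,ds=\int_{B_1(0)}|w\cdot\nu|/|w|\,dw$, and polar coordinates together with rotation invariance reduce this to $N^{-1}\int_{S^{N-1}}|z_1|\,d\HN(z)=C_N$. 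Since $\HN(\mathcal J_u\cap B_r(x_0))/r^{N-1}\to\omega_{N-1}$ at a density-$1$ point, the Radon--Nikodym density of $\mu$ at $x_0$ equals $C_N|u^+(x_0)-u^-(x_0)|^q$. Uniqueness of the cluster point upgrades weak-$*$ convergence to full convergence, and total-mass convergence follows from Step~1 plus a cutoff argument near $\partial\O$ (exploiting the Lipschitz hypothesis).

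The main obstacle is ruling out a Cantor-type contribution in the concentration step: the gain $\e^{q-1}$ afforded by $q>1$ has no pointwise analogue for $D^cu$, so one must either carry out the quantitative $SBV$-approximation sketched above or run a direct covering argument exploiting $|D^cu|\perp\HN$. All remaining steps are essentially standard once this is in hand.
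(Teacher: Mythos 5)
You should first note that the paper does not actually prove Theorem \ref{thm:Besov constant in terms of jump}; it is imported from \cite{P} and used as a black box (e.g.\ in Remark \ref{rem:Besov inequality in case of BV} and Remark \ref{rem:Gamma(N)<C_N}). What the paper itself proves, Theorem \ref{thm:main result,general formulation}, follows an entirely different route---a coarea/translation argument along a $C^1$ covering of $\mathcal{J}'_u$, Besicovitch covering, and a Radon--Nikod\'ym comparison via Lemma \ref{lem:local measure inequality for rectifiable sets gives global ineqality}---and yields \emph{only the lower bound} ``$\geq$'', but for arbitrary $u\in L^1_{loc}$ with finite lower Besov constant, without the $BV\cap L^\infty$ hypothesis. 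So your blow-up strategy for the full equality is genuinely different from anything in this paper. Your equi-boundedness step (interpolation against D\'avila's $q=1$ estimate) is correct, and your constant computation is consistent with the identity $C_N=\int_{B_1(0)}|e_1\cdot n/|n||\,dn$ recorded in Remark \ref{rem:another form for C_N}.

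There are, however, two genuine gaps. The crucial one is the density computation at $x_0\in\mathcal J_u$. Once a weak-$*$ cluster measure $\mu$ is fixed, the Radon--Nikod\'ym density is $\lim_{r\to 0}\mu(B_r(x_0))/\bigl(\alpha(N-1)r^{N-1}\bigr)$, and for radii with $\mu(\partial B_r(x_0))=0$ one has $\mu(B_r(x_0))=r^{N-1}\lim_{\delta\to 0}I(r,\delta)$ with $r$ held fixed; the $\delta$-limit is nested inside the $r$-limit, and you are not free to replace this by a ``suitable diagonal'' $r,\delta\to 0$. Moreover, even granting a diagonal, the appeal to dominated convergence does not close: the $L^\infty$ bound gives only the majorant $(2\|u\|_\infty)^q/(\delta|\xi|)$, which is not uniformly integrable in $\delta$, and quantitatively, replacing $u_r$ by the step function $u_0$ inside $I(r,\delta)$ produces an error of size $O\bigl(\|u_r-u_0\|_{L^1(B_2)}/\delta\bigr)$, so one would need a rate $\|u_r-u_0\|_{L^1}=o(\delta)$ that a generic jump point does not supply. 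This is exactly why D\'avila's $q=1$ proof and Poliakovsky's proof in \cite{P} establish the formula first for piecewise-constant (or otherwise structured) competitors and then approximate, rather than blowing up inside an unresolved $\delta$-limit. The second gap is the one you yourself flag: killing the Cantor contribution has no pointwise $\e^{q-1}$ gain analogous to the absolutely continuous part, and the $SBV$-approximation you sketch is not carried out; that, together with the density computation, is where the actual work of the theorem lies.
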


We are interested in the following question: is the formula
$\eqref{eq:equality8}$ valid for functions $u\in
BV^q(\Omega,\mathbb{R}^d)$, that do not belong to
$BV(\Omega,\mathbb{R}^d)\cap L^\infty(\Omega,\mathbb{R}^d)$? This
question has a meaning: we do not know whether or not the limit in the
right hand side of $\eqref{eq:equality8}$ exists for any function
$u\in BV^q(\Omega,\mathbb{R}^d)$, but the upper limit is finite for
such functions. In addition, the term on the left hand side of
$\eqref{eq:equality8}$ which includes the one-sided approximate
limits $u^+,u^-$ and the jump set $\mathcal{J}_u$ makes sense
because the jump set of functions $BV^q$ is countably
$(N-1)-$rectifiable as they are locally integrable functions. This
result is due to Giacomo Del Nin \cite{DelNin}:
\begin{theorem}
\label{thm:rectifiability of jump set}
Let $\Omega\subset\mathbb{R}^N$ be an open set and let $u\in L^1_{loc}(\Omega)$. Then the jump set $\mathcal{J}_u$ is countably
$(N-1)-$rectifiable.
\end{theorem}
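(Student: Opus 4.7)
The plan is to realize $\mathcal{J}_u$ as a countable union of sets, each of which is locally a Lipschitz graph of codimension one over some hyperplane; countable $(N-1)$-rectifiability follows at once.

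First I would stratify the jump set using the data in \rdef{def:approximate jump point}. Fix a countable dense set $D\subset S^{N-1}$ and, for each $a,b\in\mathbb{Q}^d$ with $a\neq b$, each $\nu\in D$, and each pair of integers $n,k\in\mathbb{N}$, define
\begin{equation*}
E_{a,b,\nu,n,k}:=\left\{x\in\Omega\cap B_n(0):\ \fint_{B^+_\rho(x,\nu)}|u-a|\,dy+\fint_{B^-_\rho(x,\nu)}|u-b|\,dy<\tfrac{|a-b|}{M}\ \forall\,\rho\in(0,1/k]\right\},
\end{equation*}
where $M=M(N)$ will be chosen large below. By approximating $u^{\pm}(x)$ by rationals, $\nu_u(x)$ by an element of $D$, and taking $k$ large enough to upgrade the pointwise convergence in \eqref{eq:one-sided approximate limitint} to the uniform bound above, every $x\in\mathcal{J}_u$ belongs to some $E_{a,b,\nu,n,k}$, and it therefore suffices to prove that each of these sets is countably $(N-1)$-rectifiable.

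The heart of the argument is a \emph{cone condition} for $E:=E_{a,b,\nu,n,k}$: there exist $\alpha\in(0,1)$ and $r_0\in(0,1/(2k))$, depending only on $N$ and $M$, such that for all $x,y\in E$ with $0<|x-y|<r_0$ one has $|(y-x)\cdot\nu|\leq\alpha|y-x|$. Assume for contradiction that $(y-x)\cdot\nu>\alpha|y-x|$ and take $\rho=2|y-x|$, which lies in $(0,1/k]$. An elementary geometric computation shows that $S:=B^+_\rho(x,\nu)\cap B^-_\rho(y,\nu)$ contains a cylinder through the midpoint $(x+y)/2$ of thickness $(y-x)\cdot\nu$, giving $|S|\geq c_N\alpha\,|B_\rho|$. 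The two average estimates from the definition of $E$ combined with the pointwise inequality $|a-b|\leq|u-a|+|u-b|$ on $S$ then yield
\begin{equation*}
|a-b|\cdot|S|\leq\int_S\bigl(|u-a|+|u-b|\bigr)dz\leq\tfrac{|a-b|}{M}\,|B_\rho|,
\end{equation*}
whence $c_N\alpha\leq 1/M$, a contradiction once $M$ is chosen so that $Mc_N\alpha>1$. The symmetric case $(y-x)\cdot\nu<-\alpha|y-x|$ is identical.

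With the cone condition, inside any ball of radius $r_0/2$ the set $E$ is the graph of a Lipschitz function over $\nu^\perp$ with constant at most $\alpha/\sqrt{1-\alpha^2}$, and such graphs are $(N-1)$-rectifiable. Covering $\Omega\cap B_n(0)$ by countably many balls of radius $r_0/2$ and summing over the countably many strata finishes the proof. The main obstacle will be the quantitative calibration of the three constants $M$, $\alpha$, and $\rho$: one has to make the overlap estimate honestly produce a contradiction while keeping the auxiliary radius $\rho$ inside the uniformity window $(0,1/k]$ and while verifying that the rational/net approximation of the triple $(u^{+}(x),u^{-}(x),\nu_u(x))$ preserves the constant $|a-b|/M$ — everything else is standard bookkeeping.
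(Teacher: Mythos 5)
The paper does not prove Theorem~\ref{thm:rectifiability of jump set}: it is quoted as a known result of Del Nin (reference \cite{DelNin}), and the paper only discusses, in Remark~\ref{rem:rectifiability of generalized jump set}, how Del Nin's more general rectifiability statement for the set $\Sigma_u$ of non-constant blow-up points applies to $\mathcal{J}_u$ and $\mathcal{J}'_u$. Your proposal instead gives a self-contained argument, and the strategy --- stratify $\mathcal{J}_u$ into countably many pieces on which the jump data is quantitatively controlled, prove a two-sided cone condition on each piece by comparing half-ball averages on the overlap of two shifted half-balls, then cover each piece by Lipschitz $(N-1)$-graphs --- is sound and is essentially the standard route to rectifiability of jump-type sets.

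The one step you dismiss as "standard bookkeeping" is in fact the delicate point, and you should make the mechanism explicit. When you replace $(u^+(x),u^-(x),\nu_u(x))$ by rational/net approximants $(a,b,\nu)$, the half-ball $B^+_\rho(x,\nu)$ differs from $B^+_\rho(x,\nu_u(x))$ by a wedge $W$ of angular opening $\theta:=\angle(\nu,\nu_u(x))$. Choosing $k$ large does \emph{not} shrink the resulting error in the average: $\fint_{B^+_\rho(x,\nu)}|u-a|$ picks up, as $\rho\to 0^+$, a persistent contribution of order $\theta\,|u^+(x)-u^-(x)|$ from the part of $W$ lying in $B^-_\rho(x,\nu_u(x))$, where $u$ is close to $u^-(x)$ rather than to $u^+(x)\approx a$. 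What saves the argument is precisely that this error is proportional to $\theta\cdot|a-b|$, so it can be absorbed into the threshold $|a-b|/M$ by taking the angular net $D$ to be $\delta$-dense with $\delta$ a fixed dimensional constant times $1/M$ (and $a,b$ chosen with $|a-u^+(x)|,|b-u^-(x)|$ a fixed fraction of $|u^+(x)-u^-(x)|/M$). You should also add a lower bound on $\dist(x,\partial\Omega)$ to the definition of $E_{a,b,\nu,n,k}$ so that the balls $B_\rho(x)$ stay in $\Omega$. With these two points spelled out, the calibration of $\alpha$, $M$, and $\delta$ closes and the cone condition holds, so the proof is correct. Compared with the paper's reliance on Del Nin, your argument buys a shorter, hands-on proof of exactly the statement needed; Del Nin's theorem is stronger in that it handles the larger set $\Sigma_u$ of all points with non-constant (translated) blow-ups, which the paper does use later for $\mathcal{J}'_u$, so the citation cannot simply be replaced by your argument elsewhere in the paper.
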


Our answer to the above question is negative: the formula in
$\eqref{eq:equality8}$ is not true for every function $u\in
BV^q(\Omega,\mathbb{R}^d)$. Nevertheless, we do have the following theorem which is the main result of this paper:

\begin{theorem}
\label{thm:main result1}
Let $1\leq q<\infty$,
$\Omega\subset \mathbb{R}^N$ be an open set and $u\in BV^q(\Omega,\R^d)$. Then,
\begin{align}
\label{eq:inequality30}
C_N\int_{\mathcal{J}_u}|u^+(x)-u^-(x)|^qd\Haus^{N-1}(x)\leq \liminf_{\epsilon\to 0^+}\int_{\Omega}\left(\int_{\Omega\cap B_{\epsilon}(x)}\frac{1}{\epsilon^N}\frac{|u(x)-u(y)|^q}{|x-y|}dy\right)dx.
\end{align}
\end{theorem}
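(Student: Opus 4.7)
The plan is to prove \eqref{eq:inequality30} by a measure-theoretic blow-up at each jump point. Writing $L$ for the right-hand side of \eqref{eq:inequality30} and assuming $L<\infty$ (otherwise there is nothing to show), fix $\e_n\to 0^+$ along which the liminf is attained and introduce the positive Radon measures on $\O$
\begin{equation*}
d\mu_n(x):=\left(\int_{\O\cap B_{\e_n}(x)}\frac{|u(x)-u(y)|^q}{\e_n^N|x-y|}\,dy\right)dx.
\end{equation*}
Their total masses are uniformly bounded, so up to a subsequence $\mu_n\weakly^*\mu$ for some positive Radon measure $\mu$ on $\O$ with $\mu(\O)\leq L$. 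The goal is to establish the measure inequality
\begin{equation*}
\mu\;\geq\;C_N\,|u^+-u^-|^q\,\HN\lfloor\mathcal{J}_u,
\end{equation*}
which, upon integrating, yields $L\geq\mu(\O)\geq C_N\int_{\mathcal{J}_u}|u^+-u^-|^q\,d\HN$.

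By \rth{thm:rectifiability of jump set} the set $\mathcal{J}_u$ is countably $(N-1)$-rectifiable, so $\HN\lfloor\mathcal{J}_u$ has density $1$ at $\HN$-a.e.\ point of $\mathcal{J}_u$. Hence, by the standard differentiation theorem for Radon measures, the measure inequality reduces to the pointwise lower density estimate
\begin{equation*}
\liminf_{r\to 0^+}\frac{\mu(\ov{B_r(x_0)})}{\omega_{N-1}r^{N-1}}\;\geq\;C_N\,|u^+(x_0)-u^-(x_0)|^q\qquad\text{for }\HN\text{-a.e.\ }x_0\in\mathcal{J}_u.
\end{equation*}
Fix such an $x_0$; translate and rotate so that $x_0=0$ and $\nu_u(0)=e_N$, and set $a:=u^+(0)$, $b:=u^-(0)$. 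By the Portmanteau inequality for weak-$*$ convergence of Radon measures on closed sets, $\mu(\ov{B_r})\geq\liminf_n\mu_n(\ov{B_r})$. Introducing $\sigma_n:=\e_n/r$ and the rescaled function $u_r(z):=u(rz)$, the change of variables $x=rz$, $y=rw$ transforms $\mu_n(\ov{B_r})=r^{N-1}F_{\sigma_n}(u_r;\ov{B_1})$, where
\begin{equation*}
F_\sigma(v;A):=\int_A\int_{B_\sigma(z)\cap\O_r}\frac{|v(z)-v(w)|^q}{\sigma^N|z-w|}\,dw\,dz\qquad(\O_r:=r^{-1}\O).
\end{equation*}
Since $\O_r\supset\ov{B_1}$ for small $r$, the density estimate reduces to
\begin{equation*}
\liminf_{r\to 0^+}\liminf_{n\to\infty}F_{\sigma_n}(u_r;\ov{B_1})\;\geq\;C_N\,|a-b|^q\,\omega_{N-1}.
\end{equation*}

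To verify this, I argue by truncation and lower semicontinuity. For $M>0$ let $u^M$ denote the coordinate-wise truncation of $u$ at level $M$; since $|u^M(x)-u^M(y)|\leq|u(x)-u(y)|$, the double integral decreases on replacing $u_r$ by $u_r^M$, so it suffices to treat the truncated problem. The rescalings $u_r^M$ are uniformly bounded by $M$, and by the definition of approximate jump point $u_r\to u_0$ in $L^1_{loc}(\R^N)$ as $r\to 0^+$, where $u_0(y):=a\chi_{\{y_N>0\}}+b\chi_{\{y_N<0\}}$; dominated convergence then yields $u_r^M\to u_0^M$ in $L^q_{loc}(\R^N)$, with $u_0^M=a^M\chi_{\{y_N>0\}}+b^M\chi_{\{y_N<0\}}$. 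Fatou's lemma applied to the double integrand gives, for every fixed $\sigma>0$,
\begin{equation*}
F_\sigma(u_0^M;\ov{B_1})\;\leq\;\liminf_{r\to 0^+}F_\sigma(u_r^M;\ov{B_1}),
\end{equation*}
while the direct computation for the two-valued step function $u_0^M$, already underlying \rth{thm:Besov constant in terms of jump}, yields $\lim_{\sigma\to 0^+}F_\sigma(u_0^M;\ov{B_1})=C_N|a^M-b^M|^q\,\omega_{N-1}$. A diagonal argument, choosing $\sigma=\sigma(r)\to 0^+$ slowly enough as $r\to 0^+$ and using the uniform $BV^q$-bound on $u_r$ inherited from $u$, combines these facts into $\liminf_{r\to 0^+}\liminf_{\sigma\to 0^+}F_\sigma(u_r^M;\ov{B_1})\geq C_N|a^M-b^M|^q\omega_{N-1}$; sending $M\to\infty$ finally gives $C_N|a-b|^q\omega_{N-1}$ by monotone convergence.

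The principal obstacle is precisely this last diagonal step of coordinating the blow-up limit $r\to 0^+$ with the BBM-localization limit $\sigma\to 0^+$. The rescaled functions $u_r$ converge to the two-valued $u_0$ only in $L^1_{loc}$, whereas $F_\sigma$ becomes increasingly sensitive to the fine structure of its argument as $\sigma\to 0^+$. Handling this requires either a quantitative stability estimate expressing $F_\sigma(u_r^M;\ov{B_1})-F_\sigma(u_0^M;\ov{B_1})$ in terms of the $L^q$-distance $\|u_r^M-u_0^M\|_{L^q(B_2)}$, or a direct covering-type estimate near $\{z_N=0\}$ that exploits the concentration of the integrand in a $\sigma$-thin slab together with the uniform $BV^q$-control.
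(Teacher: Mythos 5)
Your strategy --- weak-$*$ compactness of the measures $\mu_n$, reduction to a pointwise density estimate at $\mathcal H^{N-1}$-a.e.\ jump point via rectifiability of $\mathcal J_u$, Portmanteau for closed balls, and a blow-up rescaling $x=rz$ --- is a reasonable skeleton, and the identification $\mu_n(\overline{B_r(x_0)})=r^{N-1}F_{\sigma_n}(u_r;\overline{B_1})$ with $\sigma_n=\e_n/r$ is correct. But the proof does not close at the step you yourself flag as the ``principal obstacle,'' and it is a genuine gap, not a routine one. The quantity you need to bound from below is $\liminf_{r\to 0^+}\liminf_{n\to\infty}F_{\sigma_n}(u_r;\overline{B_1})$, and since $\sigma_n\to 0$ for each fixed $r$, this is the iterated limit $\liminf_{r\to 0^+}\liminf_{\sigma\to 0^+}$. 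What your Fatou/$L^1_{loc}$-convergence argument delivers is, for each \emph{fixed} $\sigma>0$, $\liminf_{r\to 0^+}F_\sigma(u_r^M;\overline{B_1})\geq F_\sigma(u_0^M;\overline{B_1})$, combined with $\lim_{\sigma\to 0^+}F_\sigma(u_0^M;\overline{B_1})=C_N|a^M-b^M|^q\omega_{N-1}$; together these bound $\liminf_{\sigma\to 0^+}\liminf_{r\to 0^+}$, i.e.\ the limits in the \emph{opposite} order. Interchanging them is precisely the content of the theorem, not a technicality: $F_\sigma$ is a Besov-type functional whose sensitivity to its argument degenerates as $\sigma\to 0^+$, so there is no modulus of continuity of the type you invoke in your first suggested remedy (a stability bound $|F_\sigma(u_r^M)-F_\sigma(u_0^M)|\lesssim\|u_r^M-u_0^M\|_{L^q}$ with a constant uniform in $\sigma$ is false --- an arbitrarily small $L^q$ perturbation can carry arbitrarily large $F_\sigma$ when $\sigma$ is small). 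The uniform $BV^q$ bound on $u_r$ gives boundedness of $F_\sigma(u_r;\cdot)$ but not lower semicontinuity along a diagonal $\sigma=\sigma(r)$. You name the missing estimate but do not supply it, so as written the argument does not prove the theorem.

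For comparison, the paper's proof (Theorem \ref{thm:main result,general formulation}) shares your opening move --- extracting a weak-$*$ limit measure $\mu$ from $f^{t_l}\mathcal L^N$ --- but then avoids the exchange-of-limits problem entirely by replacing the pointwise blow-up with a geometric co-area argument: it decomposes $\mathcal J'_u$ into $C^1$-submanifolds $\Gamma^i$, takes a small patch $\Gamma^i_{x_0}$, and estimates the integral over the family of translates $O_{t,n}(\Gamma^i_{x_0})$ of that patch (via the co-area formula for the graph function $f$ of Lemma \ref{lem:parametric surface relative to vector $n$}). The lower semicontinuity needed there is only along the one-parameter families of difference quotients $\gamma_{x,t}(\alpha v,-\beta v)$ in a fixed non-tangent direction $n$, which is exactly what Proposition \ref{prop:convergence of the rescaled family gamma} delivers; integrating over $n$ with a Besicovitch covering produces the constant $C_N$, and the global bound then follows from the density comparison in Lemma \ref{lem:local measure inequality for rectifiable sets gives global ineqality}. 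In short, you and the paper agree on the compactness step, but the paper's co-area/covering machinery is what actually dispenses with the double limit you are stuck on.
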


Theorem \ref{thm:main result1} will be proven in a more general context, as referred to in Theorem \ref{thm:main result,general formulation}. We will show in subsection \ref{subsec:Examples  of $BV^q$-functions for which the $q$-jump inequality is strict} that the inequality
$\eqref{eq:inequality30}$ can be strict.

\begin{remark}
\label{rem:Besov inequality in case of BV}
Using Theorem $\ref{thm:Besov constant in terms of jump}$ one can
prove by a truncation argument the validity of inequality \eqref{eq:inequality30} for functions
$u\in BV(\Omega,\R^d)\cap BV^q(\Omega,\R^d)$, where
$\Omega\subset\mathbb{R}^N$ is an open set with bounded Lipschitz boundary. More precisely, for $u=(u^1,...,u^d)\in BV(\Omega,\R^d)\cap BV^q(\Omega,\R^d)$ we have $u_l\in BV(\Omega,\R^d)\cap L^\infty(\Omega,\R^d)$, where $l\in \mathbb{N}$,  $u_l=(u^1_l,...,u^d_l)$ and
$u^i_l:=l\wedge(-l\vee u^i)$ for $1\leq i\leq d$; we get by Theorem $\ref{thm:Besov constant in terms of jump}$ for every $l\in\mathbb{N}$
\begin{equation}
C_N\int_{\mathcal{J}_{u_l}}|(u_l)^+(x)-(u_l)^-(x)|^q~d\Haus^{N-1}(x)=\lim_{\epsilon\to 0^+}\int_{\Omega}\left(\int_{\Omega\cap B_{\epsilon}(x)}\frac{1}{\epsilon^N}\frac{|u_l(x)-u_l(y)|^q}{|x-y|}dy\right)dx,
\end{equation}
and by taking the limit as $l\to \infty$, one can get $\eqref{eq:inequality30}$.
\end{remark}

Note that in Theorem
$\ref{thm:main result1}$, in comparison with Remark
$\ref{rem:Besov inequality in case of BV}$, we allow the case $q=1$,
we do not have any geometric assumptions on the boundary of $\Omega$,
or make the assumption that $u$ lies in $BV(\Omega,\mathbb{R}^d)$.

An importance of Theorem
$\ref{thm:main result1}$ is for Besov functions $B^{1/q}_{q,\infty}(\R^N,\R^d)$. Recall the definition of Besov space $B^s_{q,\infty}(\mathbb{R}^N,\mathbb{R}^d)$:
\begin{definition}
Let $1\leq q<\infty$ and $s\in (0,1)$. Define:
\begin{equation}
B^s_{q,\infty}(\R^N,\R^d):=\left\{u\in L^q(\R^N,\R^d):\sup_{\rho>0}\left(\sup_{|h|\leq \rho}\int_{\R^N}\frac{|u(x+h)-u(x)|^q}{\rho^{sq}}dx\right)<\infty\right\}.
\end{equation}
For an open set $\Omega\subset\R^N$, the local space $\left(B^{s}_{q,\infty}\right)_{loc}(\Omega,\R^d)$ is defined to be the set of all functions $u\in L^q_{loc}(\Omega,\R^d)$ such that for every compact $K\subset\Omega$ there exists a function $u_K\in B^s_{q,\infty}(\R^N,\R^d)$ such that $u_K(x)=u(x)$ for a.e. $x\in K$.
\end{definition}

The following proposition is due to Poliakovsky and it gives us a connection between Besov functions in $B^{1/q}_{q,\infty}$ and $BV^q-$functions.

\begin{proposition}(Proposition 1.1 in \cite{P})
For $1<q<\infty$ we have:
\begin{equation}
BV^q(\R^N,\R^d)=B^{1/q}_{q,\infty}(\R^N,\R^d).
\end{equation}
Moreover, for every open set $\Omega\subset\R^N$ we have
\begin{equation}
BV^q_{loc}(\Omega,\R^d)=\left(B^{1/q}_{q,\infty}\right)_{loc}(\Omega,\R^d),
\end{equation}
where the local space $BV^q_{loc}(\Omega,\R^d)$ is defined in a usual way (Definition \ref{def:definition of BV^q}).
\end{proposition}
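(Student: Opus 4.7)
The plan is to translate both membership conditions into statements about the single translation-difference functional
\[ f(h):=\int_{\R^N}|u(x+h)-u(x)|^q\,dx,\qquad h\in\R^N. \]
By Fubini and the change of variable $y=x+h$, $u\in BV^q(\R^N,\R^d)$ is equivalent to $u\in L^q$ together with $\limsup_{\epsilon\to 0^+}\epsilon^{-N}\int_{B_\epsilon(0)}f(h)/|h|\,dh<\infty$, while setting $\rho:=|h|$ in the defining supremum shows that $u\in B^{1/q}_{q,\infty}(\R^N,\R^d)$ is equivalent to the pointwise bound $\sup_{h\neq 0}f(h)/|h|<\infty$. The proposition therefore reduces to a real-variable equivalence between these two conditions on $f$. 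The inclusion $B^{1/q}_{q,\infty}\subset BV^q$ is immediate, since $f(h)\leq C|h|$ forces $\epsilon^{-N}\int_{B_\epsilon(0)}f(h)/|h|\,dh\leq C\,|B_1(0)|$ uniformly in $\epsilon$.

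The reverse inclusion is the main content, and I would obtain it by a midpoint averaging argument. Fix $h_0\neq 0$ with $r_0:=|h_0|$ small enough that the $BV^q$ bound is available at scale $r_0$, and set $A:=B_{r_0/4}(h_0/2)$. The identity $u(x+h_0)-u(x)=[u(x+h_0)-u(x+y)]+[u(x+y)-u(x)]$, averaged in $y\in A$ and combined with Jensen's inequality applied to $|\cdot|^q$ (once to the average, once to the sum), followed by integration in $x$ using translation invariance, gives
\[ f(h_0)\leq \frac{2^{q-1}}{|A|}\int_A\bigl(f(h_0-y)+f(y)\bigr)\,dy = \frac{2^q}{|A|}\int_A f(y)\,dy, \]
where the reflection symmetry $h_0-A=A$ collapses the two terms. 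Since $A\subset B_{r_0}(0)$ and $|y|\geq r_0/4$ on $A$,
\[ \frac{2^q}{|A|}\int_A f(y)\,dy\leq \frac{2^q r_0}{|A|}\int_{B_{r_0}(0)}\frac{f(y)}{|y|}\,dy\leq C_{q,N}\,M\,r_0, \]
with $|A|$ a dimensional multiple of $r_0^N$ and $M$ the $BV^q$ constant. Hence $f(h_0)\leq C_{q,N}M r_0$ for all sufficiently small $h_0$; for $|h_0|$ bounded away from zero, the trivial bound $f(h_0)\leq 2^{q+1}\|u\|_{L^q}^q$ closes the estimate, so $\sup_{h\neq 0}f(h)/|h|<\infty$.

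The local statement follows from the global one by a standard cutoff: given a compact $K\subset\Omega$, pick $\eta\in C^\infty_c(\Omega)$ with $\eta\equiv 1$ on $K$, and set $u_K:=u\eta$. The product-rule split $(u\eta)(x+h)-(u\eta)(x)=u(x+h)[\eta(x+h)-\eta(x)]+\eta(x)[u(x+h)-u(x)]$ reduces the estimate for $u\eta$ to the Lipschitz character of $\eta$ together with $u\in L^q_{loc}$ and the $BV^q_{loc}$ bound for $u$ on a slightly enlarged compact set, both of which are available by hypothesis; the converse direction is immediate from the global implication applied to each $u_K$. The principal technical obstacle is the design of the averaging set $A$, which must simultaneously be symmetric about $h_0/2$ (so the two half-integrals coincide), lie inside $B_{r_0}(0)$ (so the $BV^q$ hypothesis at scale $r_0$ applies), and stay bounded away from the origin (so that the weight $1/|y|$ costs only a factor of order $1/r_0$). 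The ball $A=B_{r_0/4}(h_0/2)$ is a clean choice meeting all three constraints at once.
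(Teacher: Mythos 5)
The paper does not prove this proposition itself; it is cited verbatim from Poliakovsky's earlier work \cite{P}, so there is no in-paper argument to compare against. Your argument is nonetheless correct and self-contained. The reduction of both memberships to statements about $f(h)=\int_{\R^N}|u(x+h)-u(x)|^q\,dx$ is the right bookkeeping (for the Besov side, note $sq=1$ when $s=1/q$, and taking $\rho=|h|$ shows the seminorm condition is exactly $\sup_{h\neq 0}f(h)/|h|<\infty$; for the $BV^q$ side, Fubini plus $y=x+h$ gives $\epsilon^{-N}\int_{B_\epsilon(0)}f(h)/|h|\,dh$, and the paper's Proposition~\ref{prop:equivalence of finiteness of upper and Besov constants} lets you pass between $\sup_\epsilon$ and $\limsup_\epsilon$). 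The midpoint-ball inequality
\begin{equation*}
f(h_0)\;\leq\;\frac{2^q}{|A|}\int_A f(y)\,dy,\qquad A=B_{r_0/4}(h_0/2),
\end{equation*}
is a clean version of the standard device for upgrading an averaged-in-$h$ Nikolskii-type bound to a pointwise-in-$h$ Besov bound, and the geometric facts you need — $h_0-A=A$, $A\subset B_{r_0}(0)$, and $|y|\geq r_0/4$ on $A$ — all hold. One point worth making explicit in the local step: the product-rule split for $u\eta$ proves only that $u\eta\in BV^q(\R^N,\R^d)$ (the cross term $u(x+h)[\eta(x+h)-\eta(x)]$ contributes a factor $\epsilon^{q-1}$ to $\hat{B}_{u\eta,q}$, which stays bounded because $q>1$), and it is the already-established global inclusion that then puts $u\eta$ in $B^{1/q}_{q,\infty}(\R^N,\R^d)$. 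The $BV^q_{loc}$ hypothesis by itself does not give pointwise control of $\int_{\supp\eta}|u(x+h)-u(x)|^q\,dx$ in $h$ without another pass through the averaging lemma, so the order of operations — cutoff, then $BV^q(\R^N)$, then the global implication — is what makes the cutoff step actually close.
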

For a general introduction to Besov spaces see for example \cite{Giovanni}.

\begin{remark}(Connection between $BV$ and $BV^1$ functions)
By the $BBM$ formula we have for any open set $\Omega\subset\R^N$ that $BV^1_{loc}(\Omega,\R^d)=BV_{loc}(\Omega,\R^d)$, and $BV^1(\Omega,\R^d)\subset BV(\Omega,\R^d)$. If $\Omega$ has bounded Lipschitz boundary, then $BV^1(\Omega,\R^d)=BV(\Omega,\R^d)$, and in particular $BV^1(\R^N,\R^d)=BV(\R^N,\R^d)$.
We do not know whether or not the equality $BV^1(\Omega,\R^d)=BV(\Omega,\R^d)$ holds for arbitrary open sets.
\end{remark}

\begin{remark}($q$-jump variation for Besov functions)
Let $u\in BV\left(\mathbb{R}^N,\mathbb{R}^d\right)$. Denote by $Du$ its distributional derivative and by
$D^ju$ the jump part of $Du$. The total variation $|D^ju|$ is a finite
Radon measure in $\mathbb{R}^N$ which can be expressed by the formula:
\begin{equation}
|D^ju|(B)=\int_{B\cap \mathcal{J}_u}|u^+(x)-u^-(x)|d\mathcal{H}^{N-1}(x)
\end{equation}
for Borel sets
$B\subset \mathbb{R}^N$ (see for example equation $(3.90)$ and Proposition 1.23 in \cite{AFP}).
Let us define a set function
\begin{equation}
|D^j_qu|(B):=\left(\int_{B\cap \mathcal{J}_u}|u^+(x)-u^-(x)|^qd\mathcal{H}^{N-1}(x)\right)^{\frac{1}{q}}
\end{equation}
for Borel sets
$B\subset \mathbb{R}^N$.
As a consequence of Theorem $\ref{thm:main result1}$ we obtain an analogous finite Radon measure $|D^j_qu|^q$ in $\mathbb{R}^N$ for Besov functions $u\in B_{q,\infty}^{1/q}\left(\mathbb{R}^N,\mathbb{R}^d\right)$. We call the measure $|D^j_qu|^q$ the \textbf{$q-$jump variation of $u$}.
\end{remark}

Our general conjecture is that $BV^q-$functions with $q>1$ inherit
at least part of the fine properties of $BV-$functions. This
conjecture was previously partially supported by the results of
De-Lellis, Otto \cite{DO} and Ghiraldin, Lamy \cite{GL}. Indeed, as
a combination of results from \cite{DO,GL}, one can get fine
properties of divergence free vector fields $u\in BV^3(\Omega,
S^1)$, where $\Omega\subset\R^2$ is a bounded open domain and
$S^1\subset\R^2$ is the unit circle. More precisely, one can get the
following simple combination of the main results in \cite{DO} and
\cite{GL}:
\begin{theorem}\label{ghjggjgj}
Let $\Omega\subset\R^2$ be a open set, and $u\in
BV^3_{loc}(\Omega,\R^2)$, satisfying
\begin{equation}
\big|u(x)\big|^2=1\quad\quad\text{for $\mathcal{L}^N$ a.e.}\quad
x\in\Omega
\end{equation}
and
\begin{equation}
\Div u=0\quad\quad\text{in the sense of distributions in}\quad
\Omega\,,
\end{equation}
which means that $\int_{\Omega}\nabla \phi(x)\cdot u(x)dx=0$ for every
$\phi\in C^\infty_c(\Omega)$. Then, the jump set $\mathcal{J}_u$ is
countably $1-$rectifiable set, oriented with the jump vector
$\nu_u(x)$, and moreover, we have
\begin{equation}
\lim\limits_{\e\to 0^+}\fint_{B_{\e}(x)} \left|u(
z)-u_{B_{\e}(x)}\right|dz=0\quad\quad\text{for
$\mathcal{H}^1$ a.e.}\quad x\in\Omega\setminus \mathcal{J}_u\,,
\end{equation}
where $u_{B_\e(x)}:=\fint_{B_\e(x)}u(w)dw$.
I.e. we have $\mathcal{H}^{1}\big(\mathcal{S}'_u\setminus
\mathcal{J}_u\big)=0$ (see Definition \ref{def:approximate limitsint} and Remark \ref{rem:equivalence for points in S",S'}).
\end{theorem}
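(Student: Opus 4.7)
The strategy is to read the hypothesis through the Besov lens and then quote the two existing structure theorems. By Proposition 1.1, the assumption $u\in BV^3_{loc}(\Omega,\R^2)$ is equivalent to $u\in (B^{1/3}_{3,\infty})_{loc}(\Omega,\R^2)$. Combined with $|u|=1$ a.e. and $\Div u=0$ in the sense of distributions, this is precisely the ingredient needed to invoke the main result of Ghiraldin--Lamy \cite{GL}: every divergence-free $S^1$-valued vector field whose components lie locally in $B^{1/3}_{3,\infty}$ has locally finite entropy productions, i.e.\ $\Div\bigl(\Phi(u)\bigr)$ is a locally finite Radon measure on $\Omega$ for every DiPerna--Lions--Otto entropy $\Phi$.

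Once the finite-entropy-production condition is in force, the structure theorem of De Lellis--Otto \cite{DO} applies and yields an $\mathcal{H}^1$-rectifiable set $J\subset\Omega$, oriented by a unit normal field $\nu$, together with two Borel traces $u^\pm:J\to S^1$, such that $J$ carries the entire singular part of every entropy measure and such that $u$ is approximately continuous at $\mathcal{H}^1$-a.e. point of $\Omega\setminus J$. In particular, for $\mathcal{H}^1$-a.e.\ $x\in\Omega\setminus J$ one obtains
\begin{equation*}
\lim_{\e\to 0^+}\fint_{B_{\e}(x)}\bigl|u(z)-u_{B_{\e}(x)}\bigr|\,dz=0,
\end{equation*}
while at points of $J$ where $u^+(x)\neq u^-(x)$ one recovers a genuine one-sided trace decomposition with jump direction $\nu(x)$.

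To finish, I would identify $J$ with $\mathcal{J}_u$ up to an $\mathcal{H}^1$-null set: every point of $J$ where $u^+\neq u^-$ belongs to $\mathcal{J}_u$ by Definition \ref{def:approximate jump point}, while the approximate-continuity conclusion just obtained forces $\mathcal{H}^1(\mathcal{S}'_u\setminus\mathcal{J}_u)=0$. Countable $1$-rectifiability of $\mathcal{J}_u$ is already delivered by \rth{thm:rectifiability of jump set}, since $u\in L^1_{loc}$. The main technical obstacle is a matching of vocabularies: verifying that the Besov hypothesis used in \cite{GL} can be localized to $(B^{1/3}_{3,\infty})_{loc}$ via a standard cutoff (multiply $u$ by $\phi\in C^\infty_c(\Omega)$ and extend by zero, controlling the error in the $B^{1/3}_{3,\infty}$-seminorm by boundedness of $u$ and smoothness of $\phi$), and checking that the DO notion of ``jump set'' coincides $\mathcal{H}^1$-a.e.\ with Definition \ref{def:approximate jump point}. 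Both are routine once the entropy Radon measures are in hand, so the genuinely new content is entirely carried by the two cited results.
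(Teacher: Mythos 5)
Your proposal is correct and follows exactly the route the paper intends: the paper states this theorem without proof, describing it as a ``simple combination of the main results in \cite{DO} and \cite{GL}''. You read the $BV^3_{loc}$ hypothesis as $(B^{1/3}_{3,\infty})_{loc}$ via Proposition 1.1, invoke Ghiraldin--Lamy to get locally finite entropy production, apply the De Lellis--Otto structure theorem to obtain the rectifiable set and the $\mathcal{H}^1$-a.e.\ oscillation decay outside it, and use Del Nin (Theorem \ref{thm:rectifiability of jump set}) for the rectifiability of $\mathcal{J}_u$ itself --- which is precisely the combination the authors have in mind.
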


The common belief is that Theorem \ref{ghjggjgj} could be improved towards obtaining $\mathcal{H}^{1}\big(\mathcal{S}_u\setminus
\mathcal{J}_u\big)=0$ instead of
$\mathcal{H}^{1}\big(\mathcal{S}'_u\setminus \mathcal{J}_u\big)=0$
(see Definition \ref{def:approximate limit} below), or at least
towards deducing that $\mathcal{S}_u$ is $\mathcal{H}^{1}$
$\sigma$-finite. Recently, in \cite{LaMr}, Lamy and Marconi
established that within the framework of Theorem \ref{ghjggjgj}, the set
$\mathcal{S}_u$ is of dimension at most one (this is slightly weaker
than being $\mathcal{H}^{1}$ $\sigma$-finite).

On the other hand, in the general settings of $BV^q_{loc}$
functions, we were able to prove the following theorem:
\begin{theorem}
\label{thm:fine property of Besov functions} Let $\Omega\subset\R^N$
be an open set and $q\in[1,\infty)$. Let $u\in
BV^q_{loc}(\Omega,\R^d)$. Then, there exists a $\mathcal{H}^{N-1}$
$\sigma$-finite Borel set $S$, such that we have
\begin{equation}
\liminf\limits_{\e\to 0^+}\fint_{B_{\e}(x)} \left|u(
z)-u_{B_{\e}(x)}\right|^qdz=0\quad\quad\text{for
$\mathcal{H}^{N-1}$ a.e.}\quad x\in\Omega\setminus S\,.
\end{equation}

I.e. $\mathcal{S}''_u$ is $\mathcal{H}^{N-1}$ $\sigma$-finite Borel
set (see Definition \ref{def:generalized approximate limit-oscillation points} and Remark \ref{rem:equivalence for points in S",S'}).
\end{theorem}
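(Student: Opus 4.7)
The plan is to reduce the statement to a quantitative one-scale covering estimate for the super-level sets of the liminf-oscillation, using the $BV^q_{\mathrm{loc}}$ hypothesis (in the form of a Poliakovsky-type \emph{upper} bound on the double integral at scale $r$) as the size control, and a simple Jensen/Fubini argument for the \emph{lower} bound. For each $\alpha>0$ and $n\in\mathbb{N}$ I would set
\[
E_\alpha^n := \Bigl\{x_0\in\Omega : \fint_{B_\rho(x_0)}\bigl|u(z)-u_{B_\rho(x_0)}\bigr|^q\,dz \geq \alpha\ \text{for every}\ \rho\in(0,1/n]\cap\mathbb{Q}\Bigr\}.
\]
Each $E_\alpha^n$ is Borel, because $(x_0,\rho)\mapsto\fint_{B_\rho(x_0)}|u-u_{B_\rho(x_0)}|^q$ is jointly continuous, so the defining inequality persists for every real $\rho\in(0,1/n]$. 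By the definition of $\liminf$, one verifies $\mathcal{S}''_u=\bigcup_{k,n\in\mathbb{N}}E_{1/k}^n$, so it suffices to show that each $E_\alpha^n$ has locally finite $\mathcal{H}^{N-1}$-measure; one then takes $S=\mathcal{S}''_u$.

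Next I would establish the pointwise lower bound: for $x_0\in E_\alpha^n$ and $0<r\leq 2/n$ small enough that $B_{r/2}(x_0)\subset\Omega$,
\[
\int_{B_{r/2}(x_0)}\int_{\Omega\cap B_r(x)}\frac{|u(x)-u(y)|^q}{|x-y|}\,dy\,dx \;\geq\; c_N\,\alpha\,r^{2N-1},
\]
with $c_N>0$ a dimensional constant. The derivation uses $B_{r/2}(x_0)\subset B_r(x)$ and $|x-y|\leq r$ whenever $x,y\in B_{r/2}(x_0)$ to pull out the factor $1/r$, then Jensen's inequality $\fint_{B_{r/2}(x_0)}|u(x)-u(y)|^q\,dx\geq |u(y)-u_{B_{r/2}(x_0)}|^q$ integrated in $y$, and finally the membership of $x_0$ in $E_\alpha^n$ at radius $r/2$.

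The final step is a single-scale Vitali argument. Fix $\Omega'\Subset\Omega$ with Lipschitz boundary and a compact $K\subset\Omega'$; by $u\in BV^q_{\mathrm{loc}}$ there exist $r_0>0$ and $M<\infty$ with
\[
\int_{\Omega'}\int_{\Omega'\cap B_r(x)}\frac{|u(x)-u(y)|^q}{|x-y|}\,dy\,dx \;\leq\; M r^N \qquad\text{for all}\ r\in(0,r_0].
\]
For $r$ small enough, apply Vitali's $5$-covering lemma to the equal-radius family $\{B_{r/2}(x_0):x_0\in E_\alpha^n\cap K\}$ to extract a disjoint subfamily $\{B_{r/2}(x_i)\}_i$ whose $5$-enlargements still cover $E_\alpha^n\cap K$. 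Combining the pointwise lower bound, summed over $i$, with disjointness and the upper bound gives $c_N\alpha r^{2N-1}\cdot\#\{i\}\leq Mr^N$, hence $\#\{i\}\leq M/(c_N\alpha r^{N-1})$, and therefore
\[
\mathcal{H}^{N-1}_{5r}(E_\alpha^n\cap K)\;\leq\; C\cdot\#\{i\}\cdot r^{N-1}\;\leq\;\frac{CM}{c_N\alpha}
\]
uniformly in $r$. Letting $r\to 0^+$, and exhausting $\Omega$ by such pairs $(K,\Omega')$, yields $\mathcal{H}^{N-1}$-$\sigma$-finiteness of $E_\alpha^n$ and hence of $\mathcal{S}''_u$.

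The main obstacle is that this covering estimate is \emph{scale-rigid}: a radius-varying Vitali selection would split into dyadic shells, each contributing a fixed positive amount to $\sum r_i^{N-1}$, making the total diverge. It is precisely the uniform oscillation lower bound at every sufficiently small radius (which is available because $\liminf\geq 1/k$ implies $\inf_{\rho\leq 1/n}\fint\geq 1/k$ for some $n$) that allows fixing a single scale, and this is the structural reason for the stratification $E_\alpha=\bigcup_n E_\alpha^n$.
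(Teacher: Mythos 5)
Your proof is correct and takes a genuinely different, more elementary route than the paper's. You stratify the bad set by oscillation level and scale, setting $E^n_\alpha$ to be the set where $\fint_{B_\rho(x_0)}|u-u_{B_\rho(x_0)}|^q\geq\alpha$ for all $\rho\leq 1/n$, derive the pointwise lower bound $\int_{B_{r/2}(x_0)}\int_{\Omega\cap B_r(x)}|u(x)-u(y)|^q|x-y|^{-1}\,dy\,dx\geq c_N\alpha r^{2N-1}$ by Jensen, and close with a single-scale Vitali packing argument against the upper bound $\hat B_{u,q}(\Omega')\,r^N$, giving $\mathcal{H}^{N-1}(E^n_\alpha\cap K)\leq C\hat B_{u,q}(\Omega')/\alpha$. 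The paper instead derives Theorem~\ref{thm:fine property of Besov functions} from Theorem~\ref{thm:sigma finiteness of limiting average with respect to Hausdorff measure}, whose engine is a chain of more abstract results: a Vitali-based lower-density lemma for a \emph{sequence} of Radon measures (Lemma~\ref{lem:the upper density lemma for a sequence of Radon measures}), upgraded through weak$^*$ compactness to the integral inequality $5^s\liminf_n\mu_n(\Omega)\geq\int_\Omega g\,d\mathcal{H}^s$ (Theorem~\ref{thm:lower function of upper density of sequence of Radon measures}), then specialized to the rescaled Besov functionals $\mu_n$ in Theorem~\ref{thm:Besov constant controls double average integral}, with $\sigma$-finiteness dropping out by Chebyshev. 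Both arguments are at heart Vitali coverings, but yours is shorter and self-contained; your closing remark about dyadic shells correctly identifies why freezing the scale is essential, which is precisely what the extra index $n$ in your stratification accomplishes. What the paper's heavier route buys is the quantitative estimate of Theorem~\ref{thm:Besov constant controls double average integral}, which it reuses with $r\neq 1/q$ in the fractional Sobolev setting and again in Corollary~\ref{cor: Besov constant controls the q-variation}, so the abstraction is a deliberate investment. Two small remarks on your write-up: you only need the inclusion $\mathcal{S}''_u\subset\bigcup_{k,n}E^n_{1/k}$, not equality (the converse inclusion would require a reverse Jensen inequality, since $\mathcal{S}''_u$ is defined via the $L^1$ oscillation while the sets $E^n_\alpha$ use the $L^q$ oscillation), but the inclusion suffices for the conclusion; and the Lipschitz-boundary assumption on $\Omega'$ is unnecessary---any open $\Omega'\subset\subset\Omega$ works directly from the definition of $BV^q_{loc}$.
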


Theorem \ref{thm:fine property of Besov functions} will be proven in a more general setting; refer to Theorem \ref{thm:sigma finiteness of limiting average with respect to Hausdorff measure}.

Recall the definition of approximate limits:
\begin{definition}(Approximate limit)
\label{def:approximate limit} Let $\Omega\subset\mathbb{R}^N$ be an
open set and $u\in L^1_{loc}(\Omega,\mathbb{R}^d)$. We say that $u$
has approximate limit at $x\in \Omega$ if and only if there exists
$z\in \mathbb{R}^d$ such that
\begin{equation}
\label{eq:approximate limit} \lim_{\rho\to
0^+}\fint_{B_\rho(x)}|u(y)-z|dy=0.
\end{equation}
The set $\mathcal{S}_u$ of points where this property does not hold
is called the approximate discontinuity set. For any $x\in \Omega$
the vector $z$, uniquely determined by $\eqref{eq:approximate
limit}$, is called the approximate limit of $u$ at $x$ and denoted by
$\tilde{u}(x)$.
\end{definition}
Recall Federer-Vol'pert Theorem for
$BV$ functions (Theorem 3.78 in \cite{AFP}):
\begin{theorem}\label{ghjggjgji90}
Let $\Omega\subset\R^N$ be an open set, and $u\in
BV_{loc}(\Omega,\R^d)$. Then, the jump set $\mathcal{J}_u$ is
countably $(N-1)-$rectifiable set, oriented with the jump vector
$\nu_u(x)$, and moreover, we have
$\mathcal{H}^{N-1}\big(\mathcal{S}_u\setminus \mathcal{J}_u\big)=0$.
\end{theorem}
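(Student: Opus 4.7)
The plan is to leverage two facts already in the paper. First, the rectifiability of $\mathcal{J}_u$ is immediate from Theorem \ref{thm:rectifiability of jump set}, since $BV_{loc}(\O,\R^d)\subset L^1_{loc}(\O,\R^d)$. Second, the claim $\HN(\mathcal{S}_u\setminus\mathcal{J}_u)=0$ reduces to scalar $u$: the inclusion $\mathcal{S}_u\subseteq\bigcup_{i=1}^d\mathcal{S}_{u^i}$ follows from $|u-z|\le\sum_i|u^i-z^i|$, while uniqueness of the approximate jump triple lets one reconstruct compatible vector-valued normals from component-wise ones modulo $\HN$-null sets.

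For scalar $u\in BV_{loc}(\O)$, introduce the upper and lower approximate limits
\begin{equation}
u^\vee(x):=\inf\big\{t\in\R : \Theta^N(\{u>t\},x)=0\big\},\qquad u^\wedge(x):=\sup\big\{t\in\R : \Theta^N(\{u>t\},x)=1\big\},
\end{equation}
with $\Theta^N$ the $N$-dimensional volume density. Elementary monotonicity gives $u^\wedge(x)\le u^\vee(x)$ and $x\notin\mathcal{S}_u$ if and only if $u^\wedge(x)=u^\vee(x)\in\R$. Thus $\mathcal{S}_u\setminus\mathcal{J}_u$ decomposes into three pieces: (B) points where $u^\wedge(x)<u^\vee(x)$ are both finite but $x\notin\mathcal{J}_u$; (D) points where $u^\wedge(x)<u^\vee(x)$ with at least one infinite; (C) points where $u^\wedge(x)=u^\vee(x)=\pm\infty$.

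Piece (B) is shown to satisfy $x\in \mathcal{J}_u$ for $\HN$-a.e.\ such $x$ by a blow-up argument: De Giorgi's structure theorem applied to the superlevel sets $\{u>t\}$ identifies a common reduced-boundary normal $\nu$ at $x$ for every $t\in(u^\wedge(x),u^\vee(x))$, and the one-sided averages of $u$ on the half-balls $B^\pm_r(x,\nu)$ are then shown to converge to $u^\vee(x)$ and $u^\wedge(x)$ respectively. Piece (D) is handled by a density argument: choose a rational interval $(a,b)\subset(u^\wedge(x),u^\vee(x))\cap\R$ of arbitrarily large length on which the superlevel set densities are bounded in $[\delta,1-\delta]$; the relative isoperimetric inequality together with the coarea formula give
\begin{equation}
|Du|(B_r(x))=\int_\R P(\{u>t\},B_r(x))\,dt\ge c_{N,\delta}\,(b-a)\,r^{N-1},
\end{equation}
forcing $\Theta^{*,N-1}(|Du|,x)=+\infty$ on the stratification, and then the classical density-comparison result (Theorem 2.56 in \cite{AFP}) yields $\HN$-negligibility. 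Piece (C) is controlled by the Sobolev embedding $u\in L^{N/(N-1)}_{loc}$ and a standard capacity argument ruling out the ``approximately infinite'' pathology on $\HN$-positive sets.

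The main obstacle is the blow-up step for piece (B): one must verify, for $\HN$-a.e.\ $x$ with $u^\wedge(x)<u^\vee(x)$ finite, that all level sets $\{u>t\}$ with $t\in(u^\wedge(x),u^\vee(x))$ share a common reduced-boundary normal at $x$, and that the one-sided averages of $u$ converge to the prescribed limits. This stitching of the coarea formula, De Giorgi--Federer's identification of the reduced and essential boundaries, and the uniqueness of blow-ups is the technical heart of the Federer--Vol'pert theorem.
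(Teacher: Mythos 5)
The paper does not prove this theorem: it is stated as a recalled classical result and attributed directly to Theorem 3.78 of \cite{AFP} (the Federer--Vol'pert theorem), so there is no in-paper argument to compare yours against. Your sketch follows the standard route of \cite{AFP} (reduce to scalar $u$, work with $u^\vee,u^\wedge$, apply the coarea formula and the De Giorgi--Federer structure theory of sets of finite perimeter), and the decomposition of $\mathcal{S}_u\setminus\mathcal{J}_u$ into pieces (B), (D), (C) is exhaustive. But several steps are asserted rather than proved, and one of them has a real gap as written.

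The sharpest issue is in piece (D). You pick $(a,b)\subset(u^\wedge(x),u^\vee(x))$ of large length and claim the superlevel-set volume fractions remain uniformly in $[\delta,1-\delta]$, then feed this into the relative isoperimetric inequality under the coarea integral. But $u^\wedge(x)<a<b<u^\vee(x)$ only yields $\limsup_{r}|\{u>b\}\cap B_r(x)|/|B_r(x)|>0$ and $\liminf_{r}|\{u>a\}\cap B_r(x)|/|B_r(x)|<1$; these can be achieved along disjoint sequences of radii, so $\limsup_{r}\min\big(|\{u>b\}\cap B_r(x)|,\,|\{u\leq a\}\cap B_r(x)|\big)/|B_r(x)|$ need not be positive, and your lower bound $|Du|(B_r(x))\geq c_{N,\delta}(b-a)r^{N-1}$ does not close for the given $x$. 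Repairing this needs the actual machinery of \cite{AFP}: the $\sigma$-finiteness of $\HN\llcorner\mathcal{S}_u$ (itself a lemma), a Fubini argument in $t$, and Federer's $\{0,\tfrac12,1\}$ density trichotomy, so that for $\HN$-a.e.\ relevant $x$ the density of $\{u>t\}$ is exactly $\tfrac12$ for a.e.\ admissible $t$. Likewise, piece (B) (the common blow-up normal across the range of $t$) is acknowledged by you as the technical heart and left entirely open; piece (C) is only gestured at; the scalar-to-vector reduction needs the componentwise jump normals to agree up to sign $\HN$-a.e.\ on overlaps, which you assert without argument; and the paper's assertion that $\mathcal{J}_u$ is ``oriented by $\nu_u$'' (i.e.\ that $\nu_u$ is the approximate normal of the rectifiable structure) is not addressed by appealing to Theorem \ref{thm:rectifiability of jump set}. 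None of these is a wrong turn, but taken together they leave the argument at the level of a plan rather than a proof --- which is exactly why the paper cites \cite{AFP} instead of proving the statement.
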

In Section \ref{sec:examples of pathological behavior of The Sobolev and $BV^q$ functions} we give an example (Example \ref{ex:2}) for a function $u\in W^{\frac{1}{2},2}(\Omega)\subset BV^2(\Omega)$ such that $\mathcal{H}^{N-1}\big(\mathcal{S}_u\setminus \mathcal{J}_u\big)>0$.

We prove also some properties of Sobolev functions. We prove that:

\begin{theorem}
\label{thm:fine property of fractional Sobolev functions(1)}
Let $\Omega\subset\R^N$ be an open set. Let $q\in[1,\infty)$, $r\in(0,1)$ be
such that $rq\leq N$. Let $u\in W^{r,q}_{loc}(\Omega,\R^d)$.
Then,
\begin{align}
&\lim\limits_{\rho\to 0^+}\fint_{B_{\rho}(x)} \left|u(
z)-u_{B_{\rho}(x)}\right|^qdz\nonumber
\\
&=\lim_{\rho\to
0^+}\fint_{B_{\rho}(x)}\Bigg\{\fint_{B_{\rho}(x)}
\Big|u\big(z\big)-u(y)\Big|^qdy\Bigg\}dz=0
\quad\quad\text{for}\quad\mathcal{H}^{N-rq}\quad \text{a.e.}\quad
x\in\Omega\,.
\end{align}

I.e. we have $\mathcal{H}^{N-rq}\big(\mathcal{S}'_u\big)=0$ (see Definition \ref{def:approximate limitsint} and Remark \ref{rem:equivalence for points in S",S'}).
\end{theorem}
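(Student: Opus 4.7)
The plan is as follows. First, Jensen's inequality and the triangle inequality give
\begin{equation*}
\fint_{B_\rho(x)}\big|u(z)-u_{B_\rho(x)}\big|^q dz \,\leq\, \fint_{B_\rho(x)}\fint_{B_\rho(x)}|u(z)-u(y)|^q dy\,dz \,\leq\, 2^q \fint_{B_\rho(x)}\big|u(z)-u_{B_\rho(x)}\big|^q dz,
\end{equation*}
so it suffices to handle the symmetric double integral. By definition of $W^{r,q}_{loc}$ one may fix a compact $K\subset\O$ and a function $u_K\in W^{r,q}(\R^N,\R^d)$ coinciding with $u$ a.e.\ on $K$; exhausting $\O$ by countably many such $K$ reduces matters to showing, for $v\in W^{r,q}(\R^N,\R^d)$, that the limit vanishes $\mathcal{H}^{N-rq}$-a.e.

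The core pointwise estimate comes from the crude bound $|z-y|\leq 2\rho$ applied inside the Gagliardo kernel:
\begin{equation*}
\fint_{B_\rho(x)}\fint_{B_\rho(x)}|v(z)-v(y)|^q dy\,dz \,\leq\, C_{N,rq}\,\rho^{rq-N}\int_{B_\rho(x)} g(z)\,dz,
\end{equation*}
where $g(z):=\int_{\R^N}\frac{|v(z)-v(y)|^q}{|z-y|^{N+rq}}\,dy$ satisfies $\|g\|_{L^1(\R^N)}=[v]_{W^{r,q}}^q<\infty$. For $\delta>0$ the bad set
$E_\delta:=\{x:\limsup_{\rho\to 0^+}\fint_{B_\rho(x)}\fint_{B_\rho(x)}|v(z)-v(y)|^q dy\,dz>\delta\}$
has the property that each of its points admits arbitrarily small radii $\rho<\eta$ with $\delta\rho^{N-rq}\leq C_{N,rq}\int_{B_\rho(x)}g\,dz$. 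Applying the $5r$-Vitali covering lemma produces pairwise disjoint balls $\{B_{\rho_i}(x_i)\}_{i\in I}$ of radius below $\eta$ whose $5$-enlargements cover $E_\delta$, and summing the inequality over the disjoint cover yields
\begin{equation*}
\sum_{i\in I}\rho_i^{N-rq} \,\leq\, \frac{C_{N,rq}}{\delta}\int_{U_\eta}g(z)\,dz, \qquad U_\eta:=\bigsqcup_{i\in I}B_{\rho_i}(x_i).
\end{equation*}

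The right-hand side is uniformly bounded by $C_{N,rq}\|g\|_{L^1}/\delta$, so $|U_\eta|\leq \omega_N\eta^{rq}\sum_i\rho_i^{N-rq}\leq C'_{N,rq,\delta}\,\eta^{rq}\to 0$ as $\eta\to 0^+$. Absolute continuity of the Lebesgue integral of $g\in L^1$ then forces $\int_{U_\eta}g\to 0$, hence $\sum_i\rho_i^{N-rq}\to 0$, and
\begin{equation*}
\mathcal{H}^{N-rq}_{10\eta}(E_\delta) \,\leq\, 10^{N-rq}\sum_{i\in I}\rho_i^{N-rq} \,\longrightarrow\, 0.
\end{equation*}
Consequently $\mathcal{H}^{N-rq}(E_\delta)=0$ for every $\delta>0$, and the countable union $\bigcup_k E_{1/k}$ is the required exceptional set. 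The critical case $rq=N$ is covered by the same argument: then $\sum_i\rho_i^0=|I|$ is an integer bounded by a vanishing quantity, so $|I|=0$ for small $\eta$, which forces $E_\delta=\emptyset$.

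The main obstacle is that the trivial bound $|z-y|\leq 2\rho$ discards the Gagliardo decay and yields only a control of order $\rho^{rq-N}\int_{B_\rho(x)}g$, which a priori need not vanish faster than $\rho^{N-rq}$ at individual $x$. It is precisely the $5r$-Vitali covering, coupled with the global $L^1$-bound $\|g\|_{L^1}=[v]_{W^{r,q}}^q$ and the absolute continuity of its Lebesgue integral, that upgrades this weak pointwise estimate into an $\mathcal{H}^{N-rq}$-negligible exceptional set; the opening reduction to the symmetric double integral then delivers the centered-average version of the statement at no additional cost.
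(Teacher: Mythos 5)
Your proof is correct, and it follows a genuinely different route from the paper's. The paper's proof (via Lemma~\ref{hhjgggg779988} with $p=1$, Lemma~\ref{lem:lemma about fractional Sobolev functions}, and Lemma~\ref{lem:Hausdorff measure of the set of points of positive upper density with lower dimension has measure zero}) first introduces a maximal-type quantity
\begin{equation*}
M(x):=\sup_{\delta\in(0,\infty)}\frac{1}{\delta^{N+rq}}\int_{B_\delta(x)}|u(w)-u(x)|^q\,dw,
\end{equation*}
shows $M\leq g$ pointwise so that $M\in L^1_{loc}$, bounds the double-average by the upper $(N-rq)$-density of the measure $\mu=M\,\mathcal{L}^N$, and finally invokes an abstract dichotomy (positive $\mathcal{H}^{N-rq}$ measure would force positive $\mathcal{L}^N$ measure, hence infinite $\mathcal{H}^{N-rq}$ measure, contradicting $\mu(\Omega)<\infty$). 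You skip the maximal-function intermediary entirely and work with the Gagliardo integrand $g$ itself, using the averaged bound $\fint_{B_\rho}\fint_{B_\rho}|v(z)-v(y)|^q\lesssim\rho^{rq-N}\int_{B_\rho}g$, then run the Vitali $5r$-covering directly on the level sets $E_\delta$, closing the estimate with absolute continuity of $\int g$ applied to $U_\eta$. Both strategies ultimately rest on a Vitali covering and on absolute continuity with respect to $\mathcal{L}^N$, but the paper abstracts these steps into a reusable density lemma (which also serves the general $p$ version needed for Theorem~\ref{thm:fine properties of Sobolev function(1)}), whereas your argument is more self-contained and bypasses both the supremum and the abstract lemma, at the cost of only proving the $p=1$ case. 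One small technical point worth tightening: your localization asserts that $u$ agrees a.e.\ on $K$ with some $u_K\in W^{r,q}(\R^N,\R^d)$, which is the paper's convention for Besov spaces rather than the standard meaning of $W^{r,q}_{loc}$; it is cleaner (and avoids any extension issue) to fix $\Omega_0\subset\subset\Omega$, set $g(z):=\int_{\Omega_0}\frac{|u(z)-u(y)|^q}{|z-y|^{N+rq}}\,dy\in L^1(\Omega_0)$, and run the covering argument for $x$ in a compactly contained subset and $\rho$ below the distance to $\partial\Omega_0$, where your key estimate holds verbatim.
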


The following theorem is a generalization of fine properties of Sobolev functions $W^{1,p}(\Omega)$, $p>1$. It is known that for $u\in W^{1,p}(\Omega)$, we have
$\lim_{\rho\to
0^+}\fint_{B_{\rho}(x)}|u(y)-u_{B_\rho(x)}|^pdy=0$ for $\cp_p$
a.e. $x\in \Omega$, where $\cp_p$ is the $p-$capacity (see proof in
\cite{Ziemer} Theorem 3.3.2). We show that the same result holds when  replacing the measure $\cp_p$ by the measure $\mathcal{H}^{N-p}$, in case $1<p\leq N$. Recall that for $1\leq p<N$ there exists a constant $C$, depending only on $p$ and $N$, such that  $\cp_p(E)\leq C\mathcal{H}^{N-p}(E)$ for every $E\subset\R^N$. Recall also that $\cp_1$ and $\mathcal{H}^{N-1}$ share negligible sets, but this is not the case for $p>1$, for example, if $1<p<N$, then any set $E$ of $\mathcal{H}^{N-p}$-finite measure has $\cp_p$ measure zero (see Remark 4.16 in \cite{Kinnunen}).

\begin{theorem}
\label{thm:fine properties of Sobolev function(1)}
Let $\Omega\subset \R^N$ be an open set, $1< p\leq N$ and $u\in W^{1,p}(\Omega)$. Then,
\begin{align}
\label{eq:fine property10}
\lim_{\rho\to
0^+}\fint_{B_{\rho}(x)}|u(y)-u_{B_\rho(x)}|^pdy=0
\quad\quad\text{for}\;\;\mathcal{H}^{N-p}\,\,\text{a.e.}\,\,x\in
\Omega\,.
\end{align}
I.e. we have $\mathcal{H}^{N-p}\big(\mathcal{S}'_u\big)=0$ (see Definition \ref{def:approximate limitsint} and Remark \ref{rem:equivalence for points in S",S'}).
\end{theorem}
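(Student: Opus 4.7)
The plan is to reduce the claim, via the Poincar\'e inequality, to an upper-density estimate for the finite positive Radon measure $\mu:=|\nabla u|^p\Leb^N$, and then to exploit the standard interplay between the upper $s$-density of a measure and the Hausdorff measure $\Haus^s$. Since the conclusion is local, after restriction I may assume $u\in W^{1,p}(U)$ for an open set $U\Subset\Omega$ and extend $\mu$ by zero outside $U$; note $\mu(U)=\|\nabla u\|_{L^p(U)}^p<\infty$. For any $x\in U$ and $\rho<\dist(x,\partial U)$, Poincar\'e's inequality applied on $B_\rho(x)$ gives a constant $C=C(N,p)$ with
\begin{equation*}
\fint_{B_\rho(x)}\bigl|u(y)-u_{B_\rho(x)}\bigr|^p\,dy\;\leq\;C\,\rho^p\fint_{B_\rho(x)}|\nabla u|^p\,dy\;=\;\frac{C}{\omega_N}\,\rho^{\,p-N}\,\mu\bigl(B_\rho(x)\bigr).
\end{equation*}
Setting $s:=N-p\in[0,N-1)$, the assertion thus reduces to proving
\begin{equation*}
\Theta^{*s}(\mu,x)\;:=\;\limsup_{\rho\to 0^+}\frac{\mu\bigl(B_\rho(x)\bigr)}{\rho^s}\;=\;0\qquad\text{for $\Haus^s$-a.e.\ $x\in U$.}
\end{equation*}

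If $p=N$ then $s=0$ and $\rho^{-s}\mu(B_\rho(x))=\mu(B_\rho(x))\to 0$ for every $x$, since $\mu\ll\Leb^N$ is finite on compact sets and charges no point. So from now on assume $1<p<N$, i.e.\ $0<s<N$. The classical mass-distribution principle (a Vitali-covering estimate, see e.g.\ Theorem 2.56 of \cite{AFP}) yields, for every $t>0$,
\begin{equation*}
\Haus^s\bigl(E_t\bigr)\;\leq\;C(N,s)\,\frac{\mu(E_t)}{t}\;\leq\;C(N,s)\,\frac{\mu(U)}{t}\;<\;\infty,\qquad E_t:=\bigl\{x:\Theta^{*s}(\mu,x)>t\bigr\}.
\end{equation*}

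To finish I combine two standard facts: because $s<N$, any Borel set of finite $\Haus^s$-measure has $\Leb^N$-measure zero, and since $\mu\ll\Leb^N$ such a set must then also be $\mu$-null. Applied to $E_t$ this forces $\mu(E_t)=0$, and re-injecting into the mass-distribution inequality gives $\Haus^s(E_t)=0$ for every $t>0$. Taking $t=1/n$ and unioning over $n$ shows $\Theta^{*s}(\mu,\cdot)=0$ on $U$ outside a $\Haus^s$-null set; a countable exhaustion of $\Omega$ by such open sets $U$ (noting that countable unions of $\Haus^s$-null sets are $\Haus^s$-null) completes the argument. The only step that is not essentially bookkeeping is the density-to-Hausdorff-measure comparison used above, but since it is a textbook Vitali-type estimate I do not expect any genuine obstacle; the whole argument is uniform in $p\in(1,N]$ once the trivial case $p=N$ is disposed of.
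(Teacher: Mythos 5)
Your proof is correct, and it takes a genuinely different — and more elementary — route than the paper. The paper proves Theorem \ref{thm:fine properties of Sobolev function(1)} by first applying the Hardy--Littlewood maximal inequality to $\nabla u$ to establish that
\begin{equation*}
\intop_{\Omega_0}\sup_{\delta\in(0,M)}\Bigg\{\int_{B_1(0)}\chi_{\Omega_0}(x+\delta z)\,\frac{|u(x+\delta z)-u(x)|}{\delta}\,dz\Bigg\}^p dx\;\leq\;C\,\|\nabla u\|^p_{L^p(\Omega)}\,,
\end{equation*}
and then feeding this into a general lemma (Lemma \ref{hhjgggg779988}, with $q=r=1$) that converts such a bound into an upper-density estimate for the Radon measure whose density is the integrand on the left, and finally invokes the comparison with Hausdorff measure (Lemma \ref{lem:Hausdorff measure of the set of points of positive upper density with lower dimension has measure zero}). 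You instead apply the scaled Poincar\'e inequality on balls to bound the oscillation directly by $\rho^{p-N}\mu(B_\rho(x))$ with $\mu=|\nabla u|^p\Leb^N$, and then run essentially the same density argument on $\mu$. The density step is the same in both proofs (your mass-distribution estimate, together with the observation that a set of finite $\Haus^s$-measure with $s<N$ is $\Leb^N$-null, is precisely Lemma \ref{lem:Hausdorff measure of the set of points of positive upper density with lower dimension has measure zero}), but the reduction to it is different. Your approach buys simplicity and uses only classical tools, while the paper's maximal-function machinery is deliberately set up so that Lemma \ref{hhjgggg779988} applies verbatim (with $0<r<1$) to fractional Sobolev functions in Theorem \ref{thm:fine property of fractional Sobolev functions(1)}, where no Poincar\'e-type inequality controlling the oscillation by a single-scale gradient is available. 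Your separate treatment of $p=N$ (where $s=0$ and $\Haus^0$ is counting measure, so one needs $\mu(B_\rho(x))\to 0$ for \emph{every} $x$, which holds since $\mu\ll\Leb^N$ is finite) is also handled correctly.
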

Recall that Theorem \ref{thm:fine properties of Sobolev
function(1)} is true also if the averages
$u_{B_\rho(x)}$ are replaced by the precise representative
$u^*(x):=\lim_{\rho\to 0^+}u_{B_\rho(x)}$ and the Hausdorff measure $\mathcal{H}^{N-p}$  is replaced by the capacity $\cp_p$ \cite{AdH,EG,Ziemer}. However, in section \ref{sec:examples of pathological behavior of The Sobolev
and $BV^q$ functions} we give examples that show that Theorem \ref{thm:fine properties of Sobolev function(1)} is not true for  $u^*$ together with $\mathcal{H}^{N-p}$ (Examples \ref{ex:4},\ref{ex:3}).

In addition, we establish a Lusin approximation theorem for fractional Sobolev functions by H{\"o}lder continuous functions:
\begin{theorem}
\label{thm:Lusin approximation for fractional Sobolev functions(1)}
Let $\Omega\subset \R^N$ be an open set and $q\in[1,\infty)$, $r\in(0,1)$. Let $u\in W^{r,q}_{loc}(\Omega,\R^d)$
and let $K\subset \Omega$ be a compact set. Then for every $\e>0$ there exists a compact set
$B\subset K$ such that
$\Leb^N\left(K\setminus B\right)<\e$ and
$u\in C^{0,r}(B,\R^d)$.
\end{theorem}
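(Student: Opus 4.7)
My plan is to prove Theorem \ref{thm:Lusin approximation for fractional Sobolev functions(1)} via a fractional analogue of the classical Lusin-type approximation of $W^{1,p}$ functions by Lipschitz maps: the role played there by the Hardy--Littlewood maximal function of the gradient will here be taken by a localised Gagliardo-type functional. First, using the definition of the local space, I would pick a bounded open set $\O'$ with $K\subset\O'$ and $\overline{\O'}\subset\O$ on which $u\in W^{r,q}(\O',\R^d)$, together with $\d>0$ satisfying $B_\d(x)\subset\O'$ for every $x\in K$, and set
\begin{equation*}
F(x):=\int_{B_\d(x)}\frac{|u(x)-u(y)|^q}{|x-y|^{N+rq}}\,dy,\qquad x\in K.
\end{equation*}
Fubini gives $F\in L^1(K)$ with $\|F\|_{L^1(K)}\leq [u]_{W^{r,q}(\O')}^q$.

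The heart of the proof is the pointwise H\"older estimate
\begin{equation*}
|u(x)-u(y)|\leq C\,|x-y|^r\,\bigl(F(x)^{1/q}+F(y)^{1/q}\bigr),
\end{equation*}
valid for every pair of Lebesgue points $x,y\in K$ with $|x-y|<\d/2$, where $u(x),u(y)$ denote the pointwise Lebesgue values. The starting observation is the trivial bound $\fint_{B_\rho(x)}|u(x)-u(y)|^q\,dy\leq C_N\rho^{rq}F(x)$, valid for every Lebesgue point $x$ and every $\rho\in(0,\d]$, obtained from $|x-y|\leq\rho$ inside $B_\rho(x)$. Setting $d:=|x-y|$ and $z_0:=(x+y)/2$, so that $B_d(z_0)\subset B_{2d}(x)\cap B_{2d}(y)$, I would decompose
\begin{equation*}
u(x)-u(y)=\bigl(u(x)-u_{B_{2d}(x)}\bigr)+\bigl(u_{B_{2d}(x)}-u_{B_d(z_0)}\bigr)+\bigl(u_{B_d(z_0)}-u_{B_{2d}(y)}\bigr)+\bigl(u_{B_{2d}(y)}-u(y)\bigr).
\end{equation*}
The first and last terms are controlled by Jensen together with the averaged bound at the Lebesgue points $x,y$. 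The middle two, rewritten as averages over $B_d(z_0)$ of $u_{B_{2d}(x)}-u$ and $u-u_{B_{2d}(y)}$, reduce to the same averaged bound up to the factor $|B_{2d}(x)|/|B_d(z_0)|=2^N$. Summing the four contributions yields the claimed inequality.

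Given this, the selection of $B$ is routine. Since $F\in L^1(K)$ and $u\in L^q(K,\R^d)$, Chebyshev's inequality allows me to choose $\lambda,M>0$ so that the set
\begin{equation*}
A:=\{x\in K:x\text{ is a Lebesgue point of }u,\ F(x)\leq\lambda^q,\ |u(x)|\leq M\}
\end{equation*}
satisfies $\Leb^N(K\setminus A)<\e/2$. Inner regularity of Lebesgue measure then provides a compact $B\subset A$ with $\Leb^N(A\setminus B)<\e/2$, whence $\Leb^N(K\setminus B)<\e$. For $x,y\in B$ with $|x-y|<\d/2$ the pointwise estimate gives $|u(x)-u(y)|\leq 2C\lambda|x-y|^r$; for $|x-y|\geq\d/2$ the bound $|u|\leq M$ on $B$ gives $|u(x)-u(y)|\leq 2M(2/\d)^r|x-y|^r$. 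Hence $u\in C^{0,r}(B,\R^d)$. The hard part is the pointwise H\"older estimate, and in particular the control of the middle two terms in the telescopic decomposition; once it is available, the rest of the argument is a standard Chebyshev plus inner-regularity extraction.
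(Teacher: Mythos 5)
Your proof is correct, and it takes a genuinely different (and somewhat more direct) route than the paper's. The paper passes through the auxiliary class $\mathcal{A}^{r,q}$ of functions $u$ with $\int_\Omega\bigl(\limsup_{\delta\to 0^+}\int_{B_\delta(x)}|u(y)-u(x)|^q\delta^{-(N+rq)}\,dy\bigr)\,dx<\infty$, shows $W^{r,q}_{loc}\subset\mathcal{A}^{r,q}_{loc}$ (Lemma \ref{lem:lemma about fractional Sobolev functions}, Proposition \ref{prop:W(r,q) lies inside A(r,q)}), and then proves a Lusin theorem for $\mathcal{A}^{r,q}$ (Theorem \ref{thm:Lusin approximation for A(r,q) functions}). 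There, the good sets $B_n$ are built from the scale-uniform oscillation bound $\int_{B_\delta(x)}|u(y)-u_{B_\delta(x)}|^q\delta^{-(N+rq)}\,dy\le n$ for all small $\delta$, which does not reference the pointwise value of $u$; consequently the paper must run a dyadic telescope $\sum_k|u_{B_{\delta/2^{k+1}}(x)}-u_{B_{\delta/2^k}(x)}|$ to recover $|\tilde u(x)-u_{B_\delta(x)}|$. You bypass that entirely: your functional $F(x)=\int_{B_\delta(x)}|u(x)-u(y)|^q|x-y|^{-(N+rq)}\,dy$ is anchored at $x$, and the estimate $\fint_{B_\rho(x)}|u(x)-u(y)|^q\,dy\le C\rho^{rq}F(x)$ (with $\rho\le\delta$) already ties the Lebesgue value $u(x)$ to all ball-averages at once, so a single four-term chain through $u_{B_{2d}(x)},u_{B_d(z_0)},u_{B_{2d}(y)}$ suffices. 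Your good set is then just a Chebyshev sublevel set of $F$ (and of $|u|$) rather than a nested sequence $\{B_n\}$. What you lose relative to the paper is generality: their argument covers the strictly larger class $\mathcal{A}^{r,q}$, which includes e.g.\ $BV\cap L^\infty$ and all $L^q$-approximately-differentiable functions, while your $F$ is only summable under a genuine Gagliardo seminorm bound. What you gain is a shorter, self-contained proof for $W^{r,q}$ that avoids the dyadic lemma and the auxiliary space altogether. Two small points worth making explicit when writing this up: (i) wherever $F(x)<\infty$, the bound $\fint_{B_\rho(x)}|u(x)-u(y)|^q\,dy\le C\rho^{rq}F(x)\to 0$ already forces $x$ to be a Lebesgue point with $\tilde u(x)=u(x)$, so listing "Lebesgue point" as a separate requirement in the definition of $A$ is redundant (though harmless); (ii) the set $A$ is Borel (as a sublevel set of measurable functions intersected with the Borel set of Lebesgue points), so inner regularity indeed applies.
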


Theorem \ref{thm:Lusin approximation for fractional Sobolev functions(1)} will be proven for a broader class of functions, as referred to in Theorem \ref{thm:Lusin approximation for A(r,q) functions}.

\section{Oscillation blow-ups}
\begin{definition}
\label{def:oscillation blow-up}
(Oscillation blow-up)
Let $\Omega\subset \R^N$ be an open set and $u\in L^1_{loc}(\Omega,\R^d)$. We say that $x\in \Omega$ is an oscillation blow-up point of $u$ if there exists $u_x\in L^1(B_1(0))$ such that
\begin{equation}\label{hhjjhjhjhjh}
\lim_{\rho\to
0^+}\Bigg(\inf\limits_{c\in\mathbb{R}^d}\fint_{B_1(0)}|u(x+\rho y)-u_x(y)-c|dy\Bigg)=0.
\end{equation}
We call the function $u_x$ an oscillation blow-up of $u$ at $x$.
\end{definition}
Oscillation blow-up is unique up to adding a constant:

\begin{proposition}
\label{prop:uniqueness of oscillation blow-up up to constant}
(Uniqueness of oscillation blow-up up to adding a constant)
Let $\Omega\subset \R^N$ be an open set, $u\in
L^1_{loc}(\Omega,\R^d)$. Assume $x\in \Omega$ is such that there exist a function $f\in L^1(B_1(0))$ and a sequence $\rho_i>0$ converging to zero as $i\to\infty$ such that
\begin{equation}
\label{eq:oscillation blow-up w.r.t a sequence}
\lim_{i\to \infty}\Bigg(\inf\limits_{c\in\mathbb{R}^d}\fint_{B_1(0)}|u(x+\rho_i
y)-f(y)-c|dy\Bigg)=0.
\end{equation}
Then, the property \eqref{eq:oscillation blow-up w.r.t a sequence} determines the function $f$ a.e. uniquely up to adding a constant, which means that, if $g\in L^1(B_1(0))$, then $g$ also satisfies \eqref{eq:oscillation blow-up w.r.t a sequence} if and only if there exists a constant $c_0\in \R^d$ such that $f(y)=g(y)+c_0$ for a.e. $y\in B_1(0)$. In particular, if $u_x$ is an oscillation blow-up of $u$ at $x$, then it is a.e. uniquely determined up to adding a constant.
\end{proposition}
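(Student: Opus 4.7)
The plan is to prove both directions of the biconditional. The reverse direction is essentially a tautology: if $g(y) = f(y) + c_0$ a.e., then for any $c\in\mathbb{R}^d$ one has $u(x+\rho_i y) - g(y) - c = u(x+\rho_i y) - f(y) - (c+c_0)$, so the two infima in \eqref{eq:oscillation blow-up w.r.t a sequence} for $f$ and for $g$ agree term-by-term along the sequence $\rho_i$, and \eqref{eq:oscillation blow-up w.r.t a sequence} holds for $g$ whenever it holds for $f$.

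For the forward direction, the strategy is to combine the two approximate identities via a triangle inequality and then use compactness of $\mathbb{R}^d$. Suppose both $f$ and $g$ satisfy \eqref{eq:oscillation blow-up w.r.t a sequence} along the same sequence $\rho_i\to 0^+$. For each $i$, choose near-minimizers $c_i^f,c_i^g\in\mathbb{R}^d$ with
\begin{equation*}
\fint_{B_1(0)} |u(x+\rho_i y) - f(y) - c_i^f|\,dy \le \inf_{c\in\mathbb{R}^d}\fint_{B_1(0)} |u(x+\rho_i y) - f(y) - c|\,dy + \tfrac{1}{i}\,,
\end{equation*}
and analogously for $c_i^g$. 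Both right-hand sides tend to $0$, so by the triangle inequality
\begin{equation*}
\fint_{B_1(0)} \bigl|f(y) - g(y) + (c_i^f - c_i^g)\bigr|\,dy \;\longrightarrow\; 0 \qquad (i\to\infty)\,.
\end{equation*}

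Set $h := f-g\in L^1(B_1(0))$ and $d_i := c_i^f - c_i^g \in \mathbb{R}^d$; the previous display reads $\fint_{B_1(0)} |h(y) + d_i|\,dy \to 0$. The reverse triangle inequality gives $|d_i| \le \fint_{B_1(0)} |h+d_i|\,dy + \fint_{B_1(0)} |h|\,dy$, so the sequence $\{d_i\}$ is bounded in $\mathbb{R}^d$. Extract a subsequence $d_{i_k}\to d_0\in\mathbb{R}^d$; passing to the limit we obtain $\fint_{B_1(0)} |h(y) + d_0|\,dy = 0$, i.e.\ $h \equiv -d_0$ a.e.\ on $B_1(0)$. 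Hence $g(y) = f(y) + d_0$ a.e., so $c_0 := d_0$ works. The uniqueness of the oscillation blow-up $u_x$ in Definition \ref{def:oscillation blow-up} up to an additive constant is the special case $f=g=u_x$ applied to two sequences $\rho_i,\tilde\rho_i\to 0^+$.

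There is no real obstacle here: all the conceptual content is in the triangle inequality reducing the problem to a statement about $h=f-g$ alone, together with the boundedness of the shifts $d_i$ which permits extracting a limit $d_0$ in the finite-dimensional target space. The only mild care needed is to select genuine near-minimizers in the infima (with a $1/i$ slack) rather than attempting to work with the infima directly, since no minimizer is guaranteed to exist a priori.
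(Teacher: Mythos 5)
Your proof is correct and takes essentially the same route as the paper's: the reverse direction is the same change-of-variable tautology, and the forward direction is the same triangle-inequality reduction to showing $\inf_{c}\fint_{B_1(0)}|f-g+c|\,dy=0$, after which one passes to the constant $c_0$. The only difference is cosmetic — you work with explicit near-minimizers and spell out the boundedness-plus-subsequence argument that the paper compresses into "so that there exists $c_0$" — and your closing remark should read that if $v,w$ are \emph{two} oscillation blow-ups then the proposition applies with $f=v$, $g=w$ (not $f=g=u_x$), but the substance is the same.
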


\begin{proof}
Assume \eqref{eq:oscillation blow-up w.r.t a sequence},
and let $c_0\in\R^d$. Define $g\in
L^1(B_1(0))$ by $g(y):=f(y)+c_0$. Then by \eqref{eq:oscillation blow-up w.r.t a sequence} we
deduce
\begin{equation}\label{hhjjhjhjhjh1}
\lim_{i\to
\infty}\Bigg(\inf\limits_{c\in\mathbb{R}^d}\fint_{B_1(0)}|u(x+\rho_i
y)-g(y)-c|dy\Bigg)=\lim_{i\to
\infty}\Bigg(\inf\limits_{c\in\mathbb{R}^d}\fint_{B_1(0)}|u(x+\rho_i
y)-g(y)-c_0-c|dy\Bigg)=0.
\end{equation}

Conversely, assume $f,g\in L^1(B_1(0))$ are such that
\begin{equation}
\label{hhjjhjhjhjh2}
\lim_{i\to
\infty}\Bigg(\inf\limits_{c_1\in\mathbb{R}^d}\fint_{B_1(0)}|u(x+\rho_i
y)-f(y)-c_1|dy\Bigg)=\lim_{i\to
\infty}\Bigg(\inf\limits_{c_2\in\mathbb{R}^d}\fint_{B_1(0)}|u(x+\rho_i
y)-g(y)-c_2|dy\Bigg)=0.
\end{equation}
Then, by the triangle inequality, we deduce
\begin{multline}\label{hhjjhjhjhjh3}
0=\lim_{i\to
\infty}\Bigg(\inf\limits_{c_1\in\mathbb{R}^d}\fint_{B_1(0)}|u(x+\rho_i
y)-f(y)-c_1|dy\Bigg)+\lim_{i\to
\infty}\Bigg(\inf\limits_{c_2\in\mathbb{R}^d}\fint_{B_1(0)}|u(x+\rho_i
y)-g(y)-c_2|dy\Bigg)
\\
=\lim_{i\to
\infty}\Bigg\{\inf\limits_{c_1,c_2\in\mathbb{R}^d}\bigg(\fint_{B_1(0)}|u(x+\rho_i
y)-f(y)-c_1|dy+\fint_{B_1(0)}|u(x+\rho_i y)-g(y)-c_2|dy\bigg)\Bigg\}\\
\geq\inf\limits_{c_1,c_2\in\mathbb{R}^d}\fint_{B_1(0)}|f(y)-g(y)+c_1-c_2|dy
\\
=\inf\limits_{c\in\mathbb{R}^d}\fint_{B_1(0)}|f(y)-g(y)+c|dy\,,
\end{multline}
so that there exists $c_0\in\R^d$ such that $g(y)=f(y)+c_0$ for
a.e. $y\in B_1(0)$.
\end{proof}

\begin{definition}(Approximate limit-oscillation points)
\label{def:approximate limitsint} Let $\Omega\subset\mathbb{R}^N$ be
an open set and $u\in L^1_{loc}(\Omega,\mathbb{R}^d)$. We say that
$x\in \Omega$ is an approximate limit-oscillation point of $u$ if
\begin{equation}
\label{eq:approximate limitsint}
 \lim_{\rho\to
0^+}\Bigg(\inf\limits_{c\in\mathbb{R}^d}\fint_{B_\rho(x)}|u(y)-c|dy\Bigg)=0.
\end{equation}
The set $\mathcal{S}'_u$ of points where this property does not hold
is called the reduced approximate discontinuity set. Obviously
$\mathcal{S}'_u\subset\mathcal{S}_u$. In terms of oscillation blow-up: $x\notin \mathcal{S}'_u$ if and only if $u$ has a constant oscillation blow-up at $x$.
\end{definition}

\begin{definition}(Generalized approximate limit-oscillation points)
\label{def:generalized approximate limit-oscillation points} Let $\Omega\subset\mathbb{R}^N$
be an open set and $u\in L^1_{loc}(\Omega,\mathbb{R}^d)$. We say that
$x\in \Omega$ is a generalized approximate limit-oscillation point of $u$ if
\begin{equation}
\label{eq:approximate limitsintw} \liminf_{\rho\to
0^+}\Bigg(\inf\limits_{c\in\mathbb{R}^d}\fint_{B_\rho(x)}|u(y)-c|dy\Bigg)=0.
\end{equation}
The set $\mathcal{S}''_u$ of points where this property does not
hold is called the restricted approximate discontinuity set.
Obviously
$\mathcal{S}''_u\subset\mathcal{S}'_u\subset\mathcal{S}_u$.
\end{definition}

\begin{remark}
\label{rem:equivalence for points in S",S'}
Let $\Omega\subset\mathbb{R}^N$ be an open set and $u\in
L^1_{loc}(\Omega,\mathbb{R}^d)$. Then, for every $x\in
\Omega$ we have
\begin{multline}
\lim_{\rho\to
0^+}\Bigg(\inf\limits_{c\in\mathbb{R}^d}\fint_{B_\rho(x)}|u(y)-c|dy\Bigg)=0\quad\text{if
and only if}\quad 
\\
\lim_{\rho\to
0^+}\fint_{B_\rho(x)}|u(y)-u_{B_\rho(x)}|dy=0,
\end{multline}
and
\begin{multline}
\liminf_{\rho\to
0^+}\Bigg(\inf\limits_{c\in\mathbb{R}^d}\fint_{B_\rho(x)}|u(y)-c|dy\Bigg)=0\quad\text{if
and only if}\quad 
\\
\liminf_{\rho\to
0^+}\fint_{B_\rho(x)}|u(y)-u_{B_\rho(x)}|dy=0.
\end{multline}

\end{remark}

\begin{definition}(Generalized approximate jump points)
\label{def:approximate jump pointsgint} Let
$\Omega\subset\mathbb{R}^N$ be an open set, $u\in
L^1_{loc}(\Omega,\mathbb{R}^d)$ and $x\in \Omega$. We say that $x$
is a generalized approximate jump point of $u$ if and only if there
exist $h\in \mathbb{R}^d\setminus\{0\}$ and $\nu\in S^{N-1}$ such
that
\begin{equation}
\label{eq:one-sided approximate limitgint} \lim_{\rho\to
0^+}\Bigg(\inf\limits_{c\in\mathbb{R}^d}\bigg\{\fint_{B^+_\rho(x,\nu)}|u(y)-h-c|dy+\fint_{B^-_\rho(x,\nu)}|u(y)-c|dy\bigg\}\Bigg)=0,
\end{equation}
where
\begin{equation}
B^+_\rho(x,\nu):=\left\{y\in B_\rho(x):(y-x)\cdot
\nu>0\right\},\quad B^-_\rho(x,\nu):=\left\{y\in
B_\rho(x):(y-x)\cdot \nu<0\right\}.
\end{equation}
The couple $(h,\nu)$, uniquely determined by $\eqref{eq:one-sided
approximate limitgint}$ up to a change of sign, is denoted by
$(u^{j'}(x),\nu'_u(x))$ (see Proposition \ref{hjhjhjgjggj}). The set of generalized approximate jump points is denoted
by $\mathcal{J}'_u$. Notice that
$\mathcal{J}_u\subset
\mathcal{J}'_u$. Moreover, for $x\in \mathcal{J}_u$ we have
$\big(u^{j'}(x),\nu'_u(x)\big)=\pm\big((u^+(x)-u^-(x)),\nu_u(x)\big)$.
\end{definition}

\begin{proposition}\label{hjhjhjgjggj}
Let $\Omega\subset \R^N$ be an open set, $u\in
L^1_{loc}(\Omega,\R^d)$ and $x\in \Omega$. For $k=1,2$ let
$h_k\in\R^d\setminus\{0\}$ and $\vec \nu_k\in S^{N-1}$ and consider
\begin{equation}\label{hihhihi}
w_{k}(y)=
\begin{cases}
h_k,\quad &\vec\nu_k\cdot y>0\\
0,\quad &\vec\nu_k\cdot y<0\\
\end{cases}.
\end{equation}
Assume for $k=1,2$
\begin{equation}\label{hhjjhjhjhjhojojo11}
\lim_{\rho\to
0^+}\Bigg(\inf\limits_{c\in\mathbb{R}^d}\fint_{B_1(0)}|u(x+\rho
y)-w_{k}(y)-c|dy\Bigg)=0.
\end{equation}
Then $(h_1,\vec \nu_1)=\pm(h_2,\vec \nu_2)$.
\end{proposition}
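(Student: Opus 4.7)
The plan is to reduce the claim to the already-proved uniqueness-up-to-a-constant principle, Proposition \ref{prop:uniqueness of oscillation blow-up up to constant}, and then carry out an elementary case analysis on the two step profiles $w_1,w_2$.

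First, since the hypothesis \eqref{hhjjhjhjhjhojojo11} asserts convergence as $\rho\to 0^+$, in particular it holds along every sequence $\rho_i\to 0^+$. Applying Proposition \ref{prop:uniqueness of oscillation blow-up up to constant} with $f:=w_1$ and $g:=w_2$, we obtain a constant $c_0\in\mathbb{R}^d$ such that $w_1(y)=w_2(y)+c_0$ for $\mathcal{L}^N$-a.e. $y\in B_1(0)$. Thus the whole problem reduces to a purely algebraic/measure-theoretic statement about two step functions of the form \eqref{hihhihi} differing a.e.\ by a constant.

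Next, I would partition $B_1(0)$ (up to null sets) into the four open "quadrants"
$Q_{\pm\pm}:=\{y\in B_1(0):\pm\vec\nu_1\cdot y>0,\ \pm\vec\nu_2\cdot y>0\}$,
on each of which $w_1-w_2$ is constant, equal respectively to $h_1-h_2$, $h_1$, $-h_2$, or $0$. The key observation is that whenever $\vec\nu_1$ and $\vec\nu_2$ are not parallel, all four quadrants have positive Lebesgue measure (they are intersections of two open half-spaces meeting at a lower-dimensional linear subspace with the unit ball), so each of the four values $h_1-h_2,\ h_1,\ -h_2,\ 0$ must equal $c_0$; but then $0=c_0=h_1$, contradicting $h_1\neq 0$. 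Hence $\vec\nu_1$ and $\vec\nu_2$ are parallel, i.e.\ $\vec\nu_2=\pm\vec\nu_1$.

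Finally, I split into the two parallel cases:
\begin{itemize}
\item If $\vec\nu_2=\vec\nu_1$, the two quadrants $Q_{++}$ and $Q_{--}$ are the only ones of positive measure and give the equations $h_1-h_2=c_0$ and $0=c_0$, so $c_0=0$ and $h_1=h_2$, whence $(h_1,\vec\nu_1)=(h_2,\vec\nu_2)$.
\item If $\vec\nu_2=-\vec\nu_1$, then $Q_{+-}$ and $Q_{-+}$ are the nonempty ones and give $h_1=c_0$ and $-h_2=c_0$, so $h_1=-h_2$, whence $(h_1,\vec\nu_1)=-(h_2,\vec\nu_2)$.
\end{itemize}
In both cases we conclude $(h_1,\vec\nu_1)=\pm(h_2,\vec\nu_2)$, as required.

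The only step that needs any care is the "non-parallel" case; once one notes that four open "wedges" of $B_1(0)$ each carry positive Lebesgue measure, the conclusion is forced by the constraint $h_1\neq 0$. Everything else is bookkeeping, and no new analytic input is needed beyond Proposition \ref{prop:uniqueness of oscillation blow-up up to constant}.
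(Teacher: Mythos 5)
Your proof is correct and follows essentially the same route as the paper's: both first invoke Proposition~\ref{prop:uniqueness of oscillation blow-up up to constant} to conclude $w_1-w_2$ is a.e.\ constant on $B_1(0)$, then argue by case analysis on whether $\vec\nu_1,\vec\nu_2$ are parallel. The paper's non-parallel contradiction picks two specific wedges ($A_1,A_2$ in its notation, where $w_2-w_1$ equals $h_2$ and $0$ respectively) while you observe all four quadrants have positive measure and derive the contradiction from any incompatible pair; this is a cosmetic difference, not a different approach.
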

\begin{proof}
By Proposition \ref{prop:uniqueness of oscillation blow-up up to constant} there exists $c_0\in\R^d$ such that for a.e.
$y\in B_1(0)$ we have
\begin{equation}\label{hhjjhjhjhjhojojo}
w_2(y)-w_1(y)=c_0.
\end{equation}
Assume by contradiction that $\vec \nu_1\neq\pm \vec \nu_2$.
Thus, $|\vec\nu_1\cdot\vec\nu_2|<1$, so that
$y_1:=\frac{1}{4}\big(\vec \nu_2-\vec \nu_1\big)\in B_1(0)$ and
$y_2:=-\frac{1}{4}\big(\vec \nu_2+\vec \nu_1\big)\in B_1(0)$ satisfy
\begin{align}\label{hhjjhjhjhjhojojouiouiui}
y_1\cdot\vec \nu_1<0\quad\text{and}\quad y_1\cdot\vec \nu_2>0\\
\label{hhjjhjhjhjhojojouiouiuiuiuiy} y_2\cdot\vec
\nu_1<0\quad\text{and}\quad y_2\cdot\vec \nu_2<0.
\end{align}
In other words, the open sets $A_1,A_2$ given by
\begin{align}\label{hhjjhjhjhjhojojouiouiuiuiuiuy}
A_1:=\Big\{y\in B_1(0)\;:\;
y\cdot\vec \nu_1<0\quad\text{and}\quad y\cdot\vec \nu_2>0\Big\}\\
\label{hhjjhjhjhjhojojouiouiuigujgj} A_2:=\Big\{y\in
B_1(0)\;:\;y\cdot\vec \nu_1<0\quad\text{and}\quad y\cdot\vec
\nu_2<0\Big\}
\end{align}
are not empty. However,
\begin{equation}\label{hihhihihh}
w_{2}(y)-w_{1}(y)=
\begin{cases}
h_2,\quad &y\in A_1\\
0,\quad &y\in A_2\\
\end{cases}.
\end{equation}
It contradicts \er{hhjjhjhjhjhojojo}, since $h_2\neq 0$.

So necessarily $\vec \nu_1=\pm \vec \nu_2$. In the case $\vec
\nu_1=\vec \nu_2$, by \er{hihhihi} we have
\begin{equation}\label{hihhihiiiy}
w_{2}(y)-w_1(y)=
\begin{cases}
h_2-h_1,\quad &\vec\nu_2\cdot y>0\\
0,\quad &\vec\nu_2\cdot y<0\\
\end{cases}
\end{equation}
and thus, using \er{hhjjhjhjhjhojojo} we deduce $h_2=h_1$, so that
$(h_1,\vec \nu_1)=(h_2,\vec \nu_2)$. Otherwise, we have $\vec
\nu_1=-\vec \nu_2$, and by \er{hihhihi} we have
\begin{equation}\label{hihhihiiiyIYYY}
w_{2}(y)-w_1(y)=
\begin{cases}
h_2,\quad &\vec\nu_2\cdot y>0\\
-h_1,\quad &\vec\nu_2\cdot y<0\\
\end{cases}
\end{equation}
and thus, using \er{hhjjhjhjhjhojojo} we deduce $h_2=-h_1$, so that
$(h_1,\vec \nu_1)=-(h_2,\vec \nu_2)$. It completes the proof.
\end{proof}

\begin{proposition}
\label{prop:inclusion of generalized jump set in the S''}
Let
$\Omega\subset\mathbb{R}^N$ be an open set and let $u\in
L^1_{loc}(\Omega,\mathbb{R}^d)$. Then
$$
\mathcal{J}_u\subset
\mathcal{J}'_u\subset\mathcal{S}''_u\subset \mathcal{S}'_u\subset
\mathcal{S}_u.
$$
\end{proposition}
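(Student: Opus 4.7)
My plan is to establish the chain $\mathcal{J}_u\subset\mathcal{J}'_u\subset\mathcal{S}''_u\subset\mathcal{S}'_u\subset\mathcal{S}_u$ one inclusion at a time; the first and the last two links will be dispatched by unwinding the definitions, and only the central inclusion $\mathcal{J}'_u\subset\mathcal{S}''_u$ will carry any real content.

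For $\mathcal{J}_u\subset\mathcal{J}'_u$, I will plug, for $x\in\mathcal{J}_u$ with triple $(u^+(x),u^-(x),\nu_u(x))$, the choices $h:=u^+(x)-u^-(x)\neq 0$, $\nu:=\nu_u(x)$, and $c:=u^-(x)$ into the functional appearing in \eqref{eq:one-sided approximate limitgint}; the two-sided limit condition \eqref{eq:one-sided approximate limitint} will then force the infimum in \eqref{eq:one-sided approximate limitgint} to vanish. For $\mathcal{S}''_u\subset\mathcal{S}'_u$ I argue by contrapositive: if the full limit in \eqref{eq:approximate limitsint} is zero, then the $\liminf$ in \eqref{eq:approximate limitsintw} is too. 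For $\mathcal{S}'_u\subset\mathcal{S}_u$, again by contrapositive, if $x$ admits an approximate limit $\tilde u(x)$ I simply test the infimum in \eqref{eq:approximate limitsint} against the constant $c=\tilde u(x)$.

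The central step is $\mathcal{J}'_u\subset\mathcal{S}''_u$. I will fix $x\in\mathcal{J}'_u$ with associated data $h\neq 0$ and $\nu\in S^{N-1}$, fix $\eta>0$, and use Definition \ref{def:approximate jump pointsgint} to select, for every sufficiently small $\rho>0$, a constant $c_\rho\in\mathbb{R}^d$ such that
\begin{equation*}
\fint_{B^+_\rho(x,\nu)}|u(y)-h-c_\rho|\,dy+\fint_{B^-_\rho(x,\nu)}|u(y)-c_\rho|\,dy\leq \e(\rho)+\eta,
\end{equation*}
with $\e(\rho)\to 0^+$ as $\rho\to 0^+$. Exploiting $|B^\pm_\rho(x,\nu)|=\tfrac12|B_\rho(x)|$ and the reverse triangle inequality $|u(y)-c|\geq|h+c_\rho-c|-|u(y)-h-c_\rho|$ on $B^+_\rho(x,\nu)$, together with its analogue on $B^-_\rho(x,\nu)$, I will derive the $c$-independent lower bound
\begin{equation*}
\fint_{B_\rho(x)}|u(y)-c|\,dy\geq \tfrac12\bigl(|h+c_\rho-c|+|c_\rho-c|\bigr)-\tfrac12\bigl(\e(\rho)+\eta\bigr)\geq \tfrac{|h|}{2}-\tfrac12\bigl(\e(\rho)+\eta\bigr),
\end{equation*}
valid for every $c\in\mathbb{R}^d$, where the last step uses $|h+c_\rho-c|+|c_\rho-c|\geq|h|$. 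Taking $\inf_{c}$, then $\liminf_{\rho\to 0^+}$, and finally $\eta\to 0^+$ will yield $\liminf_{\rho\to 0^+}\inf_{c\in\mathbb{R}^d}\fint_{B_\rho(x)}|u(y)-c|\,dy\geq\tfrac{|h|}{2}>0$, so $x\in\mathcal{S}''_u$.

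The only nontrivial point, and my main (though modest) obstacle, is the decoupling between the nearly-optimal approximating constant $c_\rho$ furnished by the hypothesis and the free variable $c$ over which the infimum in the conclusion is taken. The triangle inequality handles this at the cost of a slack $\eta$ that vanishes at the very end of the argument.
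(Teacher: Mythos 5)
Your proof is correct. The three easy inclusions are handled exactly as the paper does (the paper just calls them "immediate from definitions"). For the central inclusion $\mathcal{J}'_u\subset\mathcal{S}''_u$, however, your route differs from the paper's: the paper argues by contradiction, observing that $x\notin\mathcal{S}''_u$ would furnish (along a subsequence) a \emph{constant} oscillation blow-up of $u$ at $x$, whereas $x\in\mathcal{J}'_u$ gives a non-constant one, contradicting the uniqueness-modulo-constants of oscillation blow-ups (Proposition \ref{prop:uniqueness of oscillation blow-up up to constant}). You instead give a direct quantitative bound: splitting $B_\rho(x)$ into the two half-balls, applying the reverse triangle inequality with the nearly optimal $c_\rho$, and using $|h+c_\rho-c|+|c_\rho-c|\geq|h|$, you show
$$
\liminf_{\rho\to 0^+}\inf_{c\in\mathbb{R}^d}\fint_{B_\rho(x)}|u(y)-c|\,dy\geq \frac{|h|}{2}=\frac{|u^{j'}(x)|}{2}>0,
$$
which is strictly more information than mere membership in $\mathcal{S}''_u$. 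Worth noting: the triangle-inequality trick you use is essentially the same one that drives the proof of the uniqueness proposition the paper invokes, so conceptually the two proofs share their core step; you have simply inlined the argument and avoided the blow-up formalism, at the cost of re-deriving the inequality and with the benefit of the explicit lower bound $|h|/2$ (which in fact is exactly the constant that appears again in Proposition \ref{prop:convergence of the rescaled family gamma}). Either approach is fine; yours is more self-contained, the paper's is more modular.
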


\begin{proof}
The inclusions $\mathcal{J}_u\subset
\mathcal{J}'_u$, $\mathcal{S}''_u\subset \mathcal{S}'_u\subset
\mathcal{S}_u$ are immediate from definitions. We prove
$\mathcal{J}'_u\subset\mathcal{S}''_u$. If $x\in \mathcal{J}'_u$, then by Definition \ref{def:approximate jump pointsgint} and Definition \ref{def:oscillation blow-up} the function $u$ has a non-constant oscillation blow-up $v$ at $x$:
\begin{equation}
\label{eq:definition of the blow-up v}
v(y):=
\begin{cases}
u^{j'}(x),\quad &\nu'_u(x)\cdot y>0\\
0,\quad &\nu'_u(x)\cdot y<0\\
\end{cases},\quad u^{j'}(x)\neq 0,
\end{equation}
and
\begin{equation}
\label{eq:equation1 in Remark}
\lim_{\rho\to
0^+}\Bigg(\inf\limits_{z\in\mathbb{R}^d}\fint_{B_{\rho}(x)}|u(y)-v(y)-z|dy\Bigg)=0.
\end{equation}
If, by contradiction,
$x\notin \mathcal{S}''_u$, then there exists a sequence $\rho_i>0$ converging to zero such that
\begin{equation}
\label{eq:equation2 in Remark}
\lim_{i\to
\infty}\Bigg(\inf\limits_{z\in\mathbb{R}^d}\fint_{B_{\rho_i}(x)}|u(y)-z|dy\Bigg)=0.
\end{equation}
\eqref{eq:equation1 in Remark} and \eqref{eq:equation2 in Remark} contradict the uniqueness of oscillation blow-ups up to a constant (Proposition \ref{prop:uniqueness of oscillation blow-up up to constant}).
\end{proof}

\begin{remark}
\label{rem:oscillation blow-up in case of jump and generalized jump points}
Let $\Omega\subset\R^N$ be an open set and $u\in L^1_{loc}(\Omega,\R^d)$. In case $x\in \mathcal{J}_u$ we have that
\begin{equation}
u_x(y)=
\begin{cases}
u^+(x),\quad &\nu_u(x)\cdot y>0\\
u^-(x),\quad &\nu_u(x)\cdot y<0\\
\end{cases}
\end{equation}
is an oscillation blow-up of $u$ at $x$.

In case $x\in \mathcal{J}'_u $
we have that
\begin{equation}
\label{eq:jeneralized jump}
u_x(y)=
\begin{cases}
u^{j'}(x),\quad &\nu'_u(x)\cdot y>0\\
0,\quad &\nu'_u(x)\cdot y<0\\
\end{cases}
\end{equation}
is an oscillation blow-up of $u$ at $x$.
\end{remark}

\begin{proposition}
\label{prop:oscillation blow-up of sum equals to the sum of the oscillation blow-up}
(Oscillation blow-up of sum equals to the sum of the oscillation blow-up)
Let
$\Omega\subset\mathbb{R}^N$ be an open set, $u,B\in
L^1_{loc}(\Omega,\mathbb{R}^d)$. Assume $x\in \Omega$ is an oscillation blow-up point of $u,B$. Then, $x$ is an oscillation blow-up point of the sum $u+B$ and $(u+B)_x=u_x+B_x$.
\end{proposition}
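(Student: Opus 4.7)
The plan is to verify that the function $u_x + B_x$ satisfies the defining property \eqref{hhjjhjhjhjh} of an oscillation blow-up for $u+B$ at $x$; uniqueness (up to an additive constant) then follows from Proposition \ref{prop:uniqueness of oscillation blow-up up to constant}.

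The main tool is just the triangle inequality together with the observation that the infimum over a single constant $c \in \mathbb{R}^d$ is bounded above by the value attained at $c = c_1 + c_2$ for any choice of $c_1, c_2 \in \mathbb{R}^d$. Concretely, for every $\rho > 0$ and every $c_1, c_2 \in \mathbb{R}^d$, I would write
\begin{align*}
&\fint_{B_1(0)} \bigl|(u+B)(x+\rho y) - (u_x + B_x)(y) - (c_1+c_2)\bigr|\,dy \\
&\qquad \leq \fint_{B_1(0)} |u(x+\rho y) - u_x(y) - c_1|\,dy + \fint_{B_1(0)} |B(x+\rho y) - B_x(y) - c_2|\,dy,
\end{align*}
and then take the infimum of the left-hand side over $c \in \mathbb{R}^d$ (dominating it by $c = c_1+c_2$) followed by independent infima over $c_1$ and $c_2$ on the right.

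This yields
\begin{equation*}
\inf_{c \in \mathbb{R}^d} \fint_{B_1(0)} \bigl|(u+B)(x+\rho y) - (u_x+B_x)(y) - c\bigr|\,dy \leq \sum_{k=1}^{2} \inf_{c_k \in \mathbb{R}^d} \fint_{B_1(0)} \bigl|\phi_k(x+\rho y) - \phi_{k,x}(y) - c_k\bigr|\,dy,
\end{equation*}
where $\phi_1 := u,\ \phi_2 := B$. Since $x$ is an oscillation blow-up point of both $u$ and $B$, each term on the right tends to zero as $\rho \to 0^+$ by Definition \ref{def:oscillation blow-up}, so the left-hand side tends to zero as well. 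This shows that $x$ is an oscillation blow-up point of $u+B$ with blow-up $u_x + B_x$.

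There is essentially no obstacle here; the statement $(u+B)_x = u_x + B_x$ should be read modulo additive constants, in accordance with Proposition \ref{prop:uniqueness of oscillation blow-up up to constant}, which guarantees that any other oscillation blow-up of $u+B$ at $x$ differs from $u_x + B_x$ by an a.e.\ constant.
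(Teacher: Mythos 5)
Your proposal is correct and takes essentially the same route as the paper: both arguments rest on the triangle inequality together with the observation that the infimum over a single shift $c$ is dominated by the particular choice $c=c_1+c_2$, after which the two infima on the right decouple and tend to zero by the hypothesis. The paper arranges this "top-down" (starting from the sum of the two limits and rewriting the joint infimum via the substitution $m\mapsto m-c$), whereas you go "bottom-up," but the content is identical.
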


\begin{proof}
It follows that
\begin{equation}
\lim_{\rho\to
0^+}\Bigg(\inf\limits_{c\in\mathbb{R}^d}\fint_{B_1(0)}|u(x+\rho y)-u_x(y)-c|dy\Bigg)=0,
\end{equation}

\begin{equation}
\lim_{\rho\to
0^+}\Bigg(\inf\limits_{m\in\mathbb{R}^d}\fint_{B_1(0)}|B(x+\rho y)-B_x(y)-m|dy\Bigg)=0.
\end{equation}

Therefore,
\begin{align}
0=&\lim_{\rho\to
0^+}\Bigg(\inf_{(c,m)\in\mathbb{R}^d\times\R^d}\Bigg\{\fint_{B_1(0)}|u(x+\rho y)-u_x(y)-c|dy\nonumber
\\
&+\fint_{B_1(0)}|B(x+\rho y)-B_x(y)-m|dy\Bigg\}\Bigg)\nonumber
\\
&=\lim_{\rho\to
0^+}\Bigg(\inf_{(c,m)\in\mathbb{R}^d\times\R^d}\Bigg\{\fint_{B_1(0)}|u(x+\rho y)-u_x(y)-c|dy\nonumber
\\
&+\fint_{B_1(0)}|B(x+\rho y)-B_x(y)-(m-c)|dy\Bigg\}\Bigg)\nonumber
\\
&\geq\lim_{\rho\to
0^+}\Bigg(\inf_{(c,m)\in\mathbb{R}^d\times\R^d}\Bigg\{\fint_{B_1(0)}\left|\left(u(x+\rho y)+B(x+\rho y)\right)-\left(u_x(y)+B_x(y)\right)-m\right|dy\Bigg\}\Bigg)\nonumber
\\
&=\lim_{\rho\to
0^+}\Bigg(\inf_{m\in\mathbb{R}^d}\Bigg\{\fint_{B_1(0)}\left|\left(u(x+\rho y)+B(x+\rho y)\right)-\left(u_x(y)+B_x(y)\right)-m\right|dy\Bigg\}\Bigg).
\end{align}
Thus, $u_x+B_x$ is an oscillation blow-up of $u+B$ at $x$.
\end{proof}

\begin{proposition}
\label{prop:convergence of the rescaled family gamma}
($L^1$-convergence of the rescaled family $u_{x,t}$ for $x\in \mathcal{J}'_u$)
Let
$\Omega\subset\mathbb{R}^N$ be an open set and let $u\in
L^1_{loc}(\Omega,\mathbb{R}^d)$. For each $x\in \mathcal{J}'_u$ let
us define a function:
\begin{equation}
w'_x:\mathbb{R}^N\to \mathbb{R}^d,\quad  w'_x(y):=
\begin{cases}
u^{j'}(x),\quad &\nu'_u(x)\cdot y>0\\
0,\quad &\nu'_u(x)\cdot y<0\\
\end{cases}.
\end{equation}
For each $x\in \Omega$ and $t>0$ let us define a function
$u_{x,t}(y):=u(x+t y),y\in \frac{\Omega-x}{t}$. Then:
\\
1. For every $x\in \mathcal{J}'_u$ and every compact
$K\subset\mathbb{R}^N$ we have
\begin{equation}
\lim_{t\to
0^+}\Bigg(\inf\limits_{z\in\mathbb{R}^d}\int_{K}\big|u_{x,t}(y)-w'_x(y)-z\big|dy\Bigg)=0.
\end{equation}
\\
2. For every $x\in \mathcal{J}'_u$, define the family of functions
\begin{equation}
\gamma_{x,t}(y,z):=\left|u_{x,t}(y)-u_{x,t}(z)\right|.
\end{equation}
Then, for every $\alpha,\beta>0$ the family of functions
$y\mapsto\gamma_{x,t}(\alpha y,-\beta y)$ converges in
$L^1_{loc}(\mathbb{R}^N)$ to the constant function $\left|u^{j'}(x)\right|$
as $t\to 0^+$. Moreover, for every $q>0$
\begin{align}
&\liminf\limits_{t\to 0^+}\fint_{B_{1}(0)\times
B_{1}(0)}\Big|\gamma_{x,t}(y,z)\Big|^qd(\mathcal{L}^N\times\mathcal{L}^N)(y,z)\geq
\frac{1}{2}\Big|u^{j'}(x)\Big|^q.
\end{align}
\end{proposition}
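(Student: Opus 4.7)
The plan is to derive Part~1 directly from the definition of $\mathcal{J}'_u$ via a change of variables, and then use Part~1 to obtain both assertions of Part~2. For Part~1, I fix $R>0$ with $K\subset B_R(0)$, set $\nu:=\nu'_u(x)$ and $h:=u^{j'}(x)$, and make the substitution $\eta=x+ty$ in $\int_{B_R(0)}|u_{x,t}(y)-w'_x(y)-z|\,dy$. Since $w'_x$ depends only on the sign of $y\cdot\nu$, the map $y\mapsto y/t$ preserves $w'_x$, and the integral reduces to
\begin{equation*}
\frac{|B_1(0)|\,R^N}{2}\Bigg(\fint_{B^+_{tR}(x,\nu)}|u(\eta)-h-z|\,d\eta+\fint_{B^-_{tR}(x,\nu)}|u(\eta)-z|\,d\eta\Bigg).
\end{equation*}
Taking $\inf_z$ and $t\to 0^+$ (equivalently $\rho:=tR\to 0^+$), Definition~\ref{def:approximate jump pointsgint} forces this to vanish, which delivers Part~1 for $K=B_R(0)$, and hence for every compact $K$.

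For the $L^1_{loc}$ convergence in Part~2, for each $t>0$ I would choose $z_t\in\mathbb{R}^d$ almost realising the infimum in Part~1, with Proposition~\ref{prop:uniqueness of oscillation blow-up up to constant} ensuring that $z_t$ can be chosen consistently across all balls, so that $u_{x,t}(\cdot)-z_t\to w'_x$ in $L^1_{loc}(\mathbb{R}^N)$. The linear changes of variable $y\mapsto\alpha y$ and $y\mapsto -\beta y$ (with $\alpha,\beta>0$) preserve $L^1_{loc}$ convergence, so $u_{x,t}(\alpha y)-z_t\to w'_x(\alpha y)$ and $u_{x,t}(-\beta y)-z_t\to w'_x(-\beta y)$ in $L^1_{loc}(\mathbb{R}^N)$. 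The centering constant $z_t$ then cancels in the difference, yielding
\begin{equation*}
\gamma_{x,t}(\alpha y,-\beta y)=\big|u_{x,t}(\alpha y)-u_{x,t}(-\beta y)\big|\longrightarrow\big|w'_x(\alpha y)-w'_x(-\beta y)\big|\quad\text{in }L^1_{loc}(\mathbb{R}^N).
\end{equation*}
When $\alpha,\beta>0$ the functions $w'_x(\alpha y)$ and $w'_x(-\beta y)$ are supported on the complementary half-spaces $\{y\cdot\nu>0\}$ and $\{y\cdot\nu<0\}$, so the limit equals $|u^{j'}(x)|$ almost everywhere.

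For the $\liminf$ estimate, I would select $t_n\to 0^+$ realising the lower limit on the left-hand side, and then extract a subsequence $t_{n_k}$ along which $u_{x,t_{n_k}}(y)-z_{t_{n_k}}\to w'_x(y)$ pointwise a.e.\ in $B_1(0)$. Since $z_{t_{n_k}}$ cancels in $|u_{x,t_{n_k}}(y)-u_{x,t_{n_k}}(z)|^q$, we obtain pointwise a.e.\ convergence on $B_1(0)\times B_1(0)$ of this integrand to $|w'_x(y)-w'_x(z)|^q$, and Fatou's lemma gives
\begin{equation*}
\liminf_{t\to 0^+}\fint_{B_1(0)\times B_1(0)}\gamma_{x,t}(y,z)^q\,d(\mathcal{L}^N\times\mathcal{L}^N)(y,z)\;\geq\;\fint_{B_1(0)\times B_1(0)}\big|w'_x(y)-w'_x(z)\big|^q\,dy\,dz.
\end{equation*}
The integrand on the right equals $|u^{j'}(x)|^q$ exactly on the set $(B^+_1(0,\nu)\times B^-_1(0,\nu))\cup(B^-_1(0,\nu)\times B^+_1(0,\nu))$, of relative product measure $1/2$, and vanishes elsewhere; hence the average equals $\tfrac12|u^{j'}(x)|^q$, closing the estimate.

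The main technical point will be the cancellation in the passage from Part~1 to Part~2: the centering constants $z_t$ produced by Part~1 need not remain bounded as $t\to 0^+$, but they enter symmetrically in both arguments of the two-point quantity $\gamma_{x,t}$ and therefore drop out, so the limiting behaviour of the differences is controlled entirely by the intrinsic oscillation profile $w'_x$ rather than by the centerings.
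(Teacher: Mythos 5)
Your Part~1 is essentially the paper's proof: the change of variables $\eta=x+ty$ (after passing to the circumscribed ball $B_R(0)$), together with the observation that $w'_x$ is scale-invariant along rays, reduces the integral to exactly the infimum that Definition~\ref{def:approximate jump pointsgint} asserts tends to zero.

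For Part~2 your route genuinely differs from the paper's. The paper never selects explicit centering constants: it starts from the quantity $\inf_z\big(\tfrac1{\alpha^N}+\tfrac1{\beta^N}\big)\int_{B_{(\alpha+\beta)R}(0)}|u_{x,t}-w'_x-z|$, splits it into two integrals via the two rescalings $v\mapsto\alpha v$ and $v\mapsto-\beta v$, and applies the triangle inequality and the reverse triangle inequality so that $z$ drops out and the bound
$\int_K\big|\gamma_{x,t}(\alpha v,-\beta v)-|u^{j'}(x)|\big|\,dv\to 0$
is obtained in one pass; similarly, the $\liminf$ estimate is derived by coupling the two copies of the Part~1 integral on $B_1(0)\times B_1(0)$ and invoking lower semicontinuity of the $L^q$ norm under $L^1$ convergence. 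Your version instead picks, for each $t$, an explicit (approximate) minimizer $z_t$ so that $u_{x,t}-z_t\to w'_x$ in $L^1_{\rm loc}$, and then lets the $z_t$ cancel in all two-point quantities; the $\liminf$ estimate comes from a subsequence converging a.e.\ plus Fatou. Both approaches are valid; the paper's has the merit of never needing a uniform choice of $z_t$, whereas your approach is more transparent once that choice is secured.

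The one step you should flesh out is the ``consistency across balls''. Proposition~\ref{prop:uniqueness of oscillation blow-up up to constant} is only a uniqueness statement about the blow-up itself; it does not directly deliver a single $z_t$ working simultaneously on all balls. The precise argument is short and worth writing: if $z_{t,R}$ and $z_{t,1}$ are (near) minimizers over $B_R(0)$ and $B_1(0)$ respectively (with $R\geq 1$), then
\begin{equation*}
\alpha(N)\,|z_{t,R}-z_{t,1}|
=\int_{B_1(0)}\big|(u_{x,t}-w'_x-z_{t,1})-(u_{x,t}-w'_x-z_{t,R})\big|\,dy
\leq \int_{B_1(0)}|u_{x,t}-w'_x-z_{t,1}|\,dy+\int_{B_R(0)}|u_{x,t}-w'_x-z_{t,R}|\,dy\;\longrightarrow\;0,
\end{equation*}
so setting $z_t:=z_{t,1}$ gives $u_{x,t}-z_t\to w'_x$ in $L^1(B_R(0))$ for every $R$. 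With this line supplied, your argument is complete; alternatively you can avoid the issue entirely by adopting the paper's infimum-based bookkeeping.
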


\begin{proof}
1. Let $K\subset\mathbb{R}^N$ be a nonempty compact set such that $K\neq\{0\}$, and $x\in
\mathcal{J}'_u$. Denote
\begin{equation}
R:=\sup_{y\in K}|y|>0.
\end{equation}
It follows that
\begin{multline}
\lim_{t\to
0^+}\Bigg(\inf\limits_{z\in\mathbb{R}^d}\int_{K}\big|u_{x,t}(y)-w'_x(y)-z\big|dy\Bigg)\leq
\lim_{t\to
0^+}\Bigg(\inf\limits_{z\in\mathbb{R}^d}\int_{B_R(0)}\big|u_{x,t}(y)-w'_x(y)-z\big|dy\Bigg)
\\
=\lim_{t\to
0^+}\Bigg(\inf\limits_{z\in\mathbb{R}^d}\int_{B_R(0)}\big|u_{x,Rt}(y)-w'_x(y)-z\big|dy\Bigg)=
\frac{1}{R^N}\lim_{t\to
0^+}\Bigg(\inf\limits_{z\in\mathbb{R}^d}\int_{B_1(0)}\big|u_{x,t}(w)-w'_x(w/R)-z\big|dw\Bigg)
\\
=\frac{1}{R^N}\lim_{t\to
0^+}\Bigg(\inf\limits_{z\in\mathbb{R}^d}\frac{1}{t^N}\int_{B_t(x)}\big|u(y)-w'_x\left(\frac{y-x}{tR}\right)-z\big|dy\Bigg)
\\
=\frac{\alpha(N)}{R^N}\,\lim_{t\to
0^+}\Bigg(\inf\limits_{z\in\mathbb{R}^d}\bigg\{\fint_{B^+_t(x,\nu'_u(x))}|u(y)-u^{j'}(x)-z|dy+\fint_{B^-_t(x,\nu'_u(x))}|u(y)-z|dy\bigg\}\Bigg)=0.
\end{multline}

2. Let $K\subset\mathbb{R}^N$ be a nonempty compact set such that $K\neq\{0\}$, and $x\in
\mathcal{J}'_u$. Denote
\begin{equation}
R:=\sup_{y\in K}|y|>0.
\end{equation}
Then by item 1, using triangle inequality, for every
$\alpha,\beta>0$ we have
\begin{multline}
0=\lim_{t\to
0^+}\Bigg(\inf\limits_{z\in\mathbb{R}^d}\bigg\{\Big(\frac{1}{\alpha^N}+\frac{1}{\beta^N}\Big)\int_{B_{(\alpha+\beta)R}(0)}\big|u_{x,t}(y)-w'_x(y)-z\big|dy\bigg\}\Bigg)
\\
\geq\lim_{t\to
0^+}\Bigg(\inf\limits_{z\in\mathbb{R}^d}\bigg\{\frac{1}{\alpha^N}\int_{B_{\alpha
R}(0)}\big|u_{x,t}(y)-w'_x(y)-z\big|dy
+\frac{1}{\beta^N}\int_{B_{\beta
R}(0)}\big|u_{x,t}(y)-w'_x(y)-z\big|dy\bigg\}\Bigg)\\
= \lim_{t\to
0^+}\Bigg(\inf\limits_{z\in\mathbb{R}^d}\bigg\{\int_{B_{R}(0)}\big|u_{x,t}(\alpha
v)-w'_x(\alpha v)-z\big|dv +\int_{B_{R}(0)}\big|u_{x,t}(-\beta
v)-w'_x(-\beta v)-z\big|dv\bigg\}\Bigg)
\\
\geq \lim_{t\to
0^+}\Bigg(\int_{B_{R}(0)}\Big|u_{x,t}(\alpha
v)-u_{x,t}(-\beta v)-\left(w'_x(\alpha v)-w'_x(-\beta
v)\right)\Big|dv\Bigg)
\\
\geq \lim_{t\to
0^+}\Bigg(\int_{B_{R}(0)}\Big|\gamma_{x,t}(\alpha v,-\beta
v)-\left|w'_x(\alpha v)-w'_x(-\beta
v)\right|\Big|dv\Bigg)
\\
=\lim_{t\to
0^+}\Bigg(\int_{B_{R}(0)}\Big|\gamma_{x,t}(\alpha v,-\beta
v)-|u^{j'}(x)|\Big|dv\Bigg)
\\
\geq \lim_{t\to
0^+}\Bigg(\int_{K}\Big|\gamma_{x,t}(\alpha v,-\beta
v)-|u^{j'}(x)|\Big|dv\Bigg),
\end{multline}

so that for every $\alpha,\beta>0$ the family of functions
$\gamma_{x,t}(\alpha y,-\beta y)$ converges in
$L^1_{loc}(\mathbb{R}^N)$ to the constant function $|u^{j'}(x)|$
as $t\to 0^+$.

Finally, by item 1, using triangle inequality, we have
\begin{multline}
0=\lim_{t\to
0^+}\Bigg(\inf\limits_{c\in\mathbb{R}^d}2\fint_{B_{1}(0)}\big|u_{x,t}(y)-w'_x(y)-c\big|dy\Bigg)
\\
=\lim_{t\to
0^+}\Bigg(\inf\limits_{c\in\mathbb{R}^d}\bigg\{\fint_{B_{1}(0)}\big|u_{x,t}(y)-w'_x(y)-c\big|dy+\fint_{B_{1}(0)}\big|u_{x,t}(z)-w'_x(z)-c\big|dz\bigg\}\Bigg)
\\
=\lim_{t\to
0^+}\Bigg(\inf\limits_{c\in\mathbb{R}^d}\Bigg\{\fint_{B_{1}(0)\times
B_{1}(0)}\big|u_{x,t}(y)-w'_x(y)-c\big|d(\mathcal{L}^N\times\mathcal{L}^N)(y,z)
\\
+\fint_{B_{1}(0)\times
B_{1}(0)}\big|u_{x,t}(z)-w'_x(z)-c\big|d(\mathcal{L}^N\times\mathcal{L}^N)(y,z)\Bigg\}\Bigg)
\\
\geq\lim_{t\to
0^+}\Bigg(\fint_{B_{1}(0)\times
B_{1}(0)}\Big|\big(u_{x,t}(y)-u_{x,t}(z)\big)-\big(w'_x(y)-w'_x(z)\big)\Big|d(\mathcal{L}^N\times\mathcal{L}^N)(y,z)\Bigg)
\\
\geq \lim_{t\to 0^+}\Bigg(\fint_{B_{1}(0)\times
B_{1}(0)}\Big|\gamma_{x,t}(y,z)-\left|w'_x(y)-w'_x(z)\right|\Big|d(\mathcal{L}^N\times\mathcal{L}^N)(y,z)\Bigg).
\end{multline}
Thus, by lower semi-continuity of $L^q$ norm with respect to
convergence in $L^1$ and Fubini's theorem we have
\begin{align}
&\liminf\limits_{t\to 0^+}\int_{B_{1}(0)\times
B_{1}(0)}\Big|\gamma_{x,t}(y,z)\Big|^qd(\mathcal{L}^N\times\mathcal{L}^N)(y,z)\geq
\int_{B_{1}(0)\times
B_{1}(0)}\left|w'_x(y)-w'_x(z)\right|^qd(\mathcal{L}^N\times\mathcal{L}^N)(y,z)\nonumber
\\
&=2\int_{B^-_{1}(0,\nu'_u(x))}\int_{B^+_{1}(0,\nu'_u(x))}\Big|w'_x(y)-w'_x(z)\Big|^qdydz
=\frac{\alpha(N)^2}{2}\Big|u^{j'}(x)\Big|^q.
\end{align}
It completes the proof.
\end{proof}

In section \ref{sec:examples of pathological behavior of The Sobolev and $BV^q$ functions} we give several examples of the
pathological behavior of functions in $W^{\frac{1}{q},q}$ and $BV^q$
with $q>1$. We are able to build an example of $u\in W^{\frac{1}{2},2}(\Omega)\subset
BV^2(\Omega)$, where $\Omega\subset\R^N$ is an open set, such that
$\mathcal{H}^{N-1}\big(\mathcal{S}'_u\big)=0$ and
$\mathcal{H}^{N-1}\big(\mathcal{S}_u\big)>0$. Moreover, we are able
to build an example of $u\in  BV^2(\Omega)$ such that
$\mathcal{H}^{N-1}\big(\mathcal{J}_u\big)=0$ and
$\mathcal{H}^{N-1}\big(\mathcal{J}'_u\big)>0$.

\begin{remark}
\label{rem:rectifiability of generalized jump set}
In \cite{DelNin}, the following set was represented for $u\in L^1_{loc}(\Omega,\R^d)$, where $\Omega\subset\R^N$ is an open set:
\begin{equation}
\Sigma_u:=\left\{x\in \Omega:u_{x,r}\to v\,\, \text{in}\,\, L^1(B_1(0))\,\text{as $r\to 0^+$,\,\,\text{for some $v$ not constant}}\right\}.
\end{equation}
Here $u_{x,r}(z):=u(x+rz)$. The set
$\Sigma_u$ is the set of all points in the domain of $u$ at which there exists a non-constant blow-up. It was proved that $\Sigma_u$ is countably $(N-1)-$rectifiable and $\mathcal{J}_u\subset\Sigma_u$.

As was explained in \cite{DelNin}, the set of all points $x\in \Omega$ for which for every $r>0$ there exists $c_r\in\R^d$ such that
\begin{equation}
u_{x,r}-c_r\to v\quad\text{in $L^1(B_1(0))$ as $r\to 0^+$ for $v$ not constant}
\end{equation}
is also countably $(N-1)-$rectifiable (see the explanation after Theorem 1.2 in \cite{DelNin}).  We denote this set by $\Sigma'_u$. Note that

$$
\inf_{c\in \R^d}\fint_{B_r(x)}|u(y)-c|dy=\fint_{B_r(x)}|u(y)-c_r|dy
$$
for some $c_r\in \R^d$.
It follows that
$\mathcal{J}'_u\subset\Sigma'_u$ since if $x\in \mathcal{J}'_u$, then by Definition \ref{def:approximate jump pointsgint}, there exists for each $r>0$ an element $c_r\in \R^d$ such that $u_{x,r}-c_r$ converges in $L^1(B_1(0))$ as $r\to 0^+$ to the non-constant function \eqref{eq:jeneralized jump}, and therefore $u$ has non-constant blow-up at $x$, i.e.  $x\in \Sigma'_u$.
Thus,
{\it$\mathcal{J}'_u$ is countably $(N-1)-$rectifiable}.
\end{remark}

\section{Properties of $BV^q$-functions}
In this section we prove a fine property of $BV^q$- functions (Theorem \ref{thm:sigma finiteness of limiting average with respect to Hausdorff measure}) as well as the summability of the jump function $|u^{j'}|\in L^q(\mathcal{J}'_u,\mathcal{H}^{N-1})$ for functions $u\in BV^q(\Omega,\R^d)$ (Corollary \ref{cor: Besov constant controls the q-variation}). Moreover, we show that the lower infinitesimal Besov $q$-constant $\underline{B}_{u,q}(\Omega)$ is always bigger or equal to the $q$-jump variation of $u\in BV^q(\Omega,\R^d)$ multiplied by an optimal constant $C_N$ (Theorem \ref{thm:main result,general formulation}).

\begin{definition}
\label{def:definitions of Besov constants}
Let $\Omega\subset\mathbb{R}^N$ be an open set, $1\leq q<\infty$ and let
$u:\Omega\to \R^d$ be an $\mathcal{L}^N-$measurable function.

We define the {\it Besov $q-$constant} of $u$ by
\begin{equation}
\hat{B}_{u,q}(\Omega):=\sup_{\epsilon\in (0,1)}\int_{\Omega}\left(\int_{\Omega\cap B_{\epsilon}(x)}\frac{1}{\epsilon^N}\frac{|u(x)-u(y)|^q}{|x-y|}dy\right)dx.
\end{equation}
We define the {\it upper infinitesimal Besov $q-$constant} of $u$ by
\begin{equation}
\overline{B}_{u,q}(\Omega):=\limsup_{\epsilon\to 0^+}\int_{\Omega}\left(\int_{\Omega\cap B_{\epsilon}(x)}\frac{1}{\epsilon^N}\frac{|u(x)-u(y)|^q}{|x-y|}dy\right)dx.
\end{equation}
We define the {\it lower infinitesimal Besov $q-$constant} of $u$ by
\begin{equation}
\underline{B}_{u,q}(\Omega):=\liminf_{\epsilon\to 0^+}\int_{\Omega}\left(\int_{\Omega\cap B_{\epsilon}(x)}\frac{1}{\epsilon^N}\frac{|u(x)-u(y)|^q}{|x-y|}dy\right)dx.
\end{equation}
If the following limit exists:
\begin{equation}
B_{u,q}(\Omega):=\lim_{\epsilon\to 0^+}\int_{\Omega}\left(\int_{\Omega\cap B_{\epsilon}(x)}\frac{1}{\epsilon^N}\frac{|u(x)-u(y)|^q}{|x-y|}dy\right)dx,
\end{equation}
then it is called the {\it infinitesimal Besov $q-$constant} of $u$.
\end{definition}

We redefine here the space $BV^q$ in terms of Definition \ref{def:definitions of Besov constants}:

\begin{definition}
\label{def:definition of BV^q}
Let $1\leq q<\infty$ and let $\Omega\subset \mathbb{R}^N$ be an open set. We define a set:
\begin{equation}
BV^q(\Omega,\R^d):=\left\{u\in L^q\left(\Omega,\R^d\right):\hat{B}_{u,q}(\Omega)<\infty\right\},
\end{equation}
where $\hat{B}_{u,q}(\Omega)$ is defined in Definition $\ref{def:definitions of Besov constants}$.
The local space $BV^q_{loc}(\Omega,\R^d)$ is defined as follows: $u\in BV^q_{loc}(\Omega,\R^d)$ if and only if $u\in L^q_{loc}(\Omega,\R^d)$ and $u\in BV^q(\Omega_0,\R^d)$ for every open $\Omega_0\subset\subset\Omega$.
\end{definition}

\begin{proposition}
\label{prop:equivalence of finiteness of upper and Besov constants}
Let $u\in L^q(\Omega,\mathbb{R}^d)$, where $q\in [1,\infty)$ and $\Omega\subset\R^N$ is an open set. Then, the finiteness of
$\hat{B}_{u,q}(\Omega)$ is equivalent to the finiteness of the upper infinitesimal Besov $q-$constant $\overline{B}_{u,q}(\Omega)$.
\end{proposition}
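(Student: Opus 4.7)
The inequality $\overline{B}_{u,q}(\O)\leq\hat{B}_{u,q}(\O)$ is immediate from the definitions, since the $\limsup$ as $\e\to 0^+$ is bounded above by the supremum over all $\e\in(0,1)$. Thus $\hat{B}_{u,q}(\O)<\infty$ forces $\overline{B}_{u,q}(\O)<\infty$, and the nontrivial direction is the converse: assuming $\overline{B}_{u,q}(\O)<\infty$, we want to prove $\hat{B}_{u,q}(\O)<\infty$.

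For brevity set
\begin{equation*}
F(\e):=\int_{\O}\left(\int_{\O\cap B_{\e}(x)}\frac{1}{\e^N}\frac{|u(x)-u(y)|^q}{|x-y|}dy\right)dx.
\end{equation*}
By the very definition of $\limsup$ as $\e\to 0^+$, finiteness of $\overline{B}_{u,q}(\O)$ means that there exists $\e_0\in(0,1)$ and a constant $M<\infty$ such that $F(\e)\leq M$ for every $\e\in(0,\e_0)$. The only remaining task is to bound $F(\e)$ uniformly on the \emph{closed} interval $[\e_0,1)$, using the hypothesis $u\in L^q(\O,\R^d)$ together with the already-known bound at $\e=\e_0$.

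The key step is to split, for $\e\geq \e_0$, the inner integral according to whether $|x-y|<\e_0$ or $\e_0\leq |x-y|<\e$. The first piece gives exactly $(\e_0/\e)^N F(\e_0)\leq F(\e_0)\leq M$, which requires no new input. On the second piece $\frac{1}{|x-y|}\leq\frac{1}{\e_0}$, so the elementary inequality $|u(x)-u(y)|^q\leq 2^{q-1}(|u(x)|^q+|u(y)|^q)$ together with Fubini and the bound $\Leb^N(B_\e(x))\leq \omega_N\e^N$ yield
\begin{equation*}
\frac{1}{\e^N}\int_{\O}\int_{\O\cap (B_{\e}(x)\setminus B_{\e_0}(x))}\frac{|u(x)-u(y)|^q}{|x-y|}dy\,dx \;\leq\; \frac{2^{q}\omega_N}{\e_0}\|u\|_{L^q(\O)}^q,
\end{equation*}
which is a constant independent of $\e$. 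Combining the two pieces gives
\begin{equation*}
\hat{B}_{u,q}(\O)=\sup_{\e\in(0,1)}F(\e)\;\leq\; M+\frac{2^{q}\omega_N}{\e_0}\|u\|_{L^q(\O)}^q<\infty.
\end{equation*}

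No step looks genuinely difficult; the only subtlety worth flagging is that a crude $L^q$--based bound on the full ball, namely controlling $\int_{B_\e(x)}\!|x-y|^{-1}\,dy$, fails in dimension $N=1$, which is exactly why I split at $|x-y|=\e_0$ and use the pointwise estimate $|x-y|^{-1}\leq \e_0^{-1}$ on the annulus instead. With this splitting the argument is dimension-free.
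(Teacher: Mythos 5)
Your proof is correct and follows essentially the same route as the paper: both split the inner integral at $|x-y|=\e_0$, bound the small-ball piece by $F(\e_0)$, and on the annulus replace $|x-y|^{-1}$ by $\e_0^{-1}$, then use $|u(x)-u(y)|^q\leq 2^{q-1}(|u(x)|^q+|u(y)|^q)$ together with Fubini to reduce to $\|u\|_{L^q}^q$. The only cosmetic difference is in how the remaining $\e^{-N}$ factor is absorbed (you cancel it against $\Leb^N(B_\e(x))$, the paper bounds $\e^{-N}\leq\e_0^{-N}$ and uses $\Leb^N(B_1(0))$), and the paper takes the supremum over $(0,\e_0]$ so $\e_0$ is included—both trivial.
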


\begin{proof}
If $\hat{B}_{u,q}(\Omega)<\infty$, then $\overline{B}_{u,q}(\Omega)\leq \hat{B}_{u,q}(\Omega)<\infty$.
Assume
$\overline{B}_{u,q}(\Omega)<\infty.$
Then there exists a number
$0<\e_0<1$
such that
\begin{equation}
\sup_{\e\in (0,\e_0]}\int_{\Omega}\left(\int_{\Omega\cap B_\e(x)}\frac{1}{\e^N}\frac{|u(x)-u(y)|^q}{|x-y|}dy\right)dx<\infty.
\end{equation}
We have
\begin{align}
\sup_{\e\in [\e_0,1)}\int_{\Omega}\left(\int_{\Omega\cap B_\e(x)}\frac{1}{\e^N}\frac{|u(x)-u(y)|^q}{|x-y|}dy\right)dx\nonumber
\\
\leq \int_{\Omega}\left(\int_{\Omega\cap B_{\e_0}(x)}\frac{1}{\e_0^N}\frac{|u(x)-u(y)|^q}{|x-y|}dy\right)dx\nonumber
\\
+\sup_{\e\in [\e_0,1)}\int_{\Omega}\left(\int_{\Omega\cap \left(B_\e(x)\setminus B_{\e_0}(x)\right)}\frac{1}{\e^N}\frac{|u(x)-u(y)|^q}{|x-y|}dy\right)dx.
\end{align}
Since
$u\in L^q(\Omega,\mathbb{R}^d),$
then we have
\begin{align}
\sup_{\e\in [\e_0,1)}\int_{\Omega}\left(\int_{\Omega\cap \left(B_\e(x)\setminus B_{\e_0}(x)\right)}\frac{1}{\e^N}\frac{|u(x)-u(y)|^q}{|x-y|}dy\right)dx\nonumber
\\
\leq 2^{q-1}\frac{1}{\e_0^{N+1}}\sup_{\e\in [\e_0,1)}\int_{\Omega}\left(\int_{\Omega\cap \left(B_\e(x)\setminus B_{\e_0}(x)\right)}|u(x)|^qdy\right)dx\nonumber
\\
+2^{q-1}\frac{1}{\e_0^{N+1}}\sup_{\e\in [\e_0,1)}\int_{\Omega}\left(\int_{\Omega\cap \left(B_{\e}(x)\setminus B_{\e_0}(x)\right)}|u(y)|^qdy\right)dx\nonumber
\\
\leq2^q\frac{1}{\e_0^{N+1}}\Leb^N(B_1(0))\|u\|^q_{L^q(\Omega,\mathbb{R}^d)}<\infty.
\end{align}
Thus, $\hat{B}_{u,q}(\Omega)<\infty$.
\end{proof}

\begin{remark}
The set $BV^q\left(\Omega,\R^d\right)$ is a real Banach space continuously embedded in
$L^q\left(\Omega,\R^d\right)$ equipped with the norm:

\begin{equation}
\|u\|_{BV^q(\Omega,\mathbb{R}^d)}:=\left(\hat{B}_{u,q}(\Omega)\right)^{\frac{1}{q}}+\|u\|_{L^q(\Omega,\mathbb{R}^d)}.
\end{equation}
For more details about the space $BV^q(\Omega,\R^d)$ see \cite{P}.
\end{remark}

\subsection{Fine properties of $BV^q$-functions}
Recall Theorem \ref{thm:Hausdorff and Radon measures} in the Appendix. We give here a kind of generalization of this theorem for a sequence of Radon measures:
\begin{lemma}
\label{lem:the upper density lemma for a sequence of Radon measures}
Let $\Omega\subset\R^N$ be an open set. For every $n=1,2,\ldots$ let
$\mu_n$ be a positive Radon measure on $\Omega$. Moreover, assume
that for some $s\geq 0$, some $t>0$ and some Borel set
$A\subset\Omega$ we have
\begin{equation}\label{gtgyihhzzkkhhjgjhjggjgiyyu22jokuiu1ookjkjkkkjhhpkokkliokkoijjjjhliojojj109ojhyyuyyiuuyuuyujkhjhjijiyuuhjgjklkhuiuiuyhhiuy35klljkjjk11nhj}
\lim_{r\to 0^+}\Bigg\{\liminf\limits_{n\to
\infty}\Bigg(\sup\limits_{0<\rho\leq
r}\bigg(\frac{1}{\alpha(s)\rho^s}\mu_{n}\Big(\ov
B_{\rho}(x)\Big)\bigg)\Bigg)\Bigg\}\geq t
\quad\quad\text{for}\;\;\mathcal{H}^s\;\;{a.e.}\;\; x\in A,
\end{equation}
where
\begin{equation}
\alpha(s):=\frac{\pi^{s/2}}{\Gamma\left(\frac{s}{2}+1\right)},\quad \Gamma(s):=\int_{0}^\infty e^{-x}x^{s-1}dx,\quad \Gamma:[0,\infty)\to \R.
\end{equation}
Then we have
\begin{equation}\label{gtgyihhzzkkhhjgjhjggjgiyyu22jokuiu1ookjkjkkkjhhpkokkliokkoijjjjhliojojj109ojhyyuyyiuuyuuyujkhjhjijiyuuhjgjklkhuiuiuyhhiuy35klljkjjkjkjhuhh}
5^s\Big(\liminf\limits_{n\to \infty}\mu_n(G)\Big)\geq
t\mathcal{H}^s(A)\,,
\end{equation}
where $G\subset\Omega$ is an arbitrary open set, such that $A\subset
G$.
\end{lemma}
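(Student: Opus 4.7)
The plan is to carry out the classical Vitali $5r$-covering argument behind Theorem \ref{thm:Hausdorff and Radon measures} separately for each sufficiently large $\mu_n$, and to handle the fact that the hypothesis is only sequential by an Egorov-type reduction that makes the density bound hold uniformly in $n$ on a large subset. By Borel regularity of $\Haus^s$, and after first proving the inequality with $A$ replaced by an arbitrary compact subset $K\subset A$ of finite $\Haus^s$-measure (truncating from inside if $\Haus^s(A)=\infty$ and sending the truncation level to infinity at the end), we may fix such a compact $K\subset A$. Since $K$ is compact and $G$ is open with $K\subset G$, there is $\delta_0>0$ with $K+\ov B_{\delta_0}(0)\subset G$.

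Fix parameters $\eta\in(0,t)$ and $\lambda>0$, and, for each $\delta\in(0,\delta_0/5)$ and each $N\in\N$, consider the Borel set
$$K_{\delta,N}:=\Bigl\{x\in K:\forall n\ge N,\ \exists\,\rho_{x,n}\in(0,\delta]\text{ with }\mu_n\bigl(\ov B_{\rho_{x,n}}(x)\bigr)\ge(t-\eta)\,\alpha(s)\,\rho_{x,n}^s\Bigr\},$$
whose measurability follows from the right-continuity of $\rho\mapsto\mu_n(\ov B_\rho(x))$. The hypothesis \eqref{gtgyihhzzkkhhjgjhjggjgiyyu22jokuiu1ookjkjkkkjhhpkokkliokkoijjjjhliojojj109ojhyyuyyiuuyuuyujkhjhjijiyuuhjgjklkhuiuiuyhhiuy35klljkjjk11nhj} is, once one notices that $r\mapsto\liminf_n\sup_{0<\rho\le r}\mu_n(\ov B_\rho(x))/(\alpha(s)\rho^s)$ is monotone in $r$, equivalent to saying that for $\Haus^s$-a.e.\ $x\in K$ and \emph{every} $r>0$, $\liminf_n\sup_{0<\rho\le r}\mu_n(\ov B_\rho(x))/(\alpha(s)\rho^s)\ge t$; for such $x$ and any fixed $\delta$ one therefore has $x\in K_{\delta,N}$ for all $N$ sufficiently large. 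The sets $K_{\delta,N}$ are increasing in $N$ and exhaust $K$ modulo a $\Haus^s$-null set, so since $\Haus^s(K)<\infty$ we may pick $N_\delta$ with $\Haus^s(K\setminus K_{\delta,N_\delta})<\lambda$. Now, for any fixed $n\ge N_\delta$, the closed balls $\{\ov B_{\rho_{x,n}}(x):x\in K_{\delta,N_\delta}\}$ form a cover of $K_{\delta,N_\delta}$ with radii in $(0,\delta]$, and the Vitali $5r$-covering theorem yields a countable pairwise disjoint subfamily $\{\ov B_{\rho_j}(x_j)\}_j$ with $x_j\in K_{\delta,N_\delta}$, $\rho_j\le\delta$, $K_{\delta,N_\delta}\subset\bigcup_j\ov B_{5\rho_j}(x_j)$, and, by the choice of $\delta_0$, $\ov B_{\rho_j}(x_j)\subset G$. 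Consequently,
$$\Haus^s_{10\delta}(K_{\delta,N_\delta})\le\sum_j\alpha(s)(5\rho_j)^s\le\frac{5^s}{t-\eta}\sum_j\mu_n\bigl(\ov B_{\rho_j}(x_j)\bigr)\le\frac{5^s}{t-\eta}\,\mu_n(G),$$
and since the left-hand side does not depend on $n$, taking $\liminf_{n\to\infty}$ gives $\Haus^s_{10\delta}(K_{\delta,N_\delta})\le\frac{5^s}{t-\eta}\,\liminf_{n\to\infty}\mu_n(G)$.

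To conclude I send $\delta\to 0^+$ along a sequence and use
$$\Haus^s_{10\delta}(K)\le\Haus^s_{10\delta}(K_{\delta,N_\delta})+\Haus^s(K\setminus K_{\delta,N_\delta})\le\Haus^s_{10\delta}(K_{\delta,N_\delta})+\lambda$$
together with $\Haus^s_{10\delta}(K)\uparrow\Haus^s(K)$ to obtain $\Haus^s(K)-\lambda\le\frac{5^s}{t-\eta}\,\liminf_{n\to\infty}\mu_n(G)$; letting then $\lambda\downarrow 0$, $\eta\downarrow 0$ and exhausting $A$ from inside by compacta delivers \eqref{gtgyihhzzkkhhjgjhjggjgiyyu22jokuiu1ookjkjkkkjhhpkokkliokkoijjjjhliojojj109ojhyyuyyiuuyuuyujkhjhjijiyuuhjgjklkhuiuiuyhhiuy35klljkjjkjkjhuhh}. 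The main obstacle compared with the classical single-measure case is the coupling between the point $x$ and the index $n$ in the hypothesis: for each $x$ one knows the density bound for $\mu_n$ only once $n$ is large depending on $x$ and on the scale, so one cannot directly build a single Vitali cover valid simultaneously for all $n$. The Egorov-style passage to $K_{\delta,N_\delta}$ is exactly what turns the pointwise ``for $n$ large'' into a set-uniform threshold $N_\delta$, after which the $5r$-argument can be applied individually to each $\mu_n$ with $n\ge N_\delta$ and only then does one pass to the $\liminf_n$.
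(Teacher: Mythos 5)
Your proof is correct and follows essentially the same route as the paper's: truncate $A$ from inside to a compact $K$ of finite $\Haus^s$-measure, perform an Egorov-type passage (your $K_{\delta,N_\delta}$ is the paper's $K_{n_0}=\bigcap_{m\ge n_0}K'_m$) to make the density threshold hold uniformly in $n$ on a large subset, apply the Vitali $5r$-covering theorem for each fixed $n$ above the threshold, compare $\Haus^s_{10\delta}$ with the resulting disjoint family, take $\liminf_n$ and then $\delta\to 0^+$, and conclude by exhaustion. The only cosmetic difference is that you split the paper's single auxiliary parameter $\delta$ into three independent ones ($\delta$, $\eta$, $\lambda$), which slightly streamlines the final limiting step but does not change the argument.
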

\begin{proof}
Assume first that a compact $K\subset A$ is such that
\begin{equation}\label{gthkhkhhkhkjh}
\mathcal{H}^s(K)<\infty\,.
\end{equation}
Let $U\subset\Omega$ be an arbitrary bounded open set, such that
$K\subset U\subset\ov U\subset\Omega$. Furthermore, let $\delta\in
(0,1)$ be an arbitrary real number, satisfying
\begin{equation}
\label{eq:delta is less than the distance between K and the complement of U}
\delta<|y-z|\quad\quad\forall\, y\in K\;\;\;\forall\, z\in
\R^N\setminus U\,.
\end{equation}
Then, by
\er{gtgyihhzzkkhhjgjhjggjgiyyu22jokuiu1ookjkjkkkjhhpkokkliokkoijjjjhliojojj109ojhyyuyyiuuyuuyujkhjhjijiyuuhjgjklkhuiuiuyhhiuy35klljkjjk11nhj},
we have
\begin{equation}\label{gtgyihhzzkkhhjgjhjggjgiyyu22jokuiu1ookjkjkkkjhhpkokkliokkoijjjjhliojojj109ojhyyuyyiuuyuuyujkhjhjijiyuuhjgjklkhuiuiuyhhiuy35klljkjjk11nhjpoopooo}
\liminf\limits_{n\to \infty}\Bigg(\sup\limits_{0<\rho<
\delta}\bigg(\frac{1}{\alpha(s)\rho^s}\mu_{n}\Big(\ov
B_{\rho}(x)\Big)\bigg)\Bigg)\geq t
\quad\quad\text{for}\;\;\mathcal{H}^s\;\;{a.e.}\;\; x\in K\,.
\end{equation}

Next, for every natural $n\geq 1$ we define
\begin{equation}\label{gtgyihhzzkkhhjgjhjggjgiyyu22jokuiu1ookjkjkkkjhhpkokkliokkoijjjjhliojojj109ojhyyuyyiuuyuuyujkhjhjijiyuuhjgjklkhuiuiuyhhiuy35klljkjjk11nhjpoopoookkkhhkh}
K'_n=\Bigg\{x\in K\;:\;\;\sup\limits_{0<\rho<
\delta}\bigg(\frac{1}{\alpha(s)\rho^s}\mu_{n}\Big(\ov
B_{\rho}(x)\Big)\bigg)> t(1-\delta)\Bigg\} \,,
\end{equation}

\begin{equation}
K_n=\bigcap_{m=n}^{\infty}K'_m\,.
\end{equation}
Then, $K_n\subset K_{n+1}$ and by
\eqref{gtgyihhzzkkhhjgjhjggjgiyyu22jokuiu1ookjkjkkkjhhpkokkliokkoijjjjhliojojj109ojhyyuyyiuuyuuyujkhjhjijiyuuhjgjklkhuiuiuyhhiuy35klljkjjk11nhjpoopooo}
we have
\begin{equation}
\mathcal{H}^s\Bigg(K\setminus\bigcup_{n=1}^{\infty}K_n\Bigg)=0\,.
\end{equation}
In particular, there exists a natural number $n_0\geq 1$, such that
\begin{equation}\label{gtgyihhzzkkhhjgjhjggjgiyyu22jokuiu1ookjkjkkkjhhpkokkliokkoijjjjhliojojj109ojhyyuyyiuuyuuyujkhjhjijiyuuhjgjklkhuiuiuyhhiuy35klljkjjk11nhjpoopoooplklklkkk}
\mathcal{H}^s\big(K\setminus K_{n_0}\big)<\delta\,,
\end{equation}
and moreover, for every $x\in K_{n_0}$ and every $n\geq n_0$ we have
\begin{equation}\label{gtgyihhzzkkhhjgjhjggjgiyyu22jokuiu1ookjkjkkkjhhpkokkliokkoijjjjhliojojj109ojhyyuyyiuuyuuyujkhjhjijiyuuhjgjklkhuiuiuyhhiuy35klljkjjk11nhjpoopoookkkhhkhukkukkj}
\sup\limits_{0<\rho<
\delta}\bigg(\frac{1}{\alpha(s)\rho^s}\mu_{n}\Big(\ov
B_{\rho}(x)\Big)\bigg)> t(1-\delta) \,.
\end{equation}
Therefore, for every $x\in K_{n_0}$ and every $n\geq n_0$ there
exists $\rho_n(x)\in(0,\delta)$ such that
\begin{equation}\label{gtgyihhzzkkhhjgjhjggjgiyyu22jokuiu1ookjkjkkkjhhpkokkliokkoijjjjhliojojj109ojhyyuyyiuuyuuyujkhjhjijiyuuhjgjklkhuiuiuyhhiuy35klljkjjk11nhjpoopoookkkhhkhukkujkkh}
t\alpha(s)(\rho_n(x))^s<\frac{1}{1-\delta}\,\mu_{n}\Big(\ov
B_{\rho_n(x)}(x)\Big)\,.
\end{equation}
Next, for every natural $n\geq n_0$ consider a collection of
closed balls
\begin{equation}\label{gtgyihhzzkkhhjgjhjggjgiyyu22jokuiu1ookjkjkkkjhhpkokkliokkoijjjjhliojojj109ojhyyuyyiuuyuuyujkhjhjijiyuuhjgjklkhuiuiuyhhiuy35klljkjjkjkhl;l;33yy}
\mathcal{F}^{(n)}:=\Big\{\ov B_{\rho_{n}(x)}(x)\subset\R^N\;:\;x\in
K_{n_0}\Big\}\,,
\end{equation}
such that we have by
$\eqref{eq:delta is less than the distance between K and the complement of U}$
\begin{equation}\label{gtgyihhzzkkhhjgjhjggjgiyyu22jokuiu1ookjkjkkkjhhpkokkliokkoijjjjhliojojj109ojhyyuyyiuuyuuyujkhjhjijiyuuhjgjklkhuiuiuyhhiuy35klljkjjkjkhkljlj33yy}
K_{n_0}\subset\bigcup\limits_{B_{\rho_{n}(x)}(x)\in
\mathcal{F}^{(n)}}\ov B_{\rho_{n}(x)}(x)\subset U\,.
\end{equation}
Then, by Theorem \ref{thm:Vitali's covering theorem} (Vitali's covering theorem), for every $n\geq n_{0}$ there
exists at most countable family of \underline{disjoint} balls
\begin{equation}\label{gtgyihhzzkkhhjgjhjggjgiyyu22jokuiu1ookjkjkkkjhhpkokkliokkoijjjjhliojojj109ojhyyuyyiuuyuuyujkhjhjijiyuuhjgjklkhuiuiuyhhiuy35klljkjjkjkhlukkkljjklkklklkk3355}
\mathcal{G}^{(n)}\subset\mathcal{F}^{(n)}\,,
\end{equation}
so that
\begin{equation}
\label{eq: K_n0 is a subset of D hat}
K_{n_0}\subset \hat D^{(n)}\,,
\end{equation}
where we denote
\begin{equation}\label{gtgyihhzzkkhhjgjhjggjgiyyu22jokuiu1ookjkjkkkjhhpkokkliokkoijjjjhliojojj109ojhyyuyyiuuyuuyujkhjhjijiyuuhjgjklkhuiuiuyhhiuy35klljkjjkjkhkljljjjjkkkjjhhhllllkljk3355}
D^{(n)}:=\bigcup\limits_{\ov B_{\rho_{n}(x)}(x)\in
\mathcal{G}^{(n)}}\ov B_{\rho_{n}(x)}(x)\subset U\quad{and}\quad
\hat D^{(n)}:=\bigcup\limits_{\ov B_{\rho_{n}(x)}(x)\in
\mathcal{G}^{(n)}}\ov B_{5\rho_{n}(x)}(x)\,.
\end{equation}
Thus, by
\er{gtgyihhzzkkhhjgjhjggjgiyyu22jokuiu1ookjkjkkkjhhpkokkliokkoijjjjhliojojj109ojhyyuyyiuuyuuyujkhjhjijiyuuhjgjklkhuiuiuyhhiuy35klljkjjk11nhjpoopoookkkhhkhukkujkkh},
for every $n\geq n_{0}$ we have
\begin{multline}
\label{eq: estimate for the sum of radii of balls is G^n}
\sum\limits_{\ov B_{\rho_{n}(x)}(x)\in
\mathcal{G}^{(n)}}t\alpha(s)(\rho_{n}(x))^s\leq
\frac{1}{1-\delta}\sum\limits_{\ov B_{\rho_{n}(x)}(x)\in
\mathcal{G}^{(n)}}\mu_{n}\Big(\ov
B_{\rho_{n}(x)}(x)\Big)\\=\frac{1}{1-\delta}\mu_{n}\big(D^{(n)}\big)
\leq\frac{1}{1-\delta}\mu_{n}\big(U\big)\,.
\end{multline}
Then, by $\eqref{eq: K_n0 is a subset of D hat}$ and $\eqref{eq: estimate for the sum of radii of balls is G^n}$ for every $n\geq n_{0}$ we have
\begin{equation}\label{gtgyihhzzkkhhjgjhjggjgiyyu22jokuiu1ookjkjkkkjhhpkokkliokkoijjjjhliojojj109ojhyyuyyiuuyuuyujkhjhjijiyuuhjgjklkhuiuiuyhhiuy35klljkjjkjkhkljljjjjkkkkjljjljkjljfyfyfghghghkkljhjkkj}
\frac{t}{5^s}\mathcal{H}^s_{10\delta}\big(K_{n_0}\big)\leq
\sum\limits_{\ov B_{\rho_{n}(x)}(x)\in
\mathcal{G}^{(n)}}\frac{t}{5^s}\alpha(s)(5\rho_{n}(x))^s=
\sum\limits_{\ov B_{\rho_{n}(x)}(x)\in
\mathcal{G}^{(n)}}t\alpha(s)(\rho_{n}(x))^s\leq\frac{1}{1-\delta}\mu_{n}\big(U\big)\,.
\end{equation}

So, for every $n\geq n_{0}$ we have
\begin{equation}\label{gtgyihhzzkkhhjgjhjggjgiyyu22jokuiu1ookjkjkkkjhhpkokkliokkoijjjjhliojojj109ojhyyuyyiuuyuuyujkhjhjijiyuuhjgjklkhuiuiuyhhiuy35klljkjjkjkhkljljjjjkkkkjljjljkjljfyfyfghghghkkljhjkkjll55}
t\mathcal{H}^s_{10\delta}\big(K_{n_0}\big)\leq
\frac{5^s}{1-\delta}\mu_{n}\big(U\big)\,.
\end{equation}
Thus, by
\er{gtgyihhzzkkhhjgjhjggjgiyyu22jokuiu1ookjkjkkkjhhpkokkliokkoijjjjhliojojj109ojhyyuyyiuuyuuyujkhjhjijiyuuhjgjklkhuiuiuyhhiuy35klljkjjk11nhjpoopoooplklklkkk}
and
\er{gtgyihhzzkkhhjgjhjggjgiyyu22jokuiu1ookjkjkkkjhhpkokkliokkoijjjjhliojojj109ojhyyuyyiuuyuuyujkhjhjijiyuuhjgjklkhuiuiuyhhiuy35klljkjjkjkhkljljjjjkkkkjljjljkjljfyfyfghghghkkljhjkkjll55}
together, we have
\begin{equation}\label{gtgyihhzzkkhhjgjhjggjgiyyu22jokuiu1ookjkjkkkjhhpkokkliokkoijjjjhliojojj109ojhyyuyyiuuyuuyujkhjhjijiyuuhjgjklkhuiuiuyhhiuy35klljkjjkjkhkljljjjjkkkkjljjljkjljfyfyfghghghkkljhjkkjll}
t\mathcal{H}^s_{10\delta}\big(K\big)\leq
\frac{5^s}{1-\delta}\mu_{n}\big(U\big)+t\delta\,.
\end{equation}

Then, taking the lower limit as $n\to\infty$ in
\er{gtgyihhzzkkhhjgjhjggjgiyyu22jokuiu1ookjkjkkkjhhpkokkliokkoijjjjhliojojj109ojhyyuyyiuuyuuyujkhjhjijiyuuhjgjklkhuiuiuyhhiuy35klljkjjkjkhkljljjjjkkkkjljjljkjljfyfyfghghghkkljhjkkjll}
implies
\begin{equation}
\label{eq: ineqality of H^d10delta}
t\mathcal{H}^s_{10\delta}\big(K\big)\leq
\frac{5^s}{1-\delta}\Big(\liminf_{n\to
\infty}\mu_{n}\big(U\big)\Big)+t\delta\,.
\end{equation}
Thus, letting $\delta\to 0^+$ in $\eqref{eq: ineqality of H^d10delta}$
we deduce
\begin{equation}
\label{eq:local inequality for H^d}
t\mathcal{H}^s(K)\leq
5^s\Big(\liminf\limits_{n\to\infty}\mu_{n}(U)\Big)\,.
\end{equation}

By a well known theorem about Hausdorff measures (see Corollary 2.10.48 in Federer's book \cite{F}) we have
\begin{equation}
\mathcal{H}^s(A)=\sup\limits_{K\subset
A,\,\mathcal{H}^s(K)<\infty,K\,is\,compact}\mathcal{H}^s(K)\,.
\end{equation}
Let $G\subset\Omega$ be an arbitrary open set such that $A\subset G$. For every compact $K\subset A$ let $U_K$ be an open and bounded set such that $K\subset U_K\subset \ov U_K\subset G$.
Therefore, by $\eqref{eq:local inequality for H^d}$ we deduce
\begin{align}
t\mathcal{H}^s(A)&=\sup\limits_{K\subset
A,\,\mathcal{H}^s(K)<\infty,K\,is\,compact}t\mathcal{H}^s(K)\nonumber
\\
&\leq \sup\limits_{K\subset
A,\,\mathcal{H}^s(K)<\infty,K\,is\,compact}5^s\bigg(\liminf\limits_{n\to\infty}\mu_n(U_K)\bigg)
\leq
5^s\bigg(\liminf\limits_{n\to\infty}\mu_n(G)\bigg).
\end{align}
\end{proof}

\begin{theorem}
\label{thm:lower function of upper density of sequence of Radon measures}
Let $\Omega\subset\R^N$ be an open set. For every $n=1,2,\ldots$ let
$\mu_n$ be a positive Radon measure on $\Omega$. Moreover, assume
that for some $s\geq 0$, we have
\begin{equation}
\label{eq: upper densitiy of a sequence of Radon measures is bounded from below by g}
\lim_{r\to 0^+}\Bigg\{\liminf\limits_{n\to
\infty}\Bigg(\sup\limits_{0<\rho\leq
r}\frac{1}{\alpha(s)\rho^s}\mu_{n}\Big(\ov
B_{\rho}(x)\Big)\Bigg)\Bigg\}\geq
g(x)\quad\quad\text{for}\;\;\mathcal{H}^s\;\;{a.e.}\;\; x\in
\Omega\,,
\end{equation}
where $g:\Omega\to[0,\infty]$ is some Borel function. Then,
\begin{equation}\label{gtgyihhzzkkhhjgjhjggjgiyyu22jokuiu1ookjkjkkkjhhpkokkliokkoijjjjhliojojj109ojhyyuyyiuuyuuyujkhjhjijiyuuhjgjklkhuiuiuyhhiuy35klljkjjkjkjkljlj}
5^s\Big(\liminf\limits_{n\to \infty}\mu_n(\Omega)\Big)\geq
\int_\Omega g(x)d\mathcal{H}^s(x).
\end{equation}
\end{theorem}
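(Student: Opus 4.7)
The plan is to approximate $g$ from below by nonnegative simple Borel functions with pairwise disjoint supports, and apply Lemma \ref{lem:the upper density lemma for a sequence of Radon measures} to each level of the approximation.

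If $\liminf_{n\to\infty}\mu_n(\Omega)=+\infty$ the conclusion is trivial, so assume this $\liminf$ is finite. Applying Lemma \ref{lem:the upper density lemma for a sequence of Radon measures} with $A=\{g\geq t\}$, $G=\Omega$ and any $t>0$ yields the Chebyshev-type bound $\mathcal{H}^s\big(\{g\geq t\}\big)\leq 5^s\,t^{-1}\liminf_{n\to\infty}\mu_n(\Omega)<\infty$, so every positive super-level set of $g$ has finite $\mathcal{H}^s$-measure.

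Now fix an arbitrary simple nonnegative Borel function $\phi=\sum_{i=1}^M a_i\chi_{A_i}$ with $a_i>0$, the $A_i$ pairwise disjoint Borel, and $\phi\leq g$. Since $A_i\subset\{g\geq a_i\}$, each $A_i$ has finite $\mathcal{H}^s$-measure, and Federer's Corollary 2.10.48 lets us pick, for any $\eta>0$, compact sets $K_i\subset A_i$ with $a_i\mathcal{H}^s(A_i\setminus K_i)<\eta/M$. The $K_i$ being pairwise disjoint compacts inside the open set $\Omega$, standard separation yields pairwise disjoint open sets $U_i$ with $K_i\subset U_i\subset\Omega$. The hypothesis of Lemma \ref{lem:the upper density lemma for a sequence of Radon measures} at threshold $a_i$ holds $\mathcal{H}^s$-a.e. on $K_i$, since on $K_i$ we have $g\geq a_i$ and by assumption the $\liminf$ of the upper density dominates $g$ $\mathcal{H}^s$-a.e. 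Applying the lemma to $A=K_i$, $G=U_i$, threshold $a_i$ gives $5^s\liminf_{n\to\infty}\mu_n(U_i)\geq a_i\mathcal{H}^s(K_i)$. Summing, using super-additivity of $\liminf$ for finite sums together with $\sum_i\mu_n(U_i)\leq\mu_n(\Omega)$ (by disjointness of the $U_i$),
\begin{equation*}
5^s\liminf_{n\to\infty}\mu_n(\Omega)\;\geq\;\sum_{i=1}^M a_i\mathcal{H}^s(K_i)\;\geq\;\int_\Omega\phi\,d\mathcal{H}^s-\eta.
\end{equation*}
Sending $\eta\to 0^+$ yields $\int_\Omega\phi\,d\mathcal{H}^s\leq 5^s\liminf_{n\to\infty}\mu_n(\Omega)$ for every admissible $\phi$.

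Finally, take an increasing sequence of such admissible simple functions $\phi_k\nearrow g$ pointwise (for instance $\phi_k=2^{-k}\lfloor 2^k(g\wedge k)\rfloor$, which is a finite sum of the above form on the disjoint Borel level sets of $g\wedge k$) and invoke monotone convergence to pass to the limit and obtain $\int_\Omega g\,d\mathcal{H}^s\leq 5^s\liminf_{n\to\infty}\mu_n(\Omega)$. The main technical ingredients — inner compact approximation for the possibly non-Radon measure $\mathcal{H}^s$ on sets of finite $\mathcal{H}^s$-measure, and disjoint open enclosures of finitely many disjoint compacts in $\Omega$ — are standard, so the real substance of the argument is contained in the preceding lemma.
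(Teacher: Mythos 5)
Your proof is correct, and it takes a genuinely different route from the paper's. The paper first reduces to the case $\liminf_n\mu_n(\Omega)=L<\infty$, extracts a subsequence $\mu_{n_k}\rightharpoonup^*\mu$, and then works with the limit Radon measure $\mu$: Lemma \ref{lem:the upper density lemma for a sequence of Radon measures} combined with outer regularity of $\mu$ gives $t\mathcal{H}^s(K)\leq 5^s\mu(K)$ for compacts $K$ on which $g\geq t$, inner regularity and Federer's Corollary $2.10.48$ upgrade this to Borel sets, and finally simple lower functions for $g$ together with $\mu(\Omega)\leq\liminf_n\mu_n(\Omega)$ finish the argument. You instead avoid any compactness argument for measures: after observing (via a Chebyshev-type application of Lemma \ref{lem:the upper density lemma for a sequence of Radon measures}) that super-level sets of $g$ have finite $\mathcal{H}^s$-measure, you approximate from inside by pairwise disjoint compacts $K_i$, enclose them in pairwise disjoint open sets $U_i\subset\Omega$, apply Lemma \ref{lem:the upper density lemma for a sequence of Radon measures} on each pair $(K_i,U_i)$, and sum using super-additivity of $\liminf$ and the fact that $\sum_i\mu_n(U_i)\leq\mu_n(\Omega)$. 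This is more elementary (no weak$^*$ compactness, no regularity of the limit measure, no passage to a subsequence) and self-contained, at the modest cost of an extra approximation step ($A_i$ by $K_i$, then the disjoint-open-neighborhoods trick) that the paper delegates to the regularity of $\mu$. Both proofs hinge on the same lemma; the difference is purely in how one assembles the local estimates into the global integral inequality. For completeness you might state explicitly that the disjointness of the $U_i$ can be achieved by taking $\epsilon$-neighborhoods of the $K_i$ with $\epsilon$ smaller than one third of the minimum of their mutual distances and their distances to $\partial\Omega$, but this is indeed standard.
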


\begin{proof}
Assume that
\begin{equation}\label{gtgyihhzzkkhhjgjhjggjgiyyu22jokuiu1ookjkjkkkjhhpkokkliokkoijjjjhliojojj109ojhyyuyyiuuyuuyujkhjhjijiyuuhjgjklkhuiuiuyhhiuy35klljkjjkjkjkljljkljkljl}
\liminf\limits_{n\to \infty}\mu_n(\Omega)=L<\infty\,,
\end{equation}
otherwise
\er{gtgyihhzzkkhhjgjhjggjgiyyu22jokuiu1ookjkjkkkjhhpkokkliokkoijjjjhliojojj109ojhyyuyyiuuyuuyujkhjhjijiyuuhjgjklkhuiuiuyhhiuy35klljkjjkjkjkljlj}
is trivial. Then, up to a subsequence,
\begin{equation}\label{gtgyihhzzkkhhjgjhjggjgiyyu22jokuiu1ookjkjkkkjhhpkokkliokkoijjjjhliojojj109ojhyyuyyiuuyuuyujkhjhjijiyuuhjgjklkhuiuiuyhhiuy35klljkjjkjkjkljljkljkljlii}
\lim\limits_{n\to \infty}\mu_n(\Omega)=L<\infty\,.
\end{equation}
Thus, up to a further subsequence,
\begin{equation}\label{jmjkjhjhjhj}
\mu_n\rightharpoonup^*\mu\quad\quad\text{weakly}^*\quad\text{in}\quad
\Omega\,.
\end{equation}
Let $K\subset\Omega$ be a compact set such that for $\mathcal{H}^s$ a.e. $x\in K$ we have $g(x)\geq t$ for some number $t> 0$. Let $G\subset\Omega$ be an arbitrary open set such that $K\subset G$.
Let $U$ be a bounded open set such that $K\subset U\subset \ov U \subset G$.
Then, by Lemma \ref{lem:the upper density lemma for a sequence of Radon measures}, assumption
$\eqref{eq: upper densitiy of a sequence of Radon measures is bounded from below by g}$ and the weak* convergence of $\mu_n$ to $\mu$
\begin{align}\label{gtgyihhzzkkhhjgjhjggjgiyyu22jokuiu1ookjkjkkkjhhpkokkliokkoijjjjhliojojj109ojhyyuyyiuuyuuyujkhjhjijiyuuhjgjklkhuiuiuyhhiuy35klljkjjkjkhkljljjjjkkkkjljjiiljljjjkjkiuhhhkjhjjhjkhhjhhghhg}
t\mathcal{H}^s(K)&\leq
5^s\bigg(\liminf\limits_{n\to\infty}\mu_n(U)\bigg)\leq
5^s\bigg(\liminf\limits_{n\to\infty}\mu_n(\ov U)\bigg)\nonumber
\\
&\leq 5^s\bigg(\limsup\limits_{n\to\infty}\mu_n(\ov U)\bigg)\leq
5^s\mu(\ov U)\leq 5^s\mu(G).
\end{align}
By the outer regularity of the Radon measure $\mu$ we get $t\mathcal{H}^s(K)\leq 5^s\mu(K)$.

Let $B\subset\Omega$ be any Borel set such that $g(x)\geq t$  $\mathcal{H}^s$ a.e. $x\in B$ for some $t>0$.
By Corollary 2.10.48 in \cite{F} and the inner regularity of
$\mu$ we get
\begin{align}
\label{eq:inquality8}
t\mathcal{H}^s(B)&=\sup\limits_{K\subset
B,\,\mathcal{H}^s(K)<\infty,K\,is\,compact}t\mathcal{H}^s(K)\nonumber
\\
&\leq \sup\limits_{K\subset
B,\,\mathcal{H}^s(K)<\infty,K\,is\,compact}5^s\mu(K)\nonumber
\\
&\leq 5^s\sup\limits_{K\subset
B,K\,is\,compact}\mu(K)=5^s\mu(B).
\end{align}

Then, by \eqref{eq:inquality8}, the definition of Lebesgue integral (using simple lower functions for $g$) and the weak* convergence of $\mu_n$ to $\mu$ we have
\begin{equation}\label{gtgyihhzzkkhhjgjhjggjgiyyu22jokuiu1ookjkjkkkjhhpkokkliokkoijjjjhliojojj109ojhyyuyyiuuyuuyujkhjhjijiyuuhjgjklkhuiuiuyhhiuy35klljkjjkjkhkljljjjjkkkkjljjiiljljjjkjkiuhhhkjhjjhjkhhjhhghhghhuuhuhhhyyt}
\int_\Omega g(x)d\mathcal{H}^s(x)\leq 5^s\mu(\Omega)\leq
5^s\Big(\liminf\limits_{n\to \infty}\mu_n(\Omega)\Big)\,.
\end{equation}
\end{proof}

\begin{theorem}
\label{thm:Besov constant controls double average integral}
Let $\Omega\subset\R^N$ be an open set and $q,r\in (0,\infty)$ such that $rq\leq N$. Next consider
any sequence $\e_n>0$, such that $\lim_{n\to\infty}\e_n=0$ and
suppose $u:\Omega\to \R^d$ is $\mathcal{L}^N$ measurable function. Then, there exists a constant
$\beta(N;r,q)$, depending on $N,r,q$ only, such that
\begin{multline}
\liminf\limits_{n\to\infty}\int_\Omega\Bigg\{\fint_{B_1(0)}\chi_\Omega\big(x+\e_n
z\big) \frac{\Big|u\big(x+\e_n
z\big)-u(x)\Big|^q}{\e_n^{rq}}dz\Bigg\}dx
\\
\geq\beta(N;r,q)\int_\Omega \Bigg\{\liminf\limits_{n\to
\infty}\fint_{B_{1}(0)}\fint_{B_1(0)} \Big|u\big(x+(\e_n/2)
z\big)-u\big(x+(\e_n/2) y\big)\Big|^qdzdy\Bigg\}d\mathcal{H}^{N-rq}(x)\,.
\end{multline}
\end{theorem}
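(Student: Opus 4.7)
The plan is to apply \rth{thm:lower function of upper density of sequence of Radon measures} to the sequence of positive Radon measures
\begin{equation*}
\mu_n(A):=\int_A f_n(x)\,dx,\qquad f_n(x):=\frac{1}{\en^{rq}}\fint_{B_1(0)}\chi_\O(x+\en z)\bigl|u(x+\en z)-u(x)\bigr|^q\,dz,
\end{equation*}
so that the left-hand side of the target inequality equals $\liminf_{n\to\infty}\mu_n(\O)$. Without loss of generality this liminf is finite (otherwise the claim is trivial); after extracting a subsequence realizing it, each $\mu_n$ may be taken as a finite Radon measure (a Borel representative of $u$ makes $f_n$ Borel). Write $s:=N-rq\in[0,N)$ and
\begin{equation*}
I_n(x_0):=\fint_{B_1(0)}\fint_{B_1(0)}\bigl|u(x_0+(\en/2)z)-u(x_0+(\en/2)y)\bigr|^q\,dz\,dy.
\end{equation*}

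The heart of the proof is a pointwise lower bound for $\mu_n\bigl(\ov B_{\en/2}(x_0)\bigr)$. Fix $x_0\in\O$ and let $n$ be so large that $B_{3\en/2}(x_0)\subset\O$, which is possible since $\O$ is open. For each $y\in B_1(0)$, set $x=x_0+(\en/2)y$ and perform the change of variables $w\in B_1(0)\mapsto z=(w-y)/2$ in the inner average defining $f_n(x)$. The Jacobian is $2^{-N}$, one has $x+\en z=x_0+(\en/2)w$, and the image $\{(w-y)/2:w\in B_1(0)\}$ is strictly contained in $B_1(0)$. Dropping $\chi_\O$ (which equals $1$ on the relevant arguments, by the choice of $n$) and restricting the $z$-integration to this image yields
\begin{equation*}
f_n\bigl(x_0+(\en/2)y\bigr)\ge\frac{1}{2^N\en^{rq}}\fint_{B_1(0)}\bigl|u(x_0+(\en/2)w)-u(x_0+(\en/2)y)\bigr|^q\,dw.
\end{equation*}
Averaging in $y$ over $B_1(0)$ and multiplying by $|B_{\en/2}(x_0)|=\a(N)(\en/2)^N$ produces
\begin{equation*}
\mu_n\bigl(\ov B_{\en/2}(x_0)\bigr)\ge\frac{\a(N)}{4^N}\,\en^{\,s}\,I_n(x_0).
\end{equation*}

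Choosing $\rho=\en/2$ in the supremum, for any fixed $r>0$ and all $n$ large enough that $\en/2\le r$ one obtains
\begin{equation*}
\sup_{0<\rho\le r}\frac{1}{\a(s)\rho^s}\mu_n\bigl(\ov B_\rho(x_0)\bigr)\ge\frac{\a(N)\,2^s}{4^N\,\a(s)}\,I_n(x_0),
\end{equation*}
and passing successively to $\liminf_{n\to\infty}$ and $r\to 0^+$ gives
\begin{equation*}
\lim_{r\to 0^+}\liminf_{n\to\infty}\sup_{0<\rho\le r}\frac{1}{\a(s)\rho^s}\mu_n\bigl(\ov B_\rho(x_0)\bigr)\ge\frac{\a(N)\,2^s}{4^N\,\a(s)}\,\liminf_{n\to\infty}I_n(x_0)
\end{equation*}
for every $x_0\in\O$. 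Applying \rth{thm:lower function of upper density of sequence of Radon measures} with the Borel function $g(x):=\frac{\a(N)\,2^s}{4^N\a(s)}\liminf_{n\to\infty}I_n(x)$ then delivers the claim with the explicit constant
\begin{equation*}
\beta(N;r,q):=\frac{\a(N)\,2^s}{5^s\cdot 4^N\,\a(s)},\qquad s=N-rq.
\end{equation*}
The only real subtlety is executing the substitution so that the single-average integrand of $\mu_n$ reproduces the symmetric double average $I_n$ at the natural scale $\rho=\en/2$; everything else is bookkeeping plus one invocation of the density lemma.
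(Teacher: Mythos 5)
Your proposal is correct and rests on the same cornerstone as the paper's proof, namely Theorem~\ref{thm:lower function of upper density of sequence of Radon measures} applied to the measures $\mu_n = f_n\,\mathcal L^N$, but the way you arrive at the pointwise density bound is genuinely cleaner. The paper tests the supremum at $\rho=\e_n$, which after rescaling yields the \emph{asymmetric} integrand $\big|u(x+\e_n y+\e_n z)-u(x+\e_n y)\big|^q$; it then needs a separate comparison chain (the inequalities \eqref{eq:ineqality2}, using $B_{\e_n/2}(x)\subset B_{\e_n}(y)$ for $y\in B_{\e_n/2}(x)$ and paying a factor $2^{2N}$) to convert that into the symmetric double average $I_n(x)$. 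You instead test at $\rho=\e_n/2$ and, for each $y\in B_1(0)$ with $x=x_0+(\e_n/2)y$, substitute $z=(w-y)/2$ in the inner average; since the image $B_{1/2}(-y/2)$ sits inside $B_1(0)$, this produces $\fint_{B_1(0)}|u(x_0+(\e_n/2)w)-u(x_0+(\e_n/2)y)|^q\,dw$ directly, with Jacobian $2^{-N}$, and averaging in $y$ over $B_{\e_n/2}(x_0)$ gives $\mu_n(\ov B_{\e_n/2}(x_0))\ge \frac{\alpha(N)}{4^N}\e_n^{\,s}I_n(x_0)$ in one stroke. This collapses the paper's two-step estimate into one and yields a constant $\beta=\frac{\alpha(N)2^s}{5^s4^N\alpha(s)}$ that is $2^s$ larger than the paper's $\frac{\alpha(N)}{5^s 2^{2N}\alpha(s)}$ (neither is claimed optimal, and the statement only asks for some constant). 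Your measurability and finiteness remarks (Borel representative; the liminf is WLOG finite, so after extraction the $\mu_n$ are eventually finite Radon measures, and the subsequence only increases the right-hand side since $\liminf_k I_{n_k}\ge\liminf_n I_n$) are the same technicalities the paper leaves tacit. In short: same skeleton, tighter pointwise estimate, no gaps.
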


\begin{proof}
For every $x\in\Omega$ we have
\begin{multline}
\liminf\limits_{n\to \infty}\Bigg(\sup\limits_{0<\rho<
\delta}\frac{1}{\alpha(N-rq)\rho^{N-rq}}\int_{B_{\rho}(x)}\Bigg\{\fint_{B_1(0)}\chi_\Omega\big(y+\e_n
z\big) \frac{\Big|u\big(y+\e_n
z\big)-u(y)\Big|^q}{\e_n^{rq}}dz\Bigg\}dy\Bigg)
\\
=\liminf\limits_{n\to \infty}\Bigg(\sup\limits_{0<M<
\frac{\delta}{\e_n}}\frac{1}{\alpha(N-rq)M^{N-rq}\e^N_n}\int_{B_{M\e_n}(x)}\Bigg\{\fint_{B_1(0)}\chi_\Omega\big(y+\e_n
z\big) \Big|u\big(y+\e_n z\big)-u(y)\Big|^qdz\Bigg\}dy\Bigg)
\\
\geq \liminf\limits_{n\to \infty}\frac{1}{\alpha(N-rq)\e^N_n}\int_{B_{\e_n}(x)}\Bigg\{\fint_{B_1(0)}\chi_\Omega\big(y+\e_n
z\big) \Big|u\big(y+\e_n z\big)-u(y)\Big|^qdz\Bigg\}dy
\\
=\liminf\limits_{n\to
\infty}\frac{1}{\alpha(N-rq)}\int_{B_{1}(0)}\Bigg\{\fint_{B_1(0)}\chi_\Omega\big(x+\e_n
y+\e_n
z\big)\Big|u\big(x+\e_n
y+\e_n z\big)-u(x+\e_n y)\Big|^qdz\Bigg\}dy
\\
=\liminf\limits_{n\to
\infty}\frac{1}{\alpha(N-rq)}\int_{B_{1}(0)}\Bigg\{\fint_{B_1(0)}\Big|u\big(x+\e_n
y+\e_n z\big)-u(x+\e_n y)\Big|^qdz\Bigg\}dy.
\end{multline}
Thus,
\begin{multline}\label{gtgyihhzzkkhhjgjhjggjgiyyu22jokuiu1ookjkjkkkjhhpkokkliokkoijjjjhliojojj109ojhyyuyyiuuyuuyujkhjhjijiyuuhjgjklkhuiuiuyhhiuy35klljkjjkkhjh11kjkhuyy}
\lim\limits_{\delta\to 0^+}\liminf\limits_{n\to
\infty}\Bigg(\sup\limits_{0<\rho<
\delta}\frac{1}{\alpha(N-rq)\rho^{N-rq}}\int_{B_{\rho}(x)}\Bigg\{\fint_{B_1(0)}\chi_\Omega\big(y+\e_n
z\big) \frac{\Big|u\big(y+\e_n
z\big)-u(y)\Big|^q}{\e_n^{rq}}dz\Bigg\}dy\Bigg)
\\
\geq \liminf\limits_{n\to
\infty}\frac{1}{\alpha(N-rq)}\int_{B_{1}(0)}\Bigg\{\fint_{B_1(0)}\Big|u\big(x+\e_n
y+\e_n z\big)-u(x+\e_n y)\Big|^qdz\Bigg\}dy \quad\quad\forall\,x\in
\Omega\,.
\end{multline}

Let us denote for each $n\in \N$ and $E\subset \Omega$
\begin{equation}
\mu_n(E):=\int_{E}\Bigg\{\fint_{B_1(0)}\chi_\Omega\big(y+\e_n
z\big) \frac{\Big|u\big(y+\e_n
z\big)-u(y)\Big|^q}{\e_n^{rq}}dz\Bigg\}dy,
\end{equation}
and
\begin{equation}
g(x):=\liminf\limits_{n\to
\infty}\frac{1}{\alpha(N-rq)}\int_{B_{1}(0)}\Bigg\{\fint_{B_1(0)}\Big|u\big(x+\e_n
y+\e_n z\big)-u(x+\e_n y)\Big|^qdz\Bigg\}dy.
\end{equation}
Therefore, by Theorem \ref{thm:lower function of upper density of sequence of Radon measures} we deduce
\begin{multline}
\label{eq:ineqality1}
5^{N-rq}\liminf\limits_{n\to\infty}\int_\Omega\Bigg\{\fint_{B_1(0)}\chi_\Omega\big(x+\e_n
z\big) \frac{\Big|u\big(x+\e_n z\big)-u(x)\Big|^q}{\e_n^{rq}}dz\Bigg\}dx
\\
\geq \int_\Omega \Bigg\{ \liminf\limits_{n\to
\infty} \frac{1}{\alpha(N-rq)}\int_{B_{1}(0)}\fint_{B_1(0)}\Big|u\big(x+\e_n y+\e_n
z\big)-u(x+\e_n y)\Big|^qdzdy\Bigg\}d\mathcal{H}^{N-rq}(x)\,.
\end{multline}
However, for every $x\in\Omega$ we have
\begin{multline}
\label{eq:ineqality2}
\liminf\limits_{n\to \infty}\int_{B_{1}(0)}\int_{B_1(0)}
\Big|u\big(x+(\e_n/2) z\big)-u\big(x+(\e_n/2) y\big)\Big|^qdzdy
\\
=\liminf\limits_{n\to
\infty}\,\frac{1}{(\e_n/2)^{N}}\int_{B_{\e_n/2}(x)}\Bigg\{\frac{1}{(\e_n/2)^{N}}\int_{B_{\e_n/2}(x)}
\Big|u\big(z\big)-u(y)\Big|^qdz\Bigg\}dy
\\
\leq\liminf\limits_{n\to
\infty}\,\frac{1}{(\e_n/2)^{N}}\int_{B_{\e_n/2}(x)}\Bigg\{\frac{1}{(\e_n/2)^{N}}\int_{B_{\e_n}(y)}
\Big|u\big(z\big)-u(y)\Big|^qdz\Bigg\}dy
\\
\leq 2^{2N}\,\liminf\limits_{n\to
\infty}\,\frac{1}{\e_n^{N}}\int_{B_{\e_n}(x)}\Bigg\{\frac{1}{\e_n^{N}}\int_{B_{\e_n}(y)}
\Big|u\big(z\big)-u(y)\Big|^qdz\Bigg\}dy
\\
=2^{2N}\,\liminf\limits_{n\to
\infty}\int_{B_{1}(0)}\Bigg\{\int_{B_1(0)}\Big|u\big(x+\e_n
y+\e_n z\big)-u(x+\e_n y)\Big|^qdz\Bigg\}dy
\\
=2^{2N}\alpha(N)\alpha(N-rq)\,\liminf\limits_{n\to
\infty}\frac{1}{\alpha(N-rq)}\int_{B_{1}(0)}\fint_{B_1(0)}\Big|u\big(x+\e_n
y+\e_n z\big)-u(x+\e_n y)\Big|^qdzdy.
\end{multline}
Thus, inserting \eqref{eq:ineqality2} into \eqref{eq:ineqality1} we get
\begin{multline}
\liminf\limits_{n\to\infty}\int_\Omega\Bigg\{\fint_{B_1(0)}\chi_\Omega\big(x+\e_n
z\big) \frac{\Big|u\big(x+\e_n z\big)-u(x)\Big|^q}{\e_n^{rq}}dz\Bigg\}dx
\\
\geq\beta(N;r,q)\int_\Omega \Bigg\{\liminf\limits_{n\to
\infty}\fint_{B_{1}(0)}\fint_{B_1(0)} \Big|u\big(x+(\e_n/2)
z\big)-u\big(x+(\e_n/2)
y\big)\Big|^qdzdy\Bigg\}d\mathcal{H}^{N-rq}(x)\,,
\end{multline}
where $\beta(N;r,q):=\frac{\alpha(N)}{5^{N-rq}2^{2N}\alpha(N-rq)}$.
\end{proof}
Then we have the following first result:

\begin{theorem}
\label{thm:sigma finiteness of limiting average with respect to Hausdorff measure}
Let $\Omega\subset\R^N$ be an open set, $q\in [1,\infty)$ and $r\in(0,\infty)$ are such that $rq\leq N$. Suppose
$u\in L^q_{loc}(\Omega,\R^d)$ and
\begin{equation}
\label{eq:finiteness of upper infinitesimal Besov constan}
\limsup\limits_{\e\to
0^+}\int_{\Omega_0}\Bigg\{\fint_{B_1(0)}\chi_{\Omega_0}\big(x+\e z\big)
\frac{\Big|u\big(x+\e z\big)-u(x)\Big|^q}{\e^{rq}}dz\Bigg\}dx
<\infty,
\end{equation}
for every open
$\Omega_0\subset\subset\Omega$.
Then, for any sequence $\e_n>0$, such that
$\lim_{n\to\infty}\e_n=0$ there exists a $\mathcal{H}^{N-rq}$
$\sigma$-finite set $S\subset\Omega$, such that $\forall x\in\Omega\setminus S$
\begin{equation}
\label{eq:fine property of BV^q funcions}
\liminf_{n\to \infty}\fint_{B_{\e_n}(x)}\left|u(z)-u_{B_{\e_n}(x)}\right|^qdz=0,\quad \quad u_{B_{\e_n}(x)}:=\fint_{B_{\e_n}(x)}u(z)dz.
\end{equation}
In particular, if we choose $r=\frac{1}{q}$, then \eqref{eq:fine property of BV^q funcions} holds for $u\in BV^q_{loc}(\Omega,\R^d)$.
\end{theorem}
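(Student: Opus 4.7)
The idea is to combine Theorem \ref{thm:Besov constant controls double average integral} with Jensen's inequality inside $B_{\e_n}(x)$. Define
\[
G(x) := \liminf_{n\to\infty} \fint_{B_{\e_n}(x)} \fint_{B_{\e_n}(x)} |u(z) - u(y)|^q \, dz \, dy \, .
\]
Jensen's inequality applied to the convex function $t \mapsto |t|^q$ yields, for each $n$,
\[
\fint_{B_{\e_n}(x)} \big|u(z) - u_{B_{\e_n}(x)}\big|^q \, dz = \fint_{B_{\e_n}(x)} \Big|\fint_{B_{\e_n}(x)}(u(z)-u(y)) \, dy\Big|^q dz \leq \fint_{B_{\e_n}(x)} \fint_{B_{\e_n}(x)} |u(z)-u(y)|^q dz \, dy,
\]
so it suffices to produce a $\mathcal{H}^{N-rq}$ $\sigma$-finite set $S \supset \{x \in \Omega : G(x) > 0\}$; for $x \in \Omega\setminus S$, the Jensen bound together with $G(x)=0$ will give \eqref{eq:fine property of BV^q funcions}.

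To produce $S$, exhaust $\Omega$ by open sets $\Omega_k\subset\subset \Omega_{k+1}\subset\subset \Omega$ and apply Theorem \ref{thm:Besov constant controls double average integral} on $\Omega_k$ to the sequence $\e'_n := 2\e_n$ (so that the factor $\e'_n/2$ on the right-hand side of that theorem becomes exactly $\e_n$). Changing variables $z \mapsto x + \e_n z$, $y \mapsto x + \e_n y$ inside the double average, the right-hand side becomes $\beta(N;r,q)\int_{\Omega_k} G(x) \, d\mathcal{H}^{N-rq}(x)$, noting that for any fixed $x$ in the interior of $\Omega_k$ we have $B_{\e_n}(x) \subset \Omega_k$ for all $n$ large enough. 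The left-hand side is $\leq \limsup_{n\to\infty} \int_{\Omega_k} \fint_{B_1(0)}\chi_{\Omega_k}(x+2\e_n z)\frac{|u(x+2\e_n z)-u(x)|^q}{(2\e_n)^{rq}} \, dz \, dx$, which is finite by hypothesis \eqref{eq:finiteness of upper infinitesimal Besov constan} applied on $\Omega_0 = \Omega_k$. Consequently $\int_{\Omega_k} G \, d\mathcal{H}^{N-rq} < \infty$.

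By Chebyshev's inequality, $\{x\in\Omega_k : G(x) > 1/m\}$ has finite $\mathcal{H}^{N-rq}$-measure, so $\{G>0\}\cap\Omega_k$ is $\mathcal{H}^{N-rq}$ $\sigma$-finite; taking the countable union over $k$, the set $S := \{x \in \Omega : G(x) > 0\}$ is itself $\mathcal{H}^{N-rq}$ $\sigma$-finite. For the final assertion (the case $r = 1/q$ for $u\in BV^q_{loc}$), the hypothesis \eqref{eq:finiteness of upper infinitesimal Besov constan} follows from the definition of $BV^q$: changing variables $y = x + \e z$ and using $|x-y| = \e|z| \leq \e$ on $B_\e(x)$ (hence $1/\e \leq 1/|x-y|$) gives
\[
\int_{\Omega_0} \fint_{B_1(0)} \chi_{\Omega_0}(x+\e z) \frac{|u(x+\e z)-u(x)|^q}{\e} \, dz \, dx \leq \frac{1}{\alpha(N)}\int_{\Omega_0} \int_{\Omega_0\cap B_\e(x)} \frac{1}{\e^N}\frac{|u(x)-u(y)|^q}{|x-y|} \, dy \, dx,
\]
and the right-hand side is bounded by $\hat{B}_{u,q}(\Omega_0)/\alpha(N) < \infty$.

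The only real subtlety is bookkeeping: selecting the sequence $(2\e_n)$ instead of $(\e_n)$ so that the target expression $G(x)$ appears cleanly on the right-hand side of Theorem \ref{thm:Besov constant controls double average integral}, and checking that restricting to $\Omega_k$ does not alter the liminf density $G$ at interior points. Once these are observed, the proof reduces to the Chebyshev-type argument producing the $\sigma$-finite exceptional set, and the Jensen inequality step is elementary.
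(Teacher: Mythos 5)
Your proof is correct and follows essentially the same route as the paper's: define the double-average functional $G(x)$ (which coincides with the paper's $f(x)$ after the change of variables $z\mapsto x+\e_n z$, $y\mapsto x+\e_n y$), apply Theorem~\ref{thm:Besov constant controls double average integral} with the sequence $2\e_n$ and the hypothesis~\eqref{eq:finiteness of upper infinitesimal Besov constan} to get $\int_{\Omega_k}G\,d\mathcal{H}^{N-rq}<\infty$, extract a $\sigma$-finite set by Chebyshev, exhaust $\Omega$, and pass from $G$ to the single-average quantity via Jensen; the $BV^q$ case is handled by the same lower bound $1/\e\leq 1/|x-y|$ on $B_\e(x)$. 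The only cosmetic difference is that you state the reparametrization by $2\e_n$ and the interiority observation $B_{\e_n}(x)\subset\Omega_k$ for large $n$ explicitly, where the paper leaves them implicit.
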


\begin{proof}
Let $\Omega_0$ be an open set such that
$\Omega_0\subset\subset\Omega$. Let us denote
\begin{equation}
S_0:=\left\{x\in \Omega_0: f(x)>0 \right\}=\bigcup_{l=1}^\infty B_l,\quad B_l:=\left\{x\in \Omega_0: f(x)>\frac{1}{l} \right\},
\end{equation}
where
\begin{equation}
f(x):=\liminf\limits_{n\to \infty}\fint_{B_{1}(0)}\fint_{B_1(0)}
\Big|u\big(x+\e_n z\big)-u\big(x+\e_n
y\big)\Big|^qdzdy.
\end{equation}
By Chebyshev's inequality, Theorem $\ref{thm:Besov constant controls double average integral}$ and assumption
$\eqref{eq:finiteness of upper infinitesimal Besov constan}$, we get
\begin{equation}
\mathcal{H}^{N-rq}(B_l)\leq l\int_{\Omega_0}f(x)d\mathcal{H}^{N-rq}(x)<\infty.
\end{equation}

By Jensen's inequality
\begin{multline}
\label{eq:fine property in local case}
\liminf\limits_{n\to \infty}\fint_{B_{\e_n}(x)} \Bigg|u(
z)-\fint_{B_{\e_n}(x)}u(y)dy\Bigg|^qdz=\liminf\limits_{n\to
\infty}\fint_{B_1(0)} \Bigg|u\big(x+\e_n
z\big)-\fint_{B_1(0)}u\big(x+\e_n
y\big)dy\Bigg|^qdz
\\
\leq\liminf\limits_{n\to \infty}\fint_{B_{1}(0)}\fint_{B_1(0)}
\Big|u\big(x+\e_n z\big)-u\big(x+\e_n
y\big)\Big|^qdydz=0\quad\quad\forall x\in\Omega_0\setminus S_0\,.
\end{multline}
Let $\Omega=\bigcup_{j=1}^\infty\Omega_j$ for open sets $\Omega_j\subset\subset\Omega$. For every $j$ there exists a $\mathcal{H}^{N-rq}$
$\sigma$-finite set $S_j$ such that \eqref{eq:fine property in local case} holds for $x\in \Omega_j\setminus S_j$. Denote $S:=\bigcup_{j=1}^\infty S_j$. Then $S$ is $\mathcal{H}^{N-rq}$
$\sigma$-finite set and \eqref{eq:fine property in local case} holds for $x\in \Omega\setminus S$.

Now, if $u\in BV^q_{loc}(\Omega,\R^d)$, then $u\in L^q_{loc}(\Omega,\R^d)$ and for every $\Omega_0\subset\subset\Omega$
\begin{multline}
\infty>\overline{B}_{u,q}(\Omega_0)=\limsup_{\e\to 0^+}\int_{\Omega_0}\left(\int_{\Omega_0\cap B_{\e}(x)}\frac{1}{\e^N}\frac{|u(x)-u(y)|^q}{|x-y|}dy\right)dx
\\
=\limsup\limits_{\e\to
0^+}\int_{\Omega_0}\Bigg\{\int_{B_1(0)}\chi_{\Omega_0}\big(x+\e z\big)
\frac{\Big|u\big(x+\e z\big)-u(x)\Big|^q}{\e|z|}dz\Bigg\}dx
\\
\geq \alpha(N)\limsup\limits_{\e\to
0^+}\int_{\Omega_0}\Bigg\{\fint_{B_1(0)}\chi_{\Omega_0}\big(x+\e z\big)
\frac{\Big|u\big(x+\e z\big)-u(x)\Big|^q}{\e}dz\Bigg\}dx.
\end{multline}
Thus, functions $u\in BV^q_{loc}(\Omega,\R^d)$ satisfy the conditions of the theorem with $r=\frac{1}{q}$.
\end{proof}

\begin{corollary}
\label{cor: Besov constant controls the q-variation}
Let $\Omega\subset\R^N$ be an open set and $q>0$ and suppose
$u\in L^1_{loc}(\Omega,\R^d)$. Then, there exists a constant $\gamma(N)$,
depending on dimension $N$ only, such that
\begin{equation}
\underline{B}_{u,q}(\Omega)\geq\gamma(N)
\int_{\mathcal{J}'_u} \Big|u^{j'}(x)\Big|^qd\mathcal{H}^{N-1}(x),
\end{equation}
where
$\underline{B}_{u,q}(\Omega)$
is the lower infinitesimal Besov $q-$constant defined in Definition $\ref{def:definitions of Besov constants}$.
\end{corollary}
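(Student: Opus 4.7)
The plan is to chain Theorem~\ref{thm:Besov constant controls double average integral} (which transfers mass from the Besov integral to an $\mathcal{H}^{N-1}$ integral over $\Omega$) with the pointwise non-collapse estimate in Proposition~\ref{prop:convergence of the rescaled family gamma}, item~2 (valid on $\mathcal{J}'_u$). Assuming $\underline{B}_{u,q}(\Omega)<\infty$ (otherwise the claim is trivial), I would pick a sequence $\e_n\to 0^+$ realizing the liminf in the definition of $\underline{B}_{u,q}(\Omega)$; after the change of variables $y=x+\e_n z$ and the pointwise estimate $\tfrac{1}{\e_n|z|}\geq\tfrac{1}{\e_n}$, valid for $z\in B_1(0)$, this yields
\begin{equation*}
\underline{B}_{u,q}(\Omega)\;\geq\;\alpha(N)\,\liminf_{n\to\infty}\int_{\Omega}\fint_{B_1(0)}\chi_\Omega(x+\e_n z)\,\frac{|u(x+\e_n z)-u(x)|^q}{\e_n}\,dz\,dx,
\end{equation*}
which is precisely the left-hand side of Theorem~\ref{thm:Besov constant controls double average integral} with the parameter choice $r=1/q$ (so that $rq=1\leq N$).

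Invoking that theorem and restricting the outer integral from $\Omega$ to $\mathcal{J}'_u$ (possible since the integrand is non-negative) produces
\begin{equation*}
\underline{B}_{u,q}(\Omega)\;\geq\;\alpha(N)\,\beta(N;1/q,q)\int_{\mathcal{J}'_u}\liminf_{n\to\infty}\fint_{B_1(0)}\fint_{B_1(0)}\bigl|u(x+(\e_n/2)y)-u(x+(\e_n/2)z)\bigr|^q\,dy\,dz\,d\mathcal{H}^{N-1}(x).
\end{equation*}
For each $x\in\mathcal{J}'_u$, the inner double average is exactly $\fint_{B_1(0)\times B_1(0)}|\gamma_{x,\e_n/2}(y,z)|^q\,d(y,z)$ from Proposition~\ref{prop:convergence of the rescaled family gamma}; since the liminf along the particular sequence $t_n:=\e_n/2\to 0^+$ dominates the liminf as $t\to 0^+$, item~2 of that proposition gives the pointwise bound $\tfrac{1}{2}|u^{j'}(x)|^q$ for every $x\in\mathcal{J}'_u$.

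Putting the two displays together yields the corollary with $\gamma(N):=\tfrac{1}{2}\,\alpha(N)\,\beta(N;1/q,q)$, which genuinely depends on $N$ only: the $q$-dependence in $\beta(N;r,q)=\alpha(N)/\bigl(5^{N-rq}2^{2N}\alpha(N-rq)\bigr)$ cancels at $rq=1$, leaving $\gamma(N)=\alpha(N)^2/(5^{N-1}\,2^{2N+1}\,\alpha(N-1))$. There is no real obstacle here, since the two main analytic inputs (the Vitali-type mass transfer in Theorem~\ref{thm:Besov constant controls double average integral} and the lower bound on the double $L^q$-oscillation at generalized jump points in Proposition~\ref{prop:convergence of the rescaled family gamma}) are already in place; what remains is just bookkeeping of quantifiers and of multiplicative constants.
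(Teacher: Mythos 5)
Your proposal is correct and follows essentially the same route as the paper's proof: choose a sequence realizing the liminf, reduce $\frac{1}{\e_n|z|}\geq\frac{1}{\e_n}$, apply Theorem~\ref{thm:Besov constant controls double average integral} with $r=1/q$, restrict to $\mathcal{J}'_u$, and finish with item~2 of Proposition~\ref{prop:convergence of the rescaled family gamma} via the observation that the liminf along $\e_n/2$ dominates the full liminf as $t\to 0^+$. The resulting constant $\gamma(N)=\alpha(N)^2/(5^{N-1}2^{2N+1}\alpha(N-1))$ coincides with the paper's.
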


\begin{proof}
Let us denote
\begin{equation}
\gamma_{x,\varepsilon}(y,z):=u(x+\varepsilon y)-u(x+\varepsilon z).
\end{equation}
By Proposition
$\ref{prop:convergence of the rescaled family gamma}$ we get for $x\in \mathcal{J}'_u$

\begin{align}
&\liminf\limits_{\e\to 0^+}\fint_{B_{1}(0)\times B_{1}(0)}\Big|\gamma_{x,\varepsilon}(y,z)\Big|^qd(\mathcal{L}^N\times\mathcal{L}^N)(y,z)\geq \frac{1}{2}\Big|u^{j'}(x)\Big|^q.
\end{align}

Thus, by Fubini's theorem
\begin{align}
\liminf\limits_{\e\to 0^+}\fint_{B_{1}(0)}\fint_{B_1(0)}
\Big|u\big(x+\e z\big)-u\big(x+\e
y\big)\Big|^qdydz\geq\frac{1}{2}\Big|u^{j'}(x)\Big|^q.
\end{align}

Let $\varepsilon_n>0$ be a sequence such that $\lim_{n\to \infty}\varepsilon_n=0$ and
\begin{align}
\underline{B}_{u,q}(\Omega)=\liminf_{\epsilon\to 0^+}\int_{\Omega}\left(\int_{\Omega\cap B_{\epsilon}(x)}\frac{1}{\epsilon^N}\frac{|u(x)-u(y)|^q}{|x-y|}dy\right)dx\nonumber
\\
=\liminf_{n\to \infty}\int_{\Omega}\left(\int_{\Omega\cap B_{\varepsilon_n}(x)}\frac{1}{\varepsilon_n^N}\frac{|u(x)-u(y)|^q}{|x-y|}dy\right)dx.
\end{align}

By Theorem
$\ref{thm:Besov constant controls double average integral}$ with $r=\frac{1}{q}$
\begin{multline}
\underline{B}_{u,q}(\Omega)=\liminf_{n\to \infty}\int_{\Omega}\left(\int_{\Omega\cap B_{\varepsilon_n}(x)}\frac{1}{\varepsilon_n^N}\frac{|u(x)-u(y)|^q}{|x-y|}dy\right)dx
\\
=\liminf\limits_{n\to \infty}\int_\Omega\Bigg\{\int_{B_1(0)}\chi_\Omega\big(x+\e_n
z\big) \frac{\Big|u\big(x+\e_n
z\big)-u(x)\Big|^q}{\e_n|z|}dz\Bigg\}dx
\\
\geq\alpha(N)\liminf\limits_{n\to \infty}\int_\Omega\Bigg\{\fint_{B_1(0)}\chi_\Omega\big(x+\e_n
z\big) \frac{\Big|u\big(x+\e_n
z\big)-u(x)\Big|^q}{\e_n}dz\Bigg\}dx
\\
\geq \alpha(N)\beta(N;1/q,q)\int_\Omega \Bigg\{\liminf\limits_{n\to
\infty}\fint_{B_{1}(0)}\fint_{B_1(0)} \Big|u\big(x+(\e_n/2)
z\big)-u\big(x+(\e_n/2) y\big)\Big|^qdzdy\Bigg\}d\mathcal{H}^{N-1}(x)
\\
\geq \alpha(N)\beta(N;1/q,q)\int_\Omega \Bigg\{\liminf\limits_{\varepsilon\to 0^+}\fint_{B_{1}(0)}\fint_{B_1(0)} \Big|u\big(x+\e
z\big)-u\big(x+\e y\big)\Big|^qdzdy\Bigg\}d\mathcal{H}^{N-1}(x)
\\
\geq \alpha(N)\beta(N;1/q,q)\int_{\mathcal{J}'_u} \Bigg\{\liminf\limits_{\varepsilon\to 0^+}\fint_{B_{1}(0)}\fint_{B_1(0)} \Big|u\big(x+\e
z\big)-u\big(x+\e y\big)\Big|^qdzdy\Bigg\}d\mathcal{H}^{N-1}(x)
\\
\geq \frac{\alpha(N)\beta(N;1/q,q)}{2}\int_{\mathcal{J}'_u}\Big|u^{j'}(x)\Big|^qd\mathcal{H}^{N-1}(x).
\end{multline}
Denote $\gamma(N):=\frac{\alpha(N)\beta(N;1/q,q)}{2}=\frac{\alpha(N)^2}{5^{N-1}2^{2N+1}\alpha(N-1)}$.
\end{proof}

\begin{remark}
\label{rem:Gamma(N)<C_N}
The constant $\gamma(N)$ satisfies $\gamma(N)<C_N$, where $C_N$ is defined in \eqref{eq:definition of dimensional constant C_N}:
by the area formula
\begin{align}
C_N=\frac{1}{N}\int_{S^{N-1}}|z_1|d\mathcal{H}^{N-1}(z)&=\frac{2}{N}\int_{B^{N-1}_1(0)}\sqrt{1-\sum_{j=2}^{N}z^2_j}\,\sqrt{1+\frac{1}{1-\sum_{j=2}^{N}z^2_j}\sum_{j=2}^{N}z^2_j}\,d\mathcal{L}^{N-1}(z)\nonumber
\\
&=\frac{2}{N}\Big(\mathcal{L}^{N-1}\big(B^{N-1}_1(0)\big)\Big)=\frac{2}{N}\alpha(N-1).
\end{align}
Thus
\begin{equation}
\label{eq:equivalence to gamma(N)<C_N}
\gamma(N)<C_N\quad \Longleftrightarrow \quad \left[\frac{\alpha(N)}{\alpha(N-1)}\right]^2N<5^{N-1}2^{2N+2}.
\end{equation}
One can use the properties of gamma function $\Gamma(s+1)=s\Gamma(s)$ for $s>0$,
$\Gamma(m)=(m-1)!$ for $m\in \N$ and $\Gamma\left(\frac{1}{2}\right)=\sqrt{\pi}$ to get an estimate
$\frac{\Gamma\left(\frac{N-1}{2}+1\right)}{\Gamma\left(\frac{N}{2}+1\right)}\leq \pi^{1/2}$, and thus

\begin{equation}
\frac{\alpha(N)}{\alpha(N-1)}=\frac{\frac{\pi^{N/2}}{\Gamma\left(\frac{N}{2}+1\right)}}{\frac{\pi^{(N-1)/2}}{\Gamma\left(\frac{N-1}{2}+1\right)}}=\pi^{1/2}\frac{\Gamma\left(\frac{N-1}{2}+1\right)}{\Gamma\left(\frac{N}{2}+1\right)}\leq \pi.
\end{equation}

So the inequality on the right hand side of \eqref{eq:equivalence to gamma(N)<C_N} can be proved by proving $\pi^2N<16N\leq 5^{N-1}2^{2N+2}$ by induction on $N$.
\end{remark}

\subsection{Jumps of $BV^q$-functions}
By Theorem \ref{thm:Besov constant in terms of jump} and Remark \ref{rem:Gamma(N)<C_N} we see that the constant $\gamma(N)$ from Corollary \ref{cor: Besov constant controls the q-variation} is not optimal. The optimal constant is $C_N$ as we will show in the following theorem. Theorem \ref{thm:main result1} is an immediate consequence of Theorem \ref{thm:main result,general formulation} and Definition \ref{def:approximate jump pointsgint}.

\begin{theorem}
\label{thm:main result,general formulation}
Let $\Omega\subset \mathbb{R}^N$  be an open set and $1\leq q<\infty$. Assume that  $u\in L^1_{loc}(\Omega,\R^d)$. Then,
\begin{align}
\label{eq:ineqality3}
C_N\int_{\mathcal{J}'_u}|u^{j'}(x)|^qd\Haus^{N-1}(x)\leq \underline{B}_{u,q}\left(\Omega\right).
\end{align}
In particular, \eqref{eq:ineqality3} holds for functions $u\in BV^q(\Omega,\R^d)$.
We call inequality \eqref{eq:ineqality3} the \textbf{$q$-jump inequality } for the function $u$ in the open set $\Omega$.
\end{theorem}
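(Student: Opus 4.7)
The strategy is to upgrade the density argument from Corollary \ref{cor: Besov constant controls the q-variation} by working directly with a weak-$*$ limit of the integrand measures and then differentiating against $\HN$ on the rectifiable set $\mathcal{J}'_u$. This removes both the Vitali enlargement factor $5^{N-1}$ and the nested-averaging factor $2^{2N}$ from that Corollary, leaving only the explicit polar constant $C_N$.

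\textbf{Setup.} Assuming $\underline{B}_{u,q}(\Omega)<\infty$ (otherwise the inequality is trivial), pick a sequence $\e_n\downarrow 0$ realizing the liminf and introduce the Radon measures on $\Omega$
\begin{equation*}
\mu_n:=f_n\,\Leb^N,\qquad f_n(x):=\int_{\Omega\cap B_{\e_n}(x)}\frac{|u(x)-u(y)|^q}{\e_n^N\,|x-y|}\,dy,
\end{equation*}
so that $\mu_n(\Omega)\to\underline{B}_{u,q}(\Omega)$. By weak-$*$ compactness, up to a subsequence $\mu_n\rightharpoonup^*\mu$ for some positive Radon measure $\mu$ on $\Omega$, and $\mu(\Omega)\leq\underline{B}_{u,q}(\Omega)$.

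\textbf{Density estimate at jump points.} The core step is to show that, for $\HN$-a.e. $x_0\in\mathcal{J}'_u$,
\begin{equation*}
\liminf_{r\to 0^+}\frac{\mu(\ov B_r(x_0))}{\alpha(N-1)\,r^{N-1}}\ \geq\ C_N\,|u^{j'}(x_0)|^q.
\end{equation*}
Fix $x_0\in\mathcal{J}'_u$ and a radius $r$ that is a continuity point of $\mu$, so that $\mu(\ov B_r(x_0))\geq\liminf_n\mu_n(B_r(x_0))$. Substituting $x=x_0+rw$ and $y=x+\e_n z$,
\begin{equation*}
\mu_n(B_r(x_0))=r^{N-1}\int_{B_1(0)}\int_{B_1(0)}\chi_\Omega(x_0+rw+\e_n z)\,\frac{|u(x_0+rw)-u(x_0+rw+\e_n z)|^q}{(\e_n/r)\,|z|}\,dz\,dw.
\end{equation*}
Choose the scales so that $\e_n/r\to 0$. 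By Proposition \ref{prop:convergence of the rescaled family gamma} the rescaled function $u(x_0+r\,\cdot)$ approximates, modulo a constant, the step blow-up $w'_{x_0}(w):=u^{j'}(x_0)\chi_{\{\nu'_u(x_0)\cdot w>0\}}$ in $L^1_{\mathrm{loc}}$. For this step function alone, the integrand equals $|u^{j'}(x_0)|^q$ exactly when $w$ and $w+(\e_n/r)z$ lie on opposite sides of the jump plane, i.e.\ on a slab of thickness $(\e_n/r)|z\cdot\nu'_u(x_0)|$ whose volume in $B_1(0)$ is asymptotic to $\alpha(N-1)(\e_n/r)|z\cdot\nu'_u(x_0)|$. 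Integrating first in $w$ and then in $z$ yields the principal contribution
\begin{equation*}
\alpha(N-1)\,|u^{j'}(x_0)|^q\int_{B_1(0)}\frac{|z\cdot\nu'_u(x_0)|}{|z|}\,dz\ =\ \alpha(N-1)\,C_N\,|u^{j'}(x_0)|^q,
\end{equation*}
where the last equality is the polar identity $\int_{B_1(0)}|z\cdot\nu|/|z|\,dz=\tfrac{1}{N}\int_{S^{N-1}}|\omega\cdot\nu|\,d\HN(\omega)=C_N$ (rotation invariance of the spherical measure).

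\textbf{From density to integral, via rectifiability.} Since $\mathcal{J}'_u$ is countably $(N-1)$-rectifiable (Remark \ref{rem:rectifiability of generalized jump set}), the measure $\HN$ restricted to $\mathcal{J}'_u$ has upper and lower $(N-1)$-densities equal to $1$ at $\HN$-a.e. point of $\mathcal{J}'_u$. Applying the Lebesgue-Besicovitch differentiation theorem to $\mu$ against $\HN$ restricted to $\mathcal{J}'_u$ (no Vitali enlargement enters because the comparison measure is $\HN$-a.e. of density one), the liminf density above coincides $\HN$-a.e.\ on $\mathcal{J}'_u$ with the Radon-Nikodym density of the $\HN$-absolutely continuous part of $\mu$, so that
\begin{equation*}
\underline{B}_{u,q}(\Omega)\ \geq\ \mu(\Omega)\ \geq\ \mu(\mathcal{J}'_u)\ \geq\ C_N\int_{\mathcal{J}'_u}|u^{j'}|^q\,d\HN,
\end{equation*}
which is the desired inequality.

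\textbf{Main obstacle.} The most delicate point is the blow-up substitution in the density estimate: Proposition \ref{prop:convergence of the rescaled family gamma} supplies only $L^1_{\mathrm{loc}}$-convergence of rescaled slices (modulo constants), while the functional is a $q$-th power. My plan is a diagonal extraction with $\e_n/r\to 0$, combined with a triangle-inequality splitting of $u(x_0+rw)-u(x_0+rw+\e_n z)$ into the principal step-function contribution (the one producing $C_N$) and $L^1$-small remainders controlled by Proposition \ref{prop:convergence of the rescaled family gamma}; the $L^1$ smallness suffices after averaging over the wide $w$-slab, because the slab itself localizes the integrand. The emergence of the sharp constant $C_N$ in place of $\gamma(N)$ is precisely the payoff for working with $\mu$ on a rectifiable set rather than invoking Vitali's $5r$-covering on arbitrary Radon measures.
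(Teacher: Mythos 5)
Your overall architecture (weak-$*$ limit $\mu$, density estimate against $\HN$ on the rectifiable set $\mathcal{J}'_u$, then integrate) resembles the paper's Step~2/Step~3 framework, and if the density estimate held your route to the sharp constant $C_N$ would work. But the density estimate itself has a genuine gap, and it is precisely the point where the paper takes a structurally different path.

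The problem is an order-of-limits issue in the claimed bound
$\liminf_{r\to 0^+}\mu(\ov B_r(x_0))/(\alpha(N-1)r^{N-1})\geq C_N|u^{j'}(x_0)|^q$.
At a continuity radius, $\mu(\ov B_r(x_0))=\lim_{n\to\infty}\mu_n(\ov B_r(x_0))$, and after the substitution you wrote one has
$\mu_n(\ov B_r(x_0))/r^{N-1}=\frac{1}{\sigma}\int_{B_1(0)}\int_{B_1(0)}\frac{|v_r(w)-v_r(w+\sigma z)|^q}{|z|}\chi_\Omega\,dw\,dz$
with $v_r(w):=u(x_0+rw)$ and $\sigma:=\e_n/r$. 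For \emph{fixed} $r$, the inner limit is over $n\to\infty$, i.e. $\sigma\to 0$, and this is a difference quotient of $v_r$ at an arbitrarily small scale. That quantity probes the fine-scale oscillation of $u$ near $x_0$, which is not constrained at all by the $L^1$-blow-up convergence $v_r\to w'_{x_0}$ (modulo constants), since the latter holds in the \emph{other} limit $r\to 0$. Concretely: the slab in the $w$-variable where the step function contributes has measure $\sim\alpha(N-1)\sigma|z\cdot\nu'_u(x_0)|$, which shrinks as $\sigma\to 0$, while the $L^1$ error $\inf_c\|v_r-w'_{x_0}-c\|_{L^1(B_2(0))}=\delta_r$ is fixed once $r$ is fixed. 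Nothing prevents that error from being concentrated near the jump hyperplane, so that on the slab the error per unit volume is of order $\delta_r/\sigma\to\infty$, swamping the main term after you divide by $\sigma$. Your proposed patch ("$L^1$ smallness suffices after averaging over the wide $w$-slab, because the slab itself localizes the integrand") relies on the slab being wide; it is thin, of width $\sigma|z\cdot\nu|$, and $L^1$ control on $B_1(0)$ does not give $L^1$ control on an $o(1)$-measure subset. You cannot trade the two limits either: the weak-$*$ convergence $\mu_n\rightharpoonup^*\mu$ gives no uniform rate, so a diagonal choice $r=r_n$ with $\e_n/r_n\to 0$ does not transfer back to a statement about $\mu(\ov B_{r_n})$.

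The paper avoids this entirely by never taking a pointwise density limit in a ball. Its Step~1 works on a $C^1$ patch $\Gamma^i_{x_0}$ of the rectifiable set and a transverse tube $O_{t,n}(\Gamma^i_{x_0})$; the co-area formula converts $\frac{1}{t}\int_{\text{tube}}\chi_\Omega(x+tn)|u(x+tn)-u(x)|^q\,dx$ into an integral over $\Gamma^i_{x_0}\times(0,1)$ of $F_{t,n}(x,s)$, where the "slab" has been traded for the co-area factor. The $L^1$-blow-up (Proposition~\ref{prop:convergence of the rescaled family gamma}, item~2) is then applied in the \emph{direction} variable $n$ over a fixed ball $\ov B_{\delta_0}(n_0)$, where lower semicontinuity of the $L^q$ norm is legitimate because the convergence is on a set of fixed positive measure. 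Besicovitch's covering theorem is applied in $n$-space (not in $\O$) and fills $B_1(0)$ to produce $\int_{B_1(0)}|\nu\cdot n/|n||\,dn=C_N$, hence the sharp constant. The resulting local inequality $\lambda(\Gamma^i\cap B_\rho(x_0))\leq\mu(B_\rho(x_0))$ is what plays the role of your density estimate, and it is then fed into the Radon--Nikodym argument (Lemma~\ref{lem:local measure inequality for rectifiable sets gives global ineqality}), as you anticipated. So your Step~3 is sound, but the density inequality it requires must come from the co-area argument, not from a pointwise blow-up of $\mu_n$ in shrinking balls.
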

\begin{proof}
The proof is divided into three steps.
Our idea is to use the rectifiability structure of the generalized jump set $\mathcal{J}'_u$.
We begin our analysis by decomposing the generalized jump set $\mathcal{J}'_u$ into countably many $(N-1)$-submanifolds $\Gamma^i$ of class $1$.
Then, we look at a small neighbourhood $\Gamma^i_{x_0}\subset \Gamma^i$ of a point $x_0\in \Gamma^i$, and translate it in a non tangent direction $n$ getting a set of translations $O_{t,n}\left(\Gamma_{x_0}^i\right)$ (see $\eqref{eq:union of translations of neighbourhood of x_0}$).
With this set of translations, the co-area formula and Besicovitch covering theorem we derive a local version of the desired inequality (see $\eqref{eq:local Becov inequality where C_N appears}$).
We use the local inequality and the assumption that $\underline{B}_{u,q}\left(\Omega\right)<\infty$ to get an auxiliary measure $\mu$.
We will use Radon-Nikodim derivative of the measure $\lambda(E):=C_N\int_{E\cap \mathcal{J}'_u}|u^{j'}(x)|^qd\mathcal{H}^{N-1}(x)$ with respect to $\mu$ to get the desired global inequality. By "global" here we mean on the whole generalized jump set $\mathcal{J}'_u$, not only on a part of it.
\\
\textbf{Step 1:}
\\
Since $u\in L^1_{loc}(\Omega,\mathbb{R}^d)$, we get by Remark $\ref{rem:rectifiability of generalized jump set}$ that $\mathcal{J}'_u$ is countably $(N-1)-$rectifiable. Using Theorem
$\ref{thm:rectifiable set is a union of smooth manifold}$, we have a countable family of an $(N-1)-$dimensional submanifold of $\R^N$ of class 1, $\Gamma^i$, such that for each $i\in \mathbb{N}$,
$\Haus^{N-1}(\Gamma^i)<\infty$ and
\begin{equation}
\label{eq:rectifiability of J_u}
\mathcal{H}^{N-1}\left(\mathcal{J}'_u\setminus \cup_{i=1}^\infty\Gamma^i\right)=0.
\end{equation}
Assume without loss of generality that $\Gamma^i\subset \Omega$, otherwise replace $\Gamma^i$ by $\Gamma^i\cap \Omega$.
Let us fix $x_0\in \cup_{i=1}^\infty\Gamma^i$. Then, there exists $i\in \N$ such that $x_0\in \Gamma^i$. Let
$\nu:\Gamma^i\to S^{N-1}$ be a (continuous)unit normal field along $\Gamma^i$. By Lemma
$\ref{lem:parametric surface relative to vector $n$}$, there exist an open set $\Gamma^i_{x_0} \subset\Gamma^i$ (in the induced topology from $\mathbb{R}^N$ to $\Gamma^i$), $x_0\in \Gamma^i_{x_0}$, such that for every $n_0\notin \nu(x_0)^\perp$ there exist $t_0>0$ and $\delta_0>0$ such that for every $0<t<t_0$ and $n\in \overline{B}_{\delta_0}(n_0)\subset\mathbb{R}^N\setminus \nu(x_0)^\perp$ there exist an open set
\begin{equation}
\label{eq:union of translations of neighbourhood of x_0}
O_{t,n}(\Gamma^i_{x_0}):=\cup_{s\in (-t,t)}(\Gamma^i_{x_0}-sn)
\end{equation}
and a $C^1$ and Lipschitz function
\begin{equation}
f:O_{t,n}(\Gamma^i_{x_0})\to (-t,t),
\end{equation}
such that for every
$z:=x-sn\in O_{t,n}(\Gamma^i_{x_0})$ we have $f(z)=s$ and $|\nabla f(z)|=\frac{1}{|\nu(z+f(z)n)\cdot n|}$.

Let us extend the Lipschitz function
$f$ from $O_{t,n}\left(\Gamma_{x_0}^i\right)$ to a Lipschitz function defined on all of $\R^N$ again denoting it by $f$; and let us think of $u$ as a function on all of $\mathbb{R}^N$ defining
$u(x)=0,x\in \mathbb{R}^N\setminus \Omega$.

Let us denote for every $0<t<t_0$, $n\in \overline{B}_{\delta_0}(n_0)$ and arbitrary Borel set $B\subset\Omega$
\begin{align}
F_{t,n}(x,s):=\chi_{\Omega}(x+(1-s)tn)\chi_B(x-stn)|u(x+(1-s)tn)-u(x-stn)|^q|\nu(x)\cdot n|,
\end{align}
where $s\in (0,1)$ and $x\in\Gamma^i$.
By Theorem $\ref{thm:the co-area formula}$ (the co-area  formula) we have
\begin{align}
\label{eq:inequality9}
&\int_0^1\left[\int_{\Gamma^i_{x_0}\cap \mathcal{J}'_u}F_{t,n}(x,s)d\Haus^{N-1}(x)\right]ds
\leq\int_0^1\left[\int_{\Gamma^i_{x_0}}F_{t,n}(x,s)d\Haus^{N-1}(x)\right]ds\nonumber
\\
&=\int_0^1\left[\int_{\Gamma^i_{x_0}-stn}F_{t,n}(y+stn,s)d\Haus^{N-1}(y)\right]ds\nonumber
=\frac{1}{t}\int_0^t\left[\int_{\Gamma^i_{x_0}-\tau n}F_{t,n}(y+\tau n,\tau/t)d\Haus^{N-1}(y)\right]d\tau\nonumber
\\
&\leq \frac{1}{t}\int_{\R}\left[\int_{f^{-1}(\{\tau\})\cap O_{t,n}\left(\Gamma^i_{x_0}\right)}F_{t,n}(y+\tau n,\tau/t)d\Haus^{N-1}(y)\right]d\tau\nonumber
\\
&=\frac{1}{t}\int_{\R}\left[\int_{f^{-1}(\{\tau\})\cap O_{t,n}\left(\Gamma^i_{x_0}\right)}F_{t,n}(y+ f(y)n,f(y)/t)d\Haus^{N-1}(y)\right]d\tau\nonumber
\\
&=\frac{1}{t}\int_{O_{t,n}\left(\Gamma^i_{x_0}\right)}F_{t,n}(x+ f(x)n,f(x)/t)|\nabla f(x)|dx\nonumber
\\
&=\frac{1}{t}\int_{O_{t,n}\left(\Gamma^i_{x_0}\right)}\chi_{\Omega}(x+tn)\chi_B(x)|u(x+tn)-u(x)|^qdx\nonumber
\\
&=\frac{1}{t}\int_{O_{t,n}\left(\Gamma^i_{x_0}\right)\cap B}\chi_{\Omega}(x+tn)|u(x+tn)-u(x)|^qdx.
\end{align}

Multiplying inequality \eqref{eq:inequality9} by $1/|n|$, taking the integral over
$\overline{B}_{\delta_0}(n_0)$ and using Fubini's theorem we obtain for every $0<t<t_0$
\begin{align}
\label{eq:intermediate inequlity1}
&\int_0^1\int_{\Gamma^i_{x_0}\cap \mathcal{J}'_u}\left[\int_{\overline{B}_{\delta_0}(n_0)}\frac{F_{t,n}(x,s)}{|n|}dn\right]d\Haus^{N-1}(x)ds\nonumber
\\
&\leq \int_{\overline{B}_{\delta_0}(n_0)}\int_{O_{t,n}\left(\Gamma^i_{x_0}\right)\cap B}\chi_{\Omega}(x+tn)\frac{|u(x+tn)-u(x)|^q}{t|n|}dxdn.
\end{align}

Let us denote for $s\in (0,1)$ and
$x\in \Gamma^i_{x_0}\cap \mathcal{J}'_u$
\begin{align}
&g_t(n):=\left(\frac{F_{t,n}(x,s)}{|n|}\right)^{1/q}\nonumber
\\
&=\chi_{\Omega}(x+(1-s)tn)\chi_B(x-stn)|u(x+(1-s)tn)-u(x-stn)|\left|\nu(x)\cdot \frac{n}{|n|}\right|^{1/q},\nonumber
\\
&g(n):=\chi_B(x)|u^{j'}(x)|\left|\nu(x)\cdot \frac{n}{|n|}\right|^{1/q}.
\end{align}

Assume that $\mathcal{H}^{N-1}\left(\partial B\cap \Gamma^i_{x_0}\right)=0$. Thus, for $\mathcal{H}^{N-1}$ a.e. $x\in \Gamma^i_{x_0}$, we have $\lim_{t\to 0^+}\chi_{B}(x-stn)=\chi_{B}(x)$ for every $s\in \R$ and $n\in \R^N$. Thus, for every $s\in (0,1)$ and $\mathcal{H}^{N-1}$ a.e. $x\in \Gamma^i_{x_0}\cap \mathcal{J}'_u$ we get by Proposition $\ref{prop:convergence of the rescaled family gamma}$ that $g_t$ converges to $g$ in $L^1_{loc}(\R^N)$, hence by lower semi-continuity of $L^q$ norm with respect to convergence in $L^1_{loc}(\R^N)$ we get
\begin{equation}
\label{eq:lower semi continuity of g_t}
\|g\|^q_{L^q(\overline{B}_{\delta_0}(n_0))}\leq \liminf_{t\to 0^+}\|g_t\|^q_{L^q(\overline{B}_{\delta_0}(n_0))}.
\end{equation}
Therefore, taking the lower limit as $t\to 0^+$ on both sides of $\eqref{eq:intermediate inequlity1}$ and using Fatou's lemma, Fubini's theorem and \eqref{eq:lower semi continuity of g_t} we obtain
\begin{align}
\label{eq:intermediate inequlity2}
\int_{\overline{B}_{\delta_0}(n_0)}\int_{\Gamma^i_{x_0}\cap \mathcal{J}'_u\cap B}|u^{j'}(x)|^q\left|\nu(x)\cdot \frac{n}{|n|}\right|d\Haus^{N-1}(x)dn\nonumber
\\
\leq \liminf_{t\to 0^+}\int_{\overline{B}_{\delta_0}(n_0)}\int_{O_{t,n}\left(\Gamma^i_{x_0}\right)\cap B}\chi_{\Omega}(x+tn)\frac{|u(x+tn)-u(x)|^q}{t|n|}dxdn.
\end{align}

Note that inequality
$\eqref{eq:intermediate inequlity2}$ is valid not only for $\delta_0$, but also for every
$0<\delta\leq \delta_0$.
Let us choose for each $w\in \R^N\setminus \nu(x_0)^\perp$ an appropriate $\delta_w>0$ for which the inequality
$\eqref{eq:intermediate inequlity2}$ holds for balls $\overline{B}_{\delta}(w),0<\delta\leq \delta_w$ (in place of $\overline{B}_{\delta_0}(n_0)$). Let us define a family of closed balls:
\begin{equation}
\mathcal{F}:=\left\{\overline{B}_{\delta}(w)\subset B_1(0):w\in B_1(0)\setminus \nu(x_0)^\perp,0<\delta\leq \delta_w\right\}.
\end{equation}
By Theorem $\ref{thm:Besicovitch covering theorem}$ (Besicovitch covering theorem), there exists a countable and pairwise disjoint subfamily $\{B_j\}_{j=1}^\infty\subset \mathcal{F}$ such that
\begin{equation}
\mathcal{L}^N\left(B_1(0)\setminus \bigcup_{j=1}^\infty B_j\right)=\mathcal{L}^N\left(\left(B_1(0)\setminus \nu(x_0)^\perp\right)\setminus \bigcup_{j=1}^\infty B_j\right)=0,
\end{equation}
where $\mathcal{L}^N\left(\nu(x_0)^\perp\right)=0$.
Therefore,
\begin{align}
\label{eq:local Besov inequality}
&\int_{B_1(0)}\int_{\Gamma^i_{x_0}\cap \mathcal{J}'_u\cap B}|u^{j'}(x)|^q\left|\nu(x)\cdot \frac{n}{|n|}\right|d\Haus^{N-1}(x)dn\nonumber
\\
&=\sum_{j=1}^\infty\int_{B_j}\int_{\Gamma^i_{x_0}\cap \mathcal{J}'_u\cap B}|u^{j'}(x)|^q\left|\nu(x)\cdot \frac{n}{|n|}\right|d\Haus^{N-1}(x)dn\nonumber
\\
&\leq \sum_{j=1}^\infty\liminf_{t\to 0^+}\int_{B_j}\int_{O_{t,n}\left(\Gamma^i_{x_0}\right)\cap B}\chi_{\Omega}(x+tn)\frac{|u(x+tn)-u(x)|^q}{t|n|}dxdn\nonumber
\\
&\leq \liminf_{t\to 0^+}\sum_{j=1}^\infty\int_{B_j}\int_{O_{t,n}\left(\Gamma^i_{x_0}\right)\cap B}\chi_{\Omega}(x+tn)\frac{|u(x+tn)-u(x)|^q}{t|n|}dxdn\nonumber
\\
&=\liminf_{t\to 0^+}\int_{B_1(0)}\int_{O_{t,n}\left(\Gamma^i_{x_0}\right)\cap B}\chi_{\Omega}(x+tn)\frac{|u(x+tn)-u(x)|^q}{t|n|}dxdn.
\end{align}
By Fubini's theorem and Remark
$\ref{rem:another form for C_N}$ after the proof
\begin{align}
\label{eq:equation where C_N is appeared}
&\int_{B_1(0)}\int_{\Gamma^i_{x_0}\cap \mathcal{J}'_u\cap B}|u^{j'}(x)|^q\left|\nu(x)\cdot \frac{n}{|n|}\right|d\Haus^{N-1}(x)dn\nonumber
\\
&=\int_{\Gamma^i_{x_0}\cap \mathcal{J}'_u\cap B}|u^{j'}(x)|^q\left(\int_{B_1(0)}\left|\nu(x)\cdot \frac{n}{|n|}\right|dn\right)d\Haus^{N-1}(x)\nonumber
\\
&=\left(\int_{B_1(0)}\left|e_1\cdot \frac{n}{|n|}\right|dn\right)\int_{\Gamma^i_{x_0}\cap \mathcal{J}'_u\cap B}|u^{j'}(x)|^qd\Haus^{N-1}(x)\nonumber
\\
&=C_N\int_{\Gamma^i_{x_0}\cap \mathcal{J}'_u\cap B}|u^{j'}(x)|^qd\Haus^{N-1}(x),
\end{align}
where $C_N$ is defined in
$\eqref{eq:definition of dimensional constant C_N}$ and $e_1=(1,0,0,...,0)\in \mathbb{R}^N$. Thus, $\eqref{eq:local Besov inequality}$ and $\eqref{eq:equation where C_N is appeared}$ give
\begin{align}
\label{eq:local Becov inequality where C_N appears}
&C_N\int_{\Gamma^i_{x_0}\cap \mathcal{J}'_u\cap B}|u^{j'}(x)|^qd\Haus^{N-1}(x)\nonumber
\\
&\leq \liminf_{t\to 0^+}\int_{B_1(0)}\int_{O_{t,n}\left(\Gamma^i_{x_0}\right)\cap B}\chi_{\Omega}(x+tn)\frac{|u(x+tn)-u(x)|^q}{t|n|}dxdn\nonumber
\\
&\leq \liminf_{t\to 0^+}\int_{B_1(0)}\int_{B}\chi_{\Omega}(x+tn)\frac{|u(x+tn)-u(x)|^q}{t|n|}dxdn.
\end{align}
We emphasize that inequality
$\eqref{eq:local Becov inequality where C_N appears}$ holds for every $i\in \mathbb{N}$, every $x_0\in\Gamma^i$, every small enough neighbourhood $\Gamma^i_{x_0}\subset \Gamma^i$ of $x_0$ and every Borel set $B\subset \Omega$ with $\mathcal{H}^{N-1}\left(\partial B\cap \Gamma^i_{x_0}\right)=0$.

Notice that if we choose
$B=B_\rho(x_0)$, then since $\Gamma^i_{x_0}$ is open set in $\Gamma^i$ we have $\Gamma^i_{x_0}\cap B_\rho(x_0)=\Gamma^i\cap B_\rho(x_0)$ for every small enough $\rho>0$, and since $\mathcal{H}^{N-1}(\Gamma^i_{x_0})<\infty$, we get by Lemma $\ref{lem: F is at most countable}$ that
$\mathcal{H}^{N-1}\left(\partial B_\rho(x_0)\cap \Gamma^i_{x_0}\right)=\mathcal{H}^{N-1}\llcorner \Gamma^i_{x_0}\left(\partial B_\rho(x_0)\right)=0$ for every $\rho>0$ except for a countable set.
\\
\textbf{Step 2:}
\\
If $\underline{B}_{u,q}(\Omega)=\infty$, then inequality \eqref{eq:ineqality3} trivially holds. Assume $\underline{B}_{u,q}(\Omega)<\infty$.
Let us denote for every $t>0$
\begin{equation}
\label{eq:definition of f rho}
f^{t}(x):=\int_{B_1(0)}\chi_{\Omega}(x+tn)\frac{\left|u(x+tn)-u(x)\right|^q}{t|n|}dn,\quad x\in\Omega.
\end{equation}
Thus
\begin{align}
\liminf_{t\to 0^+}f^t\mathcal{L}^N(\Omega)=\liminf_{t\to 0^+}\int_{\Omega}\int_{B_1(0)}\chi_{\Omega}(x+t n)\frac{\left|u(x+t n)-u(x)\right|^q}{t |n|}dndx\nonumber
\\
=\liminf_{t\to 0^+}\int_{\Omega}\frac{1}{t^N}\int_{\Omega\cap B_t(x)}\frac{\left|u(y)-u(x)\right|^q}{|y-x|}dydx=\underline{B}_{u,q}(\Omega)<\infty.
\end{align}

Thus, by weak* compactness there exists a subsequence  $f^{t_l}\Leb^N$ weakly* converging to some finite Radon measure
$\mu$ in $\Omega$ as $l\to \infty$ and such that
\begin{equation}
\label{eq:lower Besov term is obtained by a sequence}
\lim_{l\to \infty}f^{t_l}\Leb^N(\Omega)=\liminf_{t\to 0^+}f^{t}\Leb^N(\Omega).
\end{equation}

Notice that for every $x_0\in \Omega$, since $\mu$ is finite Radon measure in $\Omega$ we get by Lemma $\ref{lem: F is at most countable}$ that $\mu(\partial B_\rho(x_0))=0$ for every $\rho>0$ except for a countable set and such that $B_\rho(x_0)\subset \Omega$. In particular, we have for every such $\rho$ that
$\lim_{l\to \infty}f^{t_l}\Leb^N(B_{\rho}(x_0))
=\mu(B_{\rho}(x_0))$.
\\
\textbf{Step 3:}
\\
Let us denote for $E\subset\Omega$
\begin{equation}
\lambda(E):=C_N\int_{E\cap \mathcal{J}'_u}|u^{j'}(x)|^qd\Haus^{N-1}(x).
\end{equation}

For every $i\in \N$ and every $x_0\in \Gamma^i$ let $\rho_m>0$ be a converging to zero sequence such that
$\Gamma^i_{x_0}\cap B_{\rho_m}(x_0)=\Gamma^i\cap B_{\rho_m}(x_0)$, $\mathcal{H}^{N-1}\left(\partial B_{\rho_m}(x_0)\cap \Gamma^i_{x_0}\right)=0$ and
$\mu(\partial B_{\rho_m}(x_0))=0$ for every $m\in\N$.

By inequality $\eqref{eq:local Becov inequality where C_N appears}$ with $B=B_{\rho_m}(x_0)$ we get for every $m\in \N$
\begin{align}
\lambda(\Gamma^i\cap B_{\rho_m}(x_0))&\leq \liminf_{l\to \infty}\int_{B_1(0)}\int_{B_{\rho_m}(x_0)}\chi_{\Omega}(x+t_l n)\frac{|u(x+t_l n)-u(x)|^q}{t_l|n|}dxdn\nonumber
\\
&=\liminf_{l\to \infty}f^{t_l}\Leb^N(B_{\rho_m}(x_0))=\lim_{l\to \infty}f^{t_l}\Leb^N(B_{\rho_m}(x_0))
=\mu(B_{\rho_m}(x_0)).
\end{align}

By Lemma $\ref{lem:local measure inequality for rectifiable sets gives global ineqality}$
\begin{align}
&C_N\int_{\mathcal{J}'_u}|u^{j'}(x)|^qd\Haus^{N-1}(x)=\lambda(\mathcal{J}'_u)\leq \mu(\Omega)\leq \liminf_{l\to \infty}f^{t_l}\Leb^N(\Omega)\nonumber
\\
&=\lim_{l\to \infty}f^{t_l}\Leb^N(\Omega)=\liminf_{t\to 0^+}f^{t}\Leb^N(\Omega)\nonumber
&\\
&=\liminf_{t\to 0^+}\int_{\Omega}\int_{B_1(0)}\chi_{\Omega}(x+t n)\frac{|u(x+t n)-u(x)|^q}{t|n|}dndx\nonumber
&\\
&=\liminf_{t\to 0^+}\int_{\Omega}\frac{1}{t^N}\int_{\Omega\cap B_t(x)}\frac{|u(y)-u(x)|^q}{|y-x|}dydx=\underline{B}_{u,q}(\Omega).
\end{align}
\end{proof}

\begin{remark}
\label{rem:another form for C_N}
Note that the dimensional constant $C_N$ as defined in
$\eqref{eq:definition of dimensional constant C_N}$ can be also represented as
\begin{equation}
C_N=\int_{B_1(0)}\left|e_1\cdot \frac{n}{|n|}\right|dn,
\end{equation}
where $e_1:=(1,0,0,...,0)$. Indeed, by polar coordinates we have
\begin{align}
\int_{B_1(0)}\left|e_1\cdot \frac{n}{|n|}\right|dn=\int_0^1\frac{1}{r}\left(\int_{\partial B_r(0)}\left|e_1\cdot n\right|d\mathcal{H}^{N-1}(n)\right)dr\nonumber
\\
=\int_0^1r^{N-1}\left(\int_{S^{N-1}}\left|e_1\cdot w\right|d\mathcal{H}^{N-1}(w)\right)dr
=C_N.
\end{align}
Note also that for every $v_1,v_2\in S^{N-1}$
\begin{equation}
\int_{B_1(0)}\left|v_1\cdot \frac{n}{|n|}\right|dn=\int_{B_1(0)}\left|v_2\cdot \frac{n}{|n|}\right|dn.
\end{equation}
Indeed, take an isometry $A:\mathbb{R}^N\to \mathbb{R}^N$ such that $A(v_2)=v_1$.
Then, by the change of variable formula
\begin{equation}
\int_{B_1(0)}\left|v_1\cdot \frac{n}{|n|}\right|dn=\int_{A^{-1}\left(B_1(0)\right)}\left|A(v_2)\cdot \frac{A(w)}{|A(w)|}\right|dw=\int_{B_1(0)}\left|v_2\cdot \frac{w}{|w|}\right|dw.
\end{equation}
\end{remark}

\subsection{Examples  of $BV^q$-functions for which the $q$-jump inequality is strict}
\label{subsec:Examples  of $BV^q$-functions for which the $q$-jump inequality is strict}
Proposition
$\ref{prop:example when Besov inequality is strict}$ and Remarks $\ref{rem:remark about Assouad's theorem},\ref{rem:remark about functions on the whole space where Besov inequality is strict}$ below supply us with examples where the inequality
$\eqref{eq:ineqality3}$ is strict.

\begin{proposition}
\label{prop:example when Besov inequality is strict}
Let $1\leq q<\infty$. Assume the existence of a function $u:\mathbb{R}^N\to \mathbb{R}^d$ such that there exist constants
$0<A_1,A_2<\infty$ such that
\begin{equation}
\label{eq:u is bi-Holder}
A_1|x-y|^{1/q}\leq |u(x)-u(y)|\leq A_2|x-y|^{1/q},\quad \forall x,y\in \mathbb{R}^N.
\end{equation}
Then, for every
$\Omega\subset \mathbb{R}^N$ open, bounded and nonempty we have
\begin{equation}
0<\alpha(N)A_1^q\mathcal{L}^N(\Omega)\leq \underline{B}_{u,q}(\Omega)\leq \overline{B}_{u,q}(\Omega)\leq \alpha(N)A_2^q\mathcal{L}^N(\Omega)<\infty,
\end{equation}
where $\alpha(N)=\mathcal{L}^N(B_1(0))$ stands for the volume of the unit ball in $\R^N$. In particular, since $u$ is continuous in
$\mathbb{R}^N$, we have $u\in L^q(\Omega,\mathbb{R}^d)$ and by Proposition
$\ref{prop:equivalence of finiteness of upper and Besov constants}$ we get
$u\in BV^q(\Omega,\mathbb{R}^d)$; since $u$ is continuous we get $\mathcal{J}'_u=\emptyset$, so the inequality
$\eqref{eq:ineqality3}$ is strict.
\end{proposition}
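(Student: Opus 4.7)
The plan is to estimate the double integral defining the Besov constants directly, using the bi-Hölder condition pointwise, and then apply the Lebesgue density theorem to pass to the limit. The continuity built into the left half of \eqref{eq:u is bi-Holder} then kills the generalized jump set, which gives strictness in \eqref{eq:ineqality3}.

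First I would raise \eqref{eq:u is bi-Holder} to the $q$-th power, obtaining $A_1^q|x-y|\leq|u(x)-u(y)|^q\leq A_2^q|x-y|$ for all $x,y\in\R^N$. Dividing by $|x-y|$ gives the pointwise two-sided bound $A_1^q\leq|u(x)-u(y)|^q/|x-y|\leq A_2^q$ whenever $x\neq y$. Integrating this over $\Omega\cap B_\e(x)$ and then over $\Omega$ yields, for every $\e>0$,
\begin{equation*}
A_1^q\int_\Omega\frac{\Leb^N(\Omega\cap B_\e(x))}{\e^N}\,dx\;\leq\;\int_\Omega\int_{\Omega\cap B_\e(x)}\frac{1}{\e^N}\frac{|u(x)-u(y)|^q}{|x-y|}\,dy\,dx\;\leq\;A_2^q\int_\Omega\frac{\Leb^N(\Omega\cap B_\e(x))}{\e^N}\,dx.
\end{equation*}

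Second I would pass to the limit $\e\to 0^+$. By the Lebesgue density theorem, for $\Leb^N$-a.e.\ $x\in\Omega$ the integrand $\Leb^N(\Omega\cap B_\e(x))/\e^N$ tends to $\alpha(N)$. Since the integrand is dominated by the constant $\alpha(N)$ and since $\Leb^N(\Omega)<\infty$ (as $\Omega$ is bounded), dominated convergence gives
\begin{equation*}
\lim_{\e\to 0^+}\int_\Omega\frac{\Leb^N(\Omega\cap B_\e(x))}{\e^N}\,dx=\alpha(N)\,\Leb^N(\Omega).
\end{equation*}
Combined with the sandwich above, this yields the two-sided estimate in the statement for both $\underline{B}_{u,q}(\Omega)$ and $\overline{B}_{u,q}(\Omega)$. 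Positivity of the lower extreme is immediate because $\Omega$ is nonempty and open, so $\Leb^N(\Omega)>0$, while finiteness of the upper extreme uses boundedness of $\Omega$. Together with Proposition \ref{prop:equivalence of finiteness of upper and Besov constants} this gives $u\in BV^q(\Omega,\R^d)$.

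Third, the upper half of \eqref{eq:u is bi-Holder} shows that $u$ is H\"older continuous of order $1/q$, in particular continuous everywhere. Hence for every $x\in\R^N$ we have $\lim_{\rho\to 0^+}\fint_{B_\rho(x)}|u(y)-u(x)|\,dy=0$, which forces $\mathcal{S}_u=\emptyset$. By Proposition \ref{prop:inclusion of generalized jump set in the S''}, $\mathcal{J}'_u\subset\mathcal{S}_u=\emptyset$, so the left-hand side of \eqref{eq:ineqality3} vanishes while the right-hand side is bounded below by $\alpha(N)A_1^q\Leb^N(\Omega)>0$; the inequality is therefore strict.

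There is no genuine obstacle here: the argument is a pointwise comparison followed by a density-theorem limit, and the only thing to keep track of is that dominated convergence applies (which is why the boundedness of $\Omega$ is used). The one conceptual point worth flagging is that strictness comes not from sharpness of a constant but from the complete disappearance of $\mathcal{J}'_u$, made possible by the lower bound in \eqref{eq:u is bi-Holder} which rules out any jump-type oscillation blow-up.
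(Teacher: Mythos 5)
Your proposal is correct and follows essentially the same route as the paper: a pointwise sandwich of $|u(x)-u(y)|^q/|x-y|$ between $A_1^q$ and $A_2^q$, integration, dominated convergence to recover $\alpha(N)\Leb^N(\Omega)$ in the limit, and then continuity to annihilate $\mathcal{J}'_u$. The only cosmetic difference is that the paper notes $\Leb^N(\Omega\cap B_\e(x))/\Leb^N(B_\e(x))\to 1$ for \emph{every} $x\in\Omega$ simply because $\Omega$ is open (no appeal to the Lebesgue density theorem needed), which is the same dominated-convergence step you take.
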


\begin{proof}
It follows by $\eqref{eq:u is bi-Holder}$
\begin{align}
\label{eq:ineqality4}
\int_{\Omega}\left(\int_{\Omega\cap B_{\epsilon}(x)}\frac{1}{\epsilon^N}\frac{|u(x)-u(y)|^q}{|x-y|}dy\right)dx\geq A_1^q\int_{\Omega}\left(\int_{\Omega\cap B_{\epsilon}(x)}\frac{1}{\epsilon^N}dy\right)dx\nonumber
\\
=\alpha(N)A_1^q\int_{\Omega}\frac{\mathcal{L}^N\left(\Omega\cap B_{\epsilon}(x)\right)}{\mathcal{L}^N(B_\epsilon(x))}dx.
\end{align}

Let us denote
$g_\epsilon(x):=\frac{\mathcal{L}^N\left(\Omega\cap B_{\epsilon}(x)\right)}{\mathcal{L}^N(B_\epsilon(x))}$ for $x\in \Omega$. It follows that $|g_\epsilon(x)|\leq 1$ for every $x\in \Omega$ and every $\epsilon>0$, and since $\Omega$ is open we get $\lim_{\epsilon\to 0^+}g_\epsilon(x)=1$ for every $x\in \Omega$. Taking the lower limit as $\epsilon\to 0^+$ on both sides of inequality $\eqref{eq:ineqality4}$ and using dominated convergence theorem we get
$\alpha(N)A_1^q\mathcal{L}^N(\Omega)\leq \underline{B}_{u,q}(\Omega)$. The inequality
$\overline{B}_{u,q}(\Omega)\leq \alpha(N)A_2^q\mathcal{L}^N(\Omega)$ follows by similar considerations.
\end{proof}

\begin{remark}
\label{rem:remark about Assouad's theorem}
By Assouad's theorem, for
$\mathbb{R}^N$ as a metric space endowed with the metric $d(x,y):=|x-y|^{\alpha}$, $\alpha\in (0,1)$, there exist $0<A_1,A_2<\infty$ and
$d\in \mathbb{N}$ and bi-Lipschitz function $u:\mathbb{R}^N\to \mathbb{R}^d$:
\begin{equation}
A_1|x-y|^{\alpha}\leq |u(x)-u(y)|\leq A_2|x-y|^{\alpha},\quad \forall x,y\in \mathbb{R}^N.
\end{equation}
A proof of this theorem can be found in
\cite{Assouad} or see Theorem 12.2. in \cite{JuhaHeinonen}.
\end{remark}

\begin{remark}
\label{rem:remark about functions on the whole space where Besov inequality is strict}
There exist functions
$f\in BV^q(\mathbb{R}^N,\mathbb{R}^d)\cap L^\infty(\mathbb{R}^N,\mathbb{R}^d)$ for which the inequality $\eqref{eq:ineqality3}$ is strict: by Assouad's theorem for $\mathbb{R}^N$ together with the metric $d(x,y):=|x-y|^{1/q}$, $1<q<\infty$, there exist $d\in \mathbb{N}$, $A_1,A_2\in(0,\infty)$ and a bi-Lipschitz function $u:\mathbb{R}^N\to \mathbb{R}^d$ such that inequality $\eqref{eq:u is bi-Holder}$ holds. Let $\zeta\in C^{0,1/q}_c(\mathbb{R}^N)$ (an H{\"o}lder continuous function with exponent ${1/q}$ and compact support in $\R^N$) which is identically $1$ on an open, bounded and nonempty set $\Omega\subset\mathbb{R}^N$.   Define $f:=u\zeta$. Hence,
$f\in L^\infty(\mathbb{R}^N,\mathbb{R}^d)$ and
\begin{align}
\label{eq:ineqality5}
\int_{\mathbb{R}^N}\left(\int_{B_{\epsilon}(x)}\frac{1}{\epsilon^N}\frac{|f(x)-f(y)|^q}{|x-y|}dy\right)dx\geq \int_{\Omega}\left(\int_{\Omega\cap B_{\epsilon}(x)}\frac{1}{\epsilon^N}\frac{|u(x)-u(y)|^q}{|x-y|}dy\right)dx\nonumber
\\
\geq \alpha(N)A_1^q\int_{\Omega}\frac{\mathcal{L}^N\left(\Omega\cap B_{\epsilon}(x)\right)}{\mathcal{L}^N(B_\epsilon(x))}dx.
\end{align}
Taking the lower limit on both sides of $\eqref{eq:ineqality5}$ and using   dominated convergence theorem we get
$\underline{B}_{f,q}\left(\mathbb{R}^N\right)>0$. We prove now that $f\in BV^q(\R^N,\R^d)$. For any open and bounded set
$\Omega_1\subset\mathbb{R}^N$ such that $A:=\Supp(\zeta)\subset \Omega_1$ and every $0<\epsilon<\dist\left(A,\mathbb{R}^N\setminus \Omega_1\right)$
\begin{align}
\int_{\mathbb{R}^N}\left(\int_{B_{\epsilon}(x)}\frac{1}{\epsilon^N}\frac{|f(x)-f(y)|^q}{|x-y|}dy\right)dx=\int_{\Omega_1}\left(\int_{B_{\epsilon}(x)}\frac{1}{\epsilon^N}\frac{|f(x)-f(y)|^q}{|x-y|}dy\right)dx\nonumber
\\
+\int_{\mathbb{R}^N\setminus \Omega_1}\left(\int_{B_{\epsilon}(x)}\frac{1}{\epsilon^N}\frac{|f(x)-f(y)|^q}{|x-y|}dy\right)dx=\int_{\Omega_1}\left(\int_{B_{\epsilon}(x)}\frac{1}{\epsilon^N}\frac{|f(x)-f(y)|^q}{|x-y|}dy\right)dx.
\end{align}
Let $\Omega_2\subset\mathbb{R}^N$ be an open and bounded set such that $\overline{\Omega}_1\subset\Omega_2$. Since $f$ is locally H{\"o}lder continuous in $\mathbb{R}^N$ with exponent $1/q$, there exists a constant $M$ such that
$|f(x)-f(y)|\leq M|x-y|^{1/q}$ for every $x,y\in \Omega_2$. It follows for every
$0<\epsilon<\min\{\dist\left(A,\mathbb{R}^N\setminus \Omega_1\right),\dist\left(\overline{\Omega}_1,\mathbb{R}^N\setminus\Omega_2\right)\}$
\begin{equation}
\int_{\mathbb{R}^N}\left(\int_{B_{\epsilon}(x)}\frac{1}{\epsilon^N}\frac{|f(x)-f(y)|^q}{|x-y|}dy\right)dx\leq\alpha(N) M^q\mathcal{L}^N\left(\Omega_1\right),
\end{equation}
and so $\overline{B}_{f,q}(\mathbb{R}^N)<\infty$. Thus, since $f\in L^q\left(\mathbb{R}^N,\mathbb{R}^d\right)$, we get by Proposition
$\ref{prop:equivalence of finiteness of upper and Besov constants}$ that $f\in BV^q\left(\mathbb{R}^N,\mathbb{R}^d\right)$. Since $f$ is continuous, we have $\mathcal{J}'_f=\emptyset$, so the inequality
$\eqref{eq:ineqality3}$ is strict.

Note also that, since $\underline{B}_{f,q}\left(\mathbb{R}^N\right)>0$, we also have $\overline{B}_{f,q}\left(\mathbb{R}^N\right)>0$, and so there exists $1\leq i\leq d$ such that $\overline{B}_{f_i,q}\left(\mathbb{R}^N\right)>0$, where $f=(f_1,...,f_d)$, because
\begin{align}
\overline{B}_{f,q}(\mathbb{R}^N)=\limsup_{\epsilon\to 0^+}\int_{\mathbb{R}^N}\left(\int_{B_{\epsilon}(x)}\frac{1}{\epsilon^N}\frac{|f(x)-f(y)|^q}{|x-y|}dy\right)dx\nonumber
\\
\leq C\sum_{i=1}^d\limsup_{\epsilon\to 0^+}\int_{\mathbb{R}^N}\left(\int_{B_{\epsilon}(x)}\frac{1}{\epsilon^N}\frac{|f_i(x)-f_i(y)|^q}{|x-y|}dy\right)dx= C\sum_{i=1}^d\overline{B}_{f_i,q}(\mathbb{R}^N),
\end{align}
where $C$ is a constant dependent on $N,q$ only. Consequently, there exist scalar functions $u\in BV^q\left(\mathbb{R}^N,\mathbb{R}\right)$ for which the amount
$C_N\int_{\mathcal{J}'_u}|u^{j'}(x)|^qd\Haus^{N-1}(x)$ is strictly less than the upper infinitesimal Besov $q-$constant
$\overline{B}_{u,q}\left(\mathbb{R}^N\right)$.
\end{remark}

\section{Properties of fractional Sobolev functions}
In this section we analyse fractional Sobolev functions. In subsection \ref{subs:Fractional Sobolev functions} we prove a fine property of fractional Sobolev functions (Corollary \ref{thm:fine property of fractional Sobolev functions}). In subsection \ref{subs:Lusin approximation for fractional Sobolev functions} we prove Lusin approximation of fractional Sobolev functions by H{\"o}lder continuous functions (Corollary \ref{cor:Lusin approximation for fractional Sobolev functions}).

\subsection{Fine properties of fractional Sobolev functions}
\label{subs:Fractional Sobolev functions}

\begin{lemma}
\label{hhjgggg779988}
Let $\Omega\subset\R^N$ be an open set,  $u:\Omega\to \R^d$ is $\mathcal{L}^N$ measurable function and $q,p,r\in (0,\infty)$ such that $N-rqp\geq 0$.  Assume that for every open set $\Omega_0\subset\subset\Omega$ there exists $M\in(0,\infty)$ such that
\begin{equation}
\label{eq:inequality11}
\intop_{\Omega_0}\sup\limits_{\delta\in(0,M)}\Bigg\{\int_{B_1(0)}\chi_{\Omega_0}\big(x+\delta
z\big) \frac{\Big|u\big(x+\delta
z\big)-u(x)\Big|^q}{\delta^{rq}}dz\Bigg\}^pdx
<\infty\,.
\end{equation}
Then,
\begin{equation}
\label{eq:fine property}
\lim_{\rho\to
0^+}\frac{1}{\rho^{N}}\int_{B_{\rho}(x)}\Bigg\{\frac{1}{\rho^{N}}\int_{B_{\rho}(x)}
\Big|u\big(z\big)-u(y)\Big|^qdz\Bigg\}^pdy=0
\quad\quad\text{for}\;\;\mathcal{H}^{N-rqp}\,\,\text{a.e.}\,\,x\in
\Omega\,.
\end{equation}

\end{lemma}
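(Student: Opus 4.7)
Fix an open $\Omega_0 \subset\subset \Omega$ and set $s := N - rqp$, which lies in $[0, N)$ because $r, q, p > 0$. By hypothesis \eqref{eq:inequality11} the nonnegative Borel function
\begin{equation*}
\Phi(y) := \sup_{\delta \in (0, M)} \left(\int_{B_1(0)} \chi_{\Omega_0}(y + \delta z) \frac{|u(y + \delta z) - u(y)|^q}{\delta^{rq}} dz\right)^p
\end{equation*}
belongs to $L^1(\Omega_0)$. Denoting the left-hand side of \eqref{eq:fine property} by $G_\rho(x)$, the plan is to show $G_\rho(x) \to 0$ for $\mathcal{H}^s$-a.e.\ $x \in \Omega_0$; exhausting $\Omega$ by a countable family of such $\Omega_0$ and taking the union of the corresponding $\mathcal{H}^s$-null exceptional sets then yields the full statement.

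The first step will be to establish the pointwise bound
\begin{equation*}
G_\rho(x) \;\leq\; C(N, p) \, \rho^{rqp - N} \int_{B_\rho(x)} \Phi(y) dy,
\end{equation*}
valid for every $x \in \Omega_0$ and every $\rho > 0$ small enough that $B_{3\rho}(x) \subset \Omega_0$ and $2\rho < M$. For any $y \in B_\rho(x)$ the elementary inclusion $B_\rho(x) \subset B_{2\rho}(y)$, followed by the change of variable $z = y + 2\rho \mu$, gives
\begin{equation*}
\frac{1}{\rho^N} \int_{B_\rho(x)} |u(z) - u(y)|^q dz \;\leq\; 2^N \int_{B_1(0)} \chi_{\Omega_0}(y + 2\rho\mu) \, |u(y + 2\rho\mu) - u(y)|^q d\mu \;=\; 2^N (2\rho)^{rq} \Phi(y)^{1/p},
\end{equation*}
where $\chi_{\Omega_0} \equiv 1$ in the integrand because $B_{2\rho}(y) \subset \Omega_0$, and the last equality uses the very definition of $\Phi$ with $\delta = 2\rho$. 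Raising to the $p$-th power and averaging in $y$ over $B_\rho(x)$ produces the announced bound.

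It then remains to prove the general density fact: for any nonnegative $\Phi \in L^1(\Omega_0)$ and any $s \in [0, N)$,
\begin{equation*}
\lim_{\rho \to 0^+} \rho^{-s} \int_{B_\rho(x)} \Phi(y) dy \;=\; 0 \qquad \text{for } \mathcal{H}^s\text{-a.e. } x \in \Omega_0.
\end{equation*}
For this I would split $\Phi = \Phi_R + (\Phi - \Phi_R)$ with $\Phi_R := \Phi \wedge R$. Since $s < N$, the bounded part satisfies $\rho^{-s} \int_{B_\rho(x)} \Phi_R dy \leq R \alpha(N) \rho^{N - s}$, which tends to zero uniformly in $x$. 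For the tail, I would apply \rlemma{lem:the upper density lemma for a sequence of Radon measures} to the constant sequence $\mu_n \equiv (\Phi - \Phi_R) \mathcal{L}^N$, obtaining that for every $t > 0$
\begin{equation*}
\mathcal{H}^s\!\left(\left\{x \in \Omega_0 : \limsup_{\rho \to 0^+} \rho^{-s} \int_{B_\rho(x)} (\Phi - \Phi_R) dy > t\right\}\right) \;\leq\; \frac{5^s \alpha(s)}{t} \, \|\Phi - \Phi_R\|_{L^1(\Omega_0)}.
\end{equation*}
Letting $R \to \infty$ collapses the right-hand side to zero, hence the exceptional set has $\mathcal{H}^s$-measure zero for each $t > 0$, and taking a union over $t = 1/k$ shows that the $\limsup$ vanishes $\mathcal{H}^s$-almost everywhere. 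The only genuinely delicate point I anticipate is the transfer from integrals centered at $x$ (as they appear in $G_\rho$) to those centered at $y$ (as required by the hypothesis); this is disposed of by the inclusion $B_\rho(x) \subset B_{2\rho}(y)$ and the single rescaling $\delta = 2\rho$, at the cost of a harmless dimensional constant.
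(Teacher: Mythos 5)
Your proposal is correct, and the first half (the pointwise bound $G_\rho(x)\leq C\rho^{rqp-N}\int_{B_\rho(x)}\Phi$, obtained from $B_\rho(x)\subset B_{2\rho}(y)$ for $y\in B_\rho(x)$ and the rescaling $\delta=2\rho$) is essentially identical to the paper's derivation of \eqref{eq:inequality10}; two cosmetic slips are that the last step of your chain should read ``$\leq$'' rather than ``$=$'', and the constant $2^{Np+rqp}$ depends on $r,q$ as well as $N,p$. Where you diverge is in the final density step. The paper observes that $\mu:=\Phi\,\mathcal{L}^N$ is a finite positive Radon measure absolutely continuous with respect to $\mathcal{H}^N$, and then invokes its \rlemma{lem:Hausdorff measure of the set of points of positive upper density with lower dimension has measure zero} directly, which yields $\Theta^*_{N-rqp}(\mu,\cdot)=0$ $\mathcal{H}^{N-rqp}$-a.e.\ in one stroke. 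You instead re-derive this density fact from scratch: truncate $\Phi=\Phi_R+(\Phi-\Phi_R)$, observe that $\rho^{-s}\int_{B_\rho(x)}\Phi_R\leq R\alpha(N)\rho^{N-s}\to0$ uniformly since $N-s=rqp>0$, apply \rlemma{lem:the upper density lemma for a sequence of Radon measures} to the constant sequence $\mu_n\equiv(\Phi-\Phi_R)\mathcal{L}^N$ to bound the $\mathcal{H}^s$-measure of the super-level sets of the upper density by $5^s\alpha(s)t^{-1}\|\Phi-\Phi_R\|_{L^1(\Omega_0)}$, and then let $R\to\infty$. This is a valid argument, but it is somewhat roundabout: applying \rlemma{lem:the upper density lemma for a sequence of Radon measures} to a constant sequence amounts to using the paper's \rth{thm:Hausdorff and Radon measures} (the Ambrosio--Fusco--Pallara density theorem) with a worse constant, and your truncation plays the same role that absolute continuity plays in the paper's \rlemma{lem:Hausdorff measure of the set of points of positive upper density with lower dimension has measure zero}; both strategies boil down to the same Vitali-type covering estimate. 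The paper's route is shorter because it reuses a ready-made lemma, but yours is self-contained modulo \rlemma{lem:the upper density lemma for a sequence of Radon measures} and is mathematically sound.
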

\begin{proof}
By the local nature of the desired result \eqref{eq:fine property} we can assume that \eqref{eq:inequality11} is true for $\Omega$ (use the same considerations as in the proof of Theorem \ref{thm:sigma finiteness of limiting average with respect to Hausdorff measure}).
Then, for every $\rho\in(0,M)$ we have
\begin{multline}\label{gtgyihhzzkkhhjgjhjggjgiyyu22jokuiu1ookjkjkkkjhhpkokkliokkoijjjjhliojojjoiukjjkh1iuhhjjh35779988}
\frac{1}{\rho^{N}}\int_{B_\rho(x)\cap\Omega}\Bigg\{\frac{1}{\rho^{N}}\int_{B_\rho(v)\cap\Omega}
\Big|u\big(y\big)-u(v)\Big|^qdy\Bigg\}^pdv
\\
=\frac{1}{\rho^{N-rqp}}\int_{B_\rho(x)\cap\Omega}\Bigg\{\int_{B_1(0)}
\chi_\Omega\big(v+\rho z\big) \frac{\Big|u\big(v+\rho
z\big)-u(v)\Big|^q}{\rho^{rq}}dz\Bigg\}^pdv
\\
\leq \frac{1}{\rho^{N-rqp}}
\intop_{B_\rho(x)\cap\Omega}\sup\limits_{\delta\in(0,M)}\Bigg\{\int_{B_1(0)}
\chi_\Omega\big(v+\delta z\big) \frac{\Big|u\big(v+\delta
z\big)-u(v)\Big|^q}{\delta^{rq}}dz\Bigg\}^pdv \quad\quad\forall \,x\in
\Omega \,.
\end{multline}
In particular, for every $P\in(0,M)$ we have
\begin{multline}\label{gtgyihhzzkkhhjgjhjggjgiyyu22jokuiu1ookjkjkkkjhhpkokkliokkoijjjjhliojojjoiukjjkh1iuhhjjh35779988khhjjh}
\sup_{\rho\in(0,P]}\frac{1}{\rho^{N}}\int_{B_\rho(x)\cap\Omega}\Bigg\{\frac{1}{\rho^{N}}\int_{B_\rho(v)\cap\Omega}
\Big|u\big(y\big)-u(v)\Big|^qdy\Bigg\}^pdv\\
\leq \sup_{\rho\in(0,P]}\Bigg(\frac{1}{\rho^{N-rqp}}
\intop_{B_\rho(x)\cap\Omega}\sup\limits_{\delta\in(0,M)}\Bigg\{\int_{B_1(0)}
\chi_\Omega\big(v+\delta z\big) \frac{\Big|u\big(v+\delta
z\big)-u(v)\Big|^q}{\delta^{rq}}dz\Bigg\}^pdv\Bigg) \quad\quad\forall
\,x\in \Omega \,.
\end{multline}
Thus
\begin{multline}\label{gtgyihhzzkkhhjgjhjggjgiyyu22jokuiu1ookjkjkkkjhhpkokkliokkoijjjjhliojojjoiukjjkh1iuhhjjh35779988iggg}
\limsup_{\rho\to
0^+}\frac{1}{\rho^{N}}\int_{B_\rho(x)}\Bigg\{\frac{1}{\rho^{N}}\int_{B_\rho(v)}
\Big|u\big(y\big)-u(v)\Big|^qdy\Bigg\}^pdv
\\
\leq\limsup_{\rho\to 0^+}\Bigg(\frac{1}{\rho^{N-rqp}}
\intop_{B_\rho(x)}\sup\limits_{\delta\in(0,M)}\Bigg\{\int_{B_1(0)}
\chi_\Omega\big(v+\delta z\big) \frac{\Big|u\big(v+\delta
z\big)-u(v)\Big|^q}{\delta^{rq}}dz\Bigg\}^pdv\Bigg) \quad\quad\forall
\,x\in \Omega \,.
\end{multline}
In particular, using
\er{gtgyihhzzkkhhjgjhjggjgiyyu22jokuiu1ookjkjkkkjhhpkokkliokkoijjjjhliojojjoiukjjkh1iuhhjjh35779988iggg}
we deduce:
\begin{multline}
\label{eq:inequality10}
\limsup_{\rho\to
0^+}\,\frac{1}{\rho^{N}}\int_{B_{\rho}(x)}\Bigg\{\frac{1}{\rho^{N}}\int_{B_{\rho}(x)}
\Big|u\big(z\big)-u(y)\Big|^qdz\Bigg\}^pdy
\\
\leq\limsup_{\rho\to
0^+}\,\frac{1}{\rho^{N}}\int_{B_{\rho}(x)}\Bigg\{\frac{1}{\rho^{N}}\int_{B_{2\rho}(y)}
\Big|u\big(z\big)-u(y)\Big|^qdz\Bigg\}^pdy
\\
\leq 2^{N(1+p)}\,\limsup_{\rho\to
0^+}\,\frac{1}{(2\rho)^{N}}\int_{B_{2\rho}(x)}\Bigg\{\frac{1}{(2\rho)^{N}}\int_{B_{2\rho}(y)}
\Big|u\big(z\big)-u(y)\Big|^qdz\Bigg\}^pdy
\\
\leq 2^{N(1+p)}\,\limsup_{\rho\to 0^+}\Bigg(\frac{1}{\rho^{N-rqp}}
\intop_{B_\rho(x)}\sup\limits_{\delta\in(0,M)}\Bigg\{\int_{B_1(0)}
\chi_\Omega\big(v+\delta z\big) \frac{\Big|u\big(v+\delta
z\big)-u(v)\Big|^q}{\delta^{rq}}dz\Bigg\}^pdv\Bigg)
\\
\quad\quad\forall \,x\in \Omega\,.
\end{multline}

Let us denote for Lebesgue measurable sets $E\subset\Omega$
\begin{equation}
\mu(E):=\intop_E\sup\limits_{\delta\in(0,M)}\Bigg\{\int_{B_1(0)}\chi_\Omega\big(x+\delta
z\big) \frac{\Big|u\big(x+\delta
z\big)-u(x)\Big|^q}{\delta^{rq}}dz\Bigg\}^pdx.
\end{equation}
Rewriting inequality \eqref{eq:inequality10} in terms of upper density we have
\begin{equation}
\limsup_{\rho\to
0^+}\,\frac{1}{\rho^{N}}\int_{B_{\rho}(x)}\Bigg\{\frac{1}{\rho^{N}}\int_{B_{\rho}(x)}
\Big|u\big(z\big)-u(y)\Big|^qdz\Bigg\}^pdy\leq\alpha(N-rqp) 2^{N(1+p)}\Theta^*_{N-rqp}(\mu,x),\quad \forall x\in \Omega.
\end{equation}
The measure $\mu$ is a finite positive Radon measure in $\Omega$ which is absolutely continuous with respect to $\mathcal{L}^N=\mathcal{H}^N$.
Thus, by Lemma \ref{lem:Hausdorff measure of the set of points of positive upper density with lower dimension has measure zero}, there exists a Borel set $D\subset\Omega$ such that
\begin{equation}\label{gtgyihhzzkkhhjgjhjggjgiyyu22jokuiu1ookjkjkkkjhhpkokkliokkoijjjjhliojojjoiukjjkh1iuhhjjh357799jlljkjjk88}
\limsup_{\rho\to
0^+}\frac{1}{\rho^{N}}\int_{B_\rho(x)}\Bigg\{\frac{1}{\rho^{N}}\int_{B_\rho(x)}
\Big|u\big(y\big)-u(v)\Big|^qdy\Bigg\}^pdv=0 \quad\quad\forall \,x\in
D \,,
\end{equation}
and $\mathcal{H}^{N-rqp}(\Omega\setminus D)=0$.
\end{proof}

The following theorem is a generalization of a fine property of Sobolev functions
$W^{1,p}(\Omega)$ to Hausdorff measures:

\begin{theorem}
\label{thm:fine properties of Sobolev functions}
Let $\Omega\subset \R^N$ be an open set, $1< p\leq N$ and $u\in W^{1,p}(\Omega)$. Then,
\begin{multline}
\label{eq:fine property1}
\lim_{\rho\to
0^+}\fint_{B_{\rho}(x)}|u(y)-u_{B_\rho(x)}|^pdy
\\
\leq \lim_{\rho\to
0^+}\fint_{B_{\rho}(x)}\Bigg\{\fint_{B_{\rho}(x)}
\Big|u\big(z\big)-u(y)\Big|dz\Bigg\}^pdy=0
\quad\quad\text{for}\;\;\mathcal{H}^{N-p}\,\,\text{a.e.}\,\,x\in
\Omega\,.
\end{multline}
\end{theorem}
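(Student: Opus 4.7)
The first inequality in \eqref{eq:fine property1} is immediate from Jensen's inequality applied to the inner average: writing $u(y)-u_{B_\rho(x)} = \fint_{B_\rho(x)}(u(y)-u(z))\,dz$ and taking absolute value to the $p$-th power, Jensen gives the pointwise bound $|u(y)-u_{B_\rho(x)}|^p \leq \bigl\{\fint_{B_\rho(x)} |u(z)-u(y)|\,dz\bigr\}^p$, which we then integrate in $y$. So the entire content of the theorem is the equality to zero of the middle expression $\mathcal{H}^{N-p}$-a.e.

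My plan is to deduce this equality to zero directly from \rlemma{hhjgggg779988} applied with the parameter choice $q=1$, $r=1$, and outer exponent $p$, which yields exactly $N-rqp=N-p$. The lemma will then give the desired conclusion on any open $\Omega_0\subset\subset\Omega$, and a countable exhaustion will promote it to all of $\Omega$. To invoke the lemma I must verify its hypothesis \eqref{eq:inequality11}, namely that
\begin{equation*}
\int_{\Omega_0} \sup_{\delta\in(0,M)}\Bigl\{\int_{B_1(0)} \chi_{\Omega_0}(x+\delta z)\,\frac{|u(x+\delta z)-u(x)|}{\delta}\,dz\Bigr\}^p\,dx < \infty
\end{equation*}
for an appropriate $M>0$ (say $M<\dist(\Omega_0,\partial\Omega)$).

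The main step is to bound the inner supremum by a constant multiple of the Hardy--Littlewood maximal function $\mathcal{M}(|\nabla u|)(x)$. I would use the ACL characterization of Sobolev functions: for a.e.\ $x$ and a.e.\ direction, along the line segment from $x$ to $x+\delta z$ one has the absolute continuity estimate
\begin{equation*}
|u(x+\delta z)-u(x)| \leq \delta|z|\int_0^1 |\nabla u(x+t\delta z)|\,dt.
\end{equation*}
Dividing by $\delta$, integrating in $z$ over $B_1(0)$ and swapping the order of integration gives
\begin{equation*}
\int_{B_1(0)}\frac{|u(x+\delta z)-u(x)|}{\delta}\,dz \leq \int_0^1\!\int_{B_1(0)}|\nabla u(x+t\delta z)|\,dz\,dt = \int_0^1 \frac{\alpha(N)}{(t\delta)^N}\int_{B_{t\delta}(x)}|\nabla u|\,dw\,dt,
\end{equation*}
which is bounded by $C_N\,\mathcal{M}(|\nabla u|)(x)$, uniformly in $\delta\in(0,M)$. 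Because $p>1$, the Hardy--Littlewood maximal inequality gives $\mathcal{M}(|\nabla u|)\in L^p(\Omega_0)$ (using $|\nabla u|\in L^p$ on a slightly larger set to handle the maximal function near the boundary), which yields the desired finiteness.

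The only subtle point is the ACL step, since the line-integral estimate holds only for a.e.\ $x$ along a.e.\ line, and one must justify passing from the formula for smooth representatives to the general Sobolev function; this is standard and can be done either by density of smooth functions in $W^{1,p}$ combined with Fatou, or directly from the ACL characterization by a Fubini argument on the variable $z$. Once the hypothesis of \rlemma{hhjgggg779988} is verified, the conclusion of that lemma specializes precisely to $\lim_{\rho\to 0^+}\fint_{B_\rho(x)}\{\fint_{B_\rho(x)}|u(z)-u(y)|\,dz\}^p\,dy = 0$ for $\mathcal{H}^{N-p}$-a.e.\ $x\in\Omega_0$, completing the proof after exhausting $\Omega$ by such $\Omega_0$.
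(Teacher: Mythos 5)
Your proposal is correct and follows essentially the same route as the paper's proof: applying Lemma \ref{hhjgggg779988} with $q=r=1$ and outer exponent $p$, verifying its hypothesis by bounding the inner supremum by $C_N\,\mathcal{M}(|\nabla u|)(x)$ via the fundamental-theorem-of-calculus line estimate, invoking the Hardy--Littlewood maximal inequality (which is where $p>1$ is used), and handling general $u\in W^{1,p}$ by density of $C^1\cap W^{1,p}$ plus Fatou. The only difference is cosmetic: the paper writes out the density/Fatou passage explicitly while you sketch it, and it explicitly extends $\nabla u$ by zero to $\R^N$ before taking the maximal function, which is the standard way to make the near-boundary maximal estimate precise.
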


\begin{proof}
Let us define
\begin{equation}
f(x):=
\begin{cases}
\nabla u(x)\quad &x\in \Omega\\
0\quad &x\in \R^N\setminus\Omega
\end{cases},\quad f\in L^p(\R^N,\R^N).
\end{equation}
Let $\Omega_0\subset\subset \Omega$ be an open set.
Let $M>0$ be such that
$M<\dist(\overline{\Omega}_0,\R^N\setminus\Omega)$.

Assume for a moment
$u\in W^{1,p}(\Omega)\cap C^1(\Omega)$. Our goal in the following calculation is to show that we can apply Lemma \ref{hhjgggg779988}.
By the Hardy-Littlewood maximal inequality (Theorem \ref{thm:Hardy-Littlewood maximal inequality})
\begin{align}
\intop_{\Omega_0}\sup\limits_{\delta\in(0,M)}\Bigg\{\int_{B_1(0)}\chi_{\Omega_0}\big(x+\delta
z\big) \frac{\Big|u\big(x+\delta
z\big)-u(x)\Big|}{\delta}dz\Bigg\}^pdx\nonumber
\\
\leq \intop_{\Omega_0}\sup\limits_{\delta\in(0,M)}\Bigg\{\int_{B_1(0)}\frac{\Big|u\big(x+\delta
z\big)-u(x)\Big|}{|\delta z|}dz\Bigg\}^pdx\nonumber
\\
=\intop_{\Omega_0}\sup\limits_{\delta\in(0,M)}\Bigg\{\int_{B_1(0)}\frac{\left|\intop_0^1\frac{d}{ds}u(x+s\delta z)ds\right|}{|\delta z|}dz\Bigg\}^pdx\nonumber
\\
=\intop_{\Omega_0}\sup\limits_{\delta\in(0,M)}\Bigg\{\int_{B_1(0)}\frac{\left|\intop_0^1\nabla u(x+s\delta z)\cdot(\delta z)ds\right|}{|\delta z|}dz\Bigg\}^pdx\nonumber
\\
\leq \intop_{\Omega_0}\sup\limits_{\delta\in(0,M)}\Bigg\{\int_{B_1(0)}\left(\intop_0^1\left|\nabla u(x+s\delta z)\right|ds\right)dz\Bigg\}^pdx\nonumber
\\
=\intop_{\Omega_0}\sup\limits_{\delta\in(0,M)}\Bigg\{\intop_0^1\left(\int_{B_1(0)}\left|\nabla u(x+s\delta z)\right|dz\right)ds\Bigg\}^pdx\nonumber
\\
=\intop_{\Omega_0}\sup\limits_{\delta\in(0,M)}\Bigg\{\intop_0^1\left(\frac{1}{(s\delta )^N}\int_{B_{s\delta}(x)}\left|\nabla u(y)\right|dy\right)ds\Bigg\}^pdx\nonumber
\\
\leq \intop_{\Omega_0}\Bigg\{\sup\limits_{\delta\in(0,M)}\sup_{s\in (0,1)}\left(\frac{1}{(s\delta )^N}\int_{B_{s\delta }(x)}\left|\nabla u(y)\right|dy\right)\Bigg\}^pdx\nonumber
\\
\leq \int_{\R^N}\left\{\sup\limits_{r>0}\left(\frac{1}{r^N}\int_{B_{r}(x)}\left|f(y)\right|dy\right)\right\}^pdx\nonumber
\\
=\alpha(N)^p\|M(f)\|^p_{L^p(\R^N)}\leq C \|f\|^p_{L^p(\R^N)}=C\|\nabla u\|^p_{L^p(\Omega)},
\end{align}
where $C$ is a constant dependent of $q,N$ only.
Here,
\begin{align}
M(f)(x):=\sup\limits_{r>0}\left(\fint_{B_{r}(x)}\left|f(y)\right|dy\right)
\end{align}
is the Hardy-littlewood maximal function.

Let $\{u_n\}_{n\in \N}\subset W^{1,p}(\Omega)\cap C^1(\Omega)$ be such that
\begin{equation}
\lim_{n\to \infty}\|\nabla u_n\|^p_{L^p(\Omega)}=\|\nabla u\|^p_{L^p(\Omega)},
\end{equation}
and
\begin{equation}
\lim_{n\to \infty}u_n(x)=u(x)\quad \mathcal{L}^N\quad a.e. \quad x\in \Omega.
\end{equation}

Thus, for every $n\in \N$
\begin{align}
\label{eq:inequality14}
\intop_{\Omega_0}\sup\limits_{\delta\in(0,M)}\Bigg\{\int_{B_1(0)}\chi_{\Omega_0}\big(x+\delta
z\big) \frac{\Big|u_n\big(x+\delta
z\big)-u_n(x)\Big|}{\delta}dz\Bigg\}^pdx
\leq C\|\nabla u_n\|^p_{L^p(\Omega)}.
\end{align}

Taking the limit on both sides of \eqref{eq:inequality14} and using Fatou's Lemma we get
\begin{align}
\intop_{\Omega_0}\sup\limits_{\delta\in(0,M)}\Bigg\{\int_{B_1(0)}\chi_{\Omega_0}\big(x+\delta
z\big) \frac{\Big|u\big(x+\delta
z\big)-u(x)\Big|}{\delta}dz\Bigg\}^pdx
\leq C\|\nabla u\|^p_{L^p(\Omega)}<\infty.
\end{align}
Now, \eqref{eq:fine property1} follows by Lemma \ref{hhjgggg779988} choosing $q=r=1$.
\end{proof}

\begin{lemma}
\label{lem:lemma about fractional Sobolev functions}
Let $\Omega\subset\R^N$ be an open set, $q\in[1,\infty)$, $r\in(0,1)$ and $u\in W^{r,q}_{loc}(\Omega,\R^d)$.
Then
\begin{equation}
\label{eq:finiteness of a term in which sup is inside the integral}
\int_{\Omega_0}\Bigg\{\sup\limits_{\delta\in(0,\infty)}\int_{B_1(0)}\chi_{\Omega_0}\big(x+\delta
z\big) \frac{\Big|u\big(x+\delta
z\big)-u(x)\Big|^q}{\delta^{rq}}dz\Bigg\}dx<\infty\,,
\end{equation}
for every open $\Omega_0\subset\subset\Omega$.
\end{lemma}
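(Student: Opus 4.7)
The plan is to reduce the inner integral, via the change of variables $y=x+\delta z$, to an integral of $|u(y)-u(x)|^q$ over $\Omega_0\cap B_\delta(x)$, and then exploit the pointwise bound $\delta^{-(N+rq)}\leq |y-x|^{-(N+rq)}$ valid on this ball to obtain a $\delta$-independent majorant. Explicitly, the substitution gives
\begin{equation*}
\int_{B_1(0)}\chi_{\Omega_0}(x+\delta z)\frac{|u(x+\delta z)-u(x)|^q}{\delta^{rq}}\,dz=\frac{1}{\delta^{N+rq}}\int_{\Omega_0\cap B_\delta(x)}|u(y)-u(x)|^q\,dy,
\end{equation*}
and since every $y$ in the domain of integration satisfies $|y-x|\leq \delta$, one can replace the constant factor $\delta^{-(N+rq)}$ by the pointwise larger $|y-x|^{-(N+rq)}$ and then enlarge the integration domain to all of $\Omega_0$, obtaining the $\delta$-free estimate
\begin{equation*}
\int_{B_1(0)}\chi_{\Omega_0}(x+\delta z)\frac{|u(x+\delta z)-u(x)|^q}{\delta^{rq}}\,dz\leq \int_{\Omega_0}\frac{|u(y)-u(x)|^q}{|y-x|^{N+rq}}\,dy.
\end{equation*}

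Since the right-hand side of the last display is independent of $\delta$, taking the supremum over $\delta\in(0,\infty)$ on the left preserves the bound. Integrating in $x\in\Omega_0$ and applying Fubini then yields
\begin{equation*}
\int_{\Omega_0}\sup_{\delta>0}\Bigg\{\int_{B_1(0)}\chi_{\Omega_0}(x+\delta z)\frac{|u(x+\delta z)-u(x)|^q}{\delta^{rq}}\,dz\Bigg\}dx\leq \iint_{\Omega_0\times\Omega_0}\frac{|u(y)-u(x)|^q}{|y-x|^{N+rq}}\,dy\,dx,
\end{equation*}
which is precisely the Gagliardo--Slobodeckij seminorm of $u$ on $\Omega_0$ raised to the $q$-th power. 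Since $\Omega_0\subset\subset\Omega$ and $u\in W^{r,q}_{loc}(\Omega,\R^d)$, this quantity is finite, proving \eqref{eq:finiteness of a term in which sup is inside the integral}. There is essentially no obstacle to this argument: the whole estimate reduces to the elementary geometric observation that $\delta\geq|y-x|$ on $B_\delta(x)$, with no Hardy--Littlewood maximal machinery needed (in contrast to the corresponding statement for $u\in W^{1,p}$ in \textbf{Theorem \ref{thm:fine properties of Sobolev functions}}, where differentiating across scales forces one to invoke the maximal inequality).
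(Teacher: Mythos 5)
Your proof is correct and takes essentially the same approach as the paper: after the change of variables the paper likewise compares $\delta^{-(N+rq)}$ (equivalently $|y|^{-(N+rq)}\geq 1$ for $|y|\leq1$) against the Gagliardo kernel $|y-x|^{-(N+rq)}$ and enlarges the domain of integration to recover the $W^{r,q}(\Omega_0)$ seminorm; you simply run the same estimate in the reverse direction.
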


\begin{proof}
Let $\Omega_0\subset\subset\Omega$ be open.
Since $u\in W^{r,q}(\Omega_0,\R^d)$, then
\begin{equation}\label{gtgyihhzzkkhhjgjhjggjgiyyu22jokuiu1ookjkjkkkjhhpkokkliokkoijjjjhliojojj1hihih35779988}
\int_{\Omega_0}\Bigg(\int_{\R^N} \chi_{\Omega_0}(x+z)
\frac{\big|u(x+z)-u(x)\big|^q}{|z|^{N+rq}}dz\Bigg)dx=\int_{\Omega_0}\Bigg(\int_{\Omega_0}
\frac{\big|u(y)-u(x)\big|^q}{|y-x|^{N+rq}}dy\Bigg)dx<\infty\,.
\end{equation}

We have
\begin{multline}\label{gtgyihhzzkkhhjgjhjggjgiyyu22jokuiu1ookjkjkkkjhhpkokkliokkoijjjjhliojojj1hihihhhjhhyuyuu35779988}
\int_{\Omega_0}\Bigg(\int_{\R^N} \chi_{\Omega_0}(x+z)
\frac{\big|u(x+z)-u(x)\big|^q}{|z|^{N+rq}}dz\Bigg)dx
\\
\geq\int_{\Omega_0}\Bigg(\sup\limits_{\e\in (0,\infty)}\int_{B_\e(0)}
\chi_{\Omega_0}(x+z)
\frac{\big|u(x+z)-u(x)\big|^q}{|z|^{N+rq}}dz\Bigg)dx
\\
=\int_{\Omega_0}\Bigg(\sup\limits_{\e\in (0,\infty)}\int_{B_1(0)}
\chi_{\Omega_0}(x+\e y) \frac{\big|u(x+\e
y)-u(x)\big|^q}{|y|^{N+{rq}}\e^{rq}}dy\Bigg)dx\\
\geq\int_{\Omega_0}\Bigg(\sup\limits_{\e\in (0,\infty)}\int_{B_1(0)}
\chi_{\Omega_0}(x+\e y) \frac{\big|u(x+\e
y)-u(x)\big|^q}{\e^{rq}}dy\Bigg)dx\,,
\end{multline}
and \eqref{eq:finiteness of a term in which sup is inside the integral} follows.
\end{proof}

\begin{corollary}
\label{thm:fine property of fractional Sobolev functions}
Let $\Omega\subset\R^N$ be an open set, $q\in [1,\infty)$, $r\in(0,1)$ are
such that $rq\leq N$ and $u\in W^{r,q}_{loc}(\Omega,\R^d)$.
Then,
\begin{equation}\label{gtgyihhzzkkhhjgjhjggjgiyyu22jokuiu1ookjkjkkkjhhpkokkliokkoijjjjhliojojjoiukjjkhioiou1jkjkjkhkhjhjhighooioiuiiuuu35i9i9iuyuiiiihhj779988}
\lim_{\rho\to
0^+}\frac{1}{\rho^{N}}\int_{B_{\rho}(x)}\Bigg\{\frac{1}{\rho^{N}}\int_{B_{\rho}(x)}
\Big|u\big(z\big)-u(y)\Big|^qdz\Bigg\}dy=0
\quad\quad\text{for}\quad\mathcal{H}^{N-rq}\quad \text{a.e.}\quad
x\in\Omega\,.
\end{equation}
\end{corollary}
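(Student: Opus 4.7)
The plan is to recognize this corollary as the $p=1$ specialization of Lemma \ref{hhjgggg779988}, combined with the finiteness estimate supplied by Lemma \ref{lem:lemma about fractional Sobolev functions}. First I would check that the hypothesis of Lemma \ref{hhjgggg779988} is met with $p=1$: for every open $\Omega_0\subset\subset\Omega$ and any $M\in(0,\infty)$, one needs
\begin{equation*}
\int_{\Omega_0}\sup_{\delta\in(0,M)}\Bigg\{\int_{B_1(0)}\chi_{\Omega_0}(x+\delta z)\frac{|u(x+\delta z)-u(x)|^q}{\delta^{rq}}dz\Bigg\}dx<\infty.
\end{equation*}
This is immediate from Lemma \ref{lem:lemma about fractional Sobolev functions}, which establishes the strictly stronger statement with the supremum taken over all of $(0,\infty)$. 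The only input that lemma uses is that $u\in W^{r,q}_{loc}(\Omega,\R^d)$, so the Gagliardo seminorm $\int_{\Omega_0}\int_{\Omega_0}|u(y)-u(x)|^q/|y-x|^{N+rq}\,dy\,dx$ is finite; an elementary change of variables then dominates the desired supremum-type integrand by this seminorm.

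Next, the exponent match works: with $p=1$ we have $N-rqp=N-rq$, which is nonnegative by hypothesis $rq\leq N$. Applying Lemma \ref{hhjgggg779988} with $p=1$ therefore yields
\begin{equation*}
\lim_{\rho\to 0^+}\frac{1}{\rho^{N}}\int_{B_{\rho}(x)}\Bigg\{\frac{1}{\rho^{N}}\int_{B_{\rho}(x)}|u(z)-u(y)|^q\,dz\Bigg\}dy=0\quad\text{for $\mathcal{H}^{N-rq}$ a.e. }x\in\Omega,
\end{equation*}
which is exactly the conclusion of the corollary.

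There is essentially no obstacle: the corollary is simply a packaging of two already-proved results. All of the substantive content---namely the upper-density/Vitali-covering argument that underlies Lemma \ref{hhjgggg779988} (carried out via the Radon measure $\mu(E):=\int_E \sup_\delta\{\cdots\}^p\,dx$ and Lemma \ref{lem:Hausdorff measure of the set of points of positive upper density with lower dimension has measure zero}), together with the Gagliardo-seminorm manipulation in Lemma \ref{lem:lemma about fractional Sobolev functions}---has already been performed in those preceding lemmas. The proof here reduces to verifying the hypothesis and citing.
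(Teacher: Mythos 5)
Your argument is correct and mirrors the paper's proof exactly: verify the hypothesis of Lemma \ref{hhjgggg779988} via Lemma \ref{lem:lemma about fractional Sobolev functions}, then apply Lemma \ref{hhjgggg779988} with $p=1$, noting $N-rq\geq 0$. Nothing further to add.
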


\begin{proof}
By Lemma \ref{lem:lemma about fractional Sobolev functions} we have
\begin{equation}\label{gtgyihhzzkkhhjgjhjggjgiyyu22jokuiu1ookjkjkkkjhhpkokkliokkoijjjjhliojojj1hihihhhjhhyuyuu35ljjjuututtttiojiijkhjkhioiuiuiyuuuy779988}
\int_{\Omega_0}\Bigg\{\sup\limits_{\delta\in(0,1)}\int_{B_1(0)}\chi_{\Omega_0}\big(x+\delta
z\big) \frac{\Big|u\big(x+\delta
z\big)-u(x)\Big|^q}{\delta^{rq}}dz\Bigg\}dx<\infty\,,
\end{equation}
for every open $\Omega_0\subset\subset\Omega$.
Thus, by Lemma \ref{hhjgggg779988} with $p=1$ we deduce the desired result.
\end{proof}

\subsection{Lusin approximation for fractional Sobolev functions by H{\"o}lder continuous functions}
\label{subs:Lusin approximation for fractional Sobolev functions}
We begin by representing a set of functions $\mathcal{A}^{r,q}$ which is bigger than the fractional Sobolev space $W^{r,q}$. We will show Lusin approximation for these functions.
The main result of this subsection is Corollary \ref{cor:Lusin approximation for fractional Sobolev functions}.

\begin{definition}
Let $\Omega\subset\R^N$ be an open set and $r,q\in(0,\infty)$. We say that $u\in \mathcal{A}^{r,q}(\Omega,\R^d)$ if and only if
$u\in L^q(\Omega,\R^d)$ and
\begin{equation}
\int_{\Omega}\left(\limsup_{\delta\to 0^+}\int_{B_\delta(x)}\frac{|u(y)-u(x)|^q}{\delta^{N+rq}}dy\right)dx<\infty.
\end{equation}

We say that $u\in \mathcal{A}^{r,q}_{loc}(\Omega,\R^d)$ if and only if $u\in L_{loc}^q(\Omega,\R^d)$ and $u\in  \mathcal{A}^{r,q}(\Omega_0,\R^d)$ for every open $\Omega_0\subset\subset\Omega$.
\end{definition}

We now give several propositions which tell us about some families of functions that are included in $\mathcal{A}^{r,q}$.

\begin{proposition}
Let $\Omega\subset\R^N$ be an open set, $q\in (0,\infty)$ and let $0<r\leq \alpha\leq 1$. Then
\begin{equation}
C^{0,\alpha}_{c}(\Omega,\R^d)\subset \mathcal{A}^{r,q}(\Omega,\R^d),
\end{equation}
where $C^{0,\alpha}_{c}(\Omega,\R^d)$ is the space of $\R^d$-valued $\alpha$-H{\"o}lder continuous functions with compact support in $\Omega$.
\end{proposition}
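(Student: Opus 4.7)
The plan is to directly estimate the limsup integrand pointwise, exploiting both the Hölder bound on $u$ and the fact that $u$ has compact support in $\Omega$.

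First, I would fix $u\in C^{0,\alpha}_c(\Omega,\mathbb{R}^d)$, let $K:=\operatorname{supp}(u)\subset\Omega$, and let $C>0$ be the Hölder constant, so that
\begin{equation}
|u(y)-u(x)|\leq C|y-x|^{\alpha}\quad \text{for all }x,y\in\Omega.
\end{equation}
For an arbitrary $x\in\Omega$ and $\delta>0$ small enough that $B_\delta(x)\subset\Omega$, this gives
\begin{equation}
\int_{B_\delta(x)}\frac{|u(y)-u(x)|^q}{\delta^{N+rq}}\,dy
\;\leq\; C^q\,\delta^{\alpha q-N-rq}\,\mathcal{L}^N\bigl(B_\delta(x)\bigr)
\;=\;\alpha(N)\,C^q\,\delta^{(\alpha-r)q}.
\end{equation}
Since $\alpha\geq r$ and $q>0$, the exponent $(\alpha-r)q$ is nonnegative, hence $\delta^{(\alpha-r)q}$ stays bounded by $1$ as $\delta\to 0^+$ (assuming, without loss of generality, $\delta\leq 1$). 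Therefore
\begin{equation}
\limsup_{\delta\to 0^+}\int_{B_\delta(x)}\frac{|u(y)-u(x)|^q}{\delta^{N+rq}}\,dy\;\leq\;\alpha(N)\,C^q\quad\text{for every }x\in\Omega.
\end{equation}

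Next, I would sharpen this bound away from the support. For any $x\in\Omega\setminus K$, the distance $d:=\operatorname{dist}(x,K)>0$, so for every $\delta<d$ one has $B_\delta(x)\cap K=\emptyset$, which forces $u(x)=0$ and $u\equiv 0$ on $B_\delta(x)$. The integrand therefore vanishes for all sufficiently small $\delta$, and the limsup at such $x$ is zero. Combining both observations,
\begin{equation}
\int_\Omega\left(\limsup_{\delta\to 0^+}\int_{B_\delta(x)}\frac{|u(y)-u(x)|^q}{\delta^{N+rq}}\,dy\right)dx
\;\leq\;\alpha(N)\,C^q\,\mathcal{L}^N(K)\;<\;\infty,
\end{equation}
because $K$ is compact and hence has finite Lebesgue measure. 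Measurability of the integrand follows from standard arguments (the $\limsup$ of a countable family of continuous-in-$x$ integrals is Borel), so the definition of $\mathcal{A}^{r,q}(\Omega,\mathbb{R}^d)$ is satisfied.

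There is essentially no obstacle here; the only mild care is to notice that the hypothesis $\alpha\geq r$ is exactly what prevents the factor $\delta^{(\alpha-r)q}$ from blowing up as $\delta\to 0^+$, and that compact support (not merely Hölder continuity) is what guarantees global integrability in $x$.
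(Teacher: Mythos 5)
Your proof is correct and takes essentially the same approach as the paper: bound the inner integral pointwise by $\alpha(N)C^q\delta^{(\alpha-r)q}$ using the H\"older estimate, observe that $\alpha\geq r$ keeps this bounded as $\delta\to 0^+$, and use compact support to kill the contribution from $\Omega\setminus K$ so that the outer integral is finite.
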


\begin{proof}
Let us denote by $K$ the compact support of a function $u\in C^{0,\alpha}_{c}(\Omega,\R^d)$. It follows that
\begin{align}
\int_{\Omega}\left(\limsup_{\delta\to 0^+}\int_{B_\delta(x)}\frac{|u(y)-u(x)|^q}{\delta^{N+rq}}dy\right)dx=\int_{K}\left(\limsup_{\delta\to 0^+}\int_{B_\delta(x)}\frac{|u(y)-u(x)|^q}{\delta^{N+rq}}dy\right)dx\nonumber
\\
+\int_{\Omega\setminus K}\left(\limsup_{\delta\to 0^+}\int_{B_\delta(x)}\frac{|u(y)-u(x)|^q}{\delta^{N+rq}}dy\right)dx.
\end{align}
Since $K$ is the support of $u$, then
\begin{equation}
\int_{\Omega\setminus K}\left(\limsup_{\delta\to 0^+}\int_{B_\delta(x)}\frac{|u(y)-u(x)|^q}{\delta^{N+rq}}dy\right)dx=0.
\end{equation}

Since $u\in C^{0,\alpha}_{c}(\Omega,\R^d)$
\begin{align}
\int_{K}\left(\limsup_{\delta\to 0^+}\int_{B_\delta(x)}\frac{|u(y)-u(x)|^q}{\delta^{N+rq}}dy\right)dx
\leq (C(u))^q\int_{K}\left(\limsup_{\delta\to 0^+}\int_{B_\delta(x)}\frac{|y-x|^{\alpha q}}{\delta^{N+rq}}dy\right)dx\nonumber
\\
\leq (C(u))^q \alpha(N)\mathcal{L}^N(K)\limsup_{\delta\to 0^+}\delta^{q(\alpha-r)},
\end{align}
where
\begin{equation}
C(u):=\sup_{x\neq y}\frac{|u(x)-u(y)|}{|x-y|^{\alpha}}.
\end{equation}
We see that since $\alpha\geq r$, then $u\in \mathcal{A}^{r,q}(\Omega,\R^d)$.
\end{proof}

\begin{proposition}
Let $\Omega\subset\R^N$ be an open set, and let $q\in [1,\infty),r\in (0,\infty)$ such that $rq\leq1$. Then
\begin{equation}
BV(\Omega,\R^d)\cap L^\infty(\Omega,\R^d)\subset \mathcal{A}^{r,q}(\Omega,\R^d).
\end{equation}
In case $q=1$ we have for every $r\in (0,1]$
\begin{equation}
BV(\Omega,\R^d)\subset \mathcal{A}^{r,1}(\Omega,\R^d).
\end{equation}
\end{proposition}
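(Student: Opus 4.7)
The strategy is to obtain a pointwise bound on $\limsup_{\delta\to 0^+}\delta^{-(N+rq)}\int_{B_\delta(x)}|u(y)-u(x)|^q\,dy$ at $\mathcal{L}^N$-a.e.\ $x\in\Omega$ by means of a first-order $L^1$-Taylor expansion of $u$, and then integrate over $\Omega$. This converts the problem into the elementary integrability of $|\nabla u|$, the $\mathcal{L}^N$-density of the absolutely continuous part of $Du$.

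At $\mathcal{L}^N$-a.e.\ $x\in\Omega$ the $BV$ function $u$ admits an approximate differential $\nabla u(x)\in\R^{d\times N}$ equal to the $\mathcal{L}^N$-density of $D^{ac}u$, and one has
\[
\frac{1}{\delta^{N+1}}\int_{B_\delta(x)}\big|u(y)-u(x)-\nabla u(x)(y-x)\big|\,dy\xrightarrow[\delta\to 0^+]{}0.
\]
This is derived by the decomposition $u=u^{AC}+u^{sing}$, where $u^{AC}\in W^{1,1}_{loc}$ satisfies $\nabla u^{AC}=\nabla u$ and $Du^{sing}$ is singular with respect to $\mathcal{L}^N$: for $u^{AC}$ it is the standard $L^1$-Taylor formula at Lebesgue points of $\nabla u$; for $u^{sing}$, Poincar\'e's inequality together with $|Du^{sing}|(B_\delta(x))=o(\delta^N)$ (Radon-Nikodym differentiation of the singular measure at $\mathcal{L}^N$-a.e.\ point) gives $\fint_{B_\delta(x)}|u^{sing}-(u^{sing})_{B_\delta(x)}|\,dy=o(\delta)$, while a dyadic telescoping argument produces $|(u^{sing})_{B_\delta(x)}-u^{sing}(x)|=o(\delta)$ at a.e.\ $x$. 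Together these facts are essentially \cite{AFP}, Theorem~3.83 in its $L^1$-averaged form.

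Setting $R(y):=u(y)-u(x)-\nabla u(x)(y-x)$, the triangle inequality combined with $\int_{B_\delta(x)}|y-x|^q\,dy=c(N,q)\delta^{N+q}$ yields
\[
\frac{1}{\delta^{N+rq}}\int_{B_\delta(x)}|u(y)-u(x)|^q\,dy\leq 2^{q-1}\left(\frac{1}{\delta^{N+rq}}\int_{B_\delta(x)}|R(y)|^q\,dy+c(N,q)\,|\nabla u(x)|^q\,\delta^{q(1-r)}\right).
\]
For the first statement ($q\in[1,\infty)$ with $u\in L^\infty$), the pointwise bound $|R(y)|\leq 2\|u\|_\infty+|\nabla u(x)|\delta$ on $B_\delta(x)$ gives $|R|^q\leq C(u,x)|R|$, whence $\delta^{-(N+rq)}\int|R|^q\,dy=C(u,x)\cdot o(\delta^{1-rq})$. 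For the second statement ($q=1$) no boundedness is needed, since $|R|^q=|R|$ directly.

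Since $rq\leq 1$ and $r\leq 1/q\leq 1$ with strict inequality whenever $q>1$, the $o(\delta^{1-rq})$ term vanishes as $\delta\to 0^+$, and $\delta^{q(1-r)}\to 0$ except in the case $q=r=1$ (where this factor equals $1$). Consequently $\limsup_{\delta\to 0^+}\delta^{-(N+rq)}\int_{B_\delta(x)}|u(y)-u(x)|^q\,dy$ vanishes at $\mathcal{L}^N$-a.e.\ $x$ in all cases except $q=r=1$, where it is bounded by $c(N,1)|\nabla u(x)|$. Integrating and invoking $\nabla u\in L^1(\Omega,\R^{d\times N})$ with $\int_\Omega|\nabla u|\,dx\leq|Du|(\Omega)<\infty$ yields the required finiteness of the defining integral of $\mathcal{A}^{r,q}(\Omega,\R^d)$ in both statements. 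The main technical hurdle is the $L^1$-Taylor expansion of $BV$ functions at $\mathcal{L}^N$-a.e.\ point; all remaining steps are elementary algebraic manipulation and a direct case check on the exponents $1-rq$ and $q(1-r)$.
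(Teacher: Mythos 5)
Your proof is correct and follows essentially the same route as the paper's: both arguments reduce the problem to $\mathcal{L}^N$-a.e.\ approximate differentiability of $BV$ functions (Calder\'on--Zygmund, Theorem~3.83 in \cite{AFP}) and then bound $\limsup_{\delta\to 0^+}\delta^{-(N+rq)}\int_{B_\delta(x)}|u(y)-\tilde u(x)|^q\,dy$ by a constant multiple of $|\nabla u(x)|\in L^1(\Omega)$. The only difference is cosmetic: the paper first pulls out the factor $(2\|u\|_\infty)^{q-1}$ to reduce from $|\cdot|^q$ to $|\cdot|$ and then applies the triangle inequality in $L^1$, while you apply the $L^q$-triangle inequality first and invoke the $L^\infty$ bound afterward only on the remainder term; the exponent bookkeeping and the conclusion are the same.
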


\begin{proof}
Recall Definitions \ref{def:approximate limit}, \ref{def:approximate differentiability points}.
For $u\in BV(\Omega,\R^d)$, by Theorem \ref{thm:Calderon-Zygmund} (Calder{\'o}n-Zygmund theorem)
$\Leb^N\left(\Omega\setminus \mathcal{D}_u\right)=0$,  where $\mathcal{D}_u\subset\Omega$ is the set of approximate differentiability points of $u$.

Assume that $u\in BV(\Omega,\R^d)\cap L^\infty(\Omega,\R^d)$. Let us denote $\|u\|_{\infty}:=\|u\|_{L^\infty(\Omega,\R^d)}$. For every $x\in \mathcal{D}_u$
\begin{align}
\label{eq:calculation1}
\int_{B_\delta(x)}\frac{|u(y)-\tilde{u}(x)|^q}{\delta^{N+rq}}dy
\leq (2\|u\|_{\infty})^{q-1}\int_{B_\delta(x)}\frac{|u(y)-\tilde{u}(x)|}{\delta^{N+rq}}dy\nonumber
\\
\leq (2\|u\|_{\infty})^{q-1}\int_{B_\delta(x)}\frac{|u(y)-\tilde{u}(x)-\nabla u(x)(y-x)|}{\delta^{N+rq}}dy
+(2\|u\|_{\infty})^{q-1}\int_{B_\delta(x)}\frac{|\nabla u(x)(y-x)|}{\delta^{N+rq}}dy\nonumber
\\
\leq\frac{(2\|u\|_{\infty})^{q-1}}{\delta^{rq-1}}\left(\frac{1}{\delta^{N+1}}\int_{B_\delta(x)}|u(y)-\tilde{u}(x)-\nabla u(x)(y-x)|dy\right)
+\frac{(2\|u\|_{\infty})^{q-1}}{\delta^{rq-1}}\alpha(N)|\nabla u(x)|.
\end{align}
Thus,
\begin{equation}
\limsup_{\delta\to 0^+}\int_{B_\delta(x)}\frac{|u(y)-\tilde{u}(x)|^q}{\delta^{N+rq}}dy\leq (2\|u\|_{\infty})^{q-1}\alpha(N)|\nabla u(x)|.
\end{equation}
Therefore,
\begin{equation}
\int_{\Omega}\left(\limsup_{\delta\to 0^+}\int_{B_\delta(x)}\frac{|u(y)-u(x)|^q}{\delta^{N+rq}}dy\right)dx\leq (2\|u\|_{\infty})^{q-1}\alpha(N)\int_{\Omega}|\nabla u(x)|dx<\infty.
\end{equation}
In case $q=1$ we get for $u\in BV(\Omega,\R^d)$ by a similar calculation as above
\begin{equation}
\int_{\Omega}\left(\limsup_{\delta\to 0^+}\int_{B_\delta(x)}\frac{|u(y)-u(x)|}{\delta^{N+r}}dy\right)dx\leq \alpha(N)\int_{\Omega}|\nabla u(x)|dx<\infty,
\end{equation}
for every $r\in (0,1]$.
\end{proof}

\begin{remark}
Recall the notion of $L^q-$approximate differentiability:
\\
Let $\Omega\subset\mathbb R^N$ be an open set, $0<q<\infty$ and $u\in L^q_{loc}(\Omega,\R^d)$.
We say that $u$ is {\it $L^q$-approximately differentiable} at $x\in \Omega$ if there exists a linear mapping
$L_x:\mathbb R^N\to \mathbb R^d$ such that
\begin{equation}
\lim_{\delta\to 0^+}\fint_{B_\delta(x)}\frac{|u(z)-\tilde{u}(x)-L_x(z-x)|^q}{\delta^q}dz=0.
\end{equation}
For $q\in (0,\infty)$ and  $r\in (0,1]$, if a function $u\in L^q_{loc}(\Omega,\R^d)$ is $L^q-$approximately differentiable
$\mathcal{L}^N$ almost everywhere in $\Omega$ and the approximate differential, $L$, satisfies $|L|\in L^q(\Omega)$, then we have for every approximate differentiability point $x\in \Omega$ and sufficiently small $\delta>0$

\begin{align}
\label{eq:calculation2}
&\int_{B_\delta(x)}\frac{|u(y)-\tilde{u}(x)|^q}{\delta^{N+rq}}dy\nonumber
\\
&\leq C\int_{B_\delta(x)}\frac{|u(y)-\tilde{u}(x)-L_x(y-x)|^q}{\delta^{N+rq}}dy
+C\int_{B_\delta(x)}\frac{|L_x(y-x)|^q}{\delta^{N+rq}}dy\nonumber
\\
&\leq\frac{C}{\delta^{q(r-1)}}\left(\frac{1}{\delta^{N+q}}\int_{B_\delta(x)}|u(y)-\tilde{u}(x)-L_x(y-x)|^qdy\right)
+\frac{C}{\delta^{q(r-1)}}\alpha(N)|L_x|^q,
\end{align}
where $C=1$ if $0<q\leq 1$, and $C=2^{q-1}$ if $1\leq q<\infty$.
Therefore,
\begin{equation}
\int_{\Omega}\left(\limsup_{\delta\to 0^+}\int_{B_\delta(x)}\frac{|u(y)-u(x)|^q}{\delta^{N+rq}}dy\right)dx\leq C\alpha(N)\int_{\Omega}|L_x|^qdx<\infty.
\end{equation}
Thus, $u\in \mathcal{A}^{r,q}(\Omega,\R^d)$. Notice that, by C{\'a}ldron-Zygmund theorem, functions of bounded variation are $L^1-$approximately differentiable $\mathcal{L}^N$ almost everywhere and the approximate differential is an $L^1$ mapping.
\end{remark}

\begin{proposition}
\label{prop:W(r,q) lies inside A(r,q)}
Let $\Omega\subset\R^N$ be an open set, and let $q\in [1,\infty),r\in (0,1)$. Then
\begin{equation}
W^{r,q}_{loc}(\Omega,\R^d)\subset \mathcal{A}_{loc}^{r,q}(\Omega,\R^d).
\end{equation}
\end{proposition}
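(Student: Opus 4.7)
The plan rests on the elementary pointwise inequality $\delta^{N+rq}\geq |y-x|^{N+rq}$ valid for every $y\in B_\delta(x)$, which converts the $\mathcal{A}^{r,q}$ integrand into the Gagliardo integrand of $W^{r,q}$.

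First I would set up a nested pair of domains. Fix an arbitrary open set $\Omega_0\subset\subset \Omega$ (for which I need to verify $u\in\mathcal{A}^{r,q}(\Omega_0,\R^d)$), and choose an intermediate open set $\Omega_1$ with $\Omega_0\subset\subset\Omega_1\subset\subset\Omega$. Set $\delta_0:=\dist(\ov\Omega_0,\R^N\setminus\Omega_1)>0$, so that $B_\delta(x)\subset\Omega_1$ for every $x\in\Omega_0$ and every $\delta\in(0,\delta_0]$. By the assumption $u\in W^{r,q}_{loc}(\Omega,\R^d)$ applied to $\Omega_1$, the Gagliardo energy
\begin{equation*}
[u]_{W^{r,q}(\Omega_1)}^q:=\int_{\Omega_1}\int_{\Omega_1}\frac{|u(y)-u(x)|^q}{|y-x|^{N+rq}}\,dy\,dx
\end{equation*}
is finite.

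Next I would establish the pointwise bound. For $x\in\Omega_0$ and $\delta\in(0,\delta_0]$, since $|y-x|\leq \delta$ on $B_\delta(x)$ and the integrand is nonnegative,
\begin{equation*}
\int_{B_\delta(x)}\frac{|u(y)-u(x)|^q}{\delta^{N+rq}}\,dy\;\leq\;\int_{B_\delta(x)}\frac{|u(y)-u(x)|^q}{|y-x|^{N+rq}}\,dy\;\leq\;\int_{B_{\delta_0}(x)}\frac{|u(y)-u(x)|^q}{|y-x|^{N+rq}}\,dy,
\end{equation*}
where the second inequality is monotonicity in $\delta\leq\delta_0$. Taking $\limsup_{\delta\to 0^+}$ on the left preserves the bound, since the right-hand side no longer depends on $\delta$.

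Finally I would integrate this pointwise bound over $\Omega_0$ and invoke Tonelli's theorem together with $B_{\delta_0}(x)\subset\Omega_1$:
\begin{equation*}
\int_{\Omega_0}\Bigg(\limsup_{\delta\to 0^+}\int_{B_\delta(x)}\frac{|u(y)-u(x)|^q}{\delta^{N+rq}}\,dy\Bigg)dx\leq \int_{\Omega_0}\int_{\Omega_1}\frac{|u(y)-u(x)|^q}{|y-x|^{N+rq}}\,dy\,dx\leq [u]_{W^{r,q}(\Omega_1)}^q<\infty.
\end{equation*}
This shows $u\in \mathcal{A}^{r,q}(\Omega_0,\R^d)$; since $\Omega_0$ was arbitrary, $u\in \mathcal{A}^{r,q}_{loc}(\Omega,\R^d)$. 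There is no real obstacle here: the entire content of the inclusion is the trivial pointwise domination $\delta^{-(N+rq)}\leq |y-x|^{-(N+rq)}$ on $B_\delta(x)$, and the only minor bookkeeping is choosing the intermediate domain $\Omega_1$ so that the integration balls fit inside a set on which the Gagliardo seminorm is finite.
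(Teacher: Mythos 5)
Your argument is correct and uses the same core inequality $\delta^{-(N+rq)}\le |y-x|^{-(N+rq)}$ on $B_\delta(x)$ that drives the paper's Lemma~\ref{lem:lemma about fractional Sobolev functions}, from which the paper then deduces this proposition in one line via $\limsup\le\sup$. The only difference is organizational: the paper proves the lemma in the stronger form with $\sup_{\delta\in(0,\infty)}$ inside the outer integral (because that form is reused in Corollary~\ref{thm:fine property of fractional Sobolev functions}) and handles boundary effects with a $\chi_{\Omega_0}$ cut-off, whereas you prove exactly the $\limsup$ estimate needed here directly, using the intermediate domain $\Omega_1$ to keep the balls inside a set of finite Gagliardo seminorm.
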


\begin{proof}
Let $u\in W^{r,q}_{loc}(\Omega,\R^d)$. By Lemma \ref{lem:lemma about fractional Sobolev functions}, for an open set $\Omega_0\subset\subset\Omega$
\begin{align}
\infty>\int_{\Omega_0}\Bigg\{\sup\limits_{\delta\in(0,\infty)}\int_{B_1(0)}\chi_{\Omega_0}\big(x+\delta
z\big) \frac{\Big|u\big(x+\delta
z\big)-u(x)\Big|^q}{\delta^{rq}}dz\Bigg\}dx\nonumber
\\
\geq \int_{\Omega_0}\Bigg\{\limsup\limits_{\delta\to 0^+}\int_{B_1(0)}\chi_{\Omega_0}\big(x+\delta
z\big) \frac{\Big|u\big(x+\delta
z\big)-u(x)\Big|^q}{\delta^{rq}}dz\Bigg\}dx\nonumber
\\
=\int_{\Omega_0}\Bigg\{\limsup\limits_{\delta\to 0^+}\int_{B_\delta(x)} \frac{|u(y)-u(x)|^q}{\delta^{N+rq}}dy\Bigg\}dx.
\end{align}
Thus, $u\in \mathcal{A}_{loc}^{r,q}(\Omega,\R^d)$.

\end{proof}

\begin{theorem}
\label{thm:Lusin approximation for A(r,q) functions}
Let $\Omega\subset \R^N$ be an open set and $q\in [1,\infty) $, $0<r\leq 1$. Let $u\in L^q_{loc}(\Omega,\R^d)$ be such that for $\mathcal{L}^N$-almost every point $x\in \Omega$ we have
\begin{equation}
\label{eq:assumption about finiteness of limsup}
\limsup_{\delta\to 0^+}\int_{B_\delta(x)}\frac{|u(y)-u_{B_\delta(x)}|^q}{\delta^{N+rq}}dy<\infty, \quad u_{B_\delta(x)}:=\fint_{B_\delta(x)}u(y)dy.
\end{equation}
Let $K\subset \Omega$ be a compact set. Then for every $\e>0$ there exists a compact set
$B\subset K$ such that
$\Leb^N\left(K\setminus B\right)<\e$ and
$u\in C^{0,r}(B,\R^d)$.
\end{theorem}

\begin{proof}
Let $\Omega_0\subset \R^N$ be an open set such that $\Omega_0\subset \subset \Omega$ and $K\subset \Omega_0$.
\\
\textbf{Step 1}
\\
For $\alpha>0$ we define
\begin{equation}
A_\alpha:=\left\{x\in \Omega_0:\limsup_{\delta\to 0^+}\int_{B_\delta(x)}\frac{|u(y)-u_{B_\delta(x)}|^q}{\delta^{N+rq}}dy\leq \alpha\right\}.
\end{equation}

For each $n\in \N$ we define
\begin{equation}
B_n:=\left\{x\in \Omega_0:\int_{B_\delta(x)}\frac{|u(y)-u_{B_\delta(x)}|^q}{\delta^{N+rq}}dy\leq n\quad \forall 0<\delta<I(n):=\min\left\{\frac{1}{n},\dist(\R^N\setminus\Omega,\overline{\Omega}_0)\right\}\right\}.
\end{equation}
The sequence $\{B_n\}_{n=1}^\infty$ has the following four properties:
\\
1. $B_n\subset B_{n+1},\forall n\in \N$;
\\
2. $B_n$ is closed for every $n\in \N$;
\\
3. $\Leb^N\left(\Omega_0\setminus \bigcup_{n=1}^\infty B_n\right)=0$;
\\
4. $\tilde{u}\in C_{loc}^{0,r}(B_n\setminus \mathcal{S}_u,\R^d)$ for every $n\in \N$.
\\
\\
In item 4 above, $\tilde{u}$ is the approximate limit as defined in Definition \ref{def:approximate limit}. We mean by $\tilde{u}\in C_{loc}^{0,r}(B_n\setminus \mathcal{S}_u,\R^d)$, that for every compact set $K\subset B_n\setminus \mathcal{S}_u$ there is a constant $C$ such that $|\tilde{u}(x)-\tilde{u}(y)|\leq C|x-y|^r$ for every $x,y\in K$.
\\
The monotonicity property $B_n\subset B_{n+1}$ follows directly from the definition of $B_n$. The closedness property of $B_n$ is achieved by the continuity of the function $x\longmapsto \int_{B_\delta(x)}\frac{|u(y)-u_{B_\delta(x)}|^q}{\delta^{N+rq}}dy$, $x\in \Omega_0$, for every choice of
$0<\delta<\dist(\R^N\setminus\Omega,\overline{\Omega}_0)$. For the third property, notice that for each
$\alpha>0$ we have $A_\alpha\subset \bigcup_{n=1}^\infty B_n$, and by assumption \eqref{eq:assumption about finiteness of limsup} we get that $\mathcal{L}^N$-almost every $x\in \Omega_0$ lies in the union of the $A_\alpha$. Therefore,
\begin{align}
\Leb^N\left(\Omega_0\setminus \bigcup_{n=1}^\infty B_n\right)\leq \Leb^N\left(\Omega_0\setminus \bigcup_{\alpha>0} A_\alpha\right)=0.
\end{align}

Let us prove the fourth property. Let us fix $n_0\in \N$.
For $x\in B_{n_0}$ we get by Jensen inequality for every $0<\delta<I(n_0)$
\begin{equation}
\left(\fint_{B_\delta(x)}|u(y)-u_{B_\delta(x)}|dy\right)^q\leq \fint_{B_\delta(x)}|u(y)-u_{B_\delta(x)}|^qdy\leq \frac{n_0}{\alpha(N)}\delta^{rq}.
\end{equation}
Hence
\begin{equation}
\fint_{B_\delta(x)}|u(y)-u_{B_\delta(x)}|dy\leq \left(\frac{n_0}{\alpha(N)}\right)^{1/q}\delta^{r},\quad \forall 0<\delta<I(n_0).
\end{equation}

In what following we do estimates in order to get bounds in terms of
$\delta^{r}$
on the terms
\begin{equation}
|\tilde{u}(x)-u_{B_\delta(x)}|,|u_{B_\delta(x)}-u_{B_\delta(y)}|,
\end{equation}
where $x,y\in B_{n_0}\setminus \mathcal{S}_u$ and $0<\delta<I(n_0)$.

\begin{align}
|u_{B_{\delta/2^{k+1}}(x)}-u_{B_{\delta/2^{k}}(x)}|
&\leq \fint_{B_{\delta/2^{k+1}}(x)}|u(y)-u_{B_{\delta/2^{k}}(x)}|dy\nonumber
\\
&=\frac{1}{\Leb^N(B_{\delta/2^{k+1}}(x))}\int_{B_{\delta/2^{k+1}}(x)}|u(y)-u_{B_{\delta/2^{k}}(x)}|dy\nonumber
\\
&=\frac{2^N}{\Leb^N(B_{\delta/2^k}(x))}\int_{B_{\delta/2^{k+1}}(x)}|u(y)-u_{B_{\delta/2^{k}}(x)}|dy\nonumber
\\
&\leq 2^N\fint_{B_{\delta/2^{k}}(x)}|u(y)-u_{B_{\delta/2^{k}}(x)}|dy
\leq 2^N\left(\frac{n_0}{\alpha(N)}\right)^{1/q}\left(\frac{\delta}{2^k}\right)^{r}.
\end{align}

Therefore, for every $x\in B_{n_0}\setminus \mathcal{S}_u$ and $0<\delta<I(n_0)$, since $\lim_{\delta\to 0^+}u_{B_{\delta}(x)}=\tilde{u}(x)$ it follows that

\begin{align}
|\tilde{u}(x)-u_{B_{\delta}(x)}|&=\left|\sum_{k=0}^\infty \left[u_{B_{\delta/2^{k+1}}(x)}-u_{B_{\delta/2^{k}}(x)}\right]\right|\nonumber
\\
&\leq \sum_{k=0}^\infty \left|u_{B_{\delta/2^{k+1}}(x)}-u_{B_{\delta/2^{k}}(x)}\right|\nonumber
\\
&\leq \left(2^N\left(\frac{n_0}{\alpha(N)}\right)^{1/q}\sum_{k=0}^\infty\frac{1}{2^{rk}}\right)\delta^{r}.
\end{align}

Let $x,y\in B_{n_0}\setminus \mathcal{S}_u$ such that $x\neq y$ and $\delta=|x-y|<I(n_0)$. For every $z\in B_\delta(x)\cap B_\delta(y)\setminus \mathcal{S}_u$
we have by the triangle inequality
\begin{equation}
|u_{B_{\delta}(x)}-u_{B_{\delta}(y)}|\leq
|u_{B_{\delta}(x)}-\tilde{u}(z)|+|\tilde{u}(z)-u_{B_{\delta}(y)}|
\end{equation}
and therefore
\begin{align}
|u_{B_{\delta}(x)}-u_{B_{\delta}(y)}|&
\leq \fint_{B_\delta(x)\cap B_\delta(y)} \left(|u_{B_{\delta}(x)}-u(z)|+|u(z)-u_{B_{\delta}(y)}|\right)dz\nonumber
\\
&\leq 2^N\left(\fint_{B_{\delta}(x)}|u_{B_{\delta}(x)}-u(z)|dz+
\fint_{B_{\delta}(y)}|u(z)-u_{B_{\delta}(y)}|dz \right)\nonumber
\\
&\leq 2^{N+1}\left(\frac{n_0}{\alpha(N)}\right)^{1/q}\delta^{r}.
\end{align}
In the second inequality in the above calculation we used
\begin{equation}
B_{\delta/2}\left(\frac{x+y}{2}\right)\subset B_\delta(x)\cap B_\delta(y)\quad \Longrightarrow \quad \frac{1}{\Leb^N(B_\delta(x)\cap B_\delta(y))}\leq \frac{2^N}{\Leb^N(B_\delta(x))}=\frac{2^N}{\Leb^N(B_\delta(y))}.
\end{equation}
By the above results and the triangle inequality we obtain for $x,y\in B_{n_0}\setminus \mathcal{S}_u$ such that $x\neq y$ and
$\delta=|x-y|<I(n_0)$

\begin{align}
|\tilde{u}(x)-\tilde{u}(y)|&\leq
|\tilde{u}(x)-u_{B_{\delta}(x)}|+|u_{B_{\delta}(x)}-u_{B_{\delta}(y)}|+|u_{B_{\delta}(y)}-\tilde{u}(y)|\leq C|x-y|^{r},
\end{align}
where $C$ is a constant dependent of $N,n_0,r,q$ only.
It proves that $\tilde{u}\in C_{loc}^{0,r}(B_n\setminus \mathcal{S}_u,\R^d)$.
\\

\textbf{Step 2}
\\
Let $\e>0$. By the previous step (property 3) there exists $n_0\in \N$ such that $\Leb^N\left(K\setminus B_{n_0}\right)<\e$. Assume for a moment that $u$ is continuous in the compact set $K$. Let us define $B:=B_{n_0}\cap K$. $B$ is compact as an intersection of the compact set
$K$ with the closed set $B_{n_0}$. Then, since $u$ is continuous in $K$ we get $K\subset\Omega_0\setminus \mathcal{S}_u$, $u(x)=\tilde{u}(x)$ for $x\in K$, and by the previous step
$u\in C^{0,r}_{loc}(B,\R^d)$. Since $B$ is compact we have $u\in C^{0,r}(B,\R^d)$. In addition, by the choice of $B_{n_0}$ we have
$\Leb^N\left(K\setminus B\right)=\Leb^N\left(K\setminus B_{n_0}\right)<\e$.

\textbf{Step 3}
\\
If $u$ is not continuous in $K$, then by Lusin theorem we can choose a compact set $K_0\subset K$ such that $u$ is continuous in
$K_0$ and
$\Leb^N\left(K\setminus K_0\right)<\frac{\e}{2}$.
By the previous step, there exists a compact set $B\subset K_0$ such that $u\in C^{0,r}(B,\R^d)$ and $\Leb^N\left(K_0\setminus B\right)<\frac{\e}{2}$. We get
\begin{equation}
\Leb^N\left(K\setminus B\right)=\Leb^N\left(K\setminus K_0\right)+\Leb^N\left(K_0\setminus B\right)<\e.
\end{equation}

It completes the proof.
\end{proof}

\begin{corollary}
\label{thm:Lusin approximation for A(r,q) functions, corollary}
Let $\Omega\subset \R^N$ be an open set and $q\in [1,\infty) $, $0<r\leq 1$. Let $u\in \mathcal{A}_{loc}^{r,q}(\Omega,\R^d)$.
Let $K\subset \Omega$ be a compact set. Then for every $\e>0$ there exists a compact set
$B\subset K$ such that
$\Leb^N\left(K\setminus B\right)<\e$ and
$u\in C^{0,r}(B,\R^d)$.
\end{corollary}

\begin{proof}
Let $\Omega_0\subset \R^N$ be an open set such that $\Omega_0\subset \subset \Omega$ and $K\subset \Omega_0$.
By Jensen inequality and the convexity of $s\longmapsto s^q,s>0$, we have for
$x,y\in\Omega_0\setminus \mathcal{S}_u$ and sufficiently small $\delta>0$
\begin{multline}
|\tilde{u}(y)-u_{B_\delta(x)}|^q\leq \fint_{B_\delta(x)}|\tilde{u}(y)-u(z)|^qdz
\\
\leq 2^{q-1}\fint_{B_\delta(x)}|u(z)-\tilde{u}(x)|^qdz+2^{q-1}|\tilde{u}(y)-\tilde{u}(x)|^q.
\end{multline}
By taking the average of both sides of the last inequality on a ball $B_\delta(x)$ with respect to $dy$ we obtain for sufficiently small $\delta>0$
\begin{align}
\fint_{B_\delta(x)}|u(y)-u_{B_\delta(x)}|^qdy\leq 2^q\fint_{B_\delta(x)}|u(y)-\tilde{u}(x)|^qdy.
\end{align}
Multiplying both sides by $\frac{\alpha(N)}{\delta^{rq}}$ we get
\begin{equation}
\int_{B_\delta(x)}\frac{|u(y)-u_{B_\delta(x)}|^q}{\delta^{N+rq}}dy\leq
2^q\int_{B_\delta(x)}\frac{|u(y)-\tilde{u}(x)|^q}{\delta^{N+rq}}dy.
\end{equation}
Therefore, since $u\in \mathcal{A}^{r,q}(\Omega_0,\R^d)$, then we get
\begin{equation}
\int_{\Omega_0}\left(\limsup_{\delta\to 0^+}\int_{B_\delta(x)}\frac{|u(y)-u_{B_\delta(x)}|^q}{\delta^{N+rq}}dy\right)dx\leq
2^q\int_{\Omega_0}\left(\limsup_{\delta\to 0^+}\int_{B_\delta(x)}\frac{|u(y)-u(x)|^q}{\delta^{N+rq}}dy\right)dx<\infty.
\end{equation}
Thus, $\limsup_{\delta\to 0^+}\int_{B_\delta(x)}\frac{|u(y)-u_{B_\delta(x)}|^q}{\delta^{N+rq}}dy<\infty$ for $\mathcal{L}^N$-almost everywhere in $\Omega$, and the conditions of Theorem \ref{thm:Lusin approximation for A(r,q) functions} hold.
\end{proof}

\begin{corollary}
\label{cor:Lusin approximation for fractional Sobolev functions}
Let $\Omega\subset \R^N$ be an open set and $q\in [1,\infty)$, $0<r<1$. Let $u\in W^{r,q}_{loc}(\Omega,\R^d)$
and let $K\subset \Omega$ be a compact set. Then for every $\e>0$ there exists a compact set
$B\subset K$ such that
$\Leb^N\left(K\setminus B\right)<\e$ and
$u\in C^{0,r}(B,\R^d)$.
\end{corollary}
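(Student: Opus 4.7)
The plan is straightforward: this corollary is an immediate consequence of the two results already proved in this subsection, namely Proposition \ref{prop:W(r,q) lies inside A(r,q)} and Theorem \ref{thm:Lusin approximation for A(r,q) functions}. First I would invoke Proposition \ref{prop:W(r,q) lies inside A(r,q)} to upgrade the hypothesis $u\in W^{r,q}_{loc}(\Omega,\R^d)$ to the membership $u\in \mathcal{A}^{r,q}_{loc}(\Omega,\R^d)$. The underlying mechanism, already made explicit in Lemma \ref{lem:lemma about fractional Sobolev functions}, is that the finiteness of the Gagliardo seminorm dominates
\begin{equation*}
\int_{\Omega_0}\sup_{\delta\in(0,\infty)}\int_{B_1(0)}\chi_{\Omega_0}(x+\delta z)\frac{|u(x+\delta z)-u(x)|^q}{\delta^{rq}}\,dz\,dx,
\end{equation*}
for every $\Omega_0\subset\subset\Omega$, and hence in particular controls the $\limsup$ at $\delta\to 0^+$ appearing in the definition of $\mathcal{A}^{r,q}_{loc}$.

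Second, since the exponent range $r\in(0,1)$ assumed in the corollary lies inside the range $r\in(0,1]$ allowed by Theorem \ref{thm:Lusin approximation for A(r,q) functions}, and since the assumption $q\in[1,\infty)$ matches, I would apply that theorem verbatim to the function $u\in \mathcal{A}^{r,q}_{loc}(\Omega,\R^d)$ and to the given compact set $K\subset\Omega$. This produces, for any prescribed $\e>0$, a compact set $B\subset K$ with $\Leb^N(K\setminus B)<\e$ such that $u\in C^{0,r}(B,\R^d)$, which is precisely the desired conclusion.

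There is essentially no obstacle: the two genuinely nontrivial ingredients, namely the embedding $W^{r,q}_{loc}\subset\mathcal{A}^{r,q}_{loc}$ and the dyadic telescoping argument controlling $|u_{B_{\delta/2^{k+1}}(x)}-u_{B_{\delta/2^{k}}(x)}|$ that produces local H\"older continuity on each level set $B_n$, have already been carried out earlier in the subsection. The corollary merely packages them together, and the proof can therefore be written in one or two lines.
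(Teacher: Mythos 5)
Your proof is correct and follows exactly the paper's own route: apply Proposition \ref{prop:W(r,q) lies inside A(r,q)} to obtain $u\in\mathcal{A}^{r,q}_{loc}(\Omega,\R^d)$, then invoke Theorem \ref{thm:Lusin approximation for A(r,q) functions}. The paper gives precisely this one-line argument.
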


\begin{proof}
The proof follows by Proposition \ref{prop:W(r,q) lies inside A(r,q)} and Corollary \ref{thm:Lusin approximation for A(r,q) functions, corollary}.
\end{proof}

\begin{remark}
One can derive from Corollary \ref{cor:Lusin approximation for fractional Sobolev functions} that:
Let $\Omega\subset \R^N$ be an open set and $q\in [1,\infty)$, $0<r<1$. Let $u\in W^{r,q}_{loc}(\Omega,\R^d)$
and let $K\subset \Omega$ be a compact set. Then for every $\e>0$ there exists an H{\"o}lder function $f\in C^{0,r}(\R^N,\R^d)$ such that $\Leb^N\left(\left\{x\in K: f(x)\neq u(x)\right\}\right)<\e$.

Indeed, by Corollary \ref{cor:Lusin approximation for fractional Sobolev functions} we get for arbitrary $\e>0$ a set $B\subset K$ such that $\Leb^N\left(K\setminus B\right)<\e$ and
$u\in C^{0,r}(B,\R^d)$. One can extend $u=(u_1,...,u_d)$ to a function $f\in C^{0,r}(\R^N,\R^d)$ defining for $1\leq j\leq d$
\begin{align}
&f_j(\xi):=\inf\left\{u_j(x)+H(u_j)|x-\xi|^r:x\in B\right\},\quad H(u_j):=\sup_{x,\xi\in B,x\neq \xi}\frac{|u_j(x)-u_j(\xi)|}{|x-\xi|^r}
\end{align}
and
\begin{align}
f:=(f_1,...,f_d)
,\quad  H(u):=\sup_{x,\xi\in B,x\neq \xi}\frac{|u(x)-u(\xi)|}{|x-\xi|^r},\quad H(f):=\sup_{x,\xi\in \R^N,x\neq \xi}\frac{|f(x)-f(\xi)|}{|x-\xi|^r}\leq dH(u).
\end{align}
Thus, $\Leb^N\left(\left\{x\in K: f(x)\neq u(x)\right\}\right)\leq \Leb^N\left(K\setminus B\right)<\e$.
\end{remark}

\section{Examples of pathological behavior of Sobolev and $BV^q$ functions}
\label{sec:examples of pathological behavior of The Sobolev and $BV^q$ functions}
In the examples below we use the function $u(x):=\log\left|\log|x|\right|$, $u:B^k_{1/2}(0)\to \R$, where $k\in \N$, and we denote by $B^k_R(w)$ the open ball in $\R^k$ centred at $w$ with radius $R>0$. Notice that for $k\geq 2$, $\left|\nabla u(x)\right|\leq\frac{1}{\left|x\right|\left|log\left|x\right|\right|}$ for every $x\in B^k_{1/2}(0),x\neq 0$, so $u\in W^{1,k}(B^k_{1/2}(0))$. Notice also that for every $0<r<1$ we get the estimate $u(x)\geq \log\left|\log(r)\right|$ for every $x\in B^k_r(0)\setminus\{0\}$, and $\lim_{r\to 0^+}\log\left|\log(r)\right|=\infty$.

Recall that a Sobolev function $\eta \in W^{1,p}(\Omega)$ where $\Omega\subset\R^N$ has bounded Lipschitz boundary can be extended to a Sobolev function
$\bar{\eta}\in W^{1,p}(\R^N)$, and it has a trace $T\bar{\eta}\in W^{1-\frac{1}{p},p}(\R^{N-1})$.

Recall the following inclusions.
For open set $\Omega\subset \R^N$ with bounded Lipschitz boundary, $r\in (0,1)$ and $p\in [1,\infty)$ we have $W^{1,p}(\Omega)\subset W^{r,p}(\Omega)$ (Proposition 2.2 in \cite{FSF}). For general open set $\Omega\subset \R^N$ and $p\in [1,\infty)$ we have $W^{1/p,p}(\Omega)\subset BV^{p}(\Omega)$ (Theorem 1.2 in \cite{P}), and $BV(\Omega)\cap L^\infty(\Omega)\subset BV^p(\Omega)$ (Theorem 1.1 in \cite{P}).

\begin{example}
\label{ex:4}
In this example we show the existence of a function $u\in W^{1,p}(\Omega)$, $1<p=N$, such that $\mathcal{H}^{0}(\mathcal{S}'_u)=0$ and $\mathcal{H}^{0}(\mathcal{S}_u)>0$.

For the function $u$, we get $u\in W^{1,N}(B^N_{1/2}(0))$, and
\begin{equation}
\lim_{r\to 0^+}\fint_{B^N_r(0)}u(z)dz\geq \lim_{r\to 0^+}\log\left|\log(r)\right|=\infty.
\end{equation}
Thus, $\mathcal{H}^0(\mathcal{S}_u)=\mathcal{H}^0(\{0\})=1>0$. By Theorem \ref{thm:fine properties of Sobolev function(1)} we get
$\mathcal{H}^0(\mathcal{S}'_u)=0$.
\end{example}

\begin{example}
\label{ex:3}
In this example we show the existence of a function $f\in W^{1,p}(\Omega)$, $1<p<N,p\in \N$, such that $\mathcal{H}^{N-p}(\mathcal{S}'_f)=0$ and $\mathcal{H}^{N-p}(\mathcal{S}_f)>0$.

Let $p\in \N$ such that $1<p<N$. Then $u\in W^{1,p}(B^p_{1/2}(0))$. Define $f(x,z):=u(x)$, $f:\Omega\to \R$, where $\Omega:=B^p_{1/2}(0)\times B^{N-p}_{1/2}(0)$ ($f$ is not defined at points of the set $E$ defined below). The function $f$ has weak derivatives $\frac{\partial f}{\partial x_i}=\frac{\partial u}{\partial x_i},1\leq i\leq p$, $\frac{\partial f}{\partial z_i}=0,1\leq i\leq N-p$ in $L^p(\Omega)$. Thus, $f\in  W^{1,p}(\Omega)$. Set
$$
E:=\left\{(0,z):0\in \R^p,z\in B^{N-p}_{1/2}(0)\right\}.
$$
It follows for every $(0,z_0)\in E$
\begin{equation}
\lim_{r\to 0^+}\fint_{B^N_r\left((0,z_0)\right)}f(x,z)dxdz\geq \lim_{r\to 0^+}\log\left|\log(r)\right|=\infty.
\end{equation}
Thus, $\mathcal{H}^{N-p}(\mathcal{S}_f)\geq \mathcal{H}^{N-p}(E)=\mathcal{H}^{N-p}(B^{N-p}_{1/2}(0))>0$. By Theorem \ref{thm:fine properties of Sobolev function(1)} we get
$\mathcal{H}^{N-p}(\mathcal{S}'_f)=0$.
\end{example}

\begin{example}
\label{ex:1}
In this example we show the existence of a function $g\in W^{1/2,2}(\Omega)$, $\Omega\subset\R$, such that $\mathcal{H}^{0}(\mathcal{S}'_g)=0$ and $\mathcal{H}^{0}(\mathcal{S}_g)>0$.

Since $u\in W^{1,2}(B^{2}_{1/2}(0))$, then one can extend it to a function $\bar{u}\in W^{1,2}(\R^2)$. Let $g:=T\bar{u}\in W^{1/2,2}(\R)$ be the trace of $\bar{u}$ to $\R$. It follows that $g(x)=\log\left|\log|x|\right|, x\in B^1_{1/2}(0)\setminus \{0\}$, $g\in W^{1/2,2}(B^1_{1/2}(0))$. We have
\begin{equation}
\lim_{r\to 0^+}\fint_{B^1_r(0)}g(x)dx\geq \lim_{r\to 0^+}\log\left|\log(r)\right|=\infty.
\end{equation}
Thus, $\mathcal{H}^0(\mathcal{S}_g)=\mathcal{H}^0(\{0\})=1>0$. By Theorem \ref{thm:fine property of fractional Sobolev functions(1)} we get $\mathcal{H}^{1-\frac{1}{2}2}(\mathcal{S}'_g)=0$, in particular $\mathcal{S}'_g=\emptyset$.
\end{example}

\begin{example}
\label{ex:2}
In this example we show the existence of a function $\phi\in W^{1/2,2}(\Omega)$, $\Omega\subset\R^N$, such that $\mathcal{H}^{N-1}(\mathcal{S}'_\phi)=0$ and $\mathcal{H}^{N-1}(\mathcal{S}_\phi)>0$.

Since $u\in W^{1,2}(B^{2}_{1/2}(0))$, then one can extend it to a function $\bar{u}\in W^{1,2}(\R^2)$. Let $F(z,w):=\bar{u}(z)$, where $z\in \R^2,w\in \R^{N-1}$. Let us multiply the function $F$ by a smooth function which is constant $1$ on $B^{N+1}_{1/2}(0)$ and has compact support in $B^{N+1}_1(0)$. Let us denote the resulting function by $F_0$. Thus, $F_0\in W^{1,2}(\R^{N+1})$ (the multiplication by a function with compact support is needed to insure that $F_0$ and its weak derivatives are in $L^2(\R^{N+1})$). Let $G:=TF_0\in W^{1/2,2}(H)$ be the trace of $F_0$ to the set
$H:=\left\{(z_1,0,w):z_1\in\R,w\in \R^{N-1} \right\}$.
Note that for every $|(z_1,w)|<\frac{1}{2}$ we get
$$
G(z_1,0,w)=F_0(z_1,0,w)
=F(z_1,0,w)=\bar{u}(z_1,0)=u(z_1,0)=\log\left|\log|z_1|\right|.
$$
Define 
\begin{equation}
\label{eq:definition of phi}
\phi:\Omega\to \R,\quad \phi(z_1,w):=G(z_1,0,w)=\log\left|\log|z_1|\right|,
\end{equation}
where
\begin{equation}
\label{eq:definition of the domain of phi}
\Omega:=\left\{(z_1,w)\in\R\times\R^{N-1}:|(z_1,w)|<\frac{1}{2} \right\}.
\end{equation}
We have $\phi\in W^{1/2,2}(\Omega)$. Set $E:=\left\{(0,w):w\in \R^{N-1},|w|<1/2\right\}\subset \Omega$. It follows for every $(0,w_0)\in E$
\begin{equation}
\lim_{r\to 0^+}\fint_{B^{N}_r((0,w_0))}\phi(z_1,w)dz_1dw\geq \lim_{r\to 0^+}\log\left|\log(r)\right|=\infty.
\end{equation}
Thus, $\mathcal{H}^{N-1}(\mathcal{S}_\phi)\geq \mathcal{H}^{N-1}(E)>0$. By Theorem \ref{thm:fine property of fractional Sobolev functions(1)} we get $\mathcal{H}^{N-1}(\mathcal{S}'_\phi)=0$.
\end{example}

In the following two examples we use the definition of oscillation blow-up (Definition \ref{def:oscillation blow-up}).
\begin{example}
In this example we show the existence of a function $h\in BV^2(\Omega)$, $\Omega\subset \R$, such that $\mathcal{H}^{0}(\mathcal{J}_h)=0$ and $\mathcal{H}^{0}(\mathcal{J}'_h)>0$.

Let $g\in W^{1/2,2}(B^1_{1/2}(0))$ as in Example \ref{ex:1}. Thus $g\in BV^{2}(B^1_{1/2}(0))$. Define a function $h(x)=g(x)+\chi_{(0,1/2)}(x)$, where $\chi_{(0,1/2)}(x)$ is the characteristic function of $(0,1/2)$ at the point $x$.  Since $\chi_{(0,1/2)}\in BV(B^1_{1/2}(0))$ (and bounded), then $\chi_{(0,1/2)}\in BV^2(B^1_{1/2}(0))$. Since $BV^2(B^1_{1/2}(0))$ is a vector space, we have $h\in BV^2(B^1_{1/2}(0))$. Since $h(x)\geq g(x)$ for $x\in B^1_{1/2}(0)\setminus\{0\}$, then
\begin{equation}
\lim_{r\to 0^+}\frac{1}{r}\int^r_0h(x)dx\geq \lim_{r\to 0^+}\log\left|\log(r)\right|=\infty
\end{equation}
and
\begin{equation}
\lim_{r\to 0^+}\frac{1}{r}\int^0_{-r}h(x)dx\geq \lim_{r\to 0^+}\log\left|\log(r)\right|=\infty.
\end{equation}

Therefore, the one-sided approximate limits $h^+(0),h^-(0)$ do not exist. Therefore, since $h$ is continuous in $ B^1_{1/2}(0)\setminus\{0\}$, then
$\mathcal{J}_h=\emptyset$ and $\mathcal{H}^0(\mathcal{J}_h)=0$.
Let us show that $0\in \mathcal{J}'_h$. By Proposition \ref{prop:oscillation blow-up of sum equals to the sum of the oscillation blow-up}, $h_0=g_0+\left(\chi_{(0,1/2)}\right)_0$ if the oscillation blow-ups $g_0,\left(\chi_{(0,1/2)}\right)_0$ exist at $0$. Since $\mathcal{S}'_g=\emptyset$, in particular $0\notin \mathcal{S}'_g $, which means that

\begin{equation}
\lim_{\rho\to
0^+}\Bigg(\inf\limits_{z\in\mathbb{R}}\fint_{B^1_\rho(0)}|g(y)-z|dy\Bigg)=0.
\end{equation}

By definition of oscillation blow-up (Definition \ref{def:oscillation blow-up}) we get that $g_0=0$, which means that, $0$ is an oscillation blow-up of $g$ at $0$. Since $0\in \mathcal{J}_{\chi_{(0,1/2)}}$, then we get by Remark \ref{rem:oscillation blow-up in case of jump and generalized jump points} that
\begin{equation}
\left(\chi_{(0,1/2)}\right)_0(y)=
\begin{cases}
\left(\chi_{(0,1/2)}\right)^+(0)=1\quad &y>0\\
\left(\chi_{(0,1/2)}\right)^-(0)=0\quad &y<0\\
\end{cases},\quad \nu_{\chi_{(0,1/2)}}(0)=1,
\end{equation}
is an oscillation blow-up of $\chi_{(0,1/2)}$ at $0$. Thus, $h_0=\left(\chi_{(0,1/2)}\right)_0$. By definition of oscillation blow-up (Definition \ref{def:oscillation blow-up}) and definition of generalized jump point (Definition \ref{def:generalized approximate limit-oscillation points}) we conclude that $0\in \mathcal{J}'_h$. Thus, $\mathcal{H}^0(\mathcal{J}'_h)>0$.
\end{example}

\begin{example}
In this example we show the existence of a function $P\in BV^2(U)$, $U\subset \R^N$, such that $\mathcal{H}^{N-1}(\mathcal{J}_P)=0$ and $\mathcal{H}^{N-1}(\mathcal{J}'_P)>0$.

Let $\phi\in W^{1/2,2}(\Omega)$, where $\phi$ and $\Omega$ are defined in \eqref{eq:definition of phi} and \eqref{eq:definition of the domain of phi} in Example \ref{ex:2}. Let $\gamma\in (0,1)$ be sufficiently small such that $U\subset \Omega$, where $U:=(-\gamma,\gamma)\times (-\gamma,\gamma)^{N-1}$. Here $(-\gamma,\gamma)^{N-1}$ is the $N-1$ Cartesian product of the interval $(-\gamma,\gamma)$. So $\phi\in W^{1/2,2}(U)$, and since $W^{1/2,2}(U)\subset BV^2(U)$, then $\phi\in BV^2(U)$.

Define a function $P(x)=\phi(x)+\varphi(x)$, where $\varphi(x):=\chi_{(0,\gamma)\times (-\gamma,\gamma)^{N-1}}(x)$ is the characteristic function of the product $(0,\gamma)\times (-\gamma,\gamma)^{N-1}$ at the point $x$.  Since
$\varphi\in BV(U)\cap L^\infty(U)$, then $\varphi\in BV^2(U)$. Since $BV^2(U)$ is a vector space, we have $P\in BV^2(U)$.
Set
$$
E:=\left\{(0,w):w\in (-\gamma,\gamma)^{N-1}\right\}\subset U.
$$
Since for every $\nu\in S^{N-1}$ and $(0,w)\in E$
\begin{equation}
\lim_{r\to 0^+}\fint_{B^+_r ((0,w),\nu)}P(x)dx\geq \lim_{r\to 0^+}\fint_{B^+_r ((0,w),\nu)}\phi(x)dx\geq \lim_{r\to 0^+}\log\left|\log(r)\right|=\infty,
\end{equation}
then $(0,w)\notin \mathcal{J}_P$. Therefore, since $P$ is continuous in $U\setminus E$, $\mathcal{J}_P=\emptyset$ and $\mathcal{H}^{N-1}(\mathcal{J}_P)=0$.

Let us show that $\mathcal{H}^{N-1}$ a.e. $x\in E$ is contained in $\mathcal{J}'_P$. By Proposition \ref{prop:oscillation blow-up of sum equals to the sum of the oscillation blow-up}, for $(0,w)\in E$, $P_{(0,w)}=\phi_{(0,w)}+\varphi_{(0,w)}$ if the oscillation blow-ups $\phi_{(0,w)},\varphi_{(0,w)}$ exist at $(0,w)$. In Example \ref{ex:2} it was proved that $\mathcal{H}^{N-1}(\mathcal{S}'_\phi)=0$. So we get that $\mathcal{H}^{N-1}$ a.e. $x\in U$ is an approximate limit-oscillation point of $\phi$, which means that

\begin{equation}
\lim_{\rho\to
0^+}\Bigg(\inf\limits_{z\in\mathbb{R}^d}\fint_{B_\rho(x)}|\phi(y)-z|dy\Bigg)=0.
\end{equation}

By the definition of oscillation blow-up (Definition \ref{def:oscillation blow-up}), we get that the zero function is an oscillation blow-up of $\phi$ at $\mathcal{H}^{N-1}$ a.e. point in $U$. Since every $(0,w)\in E$ satisfies $(0,w)\in \mathcal{J}_{\varphi}$, then we get by Remark \ref{rem:oscillation blow-up in case of jump and generalized jump points} that
\begin{equation}
\varphi_{(0,w)}(y)=
\begin{cases}
1\quad &y_1>0\\
0\quad &y_1<0\\
\end{cases}, \quad y:=(y_1,...,y_N), \nu_{\varphi}((0,w))=e_1,
\end{equation}
is an oscillation blow-up of $\varphi$ at $(0,w)$. Thus, $P_{(0,w)}=\varphi_{(0,w)}$ at $\mathcal{H}^{N-1}$ a.e. $(0,w)\in E$. By definition of oscillation blow-up (Definition \ref{def:oscillation blow-up}) and definition of generalized jump point (Definition \ref{def:generalized approximate limit-oscillation points}), we conclude that $\mathcal{H}^{N-1}$ a.e. point in $E$ lies in $\mathcal{J}'_P$, so $\mathcal{H}^{N-1}(\mathcal{J}'_P)\geq \mathcal{H}^{N-1}(E)>0$.
\end{example}

\section{Appendix}
\subsection{Aspects of differential geometry}
\begin{definition}($C^k$ submanifold of $\mathbb{R}^N$)
\label{de:definition of submanifold of $R^N$}
Let $l,k\in \N, l\leq N$. We say that a set $\Gamma\subset \mathbb{R}^N$ is an $l-$dimensional submanifold of $\mathbb{R}^N$ of class $k$ if and only if for every
$x\in \Gamma$ there exist an open set $U\subset \mathbb{R}^N$ containing $x$, an open set $V\subset \mathbb{R}^l$ and a function $h:V\to \Gamma\cap U$ such that: (1) $h\in C^k(V)$; (2) $h$ is homehomorphism; (3) for every $q\in V$, the differential $dh_q:\mathbb{R}^l\to \mathbb{R}^N$ is one-to-one. The function $h$ is called \textbf{coordinate system around $x$}.
\end{definition}

\begin{remark}
Recall that if $\Gamma$ is an $(N-1)-$dimensional submanifold of $\mathbb{R}^N$ of class $1$, then there exists a continuous unit normal
$\nu$ along $\Gamma$, which means that a continuous mapping
$\nu:\Gamma\to S^{N-1}$, where $S^{N-1}$ is the $(N-1)-$dimensional unit sphere in $\R^N$, such that for every $x\in \Gamma$ the vector $\nu(x)$ is normal to $\Gamma$ at the point $x$. For a non zero vector $v\in \R^N$ we denote by $v^\perp$ the
$(N-1)-$dimensional vector subspace of $\R^N$ orthogonal to the vector $v$:
\begin{equation}
v^\perp:=\left\{y\in \mathbb{R}^N:v\cdot y=0\right\},
\end{equation}
where $v\cdot y$ is the inner product between $v$ and $y$ defined by
$v\cdot y=\sum_{j=1}^Nv_jy_j$ for $v=(v_1,...,v_N),y=(y_1,...,y_N)$.
\end{remark}
\begin{lemma}
\label{lem:the normal is invariant under translations}
Let $\Omega\subset \R^N$ be an open set and $f\in C^1(\Omega)$. Let
$n\in \R^N,s\in \R$. Assume that for $x\in \Omega,y:=f(x)$, the set $\Gamma:=f^{-1}(\{y\})$ is an
$(N-1)-$dimensional submanifold  of class $1$ in $\mathbb{R}^N$. Then, $\nu(x)$ is normal to $\Gamma$ at $x$ if and only if $\nu(x)$ is normal to $\Gamma-sn$ at $x-sn$.
\end{lemma}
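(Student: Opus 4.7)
The statement is almost tautological once one remembers that a rigid translation preserves tangent spaces, but I would present the argument in two equivalent ways (chart-theoretic and level-set-theoretic), since both perspectives appear in the way the lemma is used.

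My first approach is chart-theoretic. Pick a coordinate system $h:V\to U\cap\Gamma$ around $x$ as in Definition \ref{de:definition of submanifold of $R^N$}, with $h(q_0)=x$ for some $q_0\in V$. Define $\tilde h:V\to (U-sn)\cap(\Gamma-sn)$ by $\tilde h(q):=h(q)-sn$. A direct check shows $\tilde h\in C^1(V)$, is a homeomorphism onto its image, and $d\tilde h_q=dh_q$ at every $q\in V$ (translation by a constant vector has trivial differential). Hence $\tilde h$ is a coordinate system for $\Gamma-sn$ around $x-sn$, and the tangent spaces satisfy
\begin{equation*}
T_{x-sn}(\Gamma-sn)=\operatorname{Im}\bigl(d\tilde h_{q_0}\bigr)=\operatorname{Im}\bigl(dh_{q_0}\bigr)=T_x\Gamma.
\end{equation*}
Since the normal direction at a point of an $(N-1)$-submanifold is exactly the one-dimensional orthogonal complement of the tangent space, a vector $\nu(x)\in\R^N$ is normal to $\Gamma$ at $x$ if and only if it is orthogonal to $T_x\Gamma$, which by the above equality is the same as being orthogonal to $T_{x-sn}(\Gamma-sn)$, i.e.\ normal to $\Gamma-sn$ at $x-sn$.

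My second, shorter, route uses the hypothesis $f\in C^1(\Omega)$ with $\Gamma=f^{-1}(\{y\})$. Define $g(z):=f(z+sn)$ on the open set $\Omega-sn$. Then $g^{-1}(\{y\})=\Gamma-sn$ and, by the chain rule, $\nabla g(x-sn)=\nabla f(x)$. Since $\Gamma$ is assumed to be an $(N-1)$-submanifold of class $1$, $\nabla f(x)\neq 0$ (otherwise $\{f=y\}$ would fail to have the required regular-value structure near $x$), so $\nabla f(x)$ spans the normal line of $\Gamma$ at $x$ and $\nabla g(x-sn)$ spans the normal line of $\Gamma-sn$ at $x-sn$; these two normal lines coincide.

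There is essentially no obstacle here; the content of the lemma is the translation invariance of tangent and normal spaces. The only point that needs a brief sentence of care is that if one works with the level-set description, one should make sure that $\Gamma-sn$ is also an $(N-1)$-submanifold of class $1$ near $x-sn$, which is immediate since translation by $-sn$ is a $C^\infty$ diffeomorphism of $\R^N$ carrying $\Gamma\cap U$ onto $(\Gamma-sn)\cap(U-sn)$.
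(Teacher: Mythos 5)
Your first, chart-theoretic argument is exactly the paper's proof: translate the coordinate chart by $-sn$, observe the differentials are unchanged, conclude $T_{x-sn}(\Gamma-sn)=T_x\Gamma$, and read off equality of normal lines. That alone settles the lemma.

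One caution about your second, level-set route: the inference ``since $\Gamma$ is an $(N-1)$-submanifold of class $1$, $\nabla f(x)\neq 0$'' does not hold in general. For instance $f(x)=x_1^3$ on $\R^N$ has $f^{-1}(\{0\})=\{x_1=0\}$, a perfectly good $C^1$ hypersurface, yet $\nabla f$ vanishes there, so $\nabla f$ does not span the normal line. (The paper invokes a similar-sounding fact in Remark~\ref{rem:grad f orthogonal to the manifold}, but in the place where it is actually used, inside the proof of Lemma~\ref{lem:parametric surface relative to vector $n$}, the function $f$ is produced by the inverse function theorem and $\nabla f\neq 0$ is automatic.) Your second route would need that extra nondegeneracy as a hypothesis; as written it has a gap. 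Since the first route is self-contained, this does not affect the correctness of your proof overall.
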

\begin{proof}
Notice that $h:\Omega'\subset \R^{N-1}\to \Gamma$ is a coordinate system around
$x\in\Gamma$
if and only if
$g:\Omega'\to \Gamma-sn$, $g(z)=h(z)-sn$, is a coordinate system around $x-sn\in \Gamma-sn$. The derivatives of $h$ and $g$ coincide:
$\partial_{z_i}h(z)=\partial_{z_i}g(z)$, $\forall 1\leq i\leq N-1$ and $\forall z\in \Omega'$. Hence, the tangent space to $\Gamma$ at $x$ and the tangent space to $\Gamma-sn$ at $x-sn$ are parallel, so $\nu(x)$ is normal to $\Gamma$ at $x$ if and only if $\nu(x)$ is normal to $\Gamma-sn$ at $x-sn$.
\end{proof}

\begin{remark}
\label{rem:grad f orthogonal to the manifold}
Recall that if $\Omega\subset\mathbb{R}^N$ is an open set and $f:\Omega\to \mathbb{R}$ is a $C^1$ function such that
$f^{-1}(\{s\})$ is $(N-1)-$dimensional submanifold  of class $1$ in $\mathbb{R}^N$ for some $s\in \R$, then the gradient $\nabla f$ defines a continuous normal along $f^{-1}(\{s\})$.
\end{remark}

\begin{lemma}
\label{lem:parametric surface relative to vector $n$}
Let
$\Gamma$ be an
$(N-1)-$dimensional submanifold of
$\mathbb{R}^N$ of class $1$. Let $\nu:\Gamma\to S^{N-1}$ be a continuous unit normal along $\Gamma$. Then, for every $x_0\in \Gamma$ there exists an open set
$\Gamma_{x_0} \subset\Gamma$(in the induced topology from $\R^N$ to $\Gamma$) containing $x_0$, such that for every
$n_0\notin \nu(x_0)^\perp$, there exist $t_0>0$ and $\delta_0>0$ such that for every $0<t<t_0$ and $n\in \overline{B}_{\delta_0}(n_0)$ there exist an open set
\begin{equation}
O_{t,n}(\Gamma_{x_0}):=\cup_{s\in (-t,t)}(\Gamma_{x_0}-sn)
\end{equation}
and a $C^1$ and Lipschitz function
\begin{equation}
f:O_{t,n}(\Gamma_{x_0})\to (-t,t),
\end{equation}
such that for every
$z:=x-sn\in O_{t,n}(\Gamma_{x_0})$ we have $f(z)=s$ and $|\nabla f(z)|=\frac{1}{|\nu(z+f(z)n)\cdot n|}$.
\end{lemma}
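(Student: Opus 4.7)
The plan is to build $f$ as the second coordinate of a local inverse of a suitable parametrization, using a shrinkable coordinate chart on $\Gamma$ at $x_0$ together with the Inverse Function Theorem applied with $n$ as a parameter. First, by Definition 5.1 there exist an open $V\subset\R^{N-1}$ containing some $q_0$, an open $U\subset\R^N$ containing $x_0$, and a $C^1$ homeomorphism $h:V\to\Gamma\cap U$ with $h(q_0)=x_0$ and $dh_{q_0}$ injective, so that $dh_{q_0}(\R^{N-1})=T_{x_0}\Gamma=\nu(x_0)^\perp$. I will fix a relatively compact open neighborhood $W$ of $q_0$ with $\overline{W}\subset V$ and declare $\Gamma_{x_0}:=h(W)$; this is open in $\Gamma$ and is chosen independently of the direction $n_0$.

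Given $n_0\notin\nu(x_0)^\perp$, I consider the parametrized map $\Phi:W\times\R\times\R^N\to\R^N$ defined by $\Phi(q,s,n):=h(q)-sn$. Its Jacobian in $(q,s)$ at $(q_0,0,n_0)$ is $(v,\sigma)\mapsto dh_{q_0}(v)-\sigma n_0$, which is invertible precisely because $n_0\notin dh_{q_0}(\R^{N-1})=T_{x_0}\Gamma$. By continuity of the determinant and a compactness argument on $\overline{W}\times\{0\}\times\{n_0\}$, after possibly shrinking $W$ (hence $\Gamma_{x_0}$), I can choose $t_0>0$ and $\delta_0>0$ so that for every $n\in\overline{B}_{\delta_0}(n_0)$ the map $\Phi_n:=\Phi(\cdot,\cdot,n)$ has nonsingular Jacobian and is injective on $W\times(-t_0,t_0)$. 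Then $\Phi_n$ is a $C^1$-diffeomorphism onto its image, and this image is exactly $O_{t,n}(\Gamma_{x_0})=\bigcup_{s\in(-t,t)}(\Gamma_{x_0}-sn)$ for every $t\le t_0$; in particular $O_{t,n}(\Gamma_{x_0})$ is open.

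I then define $f:O_{t,n}(\Gamma_{x_0})\to(-t,t)$ as the second component of $\Phi_n^{-1}$. By construction $f(x-sn)=s$ and $z+f(z)n\in\Gamma_{x_0}$ for $z\in O_{t,n}(\Gamma_{x_0})$, and $f$ is $C^1$. For the gradient formula, the level set $\{f=s\}$ is exactly $\Gamma_{x_0}-sn$, an $(N-1)$-submanifold of class $1$, so by Remark \ref{rem:grad f orthogonal to the manifold} the vector $\nabla f(z)$ is normal to this level set at $z$; by Lemma \ref{lem:the normal is invariant under translations} the normal to $\Gamma_{x_0}-sn$ at $z=(z+f(z)n)-sn$ coincides with $\nu(z+f(z)n)$, so $\nabla f(z)=c\,\nu(z+f(z)n)$ for some scalar $c$. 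Differentiating the identity $f(x-sn)=s$ in $s$ at a fixed $x\in\Gamma_{x_0}$ yields $\nabla f(z)\cdot n=-1$, whence $c=-1/\bigl(\nu(z+f(z)n)\cdot n\bigr)$ and $|\nabla f(z)|=1/|\nu(z+f(z)n)\cdot n|$.

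Finally, since $|\nu(x_0)\cdot n_0|>0$ and $\nu$ is continuous, by further shrinking $t_0$ and $\delta_0$ I can ensure $|\nu(z+f(z)n)\cdot n|$ stays bounded below by a positive constant uniformly for $z\in O_{t_0,n}(\Gamma_{x_0})$ and $n\in\overline{B}_{\delta_0}(n_0)$; this yields a uniform upper bound on $|\nabla f|$ and hence the Lipschitz property. The main technical point I anticipate is the uniform Inverse Function Theorem with $n$ as parameter, i.e., producing one $W$ and one pair $(t_0,\delta_0)$ on which every $\Phi_n$ is simultaneously injective. I would handle this by a standard contradiction argument: if no such radii existed, I could extract sequences of distinct points $(q_k^{1},s_k^{1},n_k)\ne(q_k^{2},s_k^{2},n_k)$ with $\Phi_{n_k}(q_k^1,s_k^1)=\Phi_{n_k}(q_k^2,s_k^2)$, pass to limits in the compact set $\overline{W}\times[-t_0,t_0]\times\overline{B}_{\delta_0}(n_0)$, and derive a degeneracy contradicting the nonsingularity of the limiting Jacobian.
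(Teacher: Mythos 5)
Your argument is correct and follows essentially the same blueprint as the paper: take a $C^1$ chart $h$ around $x_0$, observe that the map $(q,s)\mapsto h(q)-sn$ has invertible Jacobian because $n\notin T_{x_0}\Gamma$, apply the Inverse Function Theorem, define $f$ as the $s$-component of the inverse, then obtain $\nabla f\cdot n=-1$ by differentiating $f(x-sn)=s$ and combine with normality (Lemma~\ref{lem:the normal is invariant under translations} and Remark~\ref{rem:grad f orthogonal to the manifold}) to get the gradient formula.

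The one place you diverge is the treatment of the $n$-dependence. The paper's trick is to augment the map to $H(x',s,n)=(h(x')-sn,\,n)$ into $\R^N\times\R^N$, so that the \emph{standard} IFT applied once at $(x_0',0,n_0)$ simultaneously produces the neighborhoods $W$, $(-t_0,t_0)$, and $Y_0\ni n_0$, and the restriction of $H^{-1}$ to a fixed slice $\{n=\text{const}\}$ hands you $\Phi_n^{-1}$ with no extra work. You instead invoke an "IFT with parameter": you keep the range equal to $\R^N$ and prove uniform nonsingularity and uniform injectivity of the family $\{\Phi_n\}$ by a compactness/contradiction argument. This is a legitimate and somewhat more elementary route, but it costs you the extra argument that the paper sidesteps. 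Two small cautions: (i) you state that $\Gamma_{x_0}$ is chosen "independently of $n_0$" yet immediately allow "possibly shrinking $W$" when $n_0$ is fixed; to match the lemma's quantifier order you would need to commit to $\Gamma_{x_0}$ before quantifying over $n_0$, exactly as the statement reads (the paper's own proof has the same quantifier slip, so this is not a flaw peculiar to yours). (ii) A uniform bound on $|\nabla f|$ gives Lipschitzness only if $O_{t,n}(\Gamma_{x_0})$ has controlled intrinsic geometry (e.g., is quasiconvex); the paper handles this by shrinking so that $H^{-1}$ itself is Lipschitz, and your version would need the same kind of shrinking remark to be airtight.
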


\begin{proof}
Let $x_0\in \Gamma$. Let $h:U\to h(U)$ be a coordinate system around $x_0$ as in Definition $\ref{de:definition of submanifold of $R^N$}$, where $U\subset \R^{N-1}$ is an open set, $x_0'\in U$ and $h(x_0')=x_0$.
Let $n_0\notin \nu(x_0)^\perp$. Let us define a function
\begin{equation}
H:U\times \mathbb{R}\times\mathbb{R}^N\to \mathbb{R}^N\times\mathbb{R}^N,\quad H(x',s,n)=\left(h(x')-sn,n\right).
\end{equation}
Since $dh_{x_0'}$ is one-to-one and $n_0\notin \nu(x_0)^\perp$, the differential $dH_{(x'_0,0,n_0)}$ is an isomorphism. By the inverse function theorem, there exist open set
$W\subset U$, a number
$t_0>0$ and an open set $Y_0\subset \mathbb{R}^N$ such that
$(x'_0,0,n_0)\in W\times(-t_0,t_0)\times Y_0$ and
$H:W\times(-t_0,t_0)\times Y_0\to \R^N\times\R^N$
is a $C^1$ diffeomorphism on its image.
We can assume without loss of generality  that the $C^1$ function
$H^{-1}:H\left(W\times(-t_0,t_0)\times Y_0\right)\to W\times(-t_0,t_0)\times Y_0$
is a Lipschitz function (otherwise we take a smaller neighbourhood of
$\left(x_0,n_0\right)$ in
$H\left(W\times(-t_0,t_0)\times Y_0\right)$ ).
Set $\Gamma_{x_0}:=h(W)$ and for any $0<t<t_0$ and $n\in Y_0$
\begin{equation}
O_{t,n}(\Gamma_{x_0}):=\cup_{s\in (-t,t)}(\Gamma_{x_0}-sn).
\end{equation}

The set $O_{t,n}(\Gamma_{x_0})$ is an open set as an image of the open set
$W\times(-t,t)\times Y_0$ under the open map which is obtained by the composition of the open maps $H$ and the projection
$\pi:\mathbb{R}^N\times\mathbb{R}^N\to \mathbb{R}^N$ on the first $N$ coordinates:
$\pi\circ H(x',s,n)=\pi(h(x')-sn,n)=h(x')-sn$ for $(x',s,n)\in W\times(-t,t)\times Y_0$.

Note that for every
$s_1\neq s_2$, $s_1,s_2\in (-t,t),0<t<t_0$, we have
$\left(\Gamma_{x_0}-s_1n\right)\cap\left(\Gamma_{x_0}-s_2n\right)=\emptyset$. If not, then there exists
$z\in \left(\Gamma_{x_0}-s_1n\right)\cap\left(\Gamma_{x_0}-s_2n\right)$. Thus, there exist $x_1',x_2'\in W$ such that $z=h(x'_1)-s_1n=h(x'_2)-s_2n$. So $H(x_1',s_1,n)=(z,n)=H(x_2',s_2,n)$. Since $H$ is injective on $W\times(-t,t)\times Y_0$, we get $s_1=s_2$ which contradicts the assumption.

Let $\delta_0>0$ be such that $\overline{B}_{\delta_0}(n_0)\subset Y_0\cap\left( \mathbb{R}^N\setminus \nu(x_0)^\perp\right)$. Let $0<t<t_0$ and $n\in \overline{B}_{\delta_0}(n_0)$. Define functions
\begin{equation}
G:\R^N\to \R^N\times\R^N, G(z)=(z,n),\quad  \pi_N:\R^N\times\R^N\to \R,\pi_N(x_1,...,x_N,x_{N+1},...,x_{2N})=x_N,
\end{equation}
and
\begin{equation}
f:=\pi_N\circ H^{-1}\circ G,\quad f:O_{t,n}(\Gamma_{x_0})\to (-t,t).
\end{equation}
The function $f$ is Lipschitz and $C^1$ as a composition of Lipschitz and $C^1$ functions, and for $z\in O_{t,n}(\Gamma_{x_0})$, $z=h(x')-sn$ for some $x'\in W,s\in (-t,t)$, we have $f(z)=s$.
By derivation with respect to $s$ we obtain
\begin{align}
1=\partial_sf(x-sn)=\nabla f(x-sn)\cdot (-n),\quad\forall s\in (-t,t).
\end{align}
For every point
$z=x-sn\in O_{t,n}(\Gamma_{x_0})$, since $\nu(x)$ is normal to
$\Gamma_{x_0}$ at $x$, then, by Lemma
$\ref{lem:the normal is invariant under translations}$, $\nu(x)$ is normal to
$\Gamma_{x_0}-sn$ at the point $x-sn$. Since, by Remark $\ref{rem:grad f orthogonal to the manifold}$, $\nabla f(x-sn)$ is also normal to $\Gamma_{x_0}-sn$ at the point $x-sn$, there exists a scalar $\alpha(x)\neq 0$ such that
$\nabla f(x-sn)=\alpha(x)\nu(x)$. Thus,
\begin{equation}
-1=\nabla f(x-sn)\cdot n=\alpha(x) \nu(x)\cdot n\Longrightarrow \alpha(x)=\frac{-1}{\nu(x)\cdot n},
\end{equation}
and we obtain for every $z=x-sn\in O_{t,n}\left(\Gamma_{x_0}\right)$
\begin{equation}
\nabla f(z)=\nabla f(x-sn)=\frac{-1}{\nu(x)\cdot n}\nu(x)=\frac{-1}{\nu(z+f(z)n)\cdot n}\nu(z+f(z)n),
\end{equation}
so
\begin{equation}
|\nabla f(z)|=\frac{1}{|\nu(z+f(z)n)\cdot n|}.
\end{equation}
\end{proof}

\subsection{Aspects of geometric measure theory}
\begin{definition}($k-$dimensional densities)
\label{def:densities}
Let $\mu$ be a positive Radon measure in an open set $\Omega\subset\mathbb{R}^N$ and $k\geq0$. The upper and lower $k-$dimensional densities of $\mu$ at $x\in \Omega$ are respectively defined by
\begin{equation}
\Theta^*_{k}(\mu,x):=\limsup_{\rho\to 0^+}\frac{\mu\left(B_\rho(x)\right)}{\rho^{k}\alpha(k)},\quad \Theta_{*k}(\mu,x):=\liminf_{\rho\to 0^+}\frac{\mu\left(B_\rho(x)\right)}{\rho^{k}\alpha(k)}.
\end{equation}
If $\Theta^*_{k}(\mu,x)=\Theta_{*k}(\mu,x)$, then their common value is denoted by $\Theta_{k}(\mu,x)$.
For any Borel set
$\Gamma\subset\Omega$ we define also
\begin{equation}
\Theta^*_{k}(\Gamma,x):=\limsup_{\rho\to 0^+}\frac{\Haus^{k}\left(\Gamma\cap B_{\rho}(x)\right)}{\rho^{k}\alpha(k)},\quad
\Theta_{*k}(\Gamma,x):=\liminf_{\rho\to 0^+}\frac{\Haus^{k}\left(\Gamma\cap B_{\rho}(x)\right)}{\rho^{k}\alpha(k)}
\end{equation}
and, if they agree, we denote the common value of these densities by $\Theta_{k}(\Gamma,x)$.
\end{definition}

\begin{theorem}(Theorem 2.56 in \cite{AFP})
(Hausdorff and Radon measures)
\label{thm:Hausdorff and Radon measures}
Let $\Omega\subset\mathbb{R}^N$ be an open set, $\mu$ a positive Radon measure in $\Omega$ and $k\geq 0$. Then, for any $t\in (0,\infty)$ and any Borel set $B\subset\Omega$ the following implication holds:
\begin{equation}
\Theta^*_{k}(\mu,x)\geq t\quad \forall x\in B\quad \Longrightarrow \quad\mu \geq t\mathcal{H}^k\llcorner B.
\end{equation}
\end{theorem}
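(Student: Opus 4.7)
The plan is to deduce the measure inequality from the density hypothesis by a Vitali-type covering argument, exploiting outer regularity of the Radon measure $\mu$ to pass from open sets to arbitrary Borel sets. Since $\mu$ and $t\mathcal{H}^k\llcorner B$ are both Borel measures and $\mu$ is Radon (in particular outer regular on Borel sets), it suffices to prove $\mu(A)\geq t\mathcal{H}^k(A\cap B)$ for every open set $A\subset\Omega$; then taking infimum over open $A\supset E$ for any Borel $E$ gives the desired pointwise inequality of measures. By restricting to Borel subsets of $B$ of finite $\mathcal{H}^k$-measure and using the countable exhaustion $B=\bigcup_j B_j$ with $\mathcal{H}^k(B_j)<\infty$, I further reduce to showing: if $E\subset B$ is Borel with $\mathcal{H}^k(E)<\infty$, and $A\subset\Omega$ is open with $E\subset A$, then $\mu(A)\geq t\mathcal{H}^k(E)$.

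First I would fix auxiliary parameters $t'\in(0,t)$ and $\delta>0$. For each $x\in E$, the assumption $\Theta^*_k(\mu,x)\geq t>t'$ gives the existence of arbitrarily small radii $\rho\in(0,\delta)$ with $\mu(B_\rho(x))>t'\alpha(k)\rho^k$. Since $\mu$ is finite on compact sets, the set of radii $\rho$ for which $\mu(\partial B_\rho(x))>0$ is at most countable, so I may additionally arrange $\mu(\bar B_\rho(x))=\mu(B_\rho(x))$ and $\bar B_\rho(x)\subset A$. Denoting one such radius by $\rho_x$, the family $\mathcal{F}_\delta:=\{\bar B_{\rho_x}(x):x\in E\}$ is a fine (Vitali) cover of $E$ by closed balls contained in $A$ with diameters less than $2\delta$.

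The next step is to apply the classical Vitali covering theorem for the Hausdorff measure $\mathcal{H}^k$: since $\mathcal{H}^k(E)<\infty$, any fine cover of $E$ by closed balls admits a countable disjoint subfamily $\{\bar B_{\rho_i}(x_i)\}\subset\mathcal{F}_\delta$ with $\mathcal{H}^k\!\left(E\setminus\bigcup_i\bar B_{\rho_i}(x_i)\right)=0$. Using this cover (and patching the $\mathcal{H}^k$-null remainder with arbitrarily small total diameter) I estimate
\begin{equation*}
\mathcal{H}^k_{2\delta}(E)\;\leq\;\sum_{i}\alpha(k)\rho_i^k\;\leq\;\frac{1}{t'}\sum_{i}\mu\bigl(\bar B_{\rho_i}(x_i)\bigr)\;\leq\;\frac{\mu(A)}{t'},
\end{equation*}
where the second inequality is the density bound on each ball and the last uses disjointness together with $\bar B_{\rho_i}(x_i)\subset A$. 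Letting $\delta\to 0^+$ gives $\mathcal{H}^k(E)\leq \mu(A)/t'$, and then $t'\to t^-$ yields $\mu(A)\geq t\mathcal{H}^k(E)$, which combined with outer regularity of $\mu$ completes the proof.

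The main technical point, and where I would be most careful, is invoking the correct form of Vitali's covering theorem: one needs the version that produces a \emph{disjoint} subfamily covering the set up to an $\mathcal{H}^k$-null set (rather than only up to a $5$-times enlargement), since any enlargement would introduce an unwanted geometric constant $5^k$ and destroy the sharp factor $1$ in the conclusion. This is why the hypothesis $\mathcal{H}^k(E)<\infty$ enters in an essential way at this step. The remaining ingredients — outer regularity of Radon measures, avoidance of the countable set of radii charging $\partial B_\rho(x)$, and the diameter/radius bookkeeping implicit in the normalization $\alpha(k)\rho^k$ for $\mathcal{H}^k_{2\delta}$ — are routine.
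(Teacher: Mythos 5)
The paper states this result without proof, citing it as Theorem 2.56 in \cite{AFP}, so there is no internal proof to compare against. Your argument is essentially the standard one and is correct in its main thrust: reduce to open sets by outer regularity, fix $t'<t$ and $\delta>0$, cover by small closed balls satisfying the density bound, extract a \emph{disjoint} subfamily covering up to an $\mathcal{H}^k$-null set, and add up. You have correctly identified that the covering theorem needed is the one that delivers a disjoint subcover modulo a null set (the version stated in the paper as Theorem~\ref{thm:Besicovitch covering theorem}, applied to the finite Radon measure $\mathcal{H}^k\llcorner K$), not the $5$-times-enlargement version (Theorem~\ref{thm:Vitali's covering theorem}), which would only yield the weaker conclusion with a factor $5^k$; that weaker version is exactly what the paper's own Lemma~\ref{lem:the upper density lemma for a sequence of Radon measures} deliberately uses, accepting the $5^s$ loss because it is handling a whole sequence of measures at once.

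Two technical points in your write-up should be repaired, although both are routine. First, your reduction via a ``countable exhaustion $B=\bigcup_j B_j$ with $\mathcal{H}^k(B_j)<\infty$'' presupposes that $\mathcal{H}^k\llcorner B$ is $\sigma$-finite, which need not hold for an arbitrary Borel $B$; the correct reduction, used elsewhere in the paper (in the proof of Lemma~\ref{lem:the upper density lemma for a sequence of Radon measures}), is Federer's Corollary~2.10.48: $\mathcal{H}^k(A\cap B)=\sup\{\mathcal{H}^k(K):K\subset A\cap B \text{ compact},\ \mathcal{H}^k(K)<\infty\}$, so it suffices to prove $\mu(A)\geq t\mathcal{H}^k(K)$ for such $K$. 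Second, the family $\mathcal{F}_\delta=\{\bar B_{\rho_x}(x):x\in E\}$ you define contains exactly one ball per center and therefore does not satisfy the fine-cover hypothesis $\inf\{r:\bar B_r(a)\in\mathcal{F}\}=0$ required by Theorem~\ref{thm:Besicovitch covering theorem}; you should instead take $\mathcal{F}_\delta$ to consist of \emph{all} closed balls $\bar B_\rho(x)$ with $x\in K$, $0<\rho<\delta$, $\bar B_\rho(x)\subset A$, and $\mu(B_\rho(x))>t'\alpha(k)\rho^k$ --- the density hypothesis guarantees arbitrarily small such radii at each center, so this is a genuine fine cover. (The detour through radii with $\mu(\partial B_\rho(x))=0$ is unnecessary: $\mu(\bar B_\rho(x))\geq\mu(B_\rho(x))$ already gives the lower bound you need on the closed ball.) With these two fixes the proof is complete.
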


\begin{proposition}
\label{Prop:controlling one measure by another}(Proposition 2.21 in \cite{AFP})
Let $\mu$ and $\nu$ be positive Radon measures in $\R^N$ and let $t\in [0,\infty)$. For any Borel set $E\subset\Supp(\mu)$ the following two implications hold:
\begin{equation}
D_\mu^-\nu(x):=\liminf_{\rho\to 0^+}\frac{\nu\left(B_\rho(x)\right)}{\mu\left(B_\rho(x)\right)}\leq t\quad \forall x\in E\quad \Longrightarrow \nu(E)\leq t\mu(E),
\end{equation}
\begin{equation}
D_\mu^+\nu(x):=\limsup_{\rho\to 0^+}\frac{\nu\left(B_\rho(x)\right)}{\mu\left(B_\rho(x)\right)}\geq t\quad \forall x\in E\quad \Longrightarrow \nu(E)\geq t\mu(E).
\end{equation}
\end{proposition}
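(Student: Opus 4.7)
The plan is to prove both implications by a Vitali-type covering argument for Radon measures, preceded by a reduction to open neighbourhoods via outer regularity. The assumption $E\subset\Supp(\mu)$ guarantees $\mu(B_\rho(x))>0$ for every $x\in E$ and every $\rho>0$, so the density quotients are well-defined and one can meaningfully select, at every point of $E$, balls on which the ratio $\nu(B_\rho(x))/\mu(B_\rho(x))$ is close to the prescribed inferior or superior density.

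For the first implication I would fix $\epsilon>0$ and use outer regularity of the Radon measure $\mu$ to pick an open set $A\supset E$ with $\mu(A)<\mu(E)+\epsilon$. The hypothesis $D_\mu^-\nu(x)\leq t$ supplies, at every $x\in E$, arbitrarily small radii $\rho$ with $B_\rho(x)\subset A$ and $\nu(B_\rho(x))\leq(t+\epsilon)\mu(B_\rho(x))$, so these balls form a fine cover of $E$ in the Vitali sense. Applying Vitali's covering theorem for the Radon measure $\nu$ extracts a countable disjoint subfamily $\{B_{\rho_i}(x_i)\}$ with $\nu\big(E\setminus\bigcup_i B_{\rho_i}(x_i)\big)=0$, and disjointness then yields
\begin{equation*}
\nu(E)\leq\sum_i\nu(B_{\rho_i}(x_i))\leq(t+\epsilon)\sum_i\mu(B_{\rho_i}(x_i))\leq(t+\epsilon)\mu(A)\leq(t+\epsilon)(\mu(E)+\epsilon);
\end{equation*}
letting $\epsilon\to 0^+$ gives $\nu(E)\leq t\mu(E)$.

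The second implication I would prove by the symmetric argument. When $t=0$ the conclusion is trivial, so I assume $t>0$ and fix $\epsilon\in(0,t)$. Outer regularity of $\nu$ produces an open set $A\supset E$ with $\nu(A)<\nu(E)+\epsilon$, and the hypothesis $D_\mu^+\nu(x)\geq t$ furnishes a fine cover of $E$ by balls with $B_\rho(x)\subset A$ and $\nu(B_\rho(x))\geq(t-\epsilon)\mu(B_\rho(x))$. This time Vitali's theorem is applied to $\mu$, producing a countable disjoint subfamily with $\mu\big(E\setminus\bigcup_i B_{\rho_i}(x_i)\big)=0$, from which
\begin{equation*}
(t-\epsilon)\mu(E)\leq(t-\epsilon)\sum_i\mu(B_{\rho_i}(x_i))\leq\sum_i\nu(B_{\rho_i}(x_i))\leq\nu(A)\leq\nu(E)+\epsilon,
\end{equation*}
and sending $\epsilon\to 0^+$ completes the proof.

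The only nontrivial ingredient, and the natural candidate for the main obstacle, is the Vitali covering theorem for Radon measures on $\R^N$: given a fine cover of a Borel set $E$ by balls, one can extract a countable pairwise disjoint subcollection whose complement in $E$ is negligible for the chosen Radon measure. This is a classical consequence of Besicovitch's covering theorem (the covering step is in the same spirit as the one already used in the proof of \rlemma{lem:the upper density lemma for a sequence of Radon measures}); once it is in hand, both halves of the proposition reduce to the single summation displayed above, and the roles of $\mu$ and $\nu$ are interchanged only in the choice of regularization and of the measure relative to which the Vitali subcover is selected.
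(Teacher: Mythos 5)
The paper does not prove this proposition; it is stated verbatim with the citation ``Proposition 2.21 in \cite{AFP}'' and used as a black box, so there is no internal argument to compare against. Your proof is correct and is essentially the standard argument found in that reference: outer regularity to confine the cover to an open set of nearly minimal measure, a fine cover by balls on which the density quotient is within $\epsilon$ of $t$, a countable disjoint subfamily extracted by the Besicovitch--Vitali covering theorem (which the paper records in the Appendix as Theorem~\ref{thm:Besicovitch covering theorem}), and a telescoping sum over the disjoint balls. Two small details are worth making explicit when writing this up. The Besicovitch--Vitali statement you invoke requires the measure with respect to which the subcover is $\mu$- or $\nu$-negligible to be \emph{finite} on the set of centres, so one should first reduce to the case $\nu(E)<\infty$ (first implication) resp.\ $\mu(E)<\infty$ (second implication); this is harmless since both measures are $\sigma$-finite on $\R^N$, and in the remaining infinite cases the conclusion is either trivial or obtained by summing over an exhaustion of $E$ by finite-measure pieces. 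Also, the densities $D^\pm_\mu\nu$ use open balls while the covering theorem produces closed ones; this is bridged by noting that for each fixed $x$ the spheres $\partial B_\rho(x)$ carry zero $\mu$- and $\nu$-mass for all but countably many $\rho$, so the open-ball and closed-ball density quotients have the same upper and lower limits. Neither point alters the structure of your argument.
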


\begin{lemma}
\label{lem:local measure inequality for rectifiable sets gives global ineqality}
Let $\Omega\subset\R^N$ be an open set,
$\mu$ and $\lambda$ are finite positive Radon measures on $\Omega$, $J$ and
$\{\Gamma^i\}_{i=1}^\infty$ are Borel sets in $\Omega$. Assume that $\lambda\left(J\setminus \bigcup_{i=1}^\infty\Gamma^i\right)=0$, and for every $i$ and for
$\lambda$-almost every $x\in \Gamma^i$ there exists a converging to zero sequence $\rho_m>0$ such that
$\lambda\left(B_{\rho_m}(x)\cap\Gamma^i\right)\leq \mu\left(B_{\rho_m}(x)\right)$ for every $m\in\N$. Then
\begin{equation}
\lambda\left(J\right)\leq \mu\left(\Omega\right).
\end{equation}
\end{lemma}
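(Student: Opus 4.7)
The plan is to reduce to Proposition~\ref{Prop:controlling one measure by another} applied to a suitably disjointified version of the sequence $\{\Gamma^i\}$. First I would replace the $\Gamma^i$ with the pairwise disjoint Borel sets $\tilde{\Gamma}^i := \Gamma^i \setminus \bigcup_{j<i}\Gamma^j$, which still satisfy $\bigcup_i \tilde{\Gamma}^i = \bigcup_i \Gamma^i$, and introduce the finite positive Radon measures $\lambda_i := \lambda \llcorner \tilde{\Gamma}^i$. For $\lambda_i$-a.e.\ $x \in \tilde{\Gamma}^i \subset \Gamma^i$ the hypothesis of the lemma supplies a sequence $\rho_m(x) \to 0^+$ with
\begin{equation*}
\lambda_i\big(B_{\rho_m(x)}(x)\big) \leq \lambda\big(B_{\rho_m(x)}(x) \cap \Gamma^i\big) \leq \mu\big(B_{\rho_m(x)}(x)\big),
\end{equation*}
and since $\liminf_{\rho\to 0^+}$ is bounded above by the lower limit along any subsequence, this gives $D_\mu^-\lambda_i(x) \leq 1$ at every such $x$.

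Next, I would verify that the support restriction required by Proposition~\ref{Prop:controlling one measure by another} costs nothing. If $x \in \tilde{\Gamma}^i \setminus \Supp(\mu)$, then there exists $r>0$ with $\mu(B_r(x))=0$; the hypothesis then forces $\lambda_i(B_{\rho_m(x)}(x)) = 0$ for all $m$ with $\rho_m(x)<r$, whence $x \notin \Supp(\lambda_i)$. Therefore $\lambda_i\big(\tilde{\Gamma}^i \setminus \Supp(\mu)\big) = 0$, and one can choose a $\lambda_i$-full Borel subset $E_i \subset \tilde{\Gamma}^i \cap \Supp(\mu)$ on which the inequality $D_\mu^-\lambda_i(x) \leq 1$ holds pointwise. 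Proposition~\ref{Prop:controlling one measure by another} then yields $\lambda_i(E_i) \leq \mu(E_i)$, and combined with $\lambda_i(E_i) = \lambda_i(\tilde{\Gamma}^i) = \lambda(\tilde{\Gamma}^i)$ this furnishes the key local bound.

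The conclusion follows by summing over $i$: using that the sets $E_i \subset \tilde{\Gamma}^i$ are pairwise disjoint and that $\lambda\big(J \setminus \bigcup_i \tilde{\Gamma}^i\big) = \lambda\big(J\setminus\bigcup_i\Gamma^i\big) = 0$,
\begin{equation*}
\lambda(J) \leq \sum_{i=1}^{\infty} \lambda(\tilde{\Gamma}^i) = \sum_{i=1}^{\infty} \lambda_i(E_i) \leq \sum_{i=1}^{\infty} \mu(E_i) \leq \mu(\Omega).
\end{equation*}
The only delicate point of the argument is the treatment of $\Supp(\mu)$ needed to legitimately invoke Proposition~\ref{Prop:controlling one measure by another}; once this is handled, the proof is essentially a disjointification followed by the standard density-comparison principle, with the subsequence in the hypothesis being enough since $D_\mu^-$ is defined through a $\liminf$.
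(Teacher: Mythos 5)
Your proof is correct and uses the same central tool as the paper (the density--comparison Proposition~\ref{Prop:controlling one measure by another}) after the same disjointification of the $\Gamma^i$. The one place where you diverge is in the treatment of the constraint $E\subset\Supp(\mu)$: the paper sidesteps it by replacing $\mu$ with $\mu+\varepsilon f\mathcal{L}^N$ (so that $\Supp$ becomes all of $\Omega$) and implicitly sending $\varepsilon\to 0^+$ at the end, whereas you show directly that the hypothesis itself forces $\lambda_i\big(\tilde\Gamma^i\setminus\Supp(\mu)\big)=0$, so the offending set can simply be discarded. Your route avoids the perturbation-plus-limit step and is arguably cleaner; the paper's route is shorter to write but needs the extra passage $\varepsilon\to 0^+$. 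One cosmetic point the paper makes explicit and you do not: Proposition~\ref{Prop:controlling one measure by another} is stated for Radon measures on all of $\R^N$, so one should formally extend $\lambda$ and $\mu$ by zero outside $\Omega$ before applying it; this is trivial but worth a sentence.
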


\begin{proof}
Assume without loss of generality that the support of $\mu$ contains
$\Omega$, if not, we can replace $\mu$ by $\mu_0=\mu+\varepsilon f\mathcal{L}^N$, where $\varepsilon>0$ is arbitrary and $f$ is any positive function in $L^1(\Omega)$. Assume also that the sets $\Gamma^i$ are pairwise disjoint, if they are not, then we can replace them by $\Gamma^i_0:=\Gamma^i\setminus \bigcup_{l=1}^{i-1}\Gamma^l$ for $i>1$ and $\Gamma^1_0:=\Gamma^1$. Assume also that $\lambda$ and $\mu$ are finite Radon measures in all of $\R^N$, otherwise we take the restrictions $\lambda\llcorner\Omega,\mu\llcorner\Omega$ which are Radon measures in $\R^N$.

By assumption for each $i$ there exists a set $E^i$ such that $\lambda(E^i)=0$ and for every $x\in \Gamma^i\setminus E^i$ there exists a converging to zero sequence $\rho_m>0$ such that
$\lambda\left(B_{\rho_m}(x)\cap\Gamma^i\right)\leq \mu\left(B_{\rho_m}(x)\right)$ for every $m\in\N$. Thus, for every such $x$ we have
\begin{equation}
\frac{\lambda\llcorner{\Gamma^i}\left(B_{\rho_m}(x)\right)}{\mu\left(B_{\rho_m}(x)\right)}\leq 1, \quad \forall m\in\N.
\end{equation}
Taking the lower limit as $m\to \infty$ we get
\begin{equation}
D_{\mu}^-\left(\lambda\llcorner{\Gamma^i}\right)(x)\leq \liminf_{m\to \infty}\frac{\lambda\llcorner{\Gamma^i}\left(B_{\rho_m}(x)\right)}{\mu\left(B_{\rho_m}(x)\right)}\leq 1
\end{equation}
for every $i$ and every $x\in \Gamma^i\setminus E^i$. Thus, by Proposition $\ref{Prop:controlling one measure by another}$ we get $\lambda(\Gamma^i)=\lambda\llcorner{\Gamma^i}(\Gamma^i\setminus E^i)\leq \mu(\Gamma^i\setminus E^i)\leq \mu(\Gamma^i)$. Thus,
\begin{align}
\lambda\left(J\right)&=\lambda\left(J\cap\left(\bigcup_{i=1}^{\infty}\Gamma^i\right)\right)+\lambda\left(J\setminus \left(\bigcup_{i=1}^{\infty}\Gamma^i\right)\right)=\lambda\left(J\cap\left(\bigcup_{i=1}^{\infty}\Gamma^i\right)\right)\nonumber
\\
&\leq \lambda\left(\bigcup_{i=1}^{\infty}\Gamma^i\right)\leq \mu\left(\bigcup_{i=1}^{\infty}\Gamma^i\right)\leq \mu\left(\Omega\right).
\end{align}
\end{proof}

\begin{definition}(Rectifiable sets)
Let $\Gamma\subset \mathbb{R}^N$ be an $\mathcal{H}^k-$measurable set. We say that $\Gamma$ is \textbf{countably $k-$rectifiable} if and only if there exist countably many Lipschitz functions $f_i:\mathbb{R}^k\to \mathbb{R}^N$ such that
\begin{equation}
\Gamma\subset \bigcup_{i=1}^\infty f_i(\mathbb{R}^k).
\end{equation}
We say that $\Gamma$ is \textbf{countably $\mathcal{H}^k-$rectifiable} if and only if there exist countably many Lipschitz functions $f_i:\mathbb{R}^k\to \mathbb{R}^N$ such that
\begin{equation}
\mathcal{H}^k\left(\Gamma\setminus \bigcup_{i=1}^\infty f_i(\mathbb{R}^k)\right)=0.
\end{equation}
Finally, we say that $\Gamma$ is \textbf{$\mathcal{H}^k-$rectifiable} if and only if $\Gamma$ is countably $\mathcal{H}^k-$rectifiable and $\mathcal{H}^k(\Gamma)<\infty$.
\end{definition}

\begin{theorem}(Theorem 3.2.29. in \cite{F})
\\
\label{thm:rectifiable set is a union of smooth manifold}
A subset $\Gamma\subset\mathbb{R}^N$ is countably $\mathcal{H}^k-$rectifiable if and only if there exist countably many submanifolds $\Gamma^i$ of class $1$ of $\mathbb{R}^N$ such that
\begin{equation}
\mathcal{H}^k\left(\Gamma\setminus \bigcup_{i=1}^\infty\Gamma^i\right)=0.
\end{equation}
We can also choose the $\Gamma^i$ such that $\mathcal{H}^k(\Gamma^i)<\infty$.
\end{theorem}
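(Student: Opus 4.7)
The plan is to prove the two implications separately, as only one direction is nontrivial.

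For the easy direction ($\Leftarrow$), suppose $\Gamma \subset \bigcup_{i=1}^\infty \Gamma^i$ up to an $\mathcal{H}^k$-null set, where each $\Gamma^i$ is a $k$-dimensional $C^1$ submanifold. Each $\Gamma^i$ admits a countable atlas of coordinate charts $h^i_j:V^i_j\to\Gamma^i$ with $V^i_j\subset\mathbb{R}^k$ open. After refining to countably many relatively compact open subsets $W^i_j \subset\subset V^i_j$, the restriction $h^i_j|_{\overline{W^i_j}}$ is $C^1$ on a compact set and hence Lipschitz. Extending each restriction to a Lipschitz map $F^i_j:\mathbb{R}^k\to\mathbb{R}^N$ via McShane's theorem (applied componentwise) exhibits $\Gamma$ as contained in a countable union of Lipschitz images of $\mathbb{R}^k$ up to an $\mathcal{H}^k$-null set, so $\Gamma$ is countably $\mathcal{H}^k$-rectifiable.

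For the hard direction ($\Rightarrow$), I may assume $\Gamma = f(\mathbb{R}^k)$ for a single Lipschitz map $f:\mathbb{R}^k\to\mathbb{R}^N$ and then take a countable union. The key tool is the Whitney--Lusin $C^1$ approximation of Lipschitz maps: for every $\varepsilon>0$ there exists $g\in C^1(\mathbb{R}^k,\mathbb{R}^N)$ with $\mathcal{L}^k\bigl(\{x:f(x)\neq g(x)\}\bigr)<\varepsilon$. Applying this with $\varepsilon=1/j$ produces $g_j$ and sets $E_j:=\{f=g_j\}$ with $\mathcal{L}^k\bigl(\mathbb{R}^k\setminus\bigcup_j E_j\bigr)=0$. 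The Lipschitz area inequality then gives $\mathcal{H}^k\bigl(f(\mathbb{R}^k\setminus\bigcup_j E_j)\bigr)=0$, so it is enough to cover each $f(E_j)=g_j(E_j)$ up to an $\mathcal{H}^k$-null set by countably many $C^1$ submanifolds.

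Fix $j$ and let $U_j:=\{x\in\mathbb{R}^k:\operatorname{rank} Dg_j(x)=k\}$. This set is open by continuity of $Dg_j$, and on $\mathbb{R}^k\setminus U_j$ the Jacobian $Jg_j$ vanishes identically, whence the $C^1$ area formula yields $\mathcal{H}^k\bigl(g_j(\mathbb{R}^k\setminus U_j)\bigr)=0$. On $U_j$, the implicit function theorem (choosing at each point $k$ coordinates of $g_j$ whose partial Jacobian is invertible) produces a neighbourhood on which $g_j$ is a $C^1$ embedding with image a $k$-dimensional $C^1$ submanifold; by second countability I extract a countable subcover $\{V_{j,\ell}\}$, and the submanifolds $\Gamma^{j,\ell}:=g_j(V_{j,\ell})$ together cover $g_j(U_j)$. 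To impose the finite-measure condition, I further partition each $V_{j,\ell}$ into countably many relatively compact open pieces $W_{j,\ell,m}\subset\subset V_{j,\ell}$; on each such piece $g_j$ is Lipschitz, so $g_j(W_{j,\ell,m})$ is a $k$-dimensional $C^1$ submanifold with $\mathcal{H}^k\bigl(g_j(W_{j,\ell,m})\bigr)<\infty$ by the Lipschitz area inequality.

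The main obstacle is the Whitney--Lusin approximation step, which is classical but not elementary: it rests on Rademacher's theorem to define $Df$ almost everywhere, followed by Lusin-type regularity applied to $(f,Df)$ on a large closed subset $A\subset\mathbb{R}^k$ where this pair assembles into a continuous $C^1$-jet, and finally Whitney's extension theorem to produce $g\in C^1(\mathbb{R}^k,\mathbb{R}^N)$ agreeing with $f$ on $A$ with $\mathcal{L}^k(\mathbb{R}^k\setminus A)<\varepsilon$. Once this input is granted, the remainder of the argument is a routine combination of the area formula, rank analysis of $C^1$ maps, the implicit function theorem, and $\sigma$-compact exhaustion, as outlined above.
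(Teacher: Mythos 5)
The paper does not prove this statement: it is imported verbatim from Federer \cite{F}, Theorem 3.2.29, and used as a black box, so there is no in-paper proof to compare against. Your argument is the classical one underlying Federer's result and it is correct. The $\Leftarrow$ direction via charts, a relatively compact refinement, and McShane extension is routine. For the $\Rightarrow$ direction, the reduction to a single Lipschitz $f$, the passage to $C^1$ approximants $g_j$ via Whitney--Lusin, the split into the open full-rank set $U_j$ (where the local immersion theorem furnishes embedded $k$-dimensional $C^1$ submanifolds) and its complement (whose image is $\mathcal{H}^k$-null since $Jg_j\equiv 0$ there, by the area formula), and the final $\sigma$-compact refinement to enforce $\mathcal{H}^k(\Gamma^i)<\infty$ are all in order. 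One cosmetic remark: in the $\Leftarrow$ direction the paper's definition of countable $\mathcal{H}^k$-rectifiability also requires $\Gamma$ to be $\mathcal{H}^k$-measurable, so that hypothesis should be carried along (or one should pass to a measurable hull); this does not affect the substance of your argument or its use in the paper, which only invokes the $\Rightarrow$ direction.
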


\begin{theorem}(Theorem 1 in Section 1.5 in \cite{EG}) (Vitali's covering theorem)
\label{thm:Vitali's covering theorem}
Let $\mathcal{F}$ be any collection of nondegenerate closed balls in $\R^n$ with
\begin{equation}
\sup\{diam B:B\in \mathcal{F}\}<\infty.
\end{equation}
Then there exists a countable family $\mathcal{G}$ of disjoint balls in $\mathcal{F}$ such that
\begin{equation}
\bigcup_{B\in \mathcal{F}}B\subset \bigcup_{B\in \mathcal{G}}\hat{B},
\end{equation}
where if $B=\overline{B}_r(x)$, then $\hat{B}=\overline{B}_{5r}(x)$.
\end{theorem}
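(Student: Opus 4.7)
The plan is to follow the classical dyadic greedy-selection argument. Let $D := \sup\{\mathrm{diam}\,B : B \in \mathcal{F}\} < \infty$, and partition $\mathcal{F}$ into the diameter classes
\begin{equation*}
\mathcal{F}_j := \Bigl\{B \in \mathcal{F} \;:\; \tfrac{D}{2^j} < \mathrm{diam}\,B \leq \tfrac{D}{2^{j-1}}\Bigr\}, \qquad j = 1, 2, \ldots,
\end{equation*}
so that $\mathcal{F} = \bigcup_{j \geq 1} \mathcal{F}_j$. I will then build the subfamily $\mathcal{G}$ layer by layer: let $\mathcal{G}_1$ be any maximal pairwise disjoint subfamily of $\mathcal{F}_1$ (existence by Zorn's lemma), and inductively let $\mathcal{G}_j$ be a maximal pairwise disjoint subfamily of
\begin{equation*}
\Bigl\{B \in \mathcal{F}_j \;:\; B \cap B' = \emptyset \text{ for every } B' \in \mathcal{G}_1 \cup \cdots \cup \mathcal{G}_{j-1}\Bigr\}.
\end{equation*}
Set $\mathcal{G} := \bigcup_{j \geq 1} \mathcal{G}_j$. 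By construction $\mathcal{G}$ is a pairwise disjoint family in $\mathcal{F}$, and since disjoint nondegenerate balls in the separable space $\R^n$ each contain a rational point, $\mathcal{G}$ is automatically countable.

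The covering property is then verified by the standard ``$5r$-trick.'' Take an arbitrary $B \in \mathcal{F}$, say $B \in \mathcal{F}_j$. Maximality of $\mathcal{G}_j$ within its candidate family forces $B$ to meet some ball $B' \in \mathcal{G}_1 \cup \cdots \cup \mathcal{G}_j$ (otherwise $B$ could be adjoined to $\mathcal{G}_j$, contradicting maximality). Such a $B'$ lies in some $\mathcal{F}_i$ with $i \leq j$, hence
\begin{equation*}
\mathrm{diam}\,B' > \frac{D}{2^i} \geq \frac{D}{2^j} \geq \frac{1}{2}\,\mathrm{diam}\,B.
\end{equation*}
Writing $B = \overline{B}_r(x)$ and $B' = \overline{B}_{r'}(x')$, this says $r < 2r'$, and since $B \cap B' \neq \emptyset$ we have $|x - x'| \leq r + r'$. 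For any $y \in B$ the triangle inequality then gives $|y - x'| \leq 2r + r' < 4r' + r' = 5r'$, so $y \in \hat{B}'$. Therefore $B \subset \hat{B}'$, which yields $\bigcup_{B \in \mathcal{F}} B \subset \bigcup_{B' \in \mathcal{G}} \hat{B}'$.

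The only nontrivial point is the appeal to Zorn's lemma to produce the maximal disjoint subfamilies $\mathcal{G}_j$; this is unavoidable unless one assumes a priori that $\mathcal{F}$ is countable, in which case the selection can be made by a direct induction. The dyadic partition is essential, since a single greedy pass on all of $\mathcal{F}$ at once could be blocked by a long descending chain of ever-smaller balls, but within a single diameter class the ratio of the smallest to the largest radius is bounded by $2$, which is exactly what the $5r$-argument needs.
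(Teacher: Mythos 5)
Your proof is correct and reproduces the standard argument from Evans and Gariepy (Theorem 1, Section 1.5), which is precisely the source the paper cites; the paper does not supply its own proof of this lemma. The dyadic layering by diameter, the greedy maximal selection via Zorn's lemma, the countability observation via rational points in pairwise disjoint balls, and the $5r$ triangle-inequality computation (using $r < 2r'$ and $|x-x'| \leq r + r'$ to get $2r + r' < 5r'$) are all verified correctly.
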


\begin{theorem}(Corollary 1 in Section 1.5 in \cite{EG}) (Besicovitch's covering theorem)
\label{thm:Besicovitch covering theorem}
Let $\mu$ be a Borel measure on $\mathbb{R}^N$, and $\mathcal{F}$ any collection of nondegenerate
closed balls. Let $A$ denote the set of centers of the balls in $\mathcal{F}$. Assume
$\mu(A)<\infty$ and
$\inf\left\{r:\overline{B}_r(a)\in \mathcal{F}\right\}=0$ for each $a\in A$. Then for each open set $U\subset\mathbb{R}^N$, there exists a countable collection
$\mathcal{G}$ of disjoint balls in $\mathcal{F}$ such that
\begin{equation}
\bigcup_{B\in \mathcal{G}}B\subset U
\end{equation}
and
\begin{equation}
\mu\left(\left(A\cap U\right)\setminus \bigcup_{B\in \mathcal{G}}B\right)=0.
\end{equation}
\end{theorem}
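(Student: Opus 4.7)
The plan is to reduce the measure-theoretic statement to the classical combinatorial Besicovitch covering theorem, which I would take as a black box: there is a dimensional constant $N_n$ such that for any collection $\mathcal{F}_0$ of closed balls in $\mathbb{R}^n$ whose set of centers is bounded and whose radii are bounded, one can find subcollections $\mathcal{G}_1,\dots,\mathcal{G}_{N_n}\subset\mathcal{F}_0$, each consisting of pairwise disjoint balls, such that the centers are covered by $\bigcup_{i=1}^{N_n}\bigcup_{B\in \mathcal{G}_i}B$. The proof of that combinatorial packing statement is a purely geometric argument (bounding how many disjoint balls of comparable radii can simultaneously contain a common point), and it is by far the hardest ingredient; I would not reprove it but simply cite it, as is done in \cite{EG}.

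Granted the combinatorial version, the first step of my proof of the corollary is a reduction. Using that $\mu(A)<\infty$ and the inner/outer regularity of finite Borel measures on $\mathbb{R}^N$, I replace $A$ by a $\sigma$-compact subset of full $\mu$-measure, so that it suffices to cover a bounded piece at a time. Using the fine-covering hypothesis $\inf\{r:\overline{B}_r(a)\in\mathcal{F}\}=0$ together with the openness of $U$, I discard from $\mathcal{F}$ every ball not contained in $U$ (each center still admits arbitrarily small balls in $\mathcal{F}$ that do lie in $U$), obtaining a fine covering $\mathcal{F}'$ of $A\cap U$ by balls contained in $U$. I may further restrict to balls of radius at most $1$, so the combinatorial theorem applies.

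The second step is the standard pigeonhole-and-iterate argument. Applied to a bounded portion of $A\cap U$, the combinatorial theorem produces $N_n$ disjoint subfamilies of $\mathcal{F}'$ whose union covers that portion; by pigeonhole one of them, call it $\mathcal{G}^{(1)}$, satisfies
\begin{equation*}
\mu\Bigg(\bigcup_{B\in\mathcal{G}^{(1)}}B\cap(A\cap U)\Bigg)\geq \frac{1}{N_n}\mu(A\cap U).
\end{equation*}
Since $\mu(A\cap U)<\infty$, I can extract a finite subfamily $\mathcal{G}_1\subset\mathcal{G}^{(1)}$ with
\begin{equation*}
\mu\Bigg((A\cap U)\setminus\bigcup_{B\in\mathcal{G}_1}B\Bigg)\leq \theta\,\mu(A\cap U),\qquad \theta:=1-\tfrac{1}{2N_n}\in(0,1).
\end{equation*}
Now I pass to the residual set $A_1:=(A\cap U)\setminus\bigcup_{B\in \mathcal{G}_1}B$. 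Using the fine-covering hypothesis once more together with the openness of $U\setminus\bigcup_{B\in\mathcal{G}_1}B$ (noting $\mathcal{G}_1$ is a finite disjoint family of closed balls, so its complement in $U$ is open), I restrict $\mathcal{F}'$ to balls contained in this new open set and repeat the construction on $A_1$, producing $\mathcal{G}_2$ disjoint from $\mathcal{G}_1$ with $\mu(A_1\setminus\bigcup \mathcal{G}_2)\leq \theta\,\mu(A_1)\leq \theta^2\mu(A\cap U)$.

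Iterating and setting $\mathcal{G}:=\bigcup_k \mathcal{G}_k$ yields a countable, pairwise disjoint subfamily of $\mathcal{F}$ contained in $U$, with
\begin{equation*}
\mu\Bigg((A\cap U)\setminus\bigcup_{B\in\mathcal{G}}B\Bigg)\leq \lim_{k\to\infty}\theta^k\mu(A\cap U)=0,
\end{equation*}
which is the desired conclusion. The only genuinely hard step is the combinatorial Besicovitch packing lemma invoked at the start; everything else is a routine pigeonhole-plus-geometric-series iteration, so in the write-up I would spend essentially all effort citing the combinatorial lemma precisely and verifying that the fineness hypothesis lets me iterate while remaining inside $U$.
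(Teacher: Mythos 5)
The paper itself offers no proof of this statement: it is a standard result cited verbatim from \cite{EG} (Corollary~1, Section~1.5) and used as a black box. Your proposal reconstructs the standard deduction of the measure-theoretic Besicovitch covering theorem from the combinatorial one, which is precisely the argument in the cited source, so the route is the expected one. Two points of precision, though. First, the combinatorial Besicovitch lemma in \cite{EG} (Theorem~1, Section~1.5) only assumes the radii are uniformly bounded and allows the set of centers $A$ to be unbounded; if you quote it in that form, your opening reduction to $\sigma$-compact pieces and the ``bounded piece at a time'' bookkeeping disappear entirely, which is cleaner. Second, as written your pigeonhole step has a genuine gap: you apply the combinatorial lemma to a \emph{bounded portion} $A'\subset A\cap U$, so the $N_n$ disjoint subfamilies only cover $A'$, and the pigeonhole therefore yields
$\mu\big(\bigcup_{B\in\mathcal G^{(1)}}B\cap A'\big)\ge \tfrac{1}{N_n}\mu(A')$,
not the displayed inequality with $\mu(A\cap U)$ on both sides; if $\mu(A')<\mu(A\cap U)$ the claimed fraction is simply false. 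This is easy to repair --- either invoke the unbounded-centers version of the combinatorial lemma so that $A'=A\cap U$, or choose $A'$ exhausting all but an $\eta$-fraction of the $\mu$-mass of $A\cap U$ and absorb the loss into the choice of $\theta$ --- but one of these fixes must actually be carried out. The remaining steps (discarding balls not contained in $U$ via the fineness hypothesis, extracting a finite subfamily by continuity from below of $\mu$ on Borel sets, noting that $U\setminus\bigcup\mathcal G_1$ is open because $\mathcal G_1$ is a finite family of closed balls, and the geometric-series iteration with ratio $\theta<1$) are all correct and match the treatment in \cite{EG}.
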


\begin{theorem}(The co-area formula)(For proof see for example \cite{F,EG,AFP})
\label{thm:the co-area formula}
Let $f:\R^N\to \R$ be a Lipschitz function. Then for every function $g\in L^1(\R^N)$ it follows that $g\in L^1\left(f^{-1}(\{s\}),\mathcal{H}^{N-1}\right)$ for $\mathcal{L}^{1}-$almost every $s\in \R$ and
\begin{equation}
\int_{\mathbb{R}^N}g(x)|\nabla f(x)|dx=\int_{\mathbb{R}}\left(\int_{f^{-1}(\{s\})}g(x)d\mathcal{H}^{N-1}(x)\right)ds.
\end{equation}
\end{theorem}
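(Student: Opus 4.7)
The plan is to follow the classical three-step structure: reduce the integrand $g$ to characteristic functions, establish the identity for smooth $f$ by local straightening, then extend to general Lipschitz $f$ via approximation. First, by linearity of both sides in $g$ and monotone/dominated convergence, it suffices to prove the identity when $g=\chi_E$ for a Borel set $E\subset\R^N$; equivalently, both sides define Borel measures of $E$ and one checks they agree.

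Second, I would establish the formula for $f\in C^1(\R^N)$. By Sard's theorem, $f(\{\nabla f=0\})$ has $\mathcal{L}^1$-measure zero, so we may restrict attention to the open set $U:=\{\nabla f\neq 0\}$ (the contribution $\int_{\{\nabla f=0\}}g|\nabla f|dx=0$ handles the left-hand side on the complement). On $U$, the implicit function theorem implies that each point has a neighborhood in which $f^{-1}(\{s\})$ is a $C^1$ hypersurface. Using a countable partition of unity and a local change of variables $\Phi(x)=(f(x),x_2,\dots,x_N)$ in coordinates where $\partial_1 f\neq 0$, whose Jacobian equals $\partial_1 f$ and whose fibres $\Phi^{-1}(\{s\}\times\cdot)$ coincide with the level surfaces, Fubini's theorem yields the identity after recognising the surface-area element on $f^{-1}(\{s\})$ as $|\nabla f|/|\partial_1 f|$ times the transverse Lebesgue element.

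Third, I would extend to Lipschitz $f$. Rademacher's theorem gives $\nabla f\in L^\infty(\R^N)$ defined $\mathcal{L}^N$-a.e. A Whitney/Lusin-type approximation supplies, for each $\varepsilon>0$, a function $f_\varepsilon\in C^1(\R^N)$ with $\mathrm{Lip}(f_\varepsilon)\leq C\,\mathrm{Lip}(f)$ and $\mathcal{L}^N(A_\varepsilon)<\varepsilon$, where $A_\varepsilon:=\{f\neq f_\varepsilon\}\cup\{\nabla f\neq \nabla f_\varepsilon\}$. Applying the $C^1$ case to $f_\varepsilon$ and subtracting, the left-hand sides differ by at most $\|g\|_\infty \mathrm{Lip}(f)\,\varepsilon$, while the difference of right-hand sides is bounded by
\begin{equation}
\|g\|_\infty \int_\R \mathcal{H}^{N-1}\!\big(A_\varepsilon\cap f^{-1}(\{s\})\big)\,ds + \|g\|_\infty \int_\R \mathcal{H}^{N-1}\!\big(A_\varepsilon\cap f_\varepsilon^{-1}(\{s\})\big)\,ds,
\end{equation}
each term being $\leq C\,\mathrm{Lip}(f)\|g\|_\infty \varepsilon$ by the one-sided co-area inequality
\begin{equation}
\int_\R \mathcal{H}^{N-1}\!\big(A\cap f^{-1}(\{s\})\big)\,ds \;\leq\; C\,\mathrm{Lip}(f)\,\mathcal{L}^N(A),
\end{equation}
valid for every Borel set $A$. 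Letting $\varepsilon\to 0^+$ closes the argument; the integrability claim $g\in L^1(f^{-1}(\{s\}),\mathcal{H}^{N-1})$ for a.e. $s$ drops out of the finiteness of the right-hand side.

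The main obstacle is precisely the one-sided inequality above, which underpins the extension from $C^1$ to Lipschitz. I would prove it directly from the definition of Hausdorff measure by a Vitali-type covering argument (Theorem \ref{thm:Vitali's covering theorem}): covering $A$ by small balls $B_{r_i}(x_i)$ with $\sum \alpha(N)r_i^N \lesssim \mathcal{L}^N(A)$, noting that $f(B_{r_i}(x_i))$ is contained in an interval of length $2\mathrm{Lip}(f)r_i$, and $B_{r_i}(x_i)\cap f^{-1}(\{s\})$ has diameter at most $2r_i$, so Fubini gives $\int_\R \mathcal{H}^{N-1}_{2r_i}(A\cap f^{-1}(\{s\}))ds \leq C\,\mathrm{Lip}(f)\,\mathcal{L}^N(A)$; sending the mesh to zero completes it. With this inequality in hand, the whole extension argument is routine, and the rectifiability/density machinery from the appendix is not needed.
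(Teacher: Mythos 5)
Your overall scheme—reduction to characteristic functions, the one-sided coarea (Eilenberg) inequality proved by a Vitali covering, the $C^1$ case by local straightening via the implicit function theorem, and the passage to Lipschitz $f$ by $C^1$-Lusin approximation—is precisely the strategy used in the cited references (most closely Evans--Gariepy), and the key lemma you isolate at the end, namely $\int_\R \mathcal{H}^{N-1}\big(A\cap f^{-1}(\{s\})\big)\,ds \le C\,\mathrm{Lip}(f)\,\mathcal{L}^N(A)$ for every Borel $A$, is indeed the crux. The covering proof you sketch for it is correct, and your approximation step comparing level sets of $f$ and $f_\varepsilon$ on and off the exceptional set $A_\varepsilon$ is sound.

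There is, however, a genuine gap in your treatment of the $C^1$ case. You invoke Sard's theorem to conclude that $f\big(\{\nabla f=0\}\big)$ has $\mathcal{L}^1$-measure zero for $f\in C^1(\R^N)$. This is false when $N\ge 2$: the Morse--Sard theorem for a map $\R^N\to\R$ requires $C^N$ regularity, and Whitney's classical example produces a $C^1$ function on $\R^2$ whose critical set contains an arc on which $f$ is non-constant, so that the set of critical values has positive measure. Since the Whitney--Lusin approximation of a Lipschitz function only yields $C^1$ approximants $f_\varepsilon$ (there is no $C^2$-Lusin theorem for merely Lipschitz data), you cannot escape the problem by demanding more smoothness of $f_\varepsilon$. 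The fix is that you do not actually need Sard's conclusion: what the right-hand side requires is only that $\int_\R \mathcal{H}^{N-1}\big(\{\nabla f=0\}\cap f^{-1}(\{s\})\big)\,ds=0$, which is strictly weaker. For $C^1$ functions this follows from a localized version of your own Eilenberg inequality: given a compact $K\subset\{\nabla f=0\}$ and $\epsilon>0$, continuity of $\nabla f$ furnishes a $\delta$-neighbourhood of $K$ on which $|\nabla f|<\epsilon$, so repeating your ball-covering estimate with balls of radius $<\delta$ contained in that neighbourhood replaces $\mathrm{Lip}(f)$ by $\epsilon$ and gives $\int_\R \mathcal{H}^{N-1}_{2\delta}\big(K\cap f^{-1}(\{s\})\big)\,ds \le C\epsilon\,\mathcal{L}^N(K_\delta)$; letting $\delta,\epsilon\to 0^+$ and exhausting $\{\nabla f=0\}$ by compacts yields the needed vanishing. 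With this substitution your outline closes correctly. One further small point to record: the bound $\mathrm{Lip}(f_\varepsilon)\le C(N)\,\mathrm{Lip}(f)$ that you use in controlling the left-hand side is not part of the bare statement of $C^1$-Lusin approximation, but it is obtainable from the Whitney extension construction and should be cited or verified rather than assumed.
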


\begin{lemma}(Lemma 2 in section 2.2 in \cite{EG})
\label{lem:properties of Hausdorf measure}
Let $A\subset\R^N$ and $0\leq s<t<\infty$.
\\
(i) If $\mathcal{H}^s(A)<\infty$, then $\mathcal{H}^t(A)=0$.
\\
(ii) If $\mathcal{H}^t(A)>0$, then $\mathcal{H}^s(A)=\infty$.
\end{lemma}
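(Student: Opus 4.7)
The plan is to establish part (i) directly from the definition of Hausdorff measure and then deduce (ii) as its logical contrapositive. Recall that for $s \geq 0$ and $\delta > 0$, the approximating Hausdorff premeasure is defined by
\[
\mathcal{H}^s_\delta(A) := \inf\Bigg\{\sum_{i} \alpha(s)\bigg(\frac{\mathrm{diam}(E_i)}{2}\bigg)^s : A \subset \bigcup_{i} E_i,\; \mathrm{diam}(E_i) \leq \delta\Bigg\},
\]
and $\mathcal{H}^s(A) = \sup_{\delta > 0} \mathcal{H}^s_\delta(A) = \lim_{\delta \to 0^+} \mathcal{H}^s_\delta(A)$, the limit being nondecreasing as $\delta \to 0^+$ (since fewer admissible covers are available for smaller $\delta$).

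First I would prove (i). Assume $\mathcal{H}^s(A) < \infty$. Fix $\delta \in (0,1)$ and $\varepsilon > 0$, and choose a cover $\{E_i\}$ of $A$ with $\mathrm{diam}(E_i) \leq \delta$ such that $\sum_i \alpha(s)(\mathrm{diam}(E_i)/2)^s \leq \mathcal{H}^s_\delta(A) + \varepsilon \leq \mathcal{H}^s(A) + \varepsilon$. Since $t > s$, the key scaling estimate is
\[
\sum_i \alpha(t)\bigg(\frac{\mathrm{diam}(E_i)}{2}\bigg)^t = \frac{\alpha(t)}{\alpha(s)}\sum_i \bigg(\frac{\mathrm{diam}(E_i)}{2}\bigg)^{t-s}\! \alpha(s)\bigg(\frac{\mathrm{diam}(E_i)}{2}\bigg)^s \leq \frac{\alpha(t)}{\alpha(s)}\bigg(\frac{\delta}{2}\bigg)^{t-s}\!\!\big(\mathcal{H}^s(A) + \varepsilon\big),
\]
where the inequality uses $(\mathrm{diam}(E_i)/2)^{t-s} \leq (\delta/2)^{t-s}$. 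Hence $\mathcal{H}^t_\delta(A) \leq \frac{\alpha(t)}{\alpha(s)}(\delta/2)^{t-s}(\mathcal{H}^s(A) + \varepsilon)$. Letting first $\varepsilon \to 0^+$ and then $\delta \to 0^+$, the right-hand side tends to $0$ because $t-s > 0$, yielding $\mathcal{H}^t(A) = 0$.

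Part (ii) follows as the contrapositive of (i): if $\mathcal{H}^t(A) > 0$ and we had $\mathcal{H}^s(A) < \infty$, then (i) would force $\mathcal{H}^t(A) = 0$, contradicting the hypothesis; therefore $\mathcal{H}^s(A) = \infty$. The entire argument is a one-line scaling, and the only point requiring care is the order of the two limits (first $\varepsilon \to 0^+$, then $\delta \to 0^+$), which is immediate.
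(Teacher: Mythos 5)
Your proof is correct and is precisely the standard scaling argument found in Evans--Gariepy (Lemma 2, Section 2.2), which is what the paper cites without reproducing a proof of its own. Since the paper gives no internal proof of this lemma, there is nothing to compare beyond noting that your argument matches the canonical one the reference invokes.
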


\begin{lemma}
\label{lem:Hausdorff measure of the set of points of positive upper density with lower dimension has measure zero}
Let $\Omega\subset\R^N$ be an open set, and let $\mu$ be a positive Radon measure on $\Omega$ such that $\mu(\Omega)<\infty$. Assume that $\mu$ is absolutely continuous with respect to the $N$-dimensional Hausdorff measure $\mathcal{H}^{N}$. Let $0\leq k<N$ and define a set
\begin{equation}
D:=\left\{x\in \Omega: \Theta^*_k(\mu,x)>0\right\}.
\end{equation}
Then, $\mathcal{H}^{k}(D)=0$.
\end{lemma}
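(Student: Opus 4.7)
The plan is to slice the set $D$ by the value of the upper $k$-density and bootstrap: one uses Theorem~\ref{thm:Hausdorff and Radon measures} once to make the $k$-dimensional Hausdorff measure of each slice $\sigma$-finite, uses Lemma~\ref{lem:properties of Hausdorff measure}(i) together with $k<N$ to kill its $N$-dimensional measure, then uses absolute continuity of $\mu$ with respect to $\mathcal{H}^N$, and finally applies Theorem~\ref{thm:Hausdorff and Radon measures} a second time to conclude.

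More precisely, I would first decompose
\begin{equation*}
D=\bigcup_{j=1}^{\infty}D_j,\qquad D_j:=\Bigl\{x\in\Omega\;:\;\Theta^*_k(\mu,x)>1/j\Bigr\}.
\end{equation*}
The map $x\mapsto\mu(B_\rho(x))$ is upper semicontinuous for each $\rho>0$ (as $\mu$ is a Radon measure), so $\Theta^*_k(\mu,\cdot)$ is Borel and hence each $D_j$ is a Borel subset of $\Omega$. By Theorem~\ref{thm:Hausdorff and Radon measures} applied with $t=1/j$ and $B=D_j$, we have $\mu\geq (1/j)\mathcal{H}^k\llcorner D_j$, so
\begin{equation*}
\mathcal{H}^k(D_j)\leq j\mu(D_j)\leq j\mu(\Omega)<\infty.
\end{equation*}

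Since $k<N$ and $\mathcal{H}^k(D_j)<\infty$, Lemma~\ref{lem:properties of Hausdorff measure}(i) yields $\mathcal{H}^N(D_j)=0$. By the assumed absolute continuity $\mu\ll\mathcal{H}^N$, this forces $\mu(D_j)=0$. Plugging this back into the inequality $\mathcal{H}^k(D_j)\leq j\mu(D_j)$ gives $\mathcal{H}^k(D_j)=0$ for every $j$, and by countable subadditivity
\begin{equation*}
\mathcal{H}^k(D)\leq\sum_{j=1}^{\infty}\mathcal{H}^k(D_j)=0.
\end{equation*}

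There is no real obstacle here once one notices the double application of Theorem~\ref{thm:Hausdorff and Radon measures}: the first application gives a finite bound on $\mathcal{H}^k(D_j)$ which is only useful because it unlocks Lemma~\ref{lem:properties of Hausdorff measure}(i) (and hence $\mathcal{H}^N(D_j)=0$, hence $\mu(D_j)=0$); the second application then converts $\mu(D_j)=0$ into $\mathcal{H}^k(D_j)=0$. The only mildly delicate point worth writing out carefully is the Borel measurability of $D_j$, which is needed to invoke Theorem~\ref{thm:Hausdorff and Radon measures} cleanly.
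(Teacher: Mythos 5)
Your proof is correct and uses essentially the same ingredients as the paper's: the same decomposition of $D$ into the slices $D_j$, the density comparison of Theorem~\ref{thm:Hausdorff and Radon measures}, the dimension comparison of Lemma~\ref{lem:properties of Hausdorf measure}, the absolute continuity $\mu\ll\mathcal{H}^N$, and the finiteness of $\mu$. The only real difference is presentational: you argue directly (finite $\mathcal{H}^k(D_j)$ $\Rightarrow$ $\mathcal{H}^N(D_j)=0$ $\Rightarrow$ $\mu(D_j)=0$ $\Rightarrow$ $\mathcal{H}^k(D_j)=0$, using part~(i) of Lemma~\ref{lem:properties of Hausdorf measure}), whereas the paper argues by contradiction (assume $\mathcal{H}^k(D_n)>0$, derive $\mu(D_n)>0$, then $\mathcal{H}^N(D_n)>0$, then $\mathcal{H}^k(D_n)=\infty$ via part~(ii), contradicting $\mu(\Omega)<\infty$); since (i) and (ii) are contrapositives, the two routes are logically the same. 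One small slip in your write-up: for an open ball, $x\mapsto\mu(B_\rho(x))$ is \emph{lower} semicontinuous, not upper semicontinuous (one has $B_\rho(x)\subset\liminf_n B_\rho(x_n)$ whenever $x_n\to x$, so Fatou gives $\mu(B_\rho(x))\le\liminf_n\mu(B_\rho(x_n))$); upper semicontinuity holds for closed balls. This does not affect your conclusion, since semicontinuity in either direction, together with the left-continuity and monotonicity of $\rho\mapsto\mu(B_\rho(x))$, still yields Borel measurability of $\Theta^*_k(\mu,\cdot)$ and hence of each $D_j$ — a point the paper in fact leaves implicit, so your checking it is a small improvement.
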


\begin{proof}
Decompose
\begin{equation}
D=\bigcup_{n=1}^\infty D_n,\quad D_n:=\left\{x\in \Omega: \Theta^*_k(\mu,x)\geq \frac{1}{n}\right\}.
\end{equation}
By $\sigma-$subadditivity of the measure $\mathcal{H}^{k}$, we get $\mathcal{H}^{k}(D)\leq \sum_{n=1}^\infty\mathcal{H}^{k}(D_n)$. We have for every $n\in \N$ that $\mathcal{H}^{k}(D_n)=0$ : assume by contradiction there exists $n\in \N$ such that
$\mathcal{H}^{k}(D_n)>0$. So by Theorem $\ref{thm:Hausdorff and Radon measures}$, we have $\mu(D_n)\geq \frac{1}{n}\mathcal{H}^{k}(D_n)>0$, and since $\mu$ is absolutely continuous with respect to $\mathcal{H}^N$ we have $\mathcal{H}^{N}(D_n)>0$. Thus, since $k<N$, we get by Lemma \ref{lem:properties of Hausdorf measure}
$\mathcal{H}^{k}(D_n)=\infty$ which is a contradiction to the finiteness of $\mu$.
\end{proof}

\begin{theorem}(Hardy-Littlewood maximal inequality)
\label{thm:Hardy-Littlewood maximal inequality}
Let $1<p\leq \infty$ and
$f\in L^p(\R^N)$. Then, there exists a constant $C$, depending only on $p$ and $N$, such that
$\|M(f)\|_{L^p(\R^N)}\leq C\|f\|_{L^p(\R^N)}$, where
\begin{align}
M(f)(x):=\sup\limits_{r>0}\left(\fint_{B_{r}(x)}\left|f(y)\right|dy\right)
\end{align}
is the Hardy-Littlewood maximal function.
\end{theorem}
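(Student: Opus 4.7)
The plan is to establish the bound by the classical Marcinkiewicz-type interpolation scheme, passing through a weak-type endpoint at $p = 1$ combined with the trivial $L^\infty$ bound. The case $p = \infty$ is immediate, since $|f(y)| \leq \|f\|_{L^\infty}$ almost everywhere implies $M(f)(x) \leq \|f\|_{L^\infty}$ for every $x$, giving the result with $C = 1$. From here on I assume $1 < p < \infty$ and $f \in L^p(\mathbb{R}^N)$.

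First I would prove the \textbf{weak-type $(1,1)$ inequality}: for $f \in L^1(\mathbb{R}^N)$ and $\lambda > 0$,
\begin{equation*}
\mathcal{L}^N\big(E_\lambda\big) \leq \frac{5^N}{\lambda}\|f\|_{L^1(\R^N)}, \qquad E_\lambda := \big\{x \in \mathbb{R}^N : M(f)(x) > \lambda\big\}.
\end{equation*}
For each $x \in E_\lambda$ select $r_x > 0$ with $\int_{B_{r_x}(x)}|f|\,dy > \lambda\,\alpha(N) r_x^N$, which forces $r_x^N < \|f\|_{L^1}/(\lambda\alpha(N))$. The family $\mathcal{F} := \{\overline B_{r_x}(x) : x \in E_\lambda\}$ thus has uniformly bounded diameters, so Theorem \ref{thm:Vitali's covering theorem} yields a countable disjoint subfamily $\mathcal{G} \subset \mathcal{F}$ whose $5$-dilations cover $\bigcup_{B \in \mathcal{F}}B \supset E_\lambda$. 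Summing gives
\begin{equation*}
\mathcal{L}^N(E_\lambda) \leq \sum_{B \in \mathcal{G}} \alpha(N)(5r_B)^N \leq \frac{5^N}{\lambda}\sum_{B \in \mathcal{G}}\int_B |f|\,dy \leq \frac{5^N}{\lambda}\|f\|_{L^1}.
\end{equation*}

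Next I interpolate. For $f \in L^p$ and $\lambda > 0$, split $f = g_\lambda + h_\lambda$ with $g_\lambda := f\chi_{\{|f|>\lambda/2\}}$ and $h_\lambda := f - g_\lambda$. Then $\|h_\lambda\|_{L^\infty} \leq \lambda/2$ so $M(h_\lambda) \leq \lambda/2$ everywhere, and sublinearity of $M$ gives $\{M(f) > \lambda\} \subset \{M(g_\lambda) > \lambda/2\}$. Since $g_\lambda \in L^1$, the weak-type bound above yields
\begin{equation*}
\mathcal{L}^N\big(\{M(f) > \lambda\}\big) \leq \frac{2\cdot 5^N}{\lambda}\int_{\{|f|>\lambda/2\}}|f(x)|\,dx.
\end{equation*}
Integrating by the layer-cake formula and applying Fubini,
\begin{align*}
\|M(f)\|_{L^p}^p &= p\int_0^\infty \lambda^{p-1}\,\mathcal{L}^N\big(\{M(f)>\lambda\}\big)\,d\lambda \\
&\leq 2\cdot 5^N p\int_{\mathbb{R}^N}|f(x)|\left(\int_0^{2|f(x)|}\lambda^{p-2}\,d\lambda\right)dx = \frac{2^p\cdot 5^N p}{p-1}\,\|f\|_{L^p}^p,
\end{align*}
which is the desired bound with $C = (2^p \cdot 5^N p/(p-1))^{1/p}$.

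The only non-routine step is the weak $(1,1)$ endpoint, whose content is entirely geometric and is isolated in Vitali's covering theorem already recorded in the appendix; once it is in hand, the remainder is the standard good-function/bad-function truncation followed by the elementary layer-cake computation. Hence no delicate estimate is expected to arise in fleshing out the above outline.
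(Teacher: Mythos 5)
The paper records Theorem~\ref{thm:Hardy-Littlewood maximal inequality} in the Appendix as a classical fact and gives no proof, so there is no internal argument to compare against. Your proof is the standard and correct one: you establish the weak $(1,1)$ endpoint via the Vitali covering theorem (the $5^N$ comes exactly from the $5$-dilation), take the trivial $L^\infty$ bound, and run Marcinkiewicz-style interpolation through the truncation $f = f\chi_{\{|f|>\lambda/2\}} + f\chi_{\{|f|\le\lambda/2\}}$ and the layer-cake formula with Fubini. One small point you glide over (and which is harmless to omit, but worth knowing): you need $M(f)$ to be measurable before applying the layer-cake identity, which follows because $\{M(f)>\lambda\}$ is open, $M(f)$ being lower semicontinuous. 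You also implicitly use that $g_\lambda = f\chi_{\{|f|>\lambda/2\}}\in L^1$ for $f\in L^p$, which is true since on that set $|f|\le |f|^p(\lambda/2)^{1-p}$; stating it explicitly would tighten the argument. Otherwise the proposal is complete and the constant $C=(2^p\,5^N p/(p-1))^{1/p}$ you extract is the textbook one.
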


\begin{lemma}
\label{lem: F is at most countable}
Let $(X,\mathcal{E},\sigma)$ be a measure space, where $X$ is a set, $\mathcal{E}$ is a sigma-algebra on $X$ and
 $\sigma:\mathcal{E}\to [0,\infty]$ a measure. Assume that $E\in \mathcal{E}$ is such that $\sigma(E)<\infty$.
Assume $\{E_\alpha\}_{\alpha\in I}$ is a family of sets, where $I$ is a set of indexes, such that for every $\alpha\in I$, $E_\alpha\subset E$,$E_\alpha\in \mathcal{E}$, and $E_\alpha \cap E_{\alpha'}=\emptyset$ for every different $\alpha,\alpha'\in I$. Define the set
\begin{equation}
F:=\left\{\alpha\in I:\sigma(E_\alpha)>0\right\}.
\end{equation}
Then, $F$ is at most countable.
\end{lemma}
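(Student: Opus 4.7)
The plan is to decompose $F$ as a countable union of finite sets via a standard dyadic-threshold argument.

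First, I would write
\begin{equation*}
F=\bigcup_{n=1}^{\infty}F_n,\qquad F_n:=\Big\{\alpha\in I\;:\;\sigma(E_\alpha)>\tfrac{1}{n}\Big\},
\end{equation*}
which is valid because $\sigma(E_\alpha)>0$ implies $\sigma(E_\alpha)>1/n$ for some $n\in\mathbb{N}$. It suffices to show that each $F_n$ is finite; then $F$, being a countable union of finite sets, is at most countable.

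Next I would fix $n\in\mathbb{N}$ and argue by contradiction. Suppose $F_n$ contains at least $k$ distinct indices $\alpha_1,\dots,\alpha_k$. Since the sets $E_{\alpha_j}\subset E$ are pairwise disjoint and measurable, by finite additivity and monotonicity of $\sigma$,
\begin{equation*}
\sigma(E)\geq \sigma\Bigl(\bigcup_{j=1}^{k}E_{\alpha_j}\Bigr)=\sum_{j=1}^{k}\sigma(E_{\alpha_j})>\frac{k}{n}.
\end{equation*}
Hence $k<n\,\sigma(E)$, and since $\sigma(E)<\infty$, the cardinality of $F_n$ is bounded by the finite number $n\,\sigma(E)$. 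In particular $F_n$ is finite.

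No step here is really an obstacle: the only ingredients are finite additivity, the hypothesis $\sigma(E)<\infty$, and the pigeonhole-type bookkeeping on the thresholds $1/n$. The structure is the same as the classical proof that a measure can have at most countably many disjoint sets of positive measure inside a set of finite measure.
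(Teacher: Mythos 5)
Your proof is correct and is essentially the same argument as the paper's: both decompose $F$ into the threshold sets $F_n=\{\alpha:\sigma(E_\alpha)>1/n\}$ and show each $F_n$ is finite using disjointness, additivity, and $\sigma(E)<\infty$. The only cosmetic difference is that the paper derives a contradiction from an infinite subsequence via countable additivity, while you extract the explicit cardinality bound $|F_n|<n\,\sigma(E)$ from finite additivity; these are interchangeable.
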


\begin{proof}
Let us decompose $F=\cup_{k=1}^\infty F_k,F_k:=\left\{\alpha\in I:\sigma(E_\alpha)>\frac{1}{k}\right\}$. For each $k\in \mathbb{N}$ the set $F_k$ is finite. Otherwise, there exists a sequence
$\{\alpha_j\}_{j=1}^\infty\subset F_k$ of different elements and so
\begin{equation}
\infty>\sigma(E)\geq \sigma\left(\bigcup_{j=1}^\infty E_{\alpha_j}\right)=\sum_{j=1}^\infty \sigma(E_{\alpha_j})\geq \sum_{j=1}^\infty\frac{1}{k}=\infty.
\end{equation}
This contradiction shows that each $F_k$ is a finite set and hence $F$ is at most countable set as a countable union of finite sets.
\end{proof}

\begin{definition}(Definition 3.70 in \cite{AFP})
\label{def:approximate differentiability points}
Let $\Omega\subset\R^N$ be an open set and let $u\in L^1_{loc}(\Omega,\R^d)$. Let $x\in\Omega\setminus \mathcal{S}_u$. We say that $u$ is approximately differentiable at $x$ if there exists a $d\times N$ matrix $L$ such that
\begin{equation}
\label{eq:approximate differentiability}
\lim_{\rho\to 0^+}\fint_{B_\rho(x)}\frac{|u(y)-\tilde{u}(x)-L(y-x)|}{\rho}dy=0.
\end{equation}
If $u$ is approximately differentiable at $x$, the matrix $L$, uniquely determined by \eqref{eq:approximate differentiability}, is called the \textbf{approximate differential} of $u$ at $x$ and denoted $\nabla u(x)$. The set of approximate differentiability points of $u$ is denoted by $\mathcal{D}_u$.
\end{definition}

\begin{theorem}(Calder{\'o}n-Zygmund, Theorem 3.83 in \cite{AFP})
\label{thm:Calderon-Zygmund}
Let $\Omega\subset\R^N$ be an open set. Any function $u\in BV(\Omega,\R^d)$ is approximately differentiable at $\mathcal{L}^N$-a.e. point of $\Omega$. Moreover, the approximate differential $\nabla u$ is the density of the absolutely continuous part of $Du$ with respect to $\mathcal{L}^N$, in particular $\nabla u\in L^1(\Omega,\R^{d\times N})$.
\end{theorem}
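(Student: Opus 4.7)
The plan is to combine the Lebesgue--Besicovitch differentiation theorem for the vector measure $Du$ with a Poincar\'e-type inequality for $BV$ functions, following the classical Calder\'on--Zygmund strategy.

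First I would perform the Radon--Nikodym decomposition $Du = \nabla u \,\mathcal{L}^N + D^s u$, where $\nabla u \in L^1(\Omega,\R^{d\times N})$ is the density and $D^s u \perp \mathcal{L}^N$. Since $Du$ is a finite $\R^{d\times N}$-valued Radon measure on each $\Omega_0\subset\subset\Omega$, the Lebesgue--Besicovitch differentiation theorem gives that at $\mathcal{L}^N$ a.e.\ $x\in\Omega$ all three conditions hold:
(a) $\nabla u(x) = \lim_{\rho\to 0^+} Du(B_\rho(x))/\mathcal{L}^N(B_\rho(x))$;
(b) $\lim_{\rho\to 0^+} |D^s u|(B_\rho(x))/\mathcal{L}^N(B_\rho(x)) = 0$;
(c) $\lim_{\rho\to 0^+}\fint_{B_\rho(x)}|\nabla u(y)-\nabla u(x)|\,dy = 0$.
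Moreover, at $\mathcal{L}^N$ a.e.\ $x\in\Omega$ the approximate limit $\tilde u(x)$ exists (this is the classical Lebesgue point statement, which implies $x\notin\mathcal{S}_u$). Fix such a generic $x$; I need to verify \er{eq:approximate differentiability} there.

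Next I would apply a Poincar\'e/Sobolev-type inequality for $BV$ functions to $v(y) := u(y) - \nabla u(x)(y-x)$, whose distributional derivative as a measure is $Dv = Du - \nabla u(x)\,\mathcal{L}^N$. The $BV$ Sobolev--Poincar\'e inequality on the ball $B_\rho(x)$ gives
\begin{equation}
\Bigl(\fint_{B_\rho(x)}|v(y)-v_{B_\rho(x)}|^{N/(N-1)}dy\Bigr)^{(N-1)/N} \leq C(N)\,\rho\,\frac{|Dv|(B_\rho(x))}{\mathcal{L}^N(B_\rho(x))}.
\end{equation}
By the triangle inequality for the total variation,
\begin{equation}
\frac{|Dv|(B_\rho(x))}{\mathcal{L}^N(B_\rho(x))} \leq \fint_{B_\rho(x)}|\nabla u(y)-\nabla u(x)|\,dy + \frac{|D^s u|(B_\rho(x))}{\mathcal{L}^N(B_\rho(x))}\,,
\end{equation}
and by (b),(c) above both terms vanish as $\rho\to 0^+$. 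Combining this with H\"older and noting $v_{B_\rho(x)} = u_{B_\rho(x)} - \nabla u(x)\cdot\fint_{B_\rho(x)}(y-x)\,dy = u_{B_\rho(x)}$ (by symmetry of the ball), I get
\begin{equation}
\frac{1}{\rho}\fint_{B_\rho(x)}|u(y) - u_{B_\rho(x)} - \nabla u(x)(y-x)|\,dy \xrightarrow[\rho\to 0^+]{} 0.
\end{equation}

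The remaining step, and the technical heart of the argument, is to replace the running average $u_{B_\rho(x)}$ by the fixed value $\tilde u(x)$ with an error $o(\rho)$. The direct bound
\begin{equation}
|u_{B_\rho(x)} - \tilde u(x)| \leq \fint_{B_\rho(x)}|u(y)-\tilde u(x)|\,dy
\end{equation}
only yields $o(1)$, not $o(\rho)$, so this is the main obstacle. The resolution is to pick the ``good'' points $x$ more carefully: using the $L^{N/(N-1)}$ Sobolev--Poincar\'e inequality for $u$ itself together with $u\in L^1_{loc}$, one shows that at $\mathcal{L}^N$ a.e.\ $x$ one has $\fint_{B_\rho(x)}|u(y)-\tilde u(x)|^{N/(N-1)}dy \to 0$ at a rate controllable by $|Du|(B_\rho(x))/\rho^{N-1}$, and then a rescaling argument on dyadic balls $B_{\rho/2^k}(x)$ shows that $|u_{B_\rho(x)}-\tilde u(x)|/\rho\to 0$ at the same good points. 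Once this is established, the triangle inequality delivers \er{eq:approximate differentiability} with $L=\nabla u(x)$, so $\mathcal{D}_u$ has full $\mathcal{L}^N$-measure in $\Omega$ and the approximate differential coincides with the Radon--Nikodym density, as required.
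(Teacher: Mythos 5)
The paper does not prove this theorem; it cites Theorem 3.83 of Ambrosio--Fusco--Pallara. Your plan reproduces the classical Calder\'on--Zygmund strategy used there: Radon--Nikodym decomposition, Besicovitch differentiation of $D^a u$ and of $|D^s u|$, and the $BV$ Poincar\'e inequality applied to $v(y)=u(y)-\nabla u(x)(y-x)$. That much is correct, and the observation that $v_{B_\rho(x)}=u_{B_\rho(x)}$ by symmetry of the ball is exactly what one wants.

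The last step, however, is not quite right as written. Invoking an $L^{N/(N-1)}$ Sobolev--Poincar\'e inequality for $u$ itself is both unnecessary and insufficient: at a generic point that route only gives $\fint_{B_\rho(x)}|u-\tilde u(x)|\,dy=O(\rho)$ with constant of order $|\nabla u(x)|$, not the required $o(\rho)$, and it is left unclear what the subsequent ``rescaling on dyadic balls'' is telescoping. What in fact closes the argument is the telescope applied to the bound you have already established for $v$. Writing
\begin{equation*}
\e(\rho):=\frac{1}{\rho}\fint_{B_\rho(x)}\big|u(y)-u_{B_\rho(x)}-\nabla u(x)(y-x)\big|\,dy\longrightarrow 0\,,
\end{equation*}
and using that $\fint_{B_{\rho/2}(x)}\nabla u(x)(y-x)\,dy=0$, one obtains
\begin{equation*}
\big|u_{B_{\rho/2}(x)}-u_{B_\rho(x)}\big|=\Big|\fint_{B_{\rho/2}(x)}\big(u(y)-u_{B_\rho(x)}-\nabla u(x)(y-x)\big)dy\Big|\leq 2^N\rho\,\e(\rho)\,,
\end{equation*}
and summing the telescope $u_{B_\rho(x)}-\tilde u(x)=\sum_{k\geq 0}\big(u_{B_{\rho/2^k}(x)}-u_{B_{\rho/2^{k+1}}(x)}\big)$ gives
\begin{equation*}
\big|u_{B_\rho(x)}-\tilde u(x)\big|\leq 2^N\sum_{k\geq 0}\frac{\rho}{2^k}\,\e\!\left(\frac{\rho}{2^k}\right)\leq 2^{N+1}\rho\,\sup_{0<r\leq\rho}\e(r)=o(\rho)\,.
\end{equation*}
With this substitution in the final paragraph your proof is complete and coincides with the cited argument.
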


\begin{acknowledgement}
The authors would like to express gratitude to Professor Alexander Ukhlov for valuable and insightful discussions on the topic of fine properties of Sobolev functions in general. 
\end{acknowledgement}

\vskip 0.3cm

\end{document}